\documentclass{amsart}


\usepackage{amsmath,amssymb,latexsym,color}
\usepackage[bookmarks=true,colorlinks=true, linkcolor=blue, citecolor=cyan]{hyperref}
\usepackage{tikz}
\usetikzlibrary{arrows}
\usetikzlibrary{fit}
\usepackage[margin=1in,marginparwidth=0.8in, marginparsep=0.1in]{geometry}
\usepackage{extarrows}
\usepackage{verbatim}
\input xy
\xyoption{all}

\newtheorem{theorem}{Theorem}[section]

\newtheorem{conjecture}[theorem]{Conjecture}
\newtheorem{corollary}[theorem]{Corollary}
\newtheorem{definition}[theorem]{Definition}
\newtheorem{lemma}[theorem]{Lemma}

\newtheorem{proposition}[theorem]{Proposition}
\newtheorem{remark}[theorem]{Remark}
\newtheorem{example}[theorem]{Example}
\newtheorem{thm}{Theorem}
\numberwithin{equation}{section}

\renewcommand{\AA}{\mathbb{A}}
\newcommand{\C}{\mathbb{C}}
\newcommand{\CC}{\mathbb{C}}

\newcommand{\NN}{\mathbb{N}}
\newcommand{\ZZ}{\mathbb{Z}}

\newcommand{\bfe}{\mathbf{e}}
\newcommand{\bff}{\mathbf{f}}
\newcommand{\bfg}{\mathbf{g}}

\newcommand{\bfi}{\mathbf{i}}
\newcommand{\bfI}{\mathbf{I}}
\newcommand{\bfJ}{\mathbf{J}}
\newcommand{\bfs}{\mathbf{s}}
\newcommand{\bft}{\mathbf{t}}

\newcommand{\tbfe}{{\tilde\bfe}}
\newcommand{\tbff}{{\tilde\bff}}
\newcommand{\tbfg}{{\tilde\bfg}}

\newcommand{\tbfs}{{\tilde\bfs}}
\newcommand{\tbft}{{\tilde\bft}}

\newcommand{\cA}{\mathcal{A}}
\newcommand{\cB}{\mathcal{B}}
\newcommand{\cC}{\mathcal{C}}
\newcommand{\cG}{\mathcal{G}}
\newcommand{\cH}{\mathcal{H}}
\newcommand{\cP}{\mathcal{P}}
\newcommand{\cQ}{\mathcal{Q}}

\newcommand{\ui}{{\underline i}}

\newcommand\udim{{\underline{\dim}\, }}

\newcommand{\into}{\hookrightarrow}
\newcommand{\onto}{\to\!\!\!\!\!\to}

\newcommand{\Att}{\operatorname{Att}}
\newcommand{\codim}{\operatorname{codim}}
\newcommand{\Ext}{\operatorname{Ext}}
\newcommand{\Gr}{\mathrm{Gr}}
\newcommand{\GL}{\mathrm{GL}}
\newcommand{\Hom}{\operatorname{Hom}}
\renewcommand{\Im}{\operatorname{Im}}
\newcommand{\Ind}{\mathrm{Ind}}
\newcommand{\pt}{\mathrm{pt}}

\newcommand{\rep}{\operatorname{rep}}
\newcommand{\rk}{\mathrm{rk}}

\newcommand{\ses}[3]{\xymatrix@C15pt{0\ar[r] & #1\ar[r] & #2\ar[r] & #3 \ar[r] & 0}}
\newcommand{\sesm}[4]{\xymatrix{0\ar[r] & #1\ar[r] & #2\ar^{#4}[r] & #3 \ar[r] & 0}}

\newcommand{\supp}{\operatorname{supp}}
\newcommand{\vs}{\vspace{0.2cm}}

\hyphenation{endo-functors}
\setcounter{MaxMatrixCols}{20}

\title{Cell Decompositions for Rank Two Quiver Grassmannians}

\author{Dylan Rupel}
\address[Dylan Rupel]{University of Notre Dame, Department of Mathematics, Notre Dame, IN 46556, USA}
\email{drupel@nd.edu}
\author{Thorsten Weist}
\address[Thorsten Weist]{Bergische Universit\"at Wuppertal, Gau\ss str.\ 20, 42097 Wuppertal, Germany}
\email{weist@uni-wuppertal.de}

\begin{document}
\begin{abstract}
  We prove that all quiver Grassmannians for exceptional representations of a generalized Kronecker quiver admit a cell decomposition.  
  In the process, we introduce a class of regular representations which arise as quotients of consecutive preprojective representations.
  Cell decompositions for quiver Grassmannians of these ``truncated preprojectives'' are also established. 
  We also provide two natural combinatorial labelings for these cells.
  On the one hand, they are labeled by certain subsets of a so-called $2$-quiver attached to a (truncated) preprojective representation.
  On the other hand, the cells are in bijection with compatible pairs in a maximal Dyck path as predicted by the theory of cluster algebras.
  The natural bijection between these two labelings gives a geometric explanation for the appearance of Dyck path combinatorics in the theory of quiver Grassmannians. 
\end{abstract}

\setcounter{tocdepth}{2}

\maketitle

\tableofcontents
\section{Introduction}
\noindent A quiver Grassmannian is a projective variety attached to a fixed quiver representation which parametrizes subrepresentations of a fixed dimension vector.
In recent years, interest in quiver Grassmannians has grown considerably.
On the one hand, this is due to the fact that generating functions for the Euler characteristics of quiver Grassmannians of exceptional representations can be found as cluster variables \cite{ck}.
On the other hand, they are clearly interesting on their own as they reveal many properties of the representation and its geometry.

Although it follows from the results of Hille, Huisgen-Zimmermann and Reineke that every projective variety can be realized as a quiver Grassmannian, 
it turns out that very interesting phenomena arise when restricting to certain quivers or to representations with certain properties.
For instance, quiver Grassmannians attached to exceptional representations are smooth~\cite{cr}.
For Dynkin quivers and tame quivers of types~$\tilde A$ or~$\tilde D$, it is known that every quiver Grassmannian attached to an indecomposable representation admits a cell decomposition, see \cite{ce,lw} and references therein.
It has been conjectured that this is also true for exceptional representations of any quiver, in particular for preprojective and preinjective representations. 

There are basically two possible ways to find cell decompositions of quiver Grassmannians if they exist.
One is to find a non-trivial $\CC^*$-action on the quiver Grassmannian under consideration.
If the quiver Grassmannian is smooth, one can apply a result of Bia\l{}ynicki-Birula~\cite{bb} which shows that the quiver Grassmannian decomposes into affine bundles over the fixed point components.
In particular, this shows that the quiver Grassmannian has a cell decomposition if the fixed point components have such a decomposition.

Another method uses short exact sequences of quiver representations to induce maps between quiver Grassmannians.
More precisely, the quiver Grassmannian of the middle term maps to the product of the quiver Grassmannians for the two outer terms via the ``Caldero-Chapoton map'' which first appeared in \cite{cc}. 
If the short exact sequence has certain properties -- e.g. (almost) split sequences and certain generalizations -- then cell decompositions of quiver Grassmannians attached to the outer terms transfer to cell decompositions for the quiver Grassmannians of the middle term.

In this paper, we combine these two methods in order to show that every quiver Grassmannian attached to an exceptional representation of a generalized Kronecker quiver admits a cell decomposition.
The proof also shows that this is true for so-called truncated preprojective representations which appear as certain quotients of preprojective representations.
It turns out that these are precisely those representations which can be obtained from indecomposable representations with dimension vector $(d,1)$ when applying reflection functors.
Actually, we prove that quiver Grassmannians of truncated preprojective representations only depend on the dimension vector of the representation itself and on the fixed dimension vector of the subrepresentations.

As a first step, we show that a $\CC^*$-action with proper fixed point set can be defined on any quiver Grassmannian attached to a liftable representation of any acyclic quiver containing parallel arrows or non-oriented cycles, that is for representations which can be lifted to the universal (abelian) covering quiver.
These are precisely those cases where the universal covering quiver differs from the original quiver.
This lifting property holds in particular for so-called tree modules, a class of representations which includes all exceptional representations.
The fixed point set of this $\CC^*$-action consists precisely of those subrepresentations which can also be lifted to the universal abelian covering quiver.
Actually, it turns out that each fixed point component is itself a quiver Grassmannian attached to the lifted representation and thus, iterating this procedure, it suffices to understand the quiver Grassmannians for the universal covering quiver.

The next step is to investigate conditions under which the iterated fixed point components admit a cell decomposition.
Here the Caldero-Chapoton map comes into play.
In the case of the generalized Kronecker quiver, it turns out that a natural filtration of a fixed preprojective representation by preprojectives of smaller dimension transfers to the universal covering quiver.
These filtrations can be successively described by short exact sequences.
The main advantage when passing to the universal covering is that the preprojective representations covering the same dimension vector below become orthogonal, a property which rigidifies the situation in a sense.
In the end, this machinery can be used to recursively build cell decompositions of all quiver Grassmannians of lifted (truncated) preprojective representations.
As all the quiver Grassmannians of the (non-lifted) representation are smooth, this combines with the iterated torus actions on fixed point components to give a cell decomposition of these quiver Grassmannians.

As a benefit of this construction, we obtain a graph theoretic description of the non-empty cells.
More precisely, with every (truncated) preprojective representation we can associate a so-called $2$-quiver.
Essentially, such a quiver is obtained from a usual quiver by adding a collection of ``$2$-arrows'' between pairs of subquivers.
Now with every subset of the vertices we can associate a dimension vector.
If this subset is also strong successor closed, a condition which is easily verified in practice, it corresponds to a cell and vice versa.

As mentioned above, the Laurent polynomial expressions for cluster variables have been described using the representation theory of quivers \cite{cc,ck}: the cluster variables are generating functions for Euler characteristics of quiver Grassmannians.
For rank two cluster algebras, the Laurent expressions of cluster variables can also be computed using a certain Dyck path combinatorics \cite{llz}.
The confluence of these results gives rise to a combinatorial construction for the Euler characteristics and counting polynomials of certain quiver Grassmannians \cite{rupel}.
A consequence of our main result is a geometric explanation for these computations: we provide a one-to-one correspondence between the strong successor closed subsets and compatible pairs for an appropriate Dyck path which leads to a geometric explanation for the appearance of Dyck path combinatorics in the theory of quiver Grassmannians. 

The paper is organized as follows.
In Section~\ref{sec:covering}, we collect several results concerning quiver covering theory.
In Section~\ref{sec:RepK(n)}, we recall basic facts concerning the representation theory of generalized Kronecker quivers $K(n)$ which are needed later to investigate the quiver Grassmannians attached to preprojective representations.
We first focus on preprojective and preinjective representations, written as $P_m$ and $I_m$, which enables us to investigate a special class of indecomposable representations in Section~\ref{sec:truncated preprojectives} -- we call them truncated preprojective representations.
We prove that every preprojective representation admits a filtration by preprojectives of smaller dimensions such that all quotients appearing are actually truncated preprojective representations.
In Section~\ref{Lifting}, we use this together with the fact that every preprojective representation can be lifted to the universal covering in order to construct lifted filtrations.
Throughout this section, we collect many results which will turn out to completely reveal the structure of quiver Grassmannians attached to (truncated) preprojective representations.

The aim of Section~\ref{QG} is to study these quiver Grassmannians and to show that they each admit a cell decomposition.
This is obtained in Section~\ref{sec:fibrations} by combining iterated $\CC^*$-actions on quiver Grassmannians, which are introduced in Section~\ref{torusaction}, with the Caldero-Chapoton map for short exact sequences of representations.
Our first main result is Theorem~\ref{thm:torusfixedpoints} which may be formulated as follows.
\begin{thm}
  Let $X$ be a representation of a quiver $Q=(Q_0,Q_1)$ which can be lifted to a representation $\hat X$ of the universal abelian covering quiver $\hat Q=(Q_0\times A_Q,Q_1\times A_Q)$, where $A_Q$ is the free abelian group generated by $Q_1$.
  Then there exists a map $d:\supp(\hat X)\to\ZZ$ -- with a corresponding $\CC^*$-action on every $X_i=\bigoplus_{\chi\in A_{Q}} X_{(i,\chi)}$ defined by $t.x_{(i,\chi)}=t^{d(i,\chi)}x_{(i,\chi)}$ for $x_{(i,\chi)}\in X_{(i,\chi)}$ -- which induces a $\CC^*$-action on $\Gr_\bfe^Q(X)$ such that
  \[\Gr^Q_\bfe(X)^{\CC^*}\cong\bigsqcup_{\hat\bfe} \Gr^{\hat Q}_{\hat\bfe}(\hat X),\]
  where $\hat\bfe$ runs through all dimension vectors compatible with $\bfe$.
\end{thm}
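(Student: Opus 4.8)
The plan is to read the lifting hypothesis as an $A_Q$-grading $X_i=\bigoplus_{\chi\in A_Q}X_{(i,\chi)}$ on each vertex space, compatible with the structure maps in the sense that $X_\alpha\bigl(X_{(i,\chi)}\bigr)\subseteq X_{(j,\chi+\alpha)}$ for every arrow $\alpha\colon i\to j$ (here $\alpha$ is identified with the corresponding free generator of $A_Q$); then to produce $d$ from a single linear functional on $A_Q$, to check that the resulting $\CC^*$-action descends to $\Gr^Q_\bfe(X)$, and finally to identify the fixed locus with the $A_Q$-homogeneous subrepresentations, which are exactly the subrepresentations of $\hat X$.

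For the first step I would fix a group homomorphism $\ell\colon A_Q\to\ZZ$ which is injective on the finite set $S\subseteq A_Q$ of second coordinates of the vertices of $\supp(\hat X)$; such an $\ell$ exists because $A_Q$ is free of finite rank, so the locus of homomorphisms failing to separate a given pair of elements of $S$ is a proper subgroup, and a finite union of such subgroups cannot exhaust $A_Q$. Set $d(i,\chi):=\ell(\chi)$ and let $\lambda_i(t)$ act on $X_{(i,\chi)}$ as the scalar $t^{\ell(\chi)}$, as in the statement.

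The crucial point is that this action preserves subrepresentations, hence descends to $\Gr^Q_\bfe(X)$. For an arrow $\alpha\colon i\to j$ one computes $\lambda_j(t)^{-1}\circ X_\alpha\circ\lambda_i(t)=t^{-\ell(\alpha)}X_\alpha$ as a map $X_i\to X_j$, because on the $\chi$-graded piece the scalar picked up is $t^{\ell(\chi)-\ell(\chi+\alpha)}=t^{-\ell(\alpha)}$, which does not depend on $\chi$. Consequently, if $U=(U_i)\le X$ is a subrepresentation then $X_\alpha\bigl(\lambda_i(t)U_i\bigr)=\lambda_j(t)\bigl(t^{-\ell(\alpha)}X_\alpha(U_i)\bigr)\subseteq\lambda_j(t)(U_j)$, so $\bigl(\lambda_i(t)U_i\bigr)_i$ is again a subrepresentation, visibly of the same dimension vector; this gives a regular $\CC^*$-action on $\Gr^Q_\bfe(X)$. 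I regard this as the main obstacle: an arbitrary $d$ would distort the structure maps and need not send subrepresentations to subrepresentations, and it is precisely the fact that $d$ factors through a linear functional on $A_Q$ — so that the conjugated structure maps are honest scalar multiples of the originals — that makes the construction work.

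It remains to compute the fixed locus. A point $U\in\Gr^Q_\bfe(X)$ is fixed exactly when each $U_i$ is stable under the torus $\{\lambda_i(t)\}$, i.e.\ homogeneous for its weight decomposition; since $\ell$ is injective on $S$ the weight spaces of $\lambda_i$ are exactly the summands $X_{(i,\chi)}$, so $U$ is fixed if and only if $U_i=\bigoplus_\chi\bigl(U_i\cap X_{(i,\chi)}\bigr)$ is $A_Q$-homogeneous for every $i$. A direct check shows that an $A_Q$-homogeneous tuple $(U_i)$ is a subrepresentation of $X$ if and only if the tuple $\hat U=\bigl(U_i\cap X_{(i,\chi)}\bigr)_{(i,\chi)}$ is a subrepresentation of $\hat X$. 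Organizing these by the induced dimension vector $\hat\bfe$, which necessarily satisfies $\sum_{\chi}\hat e_{(i,\chi)}=e_i$ (i.e.\ is compatible with $\bfe$), and using that on the ambient product $\prod_i\Gr(e_i,X_i)$ the torus fixed locus is the disjoint union of the products $\prod_{(i,\chi)}\Gr\bigl(\hat e_{(i,\chi)},X_{(i,\chi)}\bigr)$, one sees that intersecting with $\Gr^Q_\bfe(X)$ matches the two systems of subrepresentation conditions and yields $\Gr^Q_\bfe(X)^{\CC^*}\cong\bigsqcup_{\hat\bfe}\Gr^{\hat Q}_{\hat\bfe}(\hat X)$ as varieties. (Each $\Gr^{\hat Q}_{\hat\bfe}(\hat X)$ is a genuine projective variety even though $\hat Q$ is infinite, since $\hat X$ is supported on the finite subquiver $\supp(\hat X)$.)
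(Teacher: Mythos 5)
Your proof is correct and follows essentially the same strategy as the paper's: define $d(i,\chi)$ via a group homomorphism $A_Q\to\ZZ$ applied to $\chi$, observe that the conjugated structure maps $\lambda_j(t)^{-1}X_\alpha\lambda_i(t)$ are then scalar multiples of $X_\alpha$ so the action descends to the quiver Grassmannian, and identify the fixed locus with the $A_Q$-homogeneous subrepresentations vertex by vertex (the paper isolates this vertexwise step as Lemma~\ref{le:usualGrass}). The one place you diverge is the existence of the separating functional: the paper constructs it explicitly by choosing weights $c_{\alpha_l}$ growing fast enough (Lemma~\ref{le:degree condition}), which additionally yields the stronger ``degree condition'' used for iterating the torus action later, whereas you invoke the non-constructive fact that $\Hom(A_Q,\ZZ)$ is not a finite union of the proper rank-deficient subgroups $\{\ell:\ell(\chi-\chi')=0\}$; your construction suffices for this theorem since only separation of the characters occurring at each fixed vertex is needed, though if you want to reuse $d$ for the iterated covers you would still have to upgrade it. (Minor slip: that union argument should refer to $\Hom(A_Q,\ZZ)$, not $A_Q$, and relies on those subgroups having infinite index, not merely on being proper.)
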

This $\CC^*$-action can be iterated in such a way that the remaining $\CC^*$-fixed points are precisely the subrepresentations which can be lifted to the universal covering quiver.
As far as generalized Kronecker quivers are concerned, we can show in Theorem~\ref{thm:truncpp} that all quiver Grassmannians attached to truncated preprojective representations are smooth -- actually, they only depend on appropriate dimension vectors.
In view of results of Bia\l{}ynicki-Birula~\cite{bb} -- which roughly speaking yields that cell decompositions are preserved when passing from the fixed point components to the original variety -- we can use this result to lift the investigation of the geometry of quiver Grassmannians to the universal covering quiver.
This is important insofar as results such as Corollary~\ref{cor:perpendicular} are available which do not hold on the original quiver.
Analyzing the Caldero-Chapoton map applied to short exact sequences induced by lifts of the mentioned filtrations in greater detail, and combining it with the torus method, we obtain the main result of this paper, see Theorems~\ref{cellscover} and~\ref{celldec}.
\begin{thm} 
  For every $m\geq 1$ and for every point $V\neq \C^n$ of the total Grassmannian $\Gr(\C^n)$, there exists a (truncated) preprojective representation $P_{m+1}^V$ such that every quiver Grassmannian $\Gr_\bfe(P_{m+1}^V)$ admits a cell decomposition.
\end{thm}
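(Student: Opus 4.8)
The plan is to combine the two techniques advertised in the introduction: an iterated $\CC^*$-action that reduces the problem to the rigid situation on the universal abelian covering quiver $\hat Q$ of the generalized Kronecker quiver $K(n)$, and the Caldero--Chapoton map applied to the lifted filtrations of Section~\ref{Lifting} in order to treat the covering quiver Grassmannians by induction. Throughout, $V\subsetneq\C^n$ is a proper subspace and $P_{m+1}^V$ denotes the (truncated) preprojective representation attached to $V$ in Section~\ref{sec:truncated preprojectives} --- a genuine preprojective when $V=0$ and a proper truncation otherwise.

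\emph{Reduction to the universal covering.} Since $P_{m+1}^V$ is obtained from an indecomposable representation of dimension vector $(d,1)$ by reflection functors, Theorem~\ref{thm:truncpp} shows that each $\Gr_\bfe(P_{m+1}^V)$ is smooth and depends only on the dimension vectors $\udim P_{m+1}^V$ and $\bfe$. Being a tree module, $P_{m+1}^V$ lifts to a representation $\widehat{P_{m+1}^V}$ of $\hat Q$, and by Section~\ref{Lifting} the filtration of $P_{m+1}^V$ by (truncated) preprojectives of smaller dimension lifts to a filtration of $\widehat{P_{m+1}^V}$ of the same shape. Now apply Theorem~\ref{thm:torusfixedpoints}: there is a $\CC^*$-action on $\Gr_\bfe(P_{m+1}^V)$ whose fixed locus is $\bigsqcup_{\hat\bfe}\Gr_{\hat\bfe}(\widehat{P_{m+1}^V})$. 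By smoothness, the theorem of Bia\l{}ynicki-Birula~\cite{bb} exhibits $\Gr_\bfe(P_{m+1}^V)$ as a disjoint union of affine bundles over the connected components of this fixed locus, and an affine bundle over a variety with a cell decomposition again has one. Iterating the construction --- which terminates, as explained in Section~\ref{sec:covering} --- the statement is reduced to producing cell decompositions for the covering quiver Grassmannians $\Gr_{\hat\bfe}(\widehat{P_{m+1}^V})$; this is the content of Theorem~\ref{cellscover}.

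\emph{The induction on the covering.} I would prove the existence of these cell decompositions by induction on $m$, and for fixed $m$ on the length of the lifted filtration. In the base case the representation has dimension vector $(d,1)$, and it --- together with its lift to $\hat Q$ --- can be analyzed directly, its quiver Grassmannians being built out of ordinary Grassmannians. For the inductive step, choose from the lifted filtration a short exact sequence $0\to A\to\widehat{P_{m+1}^V}\to C\to0$ whose outer terms are lifts to $\hat Q$ of (truncated) preprojective representations of strictly smaller dimension. Corollary~\ref{cor:perpendicular} ensures that the indecomposable summands of $A$ and of $C$ which cover a common dimension vector below are orthogonal over $\hat Q$, and it is this rigidity that makes the Caldero--Chapoton map
\[\Gr_{\hat\bfe}\bigl(\widehat{P_{m+1}^V}\bigr)\longrightarrow\bigsqcup_{\hat\bff+\hat\bfg=\hat\bfe}\Gr_{\hat\bff}(A)\times\Gr_{\hat\bfg}(C),\qquad U\longmapsto\bigl(U\cap A,\ \Im(U\to C)\bigr),\]
into an affine fibration after a suitable finite stratification of the target: over each stratum the fibre is an affine space whose dimension is read off from the relevant $\Hom$ and $\Ext^1$ groups, as in the (almost) split short exact sequence arguments of \cite{ce,lw}. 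By the inductive hypothesis every $\Gr_{\hat\bff}(A)$ and $\Gr_{\hat\bfg}(C)$ admits a cell decomposition, hence so does $\Gr_{\hat\bfe}(\widehat{P_{m+1}^V})$. Combined with the previous step this is Theorem~\ref{celldec}, which is exactly the assertion to be proved.

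\emph{The main obstacle.} The heart of the argument is the claim in the inductive step that, after stratifying the base, the Caldero--Chapoton map is genuinely an affine fibration: for each pair of subrepresentations $U'\subseteq A$ and $U''\subseteq C$, the set of $U\subseteq\widehat{P_{m+1}^V}$ with $U\cap A=U'$ and $\Im(U\to C)=U''$ must be an affine space of the expected dimension. Proving this needs the orthogonality on $\hat Q$ of Corollary~\ref{cor:perpendicular} to annihilate the obstruction spaces that would otherwise make the fibres non-linear, together with a precise description of which extensions actually occur along the lifted filtration; organizing this data is exactly what the $2$-quiver and its strong successor closed subsets accomplish, and one obtains the desired labeling of the cells as a by-product. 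A final, more technical point is to check that the cells coming from the Bia\l{}ynicki-Birula decomposition in the first step are compatible with those produced on the covering in the second, so that the resulting stratification of $\Gr_\bfe(P_{m+1}^V)$ is genuinely a cell decomposition; this follows because both are compatible with the $2$-quiver labeling.
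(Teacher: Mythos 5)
Your outline follows the paper's two-step strategy (iterated $\CC^*$-action and Bia\l{}ynicki-Birula to reduce to the universal cover, then Caldero--Chapoton maps along the lifted filtrations to set up an induction), so the broad route is correct. But there is a genuine gap in the inductive step, and it sits precisely at the point you flag as ``the main obstacle'' without resolving.

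Your inductive hypothesis is only that $\Gr_{\hat\bff}(A)$ and $\Gr_{\hat\bfg}(C)$ admit cell decompositions, and you assert that the fibre of the Caldero--Chapoton map over each pair $(U',U'')$ is an affine space ``of the expected dimension.'' This is false: when $U''=C$ (i.e.\ $\tbfg=\udim\tilde P_{m+1}^I$), the fibre $\Psi^{-1}(U',C)$ is \emph{empty} for precisely those $U'$ that already lie inside the kernel $\tilde P_m(I,j)$ of the AR-morphism $\tilde P_{m,j}\to\tau\tilde P_{m+1}^I$ (Propositions~\ref{quotient} and~\ref{fibers} of the paper). Corollary~\ref{cor:perpendicular} is not used to make the fibres nonempty; it is used to \emph{identify} which fibres are empty. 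Consequently, to conclude via Theorem~\ref{vb} you must know that the image of $\Psi_{\tbff,\udim\tilde P_{m+1}^I}$ is a union of affine cells of $\Gr_\tbff(\tilde P_{m,j})$, and a bare cell decomposition of $\Gr_\tbff(\tilde P_{m,j})$ gives you no control over this. The paper resolves this by proving a strictly stronger statement by simultaneous induction: not just that $\Gr_\tbfe(\tilde P_{m+1}^J)$ has a cell decomposition, but that the decomposition is \emph{compatible} with the canonical quotient maps $\pi_{m+1}^{J,\bfI}$ to truncated preprojectives further down the filtration, in the sense that ``$\pi_{m+1}^{J,\bfI}(V)\ne 0$'' is constant on each affine cell. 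It is exactly this bootstrapped hypothesis that makes the subset $\{U : U\subset\tilde P_m(I,j)\}$ (equivalently $\{U : \pi_{m,j}^\bfJ(U)=0\}$) a union of cells, hence makes the image of $\Psi$ a union of cells. Your proposal omits this strengthening entirely, and your suggestion that the $2$-quiver labeling ``accomplishes'' the required bookkeeping reverses the logic: in the paper the $2$-quiver description in Section~\ref{sec:combinatorics} is a \emph{consequence} of the compatibility condition, not a tool used to establish it.

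Two smaller points. First, the final worry about reconciling the Bia\l{}ynicki-Birula cells with the covering cells is not an issue: once the fixed-point components admit cell decompositions, Theorem~\ref{thm:bb} directly yields a cell decomposition of the total space by trivializing the affine bundles over cells, so no separate compatibility check is needed. Second, the argument must ultimately run on the universal cover $\widetilde{K(n)}$ (or a sufficiently deep iterated abelian cover), not just $\hat Q$; you mention iterating, which is correct, but the orthogonality statements you invoke are established only on $\widetilde{K(n)}$.
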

\noindent Note that, for $V=0$, we obtain the preprojective representations $P_{m+1}$.

In Section \ref{sec:combinatorics}, we reveal the combinatorics behind the obtained cell decompositions by introducing the notion of $2$-quivers which are a slight generalization of the usual notion of quivers.
Theorem \ref{thm:2quivercells} can be formulated as follows.
\begin{thm}
  With every truncated preprojective representation $P_{m+1}^V$, say with $\dim V=r$, we can associated a $2$-quiver $\mathcal Q_{m+1}^{[r]}$ such that the affine cells of the cell decomposition attached to $\Gr_\bfe(P_{m+1}^V)$ are labeled by strong successor closed subsets $\beta\subset (\mathcal Q_{m+1}^{[r]})_0$.
  In particular, the Euler characteristic $\chi(\Gr_\bfe(P_{m+1}^V))$ is given by the number of these subsets.
\end{thm}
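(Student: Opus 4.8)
The plan is to translate the recursive construction of the cell decomposition of $\Gr_\bfe(P_{m+1}^V)$ from Theorems~\ref{cellscover} and~\ref{celldec} into purely combinatorial bookkeeping on the universal abelian covering $\hat Q$ of $K(n)$, and then to recognize this bookkeeping as the collection of strong successor closed subsets of an explicit $2$-quiver. Recall that the cell decomposition is assembled from two ingredients. First, the iterated $\CC^*$-action of Theorem~\ref{thm:torusfixedpoints} presents $\Gr_\bfe(P_{m+1}^V)$ as an iterated affine bundle over a disjoint union of quiver Grassmannians $\Gr^{\hat Q}_{\hat\bfe}(\hat P_{m+1}^V)$ for the lifted representation. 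Second, the Caldero--Chapoton map applied to the lifts of the preprojective filtration of Section~\ref{sec:truncated preprojectives} fibers each of these further into affine cells. Composing the two steps, every cell is an iterated affine bundle over a point, hence isomorphic to some $\AA^k$, and the cells are indexed by a recursively defined set of admissible tuples. The theorem amounts to identifying this index set with a set of subsets of the vertex set of a $2$-quiver.

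First I would make the lifted representation completely explicit. By the filtration results of Sections~\ref{sec:truncated preprojectives} and~\ref{Lifting}, $\hat P_{m+1}^V$ is a tree module whose coefficient quiver has a transparent layered shape depending only on $m$ and $r=\dim V$: consecutive preprojective layers contribute copies of the two vertices of $K(n)$, joined according to the $n$ arrows of $K(n)$ once the covering has separated them. That the combinatorics depends on $V$ only through $r$ is precisely the content of Theorem~\ref{thm:truncpp}. I then define the $2$-quiver $\mathcal Q_{m+1}^{[r]}$ as the compact encoding of this coefficient quiver: its vertices are the tree basis vectors of $\hat P_{m+1}^V$, its ordinary arrows record the individual coordinate arrow actions, and its $2$-arrows between pairs of subquivers encode in one symbol the constraint created by the $n$ parallel arrows of $K(n)$ at a junction of layers -- a generic family of subspace choices there has only finitely many $\CC^*$-fixed members, and the admissible ones are exactly those obeying a local ``at most one omitted vertex'' rule on the source subquiver. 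Strong successor closure of a subset $\beta\subset(\mathcal Q_{m+1}^{[r]})_0$ is then the conjunction of ordinary successor closure along the arrows with these $2$-arrow conditions, and $\udim\beta$ records the number of vertices of $\beta$ lying over each vertex of $K(n)$.

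The core of the argument is the bijection between cells of $\Gr_\bfe(P_{m+1}^V)$ and strong successor closed subsets $\beta$ with $\udim\beta=\bfe$. I would prove it by induction mirroring the preprojective filtration, equivalently the levels of the torus iteration. At the bottom of the tower a cell corresponds to a $\CC^*$-fixed subrepresentation of $\hat P_{m+1}^V$, that is a coordinate subrepresentation $\spn(\beta)$; being a subrepresentation is exactly ordinary successor closure of $\beta$, while being attainable as a fiber of the Caldero--Chapoton fibration over the lower stages of the filtration is exactly the extra $2$-arrow conditions, and conversely every strong successor closed $\beta$ is realized. The key lemma is the compatibility of the two recursions: splitting $\Gr^{\hat Q}_{\hat\bfe}(\hat P_{m+1}^V)$ along the short exact sequence of a filtration step corresponds to restricting $\beta$ to the sub-$2$-quivers of $\mathcal Q_{m+1}^{[r]}$ cut out by that step, and strong successor closure is both inherited by, and glued from, these restrictions.

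Finally, since each cell is isomorphic to an affine space $\AA^k$ and $\chi(\AA^k)=1$, the Euler characteristic $\chi(\Gr_\bfe(P_{m+1}^V))$ equals the number of cells, which by the bijection is the number of strong successor closed subsets $\beta\subset(\mathcal Q_{m+1}^{[r]})_0$ with $\udim\beta=\bfe$. I expect the main obstacle to be pinning down the precise shape of the $2$-arrow condition -- equivalently, the exact definition of ``strong successor closed'' -- so that it matches the affine-bundle combinatorics at every junction and is stable under the filtration recursion; a secondary point is to confirm, via Theorem~\ref{thm:truncpp}, that the $2$-quiver, and hence the count, really depends on $V$ only through $r=\dim V$.
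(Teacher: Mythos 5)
Your proposal takes essentially the same route as the paper's proof: pass to lifted quiver Grassmannians via the iterated torus action, define the $2$-quiver from the coefficient/support quiver of the lifted representation, induct on $m$ and $r$ along the preprojective filtration, and translate non-emptiness of the Caldero--Chapoton fibers (Proposition~\ref{quotient}) into the $2$-arrow constraint. The one thing you explicitly leave open -- the exact form of the $2$-arrow rule -- is settled precisely as you anticipate: the fiber over a pair of cells $\beta_1\sqcup\beta_2$ is empty exactly when all of $\Gamma_s(1)=Q_{m+1}^{[s]}$ lies in $\beta$ while $\beta$ misses $\Gamma_s(2)\subset Q_{m,s}$ entirely, so strong successor closure forbids that configuration (your informal ``at most one omitted vertex on the source subquiver'' phrasing is not quite the right shape, but the proof mechanism you describe delivers the correct condition).
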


The results of \cite{rupel} give a combinatorial construction of counting polynomials for quiver Grassmannians of preprojective/preinjective representations of generalized Kronecker quivers $K(n)$.
This suggests that the dimensions of cells can be directly computed using this combinatorics (or the equivalent combinatorics of compatible pairs).
This is made precise in Conjecture~\ref{conj:cell dimensions}.

\subsubsection*{Acknowledgements}
We would like to thank Giovanni Cerulli Irelli, Hans Franzen, Oliver Lorscheid and Markus Reineke for very fruitful discussions related to this project.

\section{Quiver Covering Theory}
\label{sec:covering}
\noindent
We refer to \cite{gab} for an introduction to covering theory.
Let $Q$ be an acyclic quiver with vertices $Q_0$ and arrows $Q_1$ which we denote by $\alpha:s(\alpha)\to t(\alpha)$.
A $\CC$-representation $X$ of $Q$ consists of a collection of $\CC$-vector spaces $X_i$ for $i\in Q_0$ and a collection of $\CC$-linear maps $X_\alpha:X_{s(\alpha)}\to X_{t(\alpha)}$ for $\alpha\in Q_1$.
Given $\CC$-representations $X$ and $Y$ of $Q$, a morphism $f:X\to Y$ is a collection of $\CC$-linear maps $f_i:X_i\to Y_i$ for $i\in Q_0$ satisfying $f_{t(\alpha)}\circ X_\alpha=Y_\alpha\circ f_{s(\alpha)}$ for each $\alpha\in Q_1$.
We write $\rep Q$ for the hereditary abelian category of finite-dimensional $\C$-representations of $Q$ and we assume in the following that all representations are finite-dimensional.

Recall that, given $\CC$-representations $X$ and $Y$ of $Q$, any tuple of linear maps $(g_\alpha:X_{s(\alpha)}\to Y_{t(\alpha)})_{\alpha\in Q_1}$ defines a short exact sequence $\ses{Y}{Z}{X}$ with middle term given by the vector spaces $Z_i=X_i\oplus Y_i$ for $i\in Q_0$ and the linear maps $Z_\alpha=\begin{pmatrix} X_\alpha & 0 \\ g_\alpha & Y_\alpha \end{pmatrix}$ for $\alpha\in Q_1$. 
In general, considering the linear map  
\begin{align}
  \label{eq:maphomext}
  d_{X,Y}:\bigoplus_{i\in Q_0}\Hom_\CC(X_i,Y_i)\to\bigoplus_{\alpha\in Q_1}\Hom_\CC(X_{s(\alpha)},Y_{t(\alpha)}),
  \quad (f_i)_{i\in Q_0}\mapsto(f_{t(\alpha)}\circ X_\alpha-Y_\alpha\circ f_{s(\alpha)})_{\alpha\in Q_1},
\end{align}
we have $\mathrm{ker}(d_{X,Y})=\Hom_Q(X,Y)$ and $\mathrm{coker}(d_{X,Y})=\Ext_Q(X,Y)$. In the following, we write $\Hom$ (resp. $\Ext$) instead of $\Hom_Q$ (resp. $\Ext_Q$). As usual, we call a representation $X$ \emph{rigid} if $\Ext(X,X)=0$ and \emph{exceptional} if it is also indecomposable.

Let $W_Q$ be the free (non-abelian) group generated by $Q_1$.  
Write $A_Q\cong \ZZ^{Q_1}$ for the free abelian group generated by $Q_1$ and denote by $e_\alpha\in A_Q$ the generator corresponding to $\alpha\in Q_1$. 
\begin{definition}
  \label{def:covering quivers}
  The \emph{universal abelian covering quiver} $\hat Q$ of $Q$ has vertices $\hat Q_0=Q_0\times A_Q$ and arrows $\hat Q_1=Q_1\times A_Q$, where $(\alpha,\chi):\big(s(\alpha),\chi\big)\to\big(t(\alpha),\chi+e_\alpha\big)$ for $\alpha\in Q_1$ and $\chi\in A_Q$.
  Write $F_Q:\rep \hat Q\to\rep Q$ for the natural functor. 

  The \emph{universal covering quiver} $\widetilde Q$ of $Q$ has vertices $\widetilde Q_0=Q_0\times W_Q$ and arrows $\widetilde Q_1=Q_1\times W_Q$, where $(\alpha,w):\big(s(\alpha),w\big)\to\big(t(\alpha),w\alpha\big)$ for $\alpha\in Q_1$ and $w\in W_Q$.
  Write $G_Q:\rep\widetilde Q\to\rep Q$ for the natural functor. 

  We say that a representation $X\in\rep Q$ can be \emph{lifted} to $\hat Q$ (resp. $\widetilde Q$) if there exists a representation $\hat X\in\rep \hat Q$ (resp. $\widetilde X\in\rep \widetilde Q$) such that $F_Q\hat X=X$ (resp. $G_Q \widetilde X=X$).
\end{definition}

Note that in our definition every covering quiver has infinitely many connected components, but each of its connected components is a covering in the sense of \cite{gab}.
As indecomposable representations live on one of these components, this distinction will be irrelevant.
Note that the natural surjection $W_Q\onto A_Q$ induces a functor $H_Q:\rep \widetilde Q\to\rep \hat Q$.
In addition, observe that every connected component of the universal covering quiver of the universal abelian covering quiver is isomorphic to a connected component of the universal covering quiver of the original quiver.

\begin{lemma}
  \label{le:lifts of transjectives}
  Every preprojective and preinjective representation of $Q$ can be lifted to $\hat Q$ and to $\widetilde Q$. Any lift of a preprojective (or preinjective) representation is preprojective (preinjective). 
\end{lemma}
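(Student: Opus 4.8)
The plan is to proceed by induction on the position of the preprojective (resp.\ preinjective) representation in the Auslander--Reiten quiver, using the reflection functor description of preprojectives together with the fact that liftability is preserved under the relevant operations. First I would establish the base case: the indecomposable projective representations $P(i)$ for $i \in Q_0$ lift trivially. Indeed, one may construct $\hat{P}(i)$ on the connected component of $\hat Q$ through $(i,0)$ by running the same universal/path construction for projectives on $\hat Q$ that one runs on $Q$; because $\hat Q$ is acyclic whenever $Q$ is, and $F_Q$ sends paths in $\hat Q$ starting at $(i,0)$ bijectively onto paths in $Q$ starting at $i$, the natural functor carries $\hat{P}(i)$ to $P(i)$. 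Dually, injectives lift, and the same construction in $\widetilde Q$ handles the non-abelian cover. Since the projectives/injectives are themselves preprojective/preinjective, the second sentence of the lemma holds for them vacuously.

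For the inductive step I would use that every preprojective is of the form $\tau^{-k} P(i)$, and mirror the reflection-functor construction upstairs. The key point is that the universal abelian covering functor $F_Q$ (and $G_Q$ for $\widetilde Q$) commutes with reflection functors in the appropriate sense: a source or sink $i$ of $Q$ pulls back to a collection of sources or sinks $(i,\chi)$ of $\hat Q$, and applying the reflection functors at all of these simultaneously upstairs is intertwined by $F_Q$ with applying the reflection functor at $i$ downstairs. Concretely, if $\hat X$ lifts $X$ and $X$ has no summand supported only at the sink (resp.\ source) being reflected, then $\hat X$ likewise has no such summand at any $(i,\chi)$, so the reflected representation $s_i^{\pm}\hat X$ is defined, lives in $\rep\hat Q$, and satisfies $F_Q(s_i^{\pm}\hat X) = s_i^{\pm}(F_Q\hat X) = s_i^{\pm}X$. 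Iterating a full Coxeter sequence of reflections realizes $\tau^{-1}$ both upstairs and downstairs, so by induction every $\tau^{-k}P(i)$ lifts, and since the lift is built by the same Coxeter-sequence procedure it is again preprojective. The preinjective case is entirely dual, using $\tau^k I(i)$. The final assertion --- that \emph{any} lift of a preprojective is preprojective --- follows because a lift $\hat X$ of a preprojective $X$ is rigid (its $\Ext^1$ injects, via $F_Q$, into pieces of $\Ext^1(X,X)=0$, or more directly because $F_Q$ is exact and faithful so $\Ext^1_{\hat Q}(\hat X,\hat X)$ maps to $\Ext^1_Q(X,X)$) and lives on an acyclic component whose connected structure forces its indecomposable summands into the preprojective component; a non-preprojective (regular or preinjective) summand would descend to a non-preprojective summand of $X$, contradiction.

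The main obstacle I anticipate is making precise the compatibility between reflection functors and the covering functor at the level of the covering quiver rather than a single connected component: reflecting at a sink $i$ of $Q$ requires reflecting simultaneously at \emph{infinitely many} vertices $(i,\chi)$ of $\hat Q$, and one must check that this simultaneous reflection is well-defined, that it still yields a covering functor onto $s_i Q$, and that it commutes with $F_Q$ on the nose (including the bookkeeping of how the generator $e_\alpha$ of $A_Q$ transforms when an arrow $\alpha$ is reversed). Once this commutation is set up cleanly --- essentially a diagram chase identifying $\rep\hat Q$ with representations of the cover of the reflected quiver --- the induction runs smoothly, and the statements about the lift being preprojective/preinjective are immediate consequences of the construction together with exactness and faithfulness of $F_Q$ and $G_Q$.
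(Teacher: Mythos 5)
Your proposal takes essentially the same route as the paper: lift a "generator" (you use projectives, the paper uses simples of a reoriented quiver), then transport it along a sequence of BGP reflections, using the key observation that reflecting at a vertex $i$ of $Q$ corresponds to simultaneously reflecting at all vertices $(i,\chi)$ (resp.\ $(i,w)$) upstairs, and that this simultaneous reflection is intertwined by $F_Q$ (resp.\ $G_Q$). That much matches the paper closely and is fine.

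Where your proposal has a genuine gap is the argument for the second sentence, that \emph{any} lift of a preprojective is preprojective. You argue that the lift $\hat X$ is rigid and that this, together with the "connected structure" of the component of $\hat Q$, forces its indecomposable summands to be preprojective. This is not true in general: if $Q$ is wild then $\hat Q$ is also wild, and wild acyclic quivers admit regular exceptional (rigid indecomposable) representations, so rigidity alone does not place $\hat X$ in the preprojective component. Your fallback, "a non-preprojective summand would descend to a non-preprojective summand of $X$," is precisely the statement under proof and so cannot be invoked. The clean way to close the gap --- and what is implicit in the paper's "This gives both claims" --- is to run the reflections backwards: an arbitrary lift $\hat X$ of $P_m$ is indecomposable (because $F_Q$ applied to a direct sum decomposition of $\hat X$ would decompose $X$), so applying the inverse Coxeter word upstairs (which commutes with $F_Q$) carries $\hat X$ to a lift of a simple $S_j$; but a lift of $S_j$ is a direct sum of simples $S_{(j,\chi)}$, hence by indecomposability a single simple, and reflecting forward again shows $\hat X$ is preprojective.
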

\begin{proof}
  This statement is clear for the simple representations $S_i$, $i\in Q_0$.
  Now every preprojective or preinjective representation of $Q$ can be obtained by applying a sequence of BGP-reflections \cite{bgp} to a simple representation $S'_j$ of a quiver $Q'$ whose underlying graph is the same as the one for $Q$.
  Applying a BGP-reflection to a fixed vertex $i$ of $Q$ corresponds to applying BGP-reflections to all vertices $(i,\chi)\in\hat Q_0$, where $\chi$ runs through all $\chi\in A_Q$ (resp. to all $(i,w)\in \tilde Q_0$ with $w\in W_Q$).
  This gives both claims.
\end{proof}

The functor $F_Q$ induces a map $F_Q:\ZZ^{\hat Q_1}\to \ZZ^{Q_1}$.
We say that a dimension vector $\hat \bfe$ of $\hat Q$ is \emph{compatible} with $\bfe$ if $F_Q(\hat\bfe)=\bfe$.
The group $A_Q$ acts on $\hat Q$ via translation, this induces actions of $A_Q$ on $\rep\hat Q$ and on $\ZZ^{\hat Q_1}$.
The analogous observation can also be made for $\widetilde Q$.
If $X$ is a representation of $\hat Q$ (resp. $\widetilde Q$), we denote by $X_\chi$ (resp. $X_w$) the representation obtained via translation by $\chi\in A_Q$ (resp. $w\in W_Q$).  

As $F_Q$ and $G_Q$ are covering functors when restricting to one connected component, we obtain the following result from \cite{gab}.
\begin{theorem}
  \label{covering}
  The functors $F_Q$ and $G_Q$ preserve indecomposability.
  Moreover, for all representations $\hat X,\hat Y \in\rep(\hat Q)$, we have 
  \[\Hom_Q(F_Q\hat X, F_Q\hat Y)\cong \bigoplus_{\chi\in A_Q}\Hom_{\hat Q}(\hat X_\chi,\hat Y)\cong\bigoplus_{\chi\in A_Q}\Hom_{\hat Q}(\hat X,\hat Y_\chi).\]
  Analogous isomorphisms exist when replacing $\Hom$ by $\Ext$ and/or $\rep \hat Q$ by $\rep\widetilde Q$.
\end{theorem}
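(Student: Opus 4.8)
The plan is to prove the $\Hom$- and $\Ext$-isomorphisms directly by realizing $F_Q$ as the \emph{push-down} functor of the covering and tracking the grading it introduces; the case of $G_Q$ and of $\widetilde Q$ then follows the same pattern with the abelian group $A_Q$ replaced by $W_Q$. By construction $(F_Q\hat X)_i=\bigoplus_{\chi\in A_Q}\hat X_{(i,\chi)}$, and for an arrow $\alpha\colon i\to j$ of $Q$ the map $(F_Q\hat X)_\alpha$ carries the summand $\hat X_{(i,\chi)}$ into $\hat X_{(j,\chi+e_\alpha)}$ via $\hat X_{(\alpha,\chi)}$ and kills the other summands. Thus a morphism $f\colon F_Q\hat X\to F_Q\hat Y$ is a family of matrices $f_i=\big(f_i^{\chi,\psi}\colon\hat X_{(i,\chi)}\to\hat Y_{(i,\psi)}\big)_{\chi,\psi\in A_Q}$, and the intertwining relations defining a morphism unwind, entry by entry, to
\[
 f_j^{\chi+e_\alpha,\,\psi+e_\alpha}\circ\hat X_{(\alpha,\chi)}=\hat Y_{(\alpha,\psi)}\circ f_i^{\chi,\psi}\qquad\text{for all }\alpha\in Q_1,\ \chi,\psi\in A_Q.
\]
The crucial observation is that this relation is \emph{homogeneous} with respect to the grading of $f$ by the difference $\eta:=\psi-\chi$, since $(\chi,\psi)\mapsto(\chi+e_\alpha,\psi+e_\alpha)$ fixes $\eta$; splitting each $f_i$ into its homogeneous parts therefore gives $\Hom_Q(F_Q\hat X,F_Q\hat Y)=\bigoplus_{\eta\in A_Q}\Hom_Q(F_Q\hat X,F_Q\hat Y)_\eta$, a finite sum because $\supp\hat X$ and $\supp\hat Y$ are finite.

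Next I would identify the homogeneous pieces: reading the displayed relation for a morphism concentrated in degree $\eta$ shows it is exactly a collection of maps $\hat X_{(i,\chi)}\to\hat Y_{(i,\chi+\eta)}$ intertwining the structure maps attached to every arrow $(\alpha,\chi)$ of $\hat Q$, i.e.\ an element of $\Hom_{\hat Q}(\hat X,\hat Y_\eta)$ for the translation convention making $\hat Y_\eta$ the $\eta$-shift of $\hat Y$. Summing over $\eta$ yields $\Hom_Q(F_Q\hat X,F_Q\hat Y)\cong\bigoplus_\chi\Hom_{\hat Q}(\hat X,\hat Y_\chi)$, and reindexing $\chi\mapsto-\chi$ together with the autoequivalence given by translation (so $\Hom_{\hat Q}(\hat X,\hat Y_{-\chi})\cong\Hom_{\hat Q}(\hat X_\chi,\hat Y)$) gives the other isomorphism. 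For $\widetilde Q$ the computation is the same: although $W_Q$ is non-abelian, the bookkeeping parameter for a pair of grades $(w,v)$ is the left quotient $wv^{-1}$, which is again preserved under $(w,v)\mapsto(w\alpha,v\alpha)$, so $\Hom_Q(F_Q\widetilde X,F_Q\widetilde Y)$ is $W_Q$-graded with homogeneous components $\Hom_{\widetilde Q}(\widetilde X,\widetilde Y_w)$. The $\Ext$-isomorphism is obtained by feeding the very same grading into the complex \eqref{eq:maphomext}: both its source $\bigoplus_{i}\Hom_\CC\big((F_Q\hat X)_i,(F_Q\hat Y)_i\big)$ and its target $\bigoplus_{\alpha}\Hom_\CC\big((F_Q\hat X)_{s(\alpha)},(F_Q\hat Y)_{t(\alpha)}\big)$ inherit the difference grading (shifted by $e_\alpha$ on the $\alpha$-summand so that the differential becomes homogeneous), $d_{F_Q\hat X,F_Q\hat Y}$ preserves it, and in degree $\eta$ it is identified with the corresponding map for $\hat Q$; taking kernels re-proves the $\Hom$-formula and taking cokernels gives $\Ext_Q(F_Q\hat X,F_Q\hat Y)\cong\bigoplus_\chi\Ext_{\hat Q}(\hat X,\hat Y_\chi)$, and symmetrically, with the $\widetilde Q$-case identical.

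Finally, that $F_Q$ and $G_Q$ preserve indecomposability is the standard property of Galois covering functors proved in \cite{gab}; from the $\Hom$-formula one sees the mechanism. With $\hat Y=\hat X$ it exhibits $\End_Q(F_Q\hat X)=\bigoplus_{\chi\in A_Q}\Hom_{\hat Q}(\hat X,\hat X_\chi)$ as a finite-dimensional $A_Q$-graded (respectively $W_Q$-graded) algebra whose identity component $\End_{\hat Q}(\hat X)$ is local when $\hat X$ is indecomposable. Moreover $\supp\hat X$ is a finite nonempty subset of $\hat Q_0$, so it is not invariant under translation by any $\chi\neq 0$; hence $\hat X\not\cong\hat X_\chi$ for $\chi\neq 0$, and every element of a homogeneous component of nonzero degree is a non-isomorphism between non-isomorphic indecomposables, in particular a non-unit. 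Together these force $\End_Q(F_Q\hat X)$ to be local, i.e.\ $F_Q\hat X$ to be indecomposable. I expect this last step, together with keeping the translation conventions consistent throughout, to be the only points demanding real care; the remainder is the routine unwinding of the push-down construction.
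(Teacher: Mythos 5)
Your approach is genuinely different from the paper's, which simply invokes \cite{gab} after noting that $F_Q$ and $G_Q$ restrict to covering functors on connected components. You instead unwind the push-down functor directly: decomposing $f_i$ into blocks $f_i^{\chi,\psi}$, observing that the intertwining relations preserve the difference $\eta=\psi-\chi$, and identifying the degree-$\eta$ homogeneous piece with $\Hom_{\hat Q}(\hat X,\hat Y_\eta)$. This is correct, and feeding the same grading into the two-term complex $d_{X,Y}$ from \eqref{eq:maphomext} (with the degree shift by $e_\alpha$ on the $\alpha$-summand, which you correctly flag) does give the $\Ext$ isomorphism since $\rep\hat Q$ is hereditary. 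Your choice of $wv^{-1}$ as the grading parameter in the $W_Q$ case is also the right one, as it is invariant under simultaneous right multiplication. The self-contained route is a real pedagogical gain, since it makes the role of the finiteness of support and the torsion-freeness of $A_Q$ visible; the paper's citation hides these.

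However, the indecomposability argument has a genuine gap in its last step. You correctly establish that $\End_Q(F_Q\hat X)=\bigoplus_\chi\Hom_{\hat Q}(\hat X,\hat X_\chi)$ is a finite-dimensional $A_Q$-graded algebra whose degree-zero part $\End_{\hat Q}(\hat X)$ is local, and that every homogeneous element of nonzero degree is a non-unit because $\hat X\not\cong\hat X_\chi$ for $\chi\ne0$. But the claim ``Together these force $\End_Q(F_Q\hat X)$ to be local'' does not follow formally from those two facts for a general group grading; it needs the special structure of $A_Q$ (and, in the $W_Q$ case, of a free group). What is missing is an argument that the two-sided ideal $I:=J\big(\End_{\hat Q}(\hat X)\big)\oplus\bigoplus_{\chi\ne0}\Hom_{\hat Q}(\hat X,\hat X_\chi)$ is nilpotent. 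One viable route: embed the torsion-free group $A_Q$ (or the bi-orderable free group $W_Q$) in an ordered group and observe that since $\End_Q(F_Q\hat X)$ is finite-dimensional, the set $S$ of degrees carrying a nonzero component is finite; then in any product $a_1\cdots a_N$ of homogeneous elements of $I$ the partial degrees $\sum_{i\le j}\deg a_i$ must lie in $S$ to survive, so for $N>|S|$ some consecutive sub-product lands in degree $0$ and factors through a translate $\hat X_\chi\not\cong\hat X$ with $\chi\ne0$, hence lies in $J\big(\End_{\hat Q}(\hat X)\big)$; iterating and using nilpotence of that radical gives $I^M=0$ for large $M$. Alternatively one can invoke the ``no nontrivial idempotent'' form of Gabriel's argument. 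Either way, some concrete argument of this kind is required; as written, the sentence simply restates what needs to be proved.
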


\section{Representation Theory of Generalized Kronecker Quivers}
\label{sec:RepK(n)}

\noindent Fix $n\ge3$. Denote by $K(n)$ the \emph{$n$-Kronecker quiver} $1\stackrel{n}{\longleftarrow}2$ with vertices $K_0(n)=\{1,2\}$ and $n$ arrows from vertex $2$ to vertex $1$. 
The category $\rep K(n)$ of finite-dimensional representations of $K(n)$ is equivalent to the category of modules over the path algebra $A(n)$ of $K(n)$.
As a $\CC$-vector space, the path algebra $A(n)$ can be written as $A_0\oplus A_1$, where 
\begin{itemize}
  \item $A_0=\CC e_1\oplus \CC e_2$ is a two-dimensional semisimple algebra with orthogonal idempotents $e_1$ and $e_2$;
  \item $A_1=\bigoplus_{i=1}^n \CC\alpha_i$ is the $A_0$-bimodule spanned by the arrows of $K(n)$, that is $e_k\alpha_ie_\ell=\delta_{k1}\delta_{\ell2}\alpha_i$ for $1\le i\le n$ and $k,\ell\in\{1,2\}$.
\end{itemize}

Write $\Sigma_1$ and $\Sigma_2$ for the BGP-reflection functors of $K(n)$ \cite{bgp}. 
We use the same symbols $\Sigma_1$, $\Sigma_2$ for the BGP-reflection functors of $K(n)^{op}$, this should not lead to any confusion. 
Then each endofunctor $\Sigma_i^2$ is naturally isomorphic to the identity map on the full subcategory $\rep_{\langle i\rangle} K(n)\subset \rep K(n)$ whose objects are those representations of $K(n)$ which do not contain the simple $S_i$ as a direct summand.
In particular, $\Sigma_i$ gives an exact equivalence of categories $\Sigma_i:\rep_{\langle i\rangle} K(n)\to\rep_{\langle i\rangle} K(n)^{op}$.
Also, following \cite{brenner-butler}, the Auslander-Reiten translation $\tau:\rep K(n)\to\rep K(n)$ may be identified with the functor $\Sigma_2\Sigma_1$.

Define Chebyshev polynomials $u_k$ for $k\in\ZZ$ by the recursion $u_0=0$, $u_1=1$, $u_{k+1}=nu_k-u_{k-1}$.
The following is well-known.
\begin{theorem}
  \label{th:rigids}
  For each $m\ge1$, there exist unique (up to isomorphism) exceptional representations $P_m$ and $I_m$ of $K(n)$ with dimension vectors $(u_m,u_{m-1})$ and $(u_{m-1},u_m)$ respectively satisfying $\Hom(P_m,P_r)=0$ (resp. $\Hom(I_r,I_m)=0$) and $\Ext(P_r,P_m)=0$ (resp. $\Ext(I_m,I_r)=0$) for $1\leq r\leq m$.
  Moreover, any rigid representation of $K(n)$ is isomorphic to one of the form $P_m^{a_1}\oplus P_{m+1}^{a_2}$ or $I_m^{a_1}\oplus I_{m+1}^{a_2}$ for some $m\ge1$ and some $a_1,a_2\ge0$.
	
\end{theorem}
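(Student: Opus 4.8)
The plan is to treat the two assertions separately: first construct the families $P_m$ and $I_m$ with their orthogonality relations, then classify all rigid modules by reducing to the integral solutions of the Tits form of $K(n)$. For existence I would put $P_1=S_1$ and $P_2=P(2)$ (the indecomposable projective at the source, of dimension vector $(n,1)$), and for $m\ge3$ set $P_m=\tau^{-1}P_{m-2}$ with $\tau$ the Auslander--Reiten translation; equivalently, each $P_m$ arises from a simple by an iterated BGP reflection as in Lemma~\ref{le:lifts of transjectives}. Since $K(n)$ is not Dynkin, no preprojective is injective, so $\tau^{-1}$ is everywhere defined on the $P_{m-2}$, preserves indecomposability, and preserves $\dim\Hom$ and $\dim\Ext$ between preprojectives; thus $\Ext(P_m,P_m)=0$ follows by induction, so each $P_m$ is exceptional. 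On dimension vectors $\tau^{-1}$ acts by the inverse Coxeter transformation, and a short computation with the Cartan matrix of $K(n)$ — using only $u_{k+1}=nu_k-u_{k-1}$ — gives $\udim P_m=(u_m,u_{m-1})$. The orthogonality relations follow from directedness of the preprojective component, which forces $\Hom(P_m,P_r)=0$ for $r<m$, together with the Auslander--Reiten formula $\dim\Ext(P_r,P_m)=\dim\Hom(P_m,\tau P_r)=\dim\Hom(P_m,P_{r-2})$, which vanishes for $r\le m$ again by directedness. The statements for the $I_m$ are obtained by running the same argument in $\rep K(n)^{op}$ (or via the standard $\CC$-duality).

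For uniqueness I would use the open-orbit argument: $\Rep_{\mathbf d}(K(n))$ is an affine space, hence irreducible, and the $\GL_{\mathbf d}$-orbit of a representation $X$ has codimension $\dim\Ext(X,X)$; so a rigid representation has a dense orbit, and since two dense orbits in an irreducible variety must coincide, an exceptional representation is determined up to isomorphism by its dimension vector. To classify rigid modules, write a rigid $M$ as $M=\bigoplus_i M_i^{a_i}$ with the $M_i$ pairwise non-isomorphic indecomposable; each $M_i$ is then rigid and indecomposable, hence exceptional, and over a hereditary $\CC$-algebra this forces $\operatorname{End}(M_i)=\CC$ (so $M_i$ is a brick), giving $\Sc{\udim M_i}{\udim M_i}=1$. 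Writing $\udim M_i=(a,b)$, this is the Pell-type equation $a^2+b^2-nab=1$, and a descent (Vieta jumping) argument shows its only nonnegative integer solutions are $(u_k,u_{k-1})$ and $(u_{k-1},u_k)$ for $k\ge1$; by uniqueness each $M_i$ is some $P_k$ or some $I_k$.

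It remains to see which of these can coexist as summands of a rigid module. First, $\Hom(I_r,P_s)=0$ together with $\Sc{\udim I_r}{\udim P_s}=u_{r-1}u_s+u_ru_{s-1}-nu_ru_s\le -u_ru_s<0$ for all $r,s\ge1$ (using $n\ge3$ and the monotonicity of $(u_k)$) forces $\Ext(I_r,P_s)\ne0$, so no rigid module contains both a preprojective and a preinjective summand. Second, $\tau P_k=P_{k-2}$ yields $\dim\Ext(P_k,P_r)=\dim\Hom(P_r,P_{k-2})$, which is nonzero precisely when $r\le k-2$; hence for $r<k$ the pair $\{P_r,P_k\}$ is mutually $\Ext$-orthogonal only when $r=k-1$. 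Therefore the preprojective summands of $M$ carry two consecutive indices, i.e.\ $M\cong P_m^{a_1}\oplus P_{m+1}^{a_2}$, and the preinjective case is dual; conversely each such module is rigid by the vanishing statements established in the first step. The routine parts are the Coxeter matrix computation and the Vieta descent; the main obstacle is the Euler-form bookkeeping that pins down exactly which exceptional pairs are mutually $\Ext$-orthogonal, needed to rule out non-consecutive preprojectives and mixed preprojective/preinjective pairs. Since all of this is classical (exceptional sequences over hereditary algebras, after Ringel, Kac and others), I would present it as a sketch with precise references rather than carrying out every calculation.
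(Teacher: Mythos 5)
Your sketch is correct and amounts to the standard argument; the paper itself offers no proof of Theorem~\ref{th:rigids}, presenting it as ``well-known,'' so there is no internal proof to compare against. One small remark on the statement: for $r=m$ one of course has $\Hom(P_m,P_m)\cong\CC\ne0$, so the Hom-vanishing condition is only meant for $1\le r<m$ (while the Ext-vanishing does hold for $r=m$ since $P_m$ is exceptional); your proposal correctly treats it this way. The essential ingredients you invoke --- the Chebyshev/Coxeter computation for the dimension vectors, directedness of the preprojective component plus the Auslander--Reiten formula $\Ext(P_k,P_r)\cong D\Hom(P_r,P_{k-2})$ for the orthogonality, the dense-orbit argument for uniqueness, the Happel--Ringel lemma forcing $\operatorname{End}(M_i)=\CC$ so that the Tits form equals $1$, Vieta descent on $a^2+b^2-nab=1$, and the Euler-form estimate $\langle\udim I_r,\udim P_s\rangle\le(2-n)u_ru_s<0$ to exclude mixed summands --- are all valid and pin down exactly the consecutive pairs $P_m^{a_1}\oplus P_{m+1}^{a_2}$ and $I_m^{a_1}\oplus I_{m+1}^{a_2}$.
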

The representations $P_m$ are called the \emph{preprojective} representations of $K(n)$ and the representations $I_m$ are called \emph{preinjective}.
\begin{remark}
  \label{rem:reflection recursion}
  We may identify the quiver $K(n)$ with $K(n)^{op}$ by interchanging the vertex labels.
  This induces an isomorphism of categories $\rep K(n)\cong\rep K(n)^{op}$ which we write as $M\mapsto M^\sigma$.
  Note that $\Sigma_1(M^\sigma)=(\Sigma_2 M)^\sigma$ and $\Sigma_2(M^\sigma)=(\Sigma_1 M)^\sigma$.
  \begin{enumerate}
    \item The preprojective and preinjective representations satisfy the following recursions using the reflection functors:
      \[P_1=S_1,\quad P_m^\sigma=\Sigma_2 P_{m-1},\quad I_1=S_2,\quad I_m^\sigma=\Sigma_1 I_{m-1}\]
      for $m\ge2$.
      In particular, we have $P_{m-1}=\tau P_{m+1}$ and $I_{m+1}=\tau I_{m-1}$ for $m\ge2$.
    \item If $\ses{M}{B}{N}$ is a short exact sequence such that no direct summand of $M$, $B$, nor $N$ is preinjective, then the sequences $\ses{(\Sigma_1\Sigma_2)^nM}{(\Sigma_1\Sigma_2)^nB}{(\Sigma_1\Sigma_2)^nN}$ and $\ses{\Sigma_2(\Sigma_1\Sigma_2)^nM}{\Sigma_2(\Sigma_1\Sigma_2)^nB}{\Sigma_2(\Sigma_1\Sigma_2)^nN}$ are exact for any $n\ge0$ and none of these representations contain preinjective direct summands.
  \end{enumerate}
\end{remark}

Set $\cH_m:=\Hom(P_m,P_{m+1})$ for $m\ge1$.
Write $\Gr(\cH_m)$ for the \emph{total Grassmannian} of $\cH_m$ whose elements are non-trivial proper subspaces $V\subset \cH_m$.
Some results below remain true if we allow $\cH_m$ or $0$ as elements of $\Gr(\cH_m)$, but not all, so for uniformity of exposition we omit these possibilities.

For each $m\ge2$, there is an Auslander-Reiten sequence (cf.\ \cite[Section V]{ars})
\begin{equation}
  \label{eq:AR sequence}
  0\longrightarrow P_{m-1}\stackrel{\iota_{m-1}}{\longrightarrow} P_m\otimes \cH_m\stackrel{ev}{\longrightarrow} P_{m+1}\longrightarrow 0,
\end{equation}
where the right-hand morphism is the natural evaluation map.
\begin{lemma}
  \label{le:injective evaluation maps}
  For any $V\in \Gr(\cH_m)$, $m\geq 1$, the natural evaluation map $ev_V:P_m\otimes V\to P_{m+1}$ is injective.
\end{lemma}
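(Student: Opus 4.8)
\emph{Proof proposal.} The plan is to reduce, via the Auslander--Reiten sequence~\eqref{eq:AR sequence}, to the claim that every nonzero morphism $P_{m-1}\to P_m$ is a monomorphism; that claim is where the real work lies.

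First I would dispose of $m=1$ directly: $\cH_1=\Hom(P_1,P_2)=\Hom(S_1,P_2)$ is canonically identified with $(P_2)_1$ via $f\mapsto f_1(1)$, and under this identification the evaluation map $ev\colon S_1\otimes\cH_1\to P_2$ is the inclusion $\cH_1\hookrightarrow(P_2)_1$ at vertex~$1$ and the zero map at vertex~$2$; hence $ev$ itself is injective, so $ev_V$ is injective for every~$V$.

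For $m\ge2$ I would use that $ev\colon P_m\otimes\cH_m\to P_{m+1}$ is the epimorphism in the Auslander--Reiten sequence~\eqref{eq:AR sequence}, so $\ker(ev)=\iota_{m-1}(P_{m-1})$. Given $V\in\Gr(\cH_m)$, enlarge it to a hyperplane $H\subset\cH_m$; since $P_m\otimes V\subseteq P_m\otimes H$ it suffices to show $ev_H$ is injective. Writing $H=\ker\phi$ for a nonzero linear form $\phi$ on $\cH_m$ and setting $g_\phi:=(1_{P_m}\otimes\phi)\circ\iota_{m-1}\colon P_{m-1}\to P_m\otimes\CC=P_m$, the identity $\ker(1_{P_m}\otimes\phi)=P_m\otimes H$ together with injectivity of $\iota_{m-1}$ yields $\ker(ev_H)=\iota_{m-1}(P_{m-1})\cap(P_m\otimes H)=\iota_{m-1}(\ker g_\phi)$, so $ev_H$ is injective iff $g_\phi$ is. Moreover $g_\phi\neq0$: if $g_\phi=0$ then $\iota_{m-1}(P_{m-1})\subseteq P_m\otimes H$, and composing $\iota_{m-1}$ with the projection of $P_m\otimes\cH_m$ onto $P_m\otimes H$ along a complement would produce a non-invertible endomorphism of $P_m\otimes\cH_m$ fixing $\iota_{m-1}$, contradicting the left-minimality of an Auslander--Reiten sequence.

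Everything thus comes down to the claim that a nonzero $g\colon P_{m-1}\to P_m$ is injective, and this is the step I expect to be the main obstacle. I would set $L=\Im g$, a nonzero submodule of $P_m$, and show $L\cong P_{m-1}$. Every submodule of a preprojective representation of $K(n)$ is preprojective -- this uses the trichotomy preprojective/regular/preinjective for $\rep K(n)$ and the vanishing of $\Hom$ from regular or preinjective modules to preprojective ones -- so one may write $L=\bigoplus_j P_{l_j}$, and $\Hom(P_{l_j},P_m)\neq0$ forces $l_j\le m-1$ by Theorem~\ref{th:rigids}. If some $l_j\le m-2$, then $\Ext(P_{l_j},P_{m-1})=0$, and a short Euler-form computation -- using the Chebyshev identity $u_au_b-u_{a-1}u_{b+1}=u_{b-a+1}$ -- gives $\dim\Hom(P_{l_j},P_{m-1})=u_{m-l_j}\ge u_2=n\ge3$, hence $\dim\Hom(L,P_{m-1})\ge3$; but the surjection $P_{m-1}\twoheadrightarrow L$ embeds $\Hom(L,P_{m-1})$ into $\operatorname{End}(P_{m-1})=\CC$ ($P_{m-1}$ being exceptional), a contradiction. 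Therefore all $l_j$ equal $m-1$, so $L\cong P_{m-1}^{\,k}$; comparing dimension vectors, the quotient map $P_{m-1}\twoheadrightarrow L$ forces $k=1$ and is thus an isomorphism, giving $\ker g=0$. Feeding this back through the previous paragraph completes the proof; the only external ingredients are standard structural facts about $\rep K(n)$ and the left-minimality of Auslander--Reiten sequences.
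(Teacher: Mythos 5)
Your proof is correct but takes a genuinely different route from the paper's. The paper establishes the $m=1$ case by a snake-lemma diagram (with bottom row $0\to P_1\otimes\cH_1\to P_2\to I_1\to0$) and then reduces the general case to $m=1$ by applying $\Sigma_2(-)^\sigma$ and $\Sigma_1\Sigma_2$; making that reduction legitimate forces the paper to verify separately that the cokernel of $ev_V$ for $m=1$ has no preinjective direct summand, so that Remark~\ref{rem:reflection recursion}(2) applies. You instead reduce, via the Auslander--Reiten sequence and the kernel identity $\ker(ev_H)=\iota_{m-1}(\ker g_\phi)$, to showing that any nonzero $g\colon P_{m-1}\to P_m$ is injective, and you prove this structural fact directly: $\Im g$ is a sum of preprojectives $P_{l_j}$, and if some $l_j\le m-2$ then the Euler-form computation $\dim\Hom(P_{l_j},P_{m-1})=u_{m-l_j}\ge n\ge3$ contradicts the injection $\Hom(\Im g,P_{m-1})\hookrightarrow\End(P_{m-1})=\CC$ induced by the epimorphism $P_{m-1}\twoheadrightarrow\Im g$. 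Both arguments are sound; yours isolates a clean auxiliary statement of independent interest (nonzero maps between consecutive preprojectives are mono) and avoids reflection functors, while the paper's bootstrap fits the recursive reflection-functor machinery it uses throughout. One small slip worth noting: the Hom-vanishing in Theorem~\ref{th:rigids} by itself only gives $l_j\le m$, not $l_j\le m-1$; to exclude $l_j=m$ you also need that $L=\Im g$ is a proper submodule of $P_m$ (since $\udim L\le\udim P_{m-1}<\udim P_m$), which is implicit in the dimension-vector comparison you invoke at the very end in any case.
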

\begin{proof}
  As $\Ext(P_m\otimes V,P_{m-1})=0$ and $\Hom(P_1,I_1)=0$, we obtain a commutative diagram 
  \[\xymatrix{0 \ar[r] & P_{m-1}\ar@{=}[d]\ar[r] &P_{m-1}\oplus P_m\otimes V \ar@{}[dr]|(.7){\lrcorner} \ar[r]\ar[d]_{(\iota_{m-1},\mathrm{id}_{P_m}\otimes \iota_V)} &P_{m}\otimes V \ar[r]\ar_{ev_V}[d] & 0 \ar[d]\\ 0 \ar[r] & P_{m-1}\ar[r]^{\iota_{m-1}} & P_m\otimes \cH_m \ar[r]^{ev} &P_{m+1} \ar[r] & I_{2-m}\ar[r]&0}\]
  in which we set $P_0=0$ and $I_l=0$ for $l\leq 0$.
  Since $\mathrm{id}_{P_1}\otimes \iota_V$ is injective, the snake lemma shows that $\ker(ev_V)=0$ for $m=1$ and thus $ev_V$ is injective in this case.
  
  In view of Remark~\ref{rem:reflection recursion}, it is enough for the case $m>1$ to show that the cokernel of $ev_V$ does not have a preinjective direct summand when $m=1$.
  Clearly the cokernel of $\mathrm{id}_{P_1}\otimes \iota_V$ is isomorphic to $P_1\otimes \cH_1/V$.
  Thus we obtain the following commutative diagram induced by the cokernels of the above vertical maps, note that the vertical maps below are surjective:
  \[\xymatrix{0 \ar[r]  & P_1\otimes \cH_1 \ar[d]\ar[r]^{ev} &P_{2}\ar[d] \ar[r] & I_{1}\ar@{=}[d]\ar[r]&0\\ 0\ar[r] & P_1\otimes \cH_1/V \ar[r] &K\ar[r]& I_{1}\ar[r]&0.}\]

  We need to show that $K$ has no preinjective direct summand.
  As $\Hom(P_2,P_1)=0$ and as the vertical maps are surjective, the representation $K$ has no direct summand which is isomorphic to $P_1=S_1$.
  But this already shows that $K$ is indecomposable as $\udim K=(\dim \cH_1/V,1)$.
  Since $V$ is a proper subspace of $\cH_1$, $\udim K$ is not the dimension vector of a preinjective representation and the claim follows.
\end{proof}

In what follows we will not distinguish between $P_m\otimes V$ and its image under $ev_V$.

\subsection{Truncated Preprojectives}
\label{sec:truncated preprojectives}

Motivated by Lemma~\ref{le:injective evaluation maps}, we define the following.
\begin{definition}
  For $V\in \Gr(\cH_m)$, define the \emph{truncated preprojective} $P_{m+1}^V$ to be the cokernel of the map $ev_V:P_m\otimes V\to P_{m+1}$, i.e.\ we have a short exact sequence
  \begin{equation}
    \label{eq:truncated preprojectives}
    0\longrightarrow P_m\otimes V\stackrel{ev_V}{\longrightarrow} P_{m+1}\stackrel{\pi_V}{\longrightarrow} P_{m+1}^V\longrightarrow 0.
  \end{equation}
\end{definition}
\begin{remark}
  It will be convenient to also set $P_{m+1}^0=P_{m+1}$, observe that this notation is consistent with taking $V=0$ in the sequence \eqref{eq:truncated preprojectives}.
\end{remark}

We collect below several basic homological results related to preprojective representations.
\begin{lemma}
  \label{le:truncated homomorphisms}
  For $V\in \Gr(\cH_m)$, $m\ge1$, we have $\Hom(P_m,P_{m+1}^V)\cong \cH_m/V$ and $\Ext(P_m,P_{m+1}^V)=0$.
\end{lemma}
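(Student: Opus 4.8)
The statement to prove is that for $V \in \Gr(\cH_m)$, $m \ge 1$, we have $\Hom(P_m, P_{m+1}^V) \cong \cH_m/V$ and $\Ext(P_m, P_{m+1}^V) = 0$. The natural approach is to apply the functor $\Hom(P_m, -)$ to the defining short exact sequence \eqref{eq:truncated preprojectives} and analyze the resulting long exact sequence, using the known homological properties of preprojective representations from Theorem~\ref{th:rigids}.

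**Key steps.** First I would write down the long exact sequence obtained by applying $\Hom(P_m, -)$ to $0 \to P_m \otimes V \to P_{m+1} \to P_{m+1}^V \to 0$:
\[
0 \to \Hom(P_m, P_m\otimes V) \to \Hom(P_m, P_{m+1}) \to \Hom(P_m, P_{m+1}^V) \to \Ext(P_m, P_m\otimes V) \to \Ext(P_m, P_{m+1}) \to \Ext(P_m, P_{m+1}^V) \to 0,
\]
where the sequence terminates because $\rep K(n)$ is hereditary. Now I invoke the vanishing facts: $\Ext(P_m, P_m \otimes V) = \Ext(P_m, P_m) \otimes V = 0$ since $P_m$ is exceptional (rigid), and $\Ext(P_m, P_{m+1}) = 0$ by Theorem~\ref{th:rigids} (taking $r = m$ in $\Ext(P_r, P_m) = 0$ after reindexing, or directly from the orthogonality/rigidity of $P_m \oplus P_{m+1}$). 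Consequently the long exact sequence collapses to a short exact sequence
\[
0 \to \Hom(P_m, P_m\otimes V) \to \Hom(P_m, P_{m+1}) \to \Hom(P_m, P_{m+1}^V) \to 0,
\]
and simultaneously forces $\Ext(P_m, P_{m+1}^V) = 0$, which is the second assertion. For the first assertion, note $\Hom(P_m, P_{m+1}) = \cH_m$ by definition, and $\Hom(P_m, P_m \otimes V) \cong \Hom(P_m, P_m) \otimes V \cong V$ since $P_m$ is a brick (as an exceptional representation, $\End(P_m) = \CC$). The map $\Hom(P_m, P_m \otimes V) \to \Hom(P_m, P_{m+1})$ is post-composition with $ev_V$, which under these identifications is precisely the inclusion $V \hookrightarrow \cH_m$. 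Hence the cokernel is $\cH_m/V$, giving $\Hom(P_m, P_{m+1}^V) \cong \cH_m/V$.

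**Main obstacle.** The only genuine point requiring care is verifying that the connecting map $\Hom(P_m, P_m \otimes V) \to \Hom(P_m, P_{m+1})$ really corresponds to the subspace inclusion $V \subset \cH_m = \Hom(P_m, P_{m+1})$ under the canonical identifications $\Hom(P_m, P_m \otimes V) \cong \End(P_m) \otimes V \cong V$; this is essentially a tautology from the construction of $ev_V$ as the restriction of the evaluation map, but it should be stated explicitly. The edge case $m = 1$ with $P_0 = 0$, and the degenerate conventions around $\Gr(\cH_m)$, require no separate treatment here since the argument only uses rigidity of $P_m$ and the vanishing $\Ext(P_m, P_{m+1}) = 0$, both valid for all $m \ge 1$. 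The vanishing $\Ext(P_m, P_{m+1}) = 0$ is the one external input and is immediate from Theorem~\ref{th:rigids}.
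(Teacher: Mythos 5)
Your proof is correct and follows essentially the same route as the paper: apply $\Hom(P_m,-)$ to the defining sequence \eqref{eq:truncated preprojectives}, use $\Ext(P_m,P_m\otimes V)=0$ (rigidity of $P_m$) to split off the $\Hom$ part and identify the $\Ext$ terms, then invoke $\Ext(P_m,P_{m+1})=0$ from Theorem~\ref{th:rigids} and the identification $\Hom(P_m,P_m\otimes V)\cong V$. Your explicit remark that the connecting map is post-composition with $ev_V$, hence the inclusion $V\into\cH_m$, is a slightly more careful accounting of a point the paper leaves implicit, but it is the same argument.
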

\begin{proof}
  As $P_m$ is exceptional, applying the functor $\Hom(P_m,-)$ to the sequence \eqref{eq:truncated preprojectives}, gives an exact sequence
  \[\xymatrix{0 \ar[r] & \Hom(P_m,P_m\otimes V) \ar[r] & \Hom(P_m,P_{m+1}) \ar[r] & \Hom(P_m,P_{m+1}^V) \ar[r] & 0}\]
  and an isomorphism
  \[\Ext(P_m,P_{m+1})\cong\Ext(P_m,P_{m+1}^V).\]
  But there is a natural isomorphism $\Hom(P_m,P_m\otimes V)\cong V$ and the first claim follows.
  The final claim follows from Theorem ~\ref{th:rigids} which implies $\Ext(P_m,P_{m+1})=0$.
\end{proof}

\begin{lemma}
  \label{le:unique preprojective morphism}
  For $V\in \Gr(\cH_m)$, $m\ge1$, the space $\Hom(P_{m+1},P_{m+1}^V)$ is one-dimensional spanned by the natural projection $\pi_V:P_{m+1}\to P_{m+1}^V$.
  Moreover, $\Ext(P_{m+1},P_{m+1}^V)=0$.
\end{lemma}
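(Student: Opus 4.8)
The plan is to apply the functor $\Hom(P_{m+1},-)$ to the defining short exact sequence \eqref{eq:truncated preprojectives} and extract the relevant long exact sequence. This gives
\[\xymatrix@C13pt{0 \ar[r] & \Hom(P_{m+1},P_m\otimes V) \ar[r] & \Hom(P_{m+1},P_{m+1}) \ar[r] & \Hom(P_{m+1},P_{m+1}^V) \ar[r] & \Ext(P_{m+1},P_m\otimes V) \ar[r] & \Ext(P_{m+1},P_{m+1}).}\]
Since $P_{m+1}$ is exceptional, $\Hom(P_{m+1},P_{m+1})=\CC$ and $\Ext(P_{m+1},P_{m+1})=0$. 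By Theorem~\ref{th:rigids} applied with $r=m$ (the condition $\Hom(P_m,P_r)=0$ for $r\le m$, together with its $\Ext$ counterpart $\Ext(P_r,P_m)=0$), we have $\Hom(P_{m+1},P_m)=0$; hence $\Hom(P_{m+1},P_m\otimes V)\cong\Hom(P_{m+1},P_m)\otimes V=0$. First I would record these vanishings, so the sequence collapses to
\[\xymatrix@C13pt{0 \ar[r] & \CC \ar[r] & \Hom(P_{m+1},P_{m+1}^V) \ar[r] & \Ext(P_{m+1},P_m\otimes V) \ar[r] & 0,}\]
where the image of $1\in\CC=\Hom(P_{m+1},P_{m+1})$ is exactly $\pi_V$, the composite of $\mathrm{id}_{P_{m+1}}$ with the projection $\pi_V$.

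The remaining task is therefore to show $\Ext(P_{m+1},P_m\otimes V)=0$, equivalently $\Ext(P_{m+1},P_m)=0$, which is again immediate from Theorem~\ref{th:rigids} (take $r=m+1$: $\Ext(P_r,P_m)=0$ need not apply since we need $\Ext(P_{m+1},P_m)$, but note $\Hom(P_m,P_{m+1})=\cH_m$ and, more to the point, $P_m\oplus P_{m+1}$ is a rigid representation by the last assertion of Theorem~\ref{th:rigids}, so $\Ext(P_m\oplus P_{m+1},P_m\oplus P_{m+1})=0$, which in particular gives $\Ext(P_{m+1},P_m)=0$). With this vanishing, the sequence above gives $\Hom(P_{m+1},P_{m+1}^V)\cong\CC$, spanned by $\pi_V$, and simultaneously $\Ext(P_{m+1},P_{m+1}^V)=0$ follows by continuing the long exact sequence one more step: $\Ext(P_{m+1},P_m\otimes V)\to\Ext(P_{m+1},P_{m+1})\to\Ext(P_{m+1},P_{m+1}^V)\to 0$, and the first two terms vanish.

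I do not expect any serious obstacle here; the only point requiring a moment's care is confirming that all four vanishing statements — $\Hom(P_{m+1},P_m)=0$, $\Ext(P_{m+1},P_m)=0$, $\Ext(P_{m+1},P_{m+1})=0$, and the identification of the generator with $\pi_V$ — are genuinely supplied by Theorem~\ref{th:rigids} and the rigidity of $P_m\oplus P_{m+1}$, rather than needing a separate argument. The boundary case $m=1$, where $P_m\otimes V = P_1\otimes V = S_1\otimes V$, is covered uniformly since $\Hom(P_2,S_1)=\Hom(P_2,P_1)=0$ still holds by Theorem~\ref{th:rigids}.
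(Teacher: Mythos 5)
Your proof is correct and follows the same route as the paper: apply $\Hom(P_{m+1},-)$ to the defining sequence \eqref{eq:truncated preprojectives} and use the vanishing of $\Hom(P_{m+1},P_m)$, $\Ext(P_{m+1},P_m)$, and $\Ext(P_{m+1},P_{m+1})$ to read off both isomorphisms. The extra care you take in sourcing $\Ext(P_{m+1},P_m)=0$ from the rigidity of $P_m\oplus P_{m+1}$ is sound, though the paper simply takes this standard fact for granted.
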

\begin{proof}
 As $\Hom(P_{m+1},P_m)=\Ext(P_{m+1},P_m)=0$, applying the functor $\Hom(P_{m+1},-)$ to the sequence \eqref{eq:truncated preprojectives} gives isomorphisms 
  \[\Hom(P_{m+1},P_{m+1})\cong\Hom(P_{m+1},P_{m+1}^V)\]
  and 
  \[\Ext(P_{m+1},P_{m+1})\cong\Ext(P_{m+1},P_{m+1}^V).\]
  Under the first isomorphism, the identity map on $P_{m+1}$ is taken to the projection $\pi_V:P_{m+1}\to P_{m+1}^V$.
  The second isomorphism together with the rigidity of $P_{m+1}$ gives the final claim.
\end{proof}

\begin{lemma}
  \label{le:unique morphisms}
  Consider $V,W\in \Gr(\cH_m)$, $m\ge1$.
  \begin{enumerate}
    \item There exists a morphism $P_{m+1}^W\to P_{m+1}^V$ if and only if $W\subset V$ and this morphism is unique (up to scalars) when it exists.
    \item For $W\subset V$, we have $\Ext(P_{m+1}^W,P_{m+1}^V)\cong W^*\otimes(\cH_m/V)$.
  \end{enumerate}
\end{lemma}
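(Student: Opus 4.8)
The plan is to analyze both statements by applying suitable $\Hom$-functors to the defining short exact sequence \eqref{eq:truncated preprojectives} and exploiting the homological vanishings already established in Lemmas~\ref{le:truncated homomorphisms} and~\ref{le:unique preprojective morphism}. For part~(1), first I would apply $\Hom(-,P_{m+1}^V)$ to the sequence $0\to P_m\otimes W\to P_{m+1}\xrightarrow{\pi_W} P_{m+1}^W\to 0$. Using $\Ext(P_{m+1},P_{m+1}^V)=0$ from Lemma~\ref{le:unique preprojective morphism}, the resulting long exact sequence gives
\[
0\to\Hom(P_{m+1}^W,P_{m+1}^V)\to\Hom(P_{m+1},P_{m+1}^V)\to\Hom(P_m\otimes W,P_{m+1}^V)\to\Ext(P_{m+1}^W,P_{m+1}^V)\to 0,
\]
where the middle map is precomposition with $ev_W$. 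By Lemma~\ref{le:unique preprojective morphism} the second term is one-dimensional, spanned by $\pi_V$, and by Lemma~\ref{le:truncated homomorphisms} (tensored with $W$) the third term is $W^*\otimes(\cH_m/V)$. So $\Hom(P_{m+1}^W,P_{m+1}^V)$ is at most one-dimensional, and it is one-dimensional precisely when $\pi_V\circ ev_W=0$, i.e.\ when the composite $P_m\otimes W\hookrightarrow P_{m+1}\xrightarrow{\pi_V} P_{m+1}^V$ vanishes. Since $P_m\otimes V$ is the kernel of $\pi_V$ (and we have identified $P_m\otimes W$ with its image under $ev_W$ inside $P_{m+1}$, using Lemma~\ref{le:injective evaluation maps}), this vanishing is equivalent to $P_m\otimes W\subset P_m\otimes V$ as subrepresentations of $P_{m+1}$, which in turn is equivalent to $W\subset V$ since $ev$ is linear in the subspace argument. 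When $W\subset V$, the nonzero morphism $P_{m+1}^W\to P_{m+1}^V$ is the one induced by $\pi_V$ through the quotient $\pi_W$, which proves both existence and uniqueness up to scalars.

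For part~(2), assume $W\subset V$. From the four-term exact sequence above, once we know $\Hom(P_{m+1}^W,P_{m+1}^V)$ is one-dimensional (which holds by part~(1)), the map $\Hom(P_{m+1},P_{m+1}^V)\to\Hom(P_m\otimes W,P_{m+1}^V)$ is the zero map, so we obtain a short exact sequence
\[
0\to\Hom(P_{m+1},P_{m+1}^V)\xrightarrow{\;\cong\;}\Hom(P_{m+1}^W,P_{m+1}^V)\qquad\text{and}\qquad 0\to\Hom(P_m\otimes W,P_{m+1}^V)\xrightarrow{\;\cong\;}\Ext(P_{m+1}^W,P_{m+1}^V)\to 0,
\]
whence $\Ext(P_{m+1}^W,P_{m+1}^V)\cong\Hom(P_m\otimes W,P_{m+1}^V)\cong W^*\otimes\Hom(P_m,P_{m+1}^V)\cong W^*\otimes(\cH_m/V)$, the last isomorphism being Lemma~\ref{le:truncated homomorphisms}. (One should double-check that the isomorphism $\Hom(P_m\otimes W,P_{m+1}^V)\cong W^*\otimes\Hom(P_m,P_{m+1}^V)$ is just the standard adjunction/tensor identity for representations, which is routine.)

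The only real subtlety is bookkeeping about the identification of $P_m\otimes W$ with an honest subrepresentation of $P_{m+1}$ via $ev_W$ and the compatibility of these identifications as $W$ varies; once one is careful that "$W\subset V$ as subspaces of $\cH_m$" translates correctly into "$\Im(ev_W)\subset\Im(ev_V)$ as subrepresentations of $P_{m+1}$," everything else is a formal diagram chase with the stated vanishing results. I expect the main obstacle — such as it is — to be verifying that the precomposition map $\Hom(P_{m+1},P_{m+1}^V)\to\Hom(P_m\otimes W,P_{m+1}^V)$ is identically zero exactly when $W\subset V$; this is where the explicit description of $\pi_V$ and its kernel is used, and it is the hinge connecting the algebra to the combinatorial condition $W\subset V$.
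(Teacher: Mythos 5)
Your proof is correct and follows essentially the same approach as the paper: apply $\Hom(-,P_{m+1}^V)$ to the defining sequence \eqref{eq:truncated preprojectives} for $W$, use Lemmas~\ref{le:unique preprojective morphism} and~\ref{le:truncated homomorphisms} to identify the terms, and observe that $\Hom(P_{m+1}^W,P_{m+1}^V)\neq0$ iff the precomposition map $-\circ ev_W$ vanishes, which happens iff $\Im(ev_W)\subset\ker\pi_V=\Im(ev_V)$, i.e.\ iff $W\subset V$. The reading-off of $\Ext(P_{m+1}^W,P_{m+1}^V)\cong W^*\otimes(\cH_m/V)$ from the same exact sequence also matches the paper's argument.
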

\begin{proof}
  We apply the functor $\Hom(-,P_{m+1}^V)$ to the sequence \eqref{eq:truncated preprojectives} for $W$ to get an exact sequence
  \[0\longrightarrow \Hom(P_{m+1}^W,P_{m+1}^V)\longrightarrow \Hom(P_{m+1},P_{m+1}^V)\stackrel{-\circ ev_W}{\longrightarrow} \Hom(P_m\otimes W,P_{m+1}^V)\longrightarrow \Ext(P_{m+1}^W,P_{m+1}^V)\longrightarrow 0.\]
  But the space $\Hom(P_{m+1},P_{m+1}^V)$ is one-dimensional and thus $\Hom(P_{m+1}^W,P_{m+1}^V)$ is nonzero if and only if the morphism $-\circ ev_W$ of the above sequence is zero.
  But this occurs exactly when the image of the map $ev_W:P_m\otimes W\to P_{m+1}$ is contained in the kernel of $\pi_V$, i.e.\ in the image of $ev_V:P_m\otimes V\to P_{m+1}$, and this occurs if and only if $W\subset V$. 
  In this case, there are isomorphisms
  \[\Ext(P_{m+1}^W,P_{m+1}^V)\cong\Hom(P_m\otimes W,P_{m+1}^V)\cong W^*\otimes \cH_m/V,\]
  where the last isomorphism is immediate from Lemma~\ref{le:truncated homomorphisms}.
\end{proof}
\begin{remark}
  The total Grassmannian $\Gr(\cH_m)$ is naturally a poset under inclusion.
  This structure gives rise to a $\CC$-linear category $\CC \Gr(\cH_m)$ with objects the elements of $\Gr(\cH_m)$ and at most one morphism (up to scalars) between any two objects.
  Write $\cP_{m+1}$ for the full subcategory of $\rep K(n)$ with objects the truncated preprojectives $P_{m+1}^V$ for $V\in \Gr(\cH_m)$.
  By Lemma~\ref{le:unique morphisms}(1), the functor $V\mapsto P_{m+1}^V$ gives an isomorphism of categories $\CC \Gr(\cH_m)\cong\cP_{m+1}$.
\end{remark}

For the truncated preprojective representations $P_{m+1}^V$, we have  $\udim P_{m+1}^V=d(m,\dim V):=\udim P_{m+1}-\dim V\cdot\udim P_m$.
These will play an important role when describing quiver Grassmannians of preprojective representations recursively.
For a dimension vector $d=(d_1,d_2)\in\NN^{K(n)_0}$, write 
\[R_d(K(n))=\bigoplus_{i=1}^n\Hom_{\CC}(\CC^{d_2},\CC^{d_1})\]
for the affine space of representations of $K(n)$ with dimension vector $d$. 
\begin{proposition}
  \label{pro:indecomposables}
  Let $m\geq 1$ and $0\le r\le n-1$.
  The following hold:
  \begin{enumerate}
    \item The isomorphism classes of indecomposable representations of $K(n)$ with dimension vector $d(m,r)$ are in one-to-one correspondence with points of $\Gr_{n-r}(\CC^n)$.
    \item The indecomposable representations of $K(n)$ with dimension vector $d(m,r)$ are precisely the truncated preprojective representations $P_{m+1}^V$ for $V\in \Gr(\cH_m)$ with $\dim V=r$. 
    \item The set of indecomposable representations with dimension vector $d(m,r)$ is given by a non-empty open subset of $R_{d(m,r)}(K(n))$. 
  \end{enumerate}
\end{proposition}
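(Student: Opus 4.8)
The plan is to prove the three statements essentially simultaneously, with (2) as the conceptual heart and (1), (3) as consequences. First I would observe that all three claims concern the dimension vector $d(m,r) = \udim P_{m+1} - r\cdot\udim P_m$ with $0\le r\le n-1$, and the key structural input is the short exact sequence \eqref{eq:truncated preprojectives} together with Lemma~\ref{le:injective evaluation maps}, which guarantees that $P_{m+1}^V$ genuinely has dimension vector $d(m,\dim V)$ for every $V\in\Gr(\cH_m)$. So the truncated preprojectives $P_{m+1}^V$ with $\dim V = r$ provide a family of representations with the desired dimension vector; the content is that these exhaust the indecomposables and that the family is parametrized as claimed.

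For (2), I would argue as follows. Reflection functors reduce the general case to a base case: since $d(m,r)$ differs from $d(1,r) = (n-r, 1)$ by iterated application of $\Sigma_1\Sigma_2$ (using Remark~\ref{rem:reflection recursion} and the fact that $P_{m+1}^V$ is built from $P_m, P_{m+1}$, which are transjective), and reflection functors induce bijections between the indecomposables not equal to the relevant simple, it suffices to classify indecomposables of dimension vector $(n-r,1)$ for $0\le r\le n-1$, i.e.\ of the form $(d,1)$ with $1\le d\le n$. A representation of $K(n)$ of dimension vector $(d,1)$ is given by $n$ vectors in $\CC^d$, i.e.\ by a linear map $\CC^n\to\CC^d$; such a representation is indecomposable if and only if this map is surjective (otherwise the representation splits off copies of $S_1$). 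The surjective maps $\CC^n\twoheadrightarrow\CC^d$ up to the $\GL_d$-action on the target are in bijection with their kernels, i.e.\ with $(n-d)$-dimensional subspaces of $\CC^n$, giving $\Gr_{n-d}(\CC^n) = \Gr_r(\CC^n)$; I then need to match this against the statement, which asks for $\Gr_{n-r}(\CC^n)$ — here I would identify $\Gr_r(\CC^n)$ with $\Gr_{n-r}(\CC^n)$ via orthogonal complement, or more naturally track through the reflection functors to see that the parametrizing subspace $V\subset\cH_m$ of dimension $r$ corresponds under $\Sigma$'s to the $(n-r)$-dimensional space; the clean bookkeeping statement is that $P_{m+1}^V$ corresponds to $V^{\perp}$-type data, so the indecomposables are indexed by $\Gr_{n-r}(\CC^n)$. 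Combining the classification of $(d,1)$-indecomposables with the reflection-functor bijection and the already-established fact (Lemma~\ref{le:unique morphisms}(1), which shows $V\mapsto P_{m+1}^V$ is injective on isomorphism classes) that distinct $V$ give distinct $P_{m+1}^V$, I get (2) and the one-to-one correspondence in (1).

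For (3): the representations of dimension vector $d(m,r)$ that are indecomposable form the image of the surjective locus under reflection functors, which are described by explicit (invertible where defined) linear-algebra operations, so it suffices to check openness and non-emptiness in the base case $(d,1)$, $1\le d\le n$. There, indecomposability $=$ surjectivity of the associated map $\CC^n\to\CC^d$, which is the non-vanishing of at least one $d\times d$ minor — visibly a non-empty Zariski-open condition on $R_{(d,1)}(K(n))$ (non-empty exactly because $d\le n$). Transporting this open set through the reflection functors — which act on representation varieties of fixed dimension vector as rational maps that are isomorphisms on the relevant open loci (this is the standard description of BGP reflections on representation spaces, cf.\ \cite{bgp}) — yields a non-empty open subset of $R_{d(m,r)}(K(n))$, proving (3).

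The main obstacle I anticipate is the bookkeeping in the reflection-functor reduction: one must verify carefully that applying $\Sigma_1\Sigma_2$ to the defining sequence \eqref{eq:truncated preprojectives} for $P_{m+1}^V$ stays exact and yields the defining sequence for a truncated preprojective at level $m+1$ in place of $m$ with the \emph{same} parametrizing subspace $V$ (so that the parameter space $\Gr_r$ or $\Gr_{n-r}$ does not change as $m$ varies), together with pinning down the correct Grassmannian ($\Gr_{n-r}$ vs $\Gr_r$) consistently across all three parts; this requires knowing that no preinjective summands appear, which is exactly what Remark~\ref{rem:reflection recursion}(2) and the exceptionality statements in Theorem~\ref{th:rigids} provide. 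The purely linear-algebraic base case and the openness argument are routine.
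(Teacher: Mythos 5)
Your plan for parts~(1) and (2) follows essentially the same route as the paper: reduce to $m=1$ via reflection functors (using Remark~\ref{rem:reflection recursion} to rule out preinjective summands), classify indecomposables of dimension vector $(d,1)$ as surjections $\CC^n\onto\CC^d$, and use Lemma~\ref{le:unique morphisms}(1) for injectivity of $V\mapsto P_{m+1}^V$. The one difference is cosmetic: you parametrize the surjection by its kernel, yielding $\Gr_r(\CC^n)$, whereas the paper associates to the representing $l\times n$ matrix its row span, yielding $\Gr_{n-r}(\CC^n)$ directly; these two parametrizations are exchanged by passing to complements, so both are correct, but the paper's choice avoids the $\Gr_r$-versus-$\Gr_{n-r}$ bookkeeping detour you flag as a worry.

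Part~(3) is where you diverge, and where there is a real gap. The paper observes that the $P_{m+1}^V$ have one-dimensional endomorphism ring, hence $d(m,r)$ is a Schur root, and then invokes \cite[Theorem 2.2]{sch} to conclude that the indecomposables form a dense open subset. You instead establish openness in the base case $(d,1)$ by a rank condition (non-vanishing of some $d\times d$ minor) --- correct --- and then propose to ``transport this open set through the reflection functors,'' citing \cite{bgp} for reflection functors acting ``as rational maps on representation varieties of fixed dimension vector.'' This is not accurate as stated: the dimension vector changes under each reflection, and the BGP functors are not rational maps between representation varieties. The correct geometric statement involves a correspondence (a roof of principal bundles, or an orbit bijection) between the open strata of the two representation varieties on which the reflected vertex behaves injectively or surjectively, and verifying that this correspondence carries the indecomposable locus to the indecomposable locus in a topology-preserving way is genuinely not contained in \cite{bgp}. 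Your argument could be completed along those lines with more machinery, but once part~(2) is in hand a far shorter route is available: the indecomposable locus coincides with $\{X : \dim\Hom(X,X)=1\}$, which is open by upper semicontinuity of $\dim\Hom(X,X)$ on $R_{d(m,r)}(K(n))$, and is non-empty since the $P_{m+1}^V$ exist. This is essentially what the paper's Schofield citation packages; I would replace the transport step by one of these.
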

\begin{proof}
  As the reflection functors $\Sigma_1,\,\Sigma_2$ preserve indecomposability and $\Sigma_2(d(m,r))^\sigma=d(m+1,r)$, it suffices to prove the first statement for $m=1$.
  Then we have $d(m,r)=(l,1)$ for $l:=n-r$.
  This means that $X\in R_{d(m,r)}(K(n))$ can be represented by a matrix $M_X\in\CC^{l\times n}$, where the $i^{\mathrm{th}}$ column stands for $X_{\alpha_i}$.
  Now $X$ is indecomposable if and only if $\rk(M_X)=l$.
  Indeed, $X$ admits a summand isomorphic to $S_1^k$ exactly when $\rk(M_X)=l-k$.

  This shows that the indecomposable representations in $R_{(l,1)}(K(n))$ are in one-to-one correspondence with $l\times n$ matrices of maximal rank.
  Thus we may associate to each such representation $X$ a subspace of $\CC^n$ of dimension~$l$ spanned by the row vectors of the corresponding matrix $M_X$.
  Now it is straightforward to check that the $\GL_{d(m,r)}=\GL_l(\CC)\times\CC^\ast$-action on $R_{d(m,r)}(K(n))$ corresponds to the base change action of $\GL_l(\CC)$ on the set of these subspaces.
  This shows the first statement.

  By Lemma~\ref{le:unique morphisms}, the endomorphism ring of $P_{m+1}^V$ is one-dimensional and so $P_{m+1}^V$ must be indecomposable.
  Since both the isomorphism classes of indecomposables and the isomorphism classes of truncated projectives with dimension vector $d(m,r)$ are parametrized by the same Grassmannian, this gives the second claim.

  As there exist representations with trivial endomorphism ring, the dimension vectors $d(m,r)$ are Schur roots.
  It follows that the set of indecomposable representations with trivial endomorphism ring forms a dense open subset of $R_{d(m,r)}(K(n))$, see for example \cite[Theorem 2.2]{sch}.
  This shows the last claim.
\end{proof}
\begin{remark}
  There is a more elegant way to prove the first part of Proposition~\ref{pro:indecomposables} using the notion of stability and moduli spaces.
  Actually, fixing the standard stability induced by the linear form $\Theta:\ZZ^{Q_0}\to\ZZ$ defined by $\Theta(d)=d_2$, it can be shown that all indecomposables are stable and that the moduli space of stable representations is in fact $\Gr_l(\CC^n)$.
  We opted for the proof above because the notion of stability would only be used at this point and we wanted to keep the exposition as simple as possible. 
\end{remark}

\begin{lemma}
  \label{le:basic homological properties}
  For $V\in\Gr(\cH_m)$, $m\ge1$, we have $\Hom(P_{m+1}^V,P_\ell)=0=\Ext(P_\ell,P_{m+1}^V)$ for all $\ell\ge1$.
\end{lemma}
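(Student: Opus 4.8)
The plan is to use the defining short exact sequence \eqref{eq:truncated preprojectives} together with the standard homological vanishing between preprojective representations from Theorem~\ref{th:rigids}. First I would apply the functor $\Hom(-,P_\ell)$ to the sequence $0\to P_m\otimes V\to P_{m+1}\to P_{m+1}^V\to 0$, obtaining the long exact sequence
\[
0\to\Hom(P_{m+1}^V,P_\ell)\to\Hom(P_{m+1},P_\ell)\to\Hom(P_m\otimes V,P_\ell)\to\Ext(P_{m+1}^V,P_\ell)\to\Ext(P_{m+1},P_\ell)\to\cdots.
\]
The point is that $P_{m+1}^V$ sits between two genuine preprojectives $P_m$ and $P_{m+1}$, so it should inherit their homological behavior relative to the whole preprojective component.

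For $\Hom(P_{m+1}^V,P_\ell)=0$ I would split into cases according to $\ell$. When $\ell\le m+1$, Theorem~\ref{th:rigids} gives $\Hom(P_{m+1},P_\ell)=0$, hence $\Hom(P_{m+1}^V,P_\ell)=0$ directly from left-exactness of the sequence above. When $\ell\ge m+2$, I cannot use that vanishing, so instead I would argue that any nonzero $f:P_{m+1}^V\to P_\ell$ composed with the surjection $\pi_V:P_{m+1}\to P_{m+1}^V$ yields a nonzero map $P_{m+1}\to P_\ell$; but a map $P_{m+1}\to P_\ell$ with $\ell>m+1$ is necessarily injective (since $P_{m+1}$ is indecomposable and $\Hom(P_{m+1},I_1)=0$, or more simply because preprojectives have no submodules which are proper quotients of $P_{m+1}$ of the relevant dimension — here I would rather invoke that the only maps between these preprojectives are either zero or injective, which follows from exceptionality and the AR-structure), so $\pi_V$ would be injective, contradicting $V\ne0$. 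Alternatively, and more cleanly, I would note $\Hom(P_{m+1}^V,P_\ell)\hookrightarrow\Hom(P_{m+1},P_\ell)$ always, and then observe that the image must vanish because such a map would factor through $\pi_V$ and hence kill $\Im(ev_V)=P_m\otimes V\ne0$, forcing it to be a map $P_{m+1}\to P_\ell$ which is not injective — but any nonzero homomorphism between indecomposable preprojectives of a tame-or-wild Kronecker quiver is injective. I expect this injectivity fact to need a short justification citing the hereditary structure.

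For $\Ext(P_\ell,P_{m+1}^V)=0$ I would apply $\Hom(P_\ell,-)$ to \eqref{eq:truncated preprojectives}, giving
\[
\cdots\to\Ext(P_\ell,P_m\otimes V)\to\Ext(P_\ell,P_{m+1})\to\Ext(P_\ell,P_{m+1}^V)\to 0,
\]
where the last term is the cokernel since $\rep K(n)$ is hereditary. When $\ell\le m+1$, Theorem~\ref{th:rigids} gives $\Ext(P_\ell,P_{m+1})=0$, so the claim is immediate. When $\ell\ge m+2$, I would use that $\Ext(P_\ell,P_m\otimes V)\cong V\otimes\Ext(P_\ell,P_m)$ and that $\Ext(P_\ell,P_m)=0$ for $\ell>m$ (again Theorem~\ref{th:rigids}), so the map $\Ext(P_\ell,P_m\otimes V)\to\Ext(P_\ell,P_{m+1})$ is zero; but then surjectivity onto $\Ext(P_\ell,P_{m+1}^V)$ forces the latter to be a quotient of $\Ext(P_\ell,P_{m+1})$ — which does not obviously vanish for $\ell\ge m+2$. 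So here I would instead exploit Lemma~\ref{le:basic homological properties}'s companion statements or, better, use the long exact sequence in the other variable: from $\Hom(P_\ell,P_{m+1})\to\Hom(P_\ell,P_{m+1}^V)\to\Ext(P_\ell,P_m\otimes V)\to\Ext(P_\ell,P_{m+1})$ together with $\Ext(P_\ell,P_m)=0$, and then combine with a dimension count using the Euler form $\langle P_\ell,P_{m+1}^V\rangle=\langle P_\ell,P_{m+1}\rangle-\dim V\langle P_\ell,P_m\rangle$. The cleanest route: since $\udim P_{m+1}^V$ is a root and equals a truncated preprojective dimension vector, one can show $\Ext(P_\ell,P_{m+1}^V)=0$ follows because $P_{m+1}^V$ is again (by Proposition~\ref{pro:indecomposables} applied after reflection) an indecomposable of the form $P_{m'+1}^{V'}$ for the reflected data, reducing to a case already covered, or because all preprojectives $P_\ell$ with $\ell$ large have $\Ext$ vanishing against anything in the "earlier" part of the category.

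\textbf{Main obstacle.} The genuinely delicate case is $\ell\ge m+2$, where the naive long exact sequence arguments do not close because $\Hom(P_{m+1},P_\ell)$ and $\Ext(P_\ell,P_{m+1})$ need not vanish. The crux will be showing that the nonzero maps $P_{m+1}\to P_\ell$ are all injective (so cannot factor through the proper quotient $\pi_V$), and dually that the relevant $\Ext$ classes for $P_\ell$ against $P_{m+1}$ are "absorbed" by those against $P_m\otimes V$. I would expect to handle this either via the injectivity of preprojective morphisms (a standard fact about hereditary algebras which should be stated as a small lemma) or by reflecting the whole sequence using $\Sigma_1,\Sigma_2$ to move $\ell$ down into the range $\ell\le m+1$, invoking Remark~\ref{rem:reflection recursion}(2) to preserve exactness and absence of preinjective summands, thereby reducing the hard case to the easy one.
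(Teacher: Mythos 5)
The paper's proof is a one-line structural argument: over $K(n)$, any indecomposable $X$ with $\Hom(X,P_\ell)\ne0$ or $\Ext(P_\ell,X)\ne0$ must itself be a preprojective $P_r$ (with $r\le\ell$, resp.\ $r\le\ell-2$); but $P_{m+1}^V$ is indecomposable by Proposition~\ref{pro:indecomposables} and not rigid by Lemma~\ref{le:unique morphisms}, so it is not preprojective, and the vanishing follows immediately. You take an entirely different route, attempting to close long exact sequences coming from \eqref{eq:truncated preprojectives} with the orthogonality relations in Theorem~\ref{th:rigids}.

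Your $\Hom$ argument, while more laborious than necessary, can in principle be completed (the injectivity of nonzero maps $P_{m+1}\to P_\ell$ for $\ell>m+1$ is true and can be proved from the classification of subrepresentations of preprojectives, though you correctly flag that it needs its own justification). The genuine gap is in the $\Ext$ direction for $\ell\ge m+2$. You write ``$\Ext(P_\ell,P_m)=0$ for $\ell>m$ (again Theorem~\ref{th:rigids})'', but Theorem~\ref{th:rigids} gives $\Ext(P_r,P_m)=0$ only for $1\le r\le m$; in the opposite range the groups do not vanish. Indeed by the Auslander--Reiten formula $\Ext(P_\ell,P_m)\cong D\Hom(P_m,\tau P_\ell)=D\Hom(P_m,P_{\ell-2})\ne0$ precisely when $\ell\ge m+2$, which is exactly the range you are in. Consequently the connecting map $\Ext(P_\ell,P_m\otimes V)\to\Ext(P_\ell,P_{m+1})$ is not zero, your reduction fails, and you are left needing to show that this map is surjective --- which you do not do. The alternatives you sketch (Euler-form dimension counts, appeal to the lemma's ``companion statements,'' a reflection reduction) are not carried out, and the second is circular as phrased. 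The reflection idea could in principle be made to work (using Lemma~\ref{le:reflected truncated preprojectives}/Remark~\ref{rem:reflection recursion}(2) to move $\ell$ into the range covered by the easy case), but as written the proposal does not contain a proof of the $\Ext$ vanishing. The paper's route via the predecessor-closedness of the preprojective component and the non-rigidity of $P_{m+1}^V$ avoids all of this and is the cleaner argument.
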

\begin{proof}
  Recall that $\Hom(X,P_\ell)\ne0$ (resp. $\Ext(P_\ell,X)\ne0$) for some indecomposable representation $X$ and some $\ell\ge1$ implies that $X$ is preprojective of the form $P_r$ with $1\le r\le\ell$ (resp. $1\le r\le\ell-2$).
  However, $P_{m+1}^V$ is indecomposable by Proposition~\ref{pro:indecomposables} and it cannot be preprojective as it is not rigid by Lemma~\ref{le:unique morphisms}.
\end{proof}

\begin{lemma}
  \label{le:reflected truncated preprojectives}
  For $V\in\Gr(\cH_m)$, $m\ge2$, the representation $\Sigma_1(P_{m+1}^V)^\sigma$ is also truncated preprojective.
\end{lemma}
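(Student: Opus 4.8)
The plan is to apply the reflection functor $\Sigma_1$ directly to the defining short exact sequence \eqref{eq:truncated preprojectives} of $P_{m+1}^V$ and identify the resulting sequence (after the $\sigma$-twist) with the defining sequence of another truncated preprojective. Concretely, I would start from
\[0\longrightarrow P_m\otimes V\stackrel{ev_V}{\longrightarrow} P_{m+1}\stackrel{\pi_V}{\longrightarrow} P_{m+1}^V\longrightarrow 0,\]
and argue first that none of the three terms is preinjective: $P_m\otimes V$ and $P_{m+1}$ are preprojective, and $P_{m+1}^V$ has no preinjective summand since it is indecomposable by Proposition~\ref{pro:indecomposables} with dimension vector $d(m,\dim V)$, which for $m\geq 2$ is visibly not a preinjective dimension vector. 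In particular no term has $S_2$ as a summand, so $\Sigma_2^2$ is the identity on these and the relevant sequences stay exact. By Remark~\ref{rem:reflection recursion}(2) (or directly, since $\Sigma_1$ is exact on $\rep_{\langle 1\rangle}$, and one checks $S_1$ is not a summand of any term — true for $P_m\otimes V$ and $P_{m+1}$ since $m\geq 2$, and for $P_{m+1}^V$ by indecomposability), applying $\Sigma_1$ and then the twist $\sigma$ gives a short exact sequence
\[0\longrightarrow (\Sigma_1(P_m\otimes V))^\sigma\longrightarrow (\Sigma_1 P_{m+1})^\sigma\longrightarrow \Sigma_1(P_{m+1}^V)^\sigma\longrightarrow 0.\]

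Next I would identify the two outer terms. Using Remark~\ref{rem:reflection recursion}(1), which gives $P_{k+1}^\sigma=\Sigma_2 P_k$ and hence (applying $\Sigma_1$, and using $\Sigma_1(M^\sigma)=(\Sigma_2 M)^\sigma$, $\Sigma_1\Sigma_1=\mathrm{id}$ on the relevant subcategory) a clean formula $(\Sigma_1 P_k)^\sigma=P_{k-1}$ for $k\geq 2$, I get $(\Sigma_1 P_{m+1})^\sigma=P_m$ and $(\Sigma_1 P_m)^\sigma=P_{m-1}$ for $m\geq 2$. Since $\Sigma_1$ is additive, $\Sigma_1(P_m\otimes V)^\sigma\cong P_{m-1}\otimes V$ as a $\dim V$-fold sum. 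So the sequence becomes
\[0\longrightarrow P_{m-1}\otimes V'\longrightarrow P_m\longrightarrow \Sigma_1(P_{m+1}^V)^\sigma\longrightarrow 0\]
for a subspace $V'\subseteq \cH_{m-1}=\Hom(P_{m-1},P_m)$ of dimension $\dim V$. The remaining point is that the inclusion map here really is (up to the isomorphism $\cP_m\cong\CC\Gr(\cH_{m-1})$) an evaluation map $ev_{V'}$ for some $V'\in\Gr(\cH_{m-1})$ with $\dim V'=\dim V$; granting that, the cokernel is by definition $P_m^{V'}$, which is truncated preprojective, and we are done.

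The main obstacle will be this last identification: showing the reflected inclusion $P_{m-1}\otimes V'\hookrightarrow P_m$ is the evaluation map associated to a genuine subspace $V'\in\Gr(\cH_{m-1})$ rather than merely some injection from a $\dim V$-fold power of $P_{m-1}$. I would handle it by noting that $\Hom(P_{m-1}\otimes \C^{\dim V},P_m)\cong \cH_{m-1}^{\dim V}$ and that the reflected map, being the image of the rank-$\dim V$ evaluation datum $V\hookrightarrow\cH_m$ under the equivalence-induced isomorphism on Hom-spaces, corresponds to a rank-$\dim V$ tuple of elements of $\cH_{m-1}$, i.e.\ to an injective linear map $\C^{\dim V}\to\cH_{m-1}$ whose image is the desired $V'$ (properness of $V'$ follows since the cokernel is nonzero of dimension $d(m-1,\dim V)\neq 0$, using $\dim V\leq n-1$). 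Alternatively, and perhaps more cleanly, one can avoid tracking the map explicitly: the cokernel $\Sigma_1(P_{m+1}^V)^\sigma$ is indecomposable (reflection functors preserve indecomposability) with dimension vector $\Sigma_1(d(m,\dim V))^\sigma=d(m-1,\dim V)$, so by Proposition~\ref{pro:indecomposables}(2) it is automatically of the form $P_m^{V'}$ for some $V'\in\Gr(\cH_{m-1})$ with $\dim V'=\dim V$ — which is exactly the assertion. This second route makes Proposition~\ref{pro:indecomposables} do all the work and is the one I would write up.
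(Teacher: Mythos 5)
Your proposal is correct and follows the same basic strategy as the paper: both apply $\Sigma_1(-)^\sigma$ to the defining sequence \eqref{eq:truncated preprojectives} to produce an exact sequence $0\to P_{m-1}\otimes V\to P_m\to\Sigma_1(P_{m+1}^V)^\sigma\to0$. The paper's proof is quite terse at the end — it writes this sequence and simply says ``which gives the claim'', leaving implicit exactly the point you flag as the ``main obstacle'': that an arbitrary injection $P_{m-1}\otimes V\into P_m$ is indeed an evaluation map $ev_{V'}$ for some genuine $V'\in\Gr(\cH_{m-1})$. You handle this honestly, either by tracking the map through the $\Hom$-isomorphism, or (your preferred route) by sidestepping the map entirely and observing that the cokernel is indecomposable with dimension vector $d(m-1,\dim V)$, hence truncated preprojective by Proposition~\ref{pro:indecomposables}(2). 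That second route is arguably cleaner than what the paper does: it does not need the exact sequence at all, only that $\Sigma_1(-)^\sigma$ preserves indecomposability and that one can compute the reflected dimension vector. The one small imprecision is your appeal to Remark~\ref{rem:reflection recursion}(2), which as stated covers $(\Sigma_1\Sigma_2)^k$ and $\Sigma_2(\Sigma_1\Sigma_2)^k$ rather than a lone $\Sigma_1$; your parenthetical fallback (exactness of $\Sigma_1$ because no term has $S_1$ as a summand) is what is actually needed, and for exactness of the reflected sequence it is the absence of $S_1$ in the \emph{kernel} term $P_m\otimes V$ that does the work — automatic for $m\ge2$.
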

\begin{proof}
  By Proposition~\ref{pro:indecomposables} and Lemma~\ref{le:unique morphisms}, $P_{m+1}^V$ is indecomposable but not rigid.
  In particular, $P_{m+1}^V$ does not have a summand isomorphic to $S_1$.
  Thus, following Remark~\ref{rem:reflection recursion}, we may apply the functor $\Sigma_1(-)^\sigma$ to the sequence \eqref{eq:truncated preprojectives} to get the exact sequence
  \[0\longrightarrow P_{m-1}\otimes V\xrightarrow{\Sigma_1(ev_V)^\sigma} P_m\longrightarrow \Sigma_1(P_{m+1}^V)^\sigma\longrightarrow 0\]
  which gives the claim.
\end{proof}

\begin{lemma}
  \label{le:projective subrepresentations}
  For $V\in \Gr(\cH_m)$, $m\ge1$, any proper subrepresentation $X\subsetneq P_{m+1}^V$ can be written as a direct sum of preprojective representations $P_r$ with $1\le r\le m$.
\end{lemma}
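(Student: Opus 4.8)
The plan is to argue by induction on $m$. For the base case $m=1$ I would use the explicit description from Proposition~\ref{pro:indecomposables}: writing $r=\dim V$, the representation $P_2^V$ is, up to isomorphism, the representation with $\CC^{n-r}$ at vertex $1$, $\CC$ at vertex $2$, and structure maps forming an $(n-r)\times n$ matrix of maximal rank; in particular the images of the $n$ structure maps span the entire space at vertex $1$. Hence if $X=(X_1,X_2)\subsetneq P_2^V$ is a proper subrepresentation, then $X_2=0$ (otherwise compatibility with the structure maps would force $X_1=\CC^{n-r}$ and $X=P_2^V$), so $X\cong S_1^{\dim X_1}=P_1^{\dim X_1}$, which is of the required form.

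For the inductive step, fix $m\ge2$, assume the statement for $m-1$, and let $X\subsetneq P_{m+1}^V$ be proper. First, using Krull--Schmidt, split off the $S_1$-summands, writing $X\cong P_1^a\oplus X'$ with $X'$ having no direct summand isomorphic to $S_1=P_1$. Since copies of $P_1$ are already of the desired form, it suffices to show that $X'$ is a direct sum of preprojectives $P_r$ with $1\le r\le m$, and we may assume $X'\ne0$; note that $X'$ is (isomorphic to) a proper subrepresentation of $P_{m+1}^V$. The crucial step is to apply the BGP-reflection functor $\Sigma_1$ at the sink vertex $1$, followed by $\sigma$. Because the sink reflection $\Sigma_1$ is given on objects by a kernel, it is left exact on all of $\rep K(n)$ and therefore preserves the inclusion $X'\hookrightarrow P_{m+1}^V$; applying $\sigma$ then yields a monomorphism $(\Sigma_1 X')^\sigma\hookrightarrow\Sigma_1(P_{m+1}^V)^\sigma$. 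By Lemma~\ref{le:reflected truncated preprojectives}, the dimension vector read off from its proof (namely $d(m-1,\dim V)$), and the classification in Proposition~\ref{pro:indecomposables}, the target is $P_m^{V''}$ for some $V''\in\Gr(\cH_{m-1})$ with $\dim V''=\dim V$. Moreover $(\Sigma_1 X')^\sigma$ must be a \emph{proper} subrepresentation of $P_m^{V''}$: equality would give $\Sigma_1 X'\cong\Sigma_1 P_{m+1}^V$, hence $X'\cong P_{m+1}^V$ since $\Sigma_1$ is an equivalence on $\rep_{\langle1\rangle}K(n)$ — and both $X'$ and $P_{m+1}^V$ lie there, the latter because it is indecomposable with dimension vector different from $(1,0)$ — contradicting properness.

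Now I would invoke the induction hypothesis to write $(\Sigma_1 X')^\sigma\cong\bigoplus_j P_{r_j}$ with $1\le r_j\le m-1$, and transport this back through $\sigma$ and a quasi-inverse of $\Sigma_1$: applying $\sigma$ gives $\Sigma_1 X'\cong\bigoplus_j P_{r_j}^\sigma$, and then the identity $\Sigma_1^{-1}(P_r^\sigma)\cong P_{r+1}$ — which follows from the recursions $P_{r+1}^\sigma=\Sigma_2 P_r$ and $\Sigma_1(M^\sigma)=(\Sigma_2 M)^\sigma$ of Remark~\ref{rem:reflection recursion} — yields $X'\cong\bigoplus_j P_{r_j+1}$ with $2\le r_j+1\le m$. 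Hence $X\cong P_1^a\oplus\bigoplus_j P_{r_j+1}$ is a direct sum of preprojectives $P_r$ with $1\le r\le m$, completing the induction.

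I expect the only real difficulty to be the bookkeeping around the reflection functor rather than anything deep. Since $\Sigma_1$ annihilates $S_1$ and is an equivalence only on $\rep_{\langle1\rangle}K(n)$, one must genuinely first remove the $S_1$-summands (harmless, being copies of $P_1$) before reflecting; one must also keep careful track of the two incarnations of $\Sigma_1$ (the sink reflection $\rep K(n)\to\rep K(n)^{op}$ versus its quasi-inverse) and of the instances of $\sigma$, and verify that $\Sigma_1(P_{m+1}^V)^\sigma$ is the truncated preprojective $P_m^{V''}$ with exactly the parameters needed to feed the induction hypothesis. None of this is deep, but it is the part most prone to error.
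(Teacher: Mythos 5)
Your proof follows essentially the same route as the paper's: induction on $m$, an explicit base-case argument using the dimension vector $(n-r,1)$, and for $m\ge2$ splitting off the $S_1$-summands and transporting the problem through the functor $\Sigma_1(-)^\sigma$ to the truncated preprojective $\Sigma_1(P_{m+1}^V)^\sigma$ of Lemma~\ref{le:reflected truncated preprojectives}, whose proper subrepresentations are controlled by the induction hypothesis. The paper simply phrases the step in the reverse direction (producing subrepresentations of $P_{m+1}^V$ by applying $\Sigma_2(-)^\sigma$ to subrepresentations of $\Sigma_1(P_{m+1}^V)^\sigma$) and leaves implicit the left-exactness/properness bookkeeping that you spell out; there is no substantive difference.
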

\begin{proof}
  We proceed by induction on $m$.
  When $m=1$, the dimension vector of $P_{m+1}^V$ is $(\codim_{\cH_m} V,1)$.
  In particular, it is immediate that each proper subrepresentation of $P_{m+1}^V$ is isomorphic to $P_1^k$ for some $0\le k\le\codim_{\cH_m} V$.

  For $m\ge2$, we observe by induction that Lemma~\ref{le:reflected truncated preprojectives} implies that any subrepresentation of $P_{m+1}^V$ which has no summand isomorphic to $P_1$ must be a direct sum of preprojective representations $P_r$ with $2\le r\le m$.
  Indeed, each of these is obtained from a subrepresentation of $\Sigma_1(P_{m+1}^V)^\sigma$ by applying the functor $\Sigma_2(-)^\sigma$ and the claim follows from the recursions in Remark~\ref{rem:reflection recursion} for preprojective representations.
\end{proof}

For $V\in \Gr(\cH_m)$, $m\ge1$, any subspace $W\subset V$ gives rise to an exact sequence
\[\xymatrix{0 \ar[r] & P_m\otimes(V/W) \ar[r]^(.6){ev} & P_{m+1}^W \ar[r] & P_{m+1}^V \ar[r] & 0},\]
where the left hand morphism above is the natural evaluation morphism coming from Lemma~\ref{le:truncated homomorphisms}.
Each such sequence has the following almost-split property for proper subrepresentations of $P_{m+1}^V$.
\begin{corollary}
  \label{cor:base fibers}
  Consider $V,W\in \Gr(\cH_m)$, $m\ge1$, with $W\subset V$.
  Given any proper subrepresentation $X\subsetneq P_{m+1}^V$ and any subrepresentation $Z\subset P_m\otimes(V/W)$, there is a subrepresentation of $P_{m+1}^W$ isomorphic to $Z\oplus X$ which fits into a commutative diagram
  \[\xymatrix{0 \ar[r] & Z \ar[d]\ar[r] & Z\oplus X \ar[r]\ar[d] & X \ar[d]\ar[r] & 0 \\
    0 \ar[r] & P_m\otimes(V/W) \ar[r] & P_{m+1}^W \ar[r] & P_{m+1}^V \ar[r] & 0}\]
\end{corollary}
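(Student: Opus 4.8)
The plan is to realize the desired subrepresentation as a preimage under the projection $\pi\colon P_{m+1}^W\to P_{m+1}^V$ appearing in the exact sequence
\[0\longrightarrow P_m\otimes(V/W)\longrightarrow P_{m+1}^W\stackrel{\pi}{\longrightarrow} P_{m+1}^V\longrightarrow 0,\]
after first splitting off a copy of $X$. Concretely, I would pull this sequence back along the inclusion $\iota\colon X\hookrightarrow P_{m+1}^V$. Setting $E:=\pi^{-1}(X)\subset P_{m+1}^W$, this produces a short exact sequence
\[0\longrightarrow P_m\otimes(V/W)\longrightarrow E\longrightarrow X\longrightarrow 0\]
whose extension class lies in $\Ext(X,P_m\otimes(V/W))$.

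The key step is to show this Ext group vanishes. Since $X$ is a \emph{proper} subrepresentation of $P_{m+1}^V$, Lemma~\ref{le:projective subrepresentations} allows one to write $X\cong\bigoplus_{r=1}^{m}P_r^{\,a_r}$. As $\Ext$ is additive in both variables and $P_m\otimes(V/W)$ is a direct sum of $\dim(V/W)$ copies of $P_m$, the vanishing reduces to $\Ext(P_r,P_m)=0$ for $1\le r\le m$, which is part of Theorem~\ref{th:rigids}. Hence the pulled-back sequence splits; I would fix a section $s\colon X\to E$, so that $E=\big(P_m\otimes(V/W)\big)\oplus s(X)$ as subrepresentations of $P_{m+1}^W$.

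It then remains to take the subrepresentation $Z\oplus s(X)\subset E\subset P_{m+1}^W$. This is isomorphic to $Z\oplus X$ because $s$ is injective and $s(X)\cap\big(P_m\otimes(V/W)\big)=0$, so in particular $Z\cap s(X)=0$. A direct check shows that $\pi$ restricts to the canonical projection $Z\oplus s(X)\to X$ — it annihilates $Z\subset P_m\otimes(V/W)=\ker\pi$ and carries $s(X)$ isomorphically onto $X$ — that the intersection of $Z\oplus s(X)$ with $\ker\pi$ is exactly $Z$, and that the evident inclusions $Z\hookrightarrow P_m\otimes(V/W)$, $Z\oplus s(X)\hookrightarrow P_{m+1}^W$, $X\hookrightarrow P_{m+1}^V$ make the two squares of the required diagram commute.

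None of these final verifications is difficult; the only real content is the $\Ext$-vanishing, which is why I single it out as the main point. The one mild subtlety worth flagging is that the argument genuinely uses $X\subsetneq P_{m+1}^V$: this is what forces the decomposition of $X$ to involve only indices $r\le m$, and hence what makes Theorem~\ref{th:rigids} applicable — exactly the input provided by Lemma~\ref{le:projective subrepresentations}.
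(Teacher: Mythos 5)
Your proof is correct and takes essentially the same approach as the paper: pull back the defining sequence along $X\hookrightarrow P_{m+1}^V$, use Lemma~\ref{le:projective subrepresentations} to decompose $X$ into preprojectives $P_r$ with $r\le m$, invoke $\Ext(P_r,P_m)=0$ to split the pullback, and then pass to $Z\oplus s(X)$. The paper presents this more tersely but the underlying argument is identical.
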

\begin{proof}
  Observe that $\Ext(P_r,P_m)=0$ for $1\le r\le m$ and, since $X$ is a direct sum of preprojectives $P_r$ with $1\leq r\leq m$, the upper pullback sequence
  \[\xymatrix{0 \ar[r] & P_m\otimes(V/W) \ar@{=}[d]\ar[r] & Y \ar[r]\ar[d]\ar@{}[dr]|(.7){\lrcorner} & X \ar[d]\ar[r] & 0 \\
    0 \ar[r] & P_m\otimes(V/W) \ar[r] & P_{m+1}^W \ar[r] & P_{m+1}^V \ar[r] & 0}\]
  must split.
  The claim for arbitrary subrepresentations $Z\subset P_m\otimes(V/W)$ is an immediate consequence of this splitting.
\end{proof}

\begin{lemma}
  \label{le:unique truncated extension}
  For $V\in \Gr(\cH_m)$, $m\ge2$, the space $\Ext(P_{m+1}^V,P_{m-1})$ is one-dimensional and spanned by the extension
  \begin{equation}
    \label{eq:unique extension}
    \xymatrix{0 \ar[r] & P_{m-1} \ar[r]^(.375){\kappa_V} & P_m\otimes(\cH_m/V) \ar[r]^(.625){ev} & P_{m+1}^V \ar[r] & 0}.
  \end{equation}
\end{lemma}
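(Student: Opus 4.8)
The plan is to compute the dimension of $\Ext(P_{m+1}^V,P_{m-1})$ homologically and then exhibit the sequence \eqref{eq:unique extension} as a nonzero element, so that by one-dimensionality it must span. To set up the computation, I would apply the functor $\Hom(-,P_{m-1})$ to the defining short exact sequence \eqref{eq:truncated preprojectives}, namely $0\to P_m\otimes V\to P_{m+1}\to P_{m+1}^V\to 0$. Since $K(n)$ is hereditary, the resulting long exact sequence is
\[\xymatrix@C12pt{0\ar[r] & \Hom(P_{m+1}^V,P_{m-1})\ar[r] & \Hom(P_{m+1},P_{m-1})\ar[r] & \Hom(P_m\otimes V,P_{m-1})\ar[r] & \Ext(P_{m+1}^V,P_{m-1})\ar[r] & \Ext(P_{m+1},P_{m-1})\ar[r] & \Ext(P_m\otimes V,P_{m-1})\ar[r] & 0}.\]
By Theorem~\ref{th:rigids} we have $\Hom(P_{m+1},P_{m-1})=0$ (as $m-1\le m+1$) and $\Ext(P_{m+1},P_{m-1})=0$ (take $r=m-1\le m+1$). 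Also $\Ext(P_m\otimes V,P_{m-1})=\Ext(P_m,P_{m-1})\otimes V=0$ by the same theorem with $r=m-1\le m$. Hence the sequence degenerates to the isomorphism
\[\Ext(P_{m+1}^V,P_{m-1})\cong\Hom(P_m\otimes V,P_{m-1})\cong\Hom(P_m,P_{m-1})\otimes V.\]
Now $\Hom(P_m,P_{m-1})$ is one-dimensional: by Remark~\ref{rem:reflection recursion} the AR translate gives $\tau P_{m+1}=P_{m-1}$, and $\Hom(P_m,\tau P_{m+1})\cong D\Ext(P_{m+1},P_m)$ by the AR formula, which is one-dimensional since $\dim\Ext(P_{m+1},P_m)=-\langle P_{m+1},P_m\rangle$ equals $u_{m+1}u_{m-1}-nu_mu_{m-1}+u_m^2 = u_m(u_m-u_{m-1}\cdot 0)$... more directly, it follows from the AR sequence \eqref{eq:AR sequence} together with $\Hom(P_{m-1},P_m)$ and $\Hom(P_m\otimes\cH_m,P_m)$ that the middle map has one-dimensional cokernel. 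In any case one can cite the well-known fact, or deduce it from \eqref{eq:AR sequence} directly. Wait—I should be careful: I actually want $\dim\Hom(P_m,P_{m-1})=1$. Alternatively, note $\Sigma_1(-)^\sigma$ sends the pair $(P_m,P_{m-1})$ to a pair of the form considered, reducing to $m=1,2$; or simply invoke that $\cH_{m-1}:=\Hom(P_{m-1},P_m)$ has dimension $u_m u_{m-1} - \ldots$. The cleanest route: $\dim\Hom(P_m,P_{m-1})=\dim\Hom(P_m,\tau P_{m+1})=\dim\Ext(P_{m+1},P_m)=\dim\Hom(P_m,P_{m+1})$ by AR duality combined with the fact that $\Hom(P_{m+1},P_m)=0$ and Euler form bookkeeping; and $\dim\cH_m=\dim\Hom(P_m,P_{m+1})$, which by \eqref{eq:AR sequence} satisfies a known recursion. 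Rather than chase this, I would simply record $\dim\Hom(P_m,P_{m-1})=1$ as following from Theorem~\ref{th:rigids} and the AR sequence \eqref{eq:AR sequence}, since the orthogonality conditions there force the homomorphism space between consecutive preprojectives to be one-dimensional (this is the statement that the AR sequence is the universal extension). Thus $\dim\Ext(P_{m+1}^V,P_{m-1})=\dim V$.

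Hmm, but the claim asserts $\Ext(P_{m+1}^V,P_{m-1})$ is \emph{one}-dimensional, not $\dim V$-dimensional. So I have miscomputed somewhere—let me reconsider. The issue: $\Hom(P_m\otimes V,P_{m-1})\cong\Hom(P_m,P_{m-1})\otimes V^*$ or $\otimes V$? Since $P_m\otimes V=\bigoplus_{\dim V}P_m$, indeed $\Hom(P_m\otimes V,P_{m-1})\cong\Hom(P_m,P_{m-1})^{\oplus\dim V}$, which would be $\dim V$-dimensional if $\Hom(P_m,P_{m-1})$ is one-dimensional. That contradicts the lemma unless $\Hom(P_m,P_{m-1})=0$. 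And indeed by Theorem~\ref{th:rigids}, $\Hom(P_m,P_r)=0$ for $1\le r\le m$, so in particular $\Hom(P_m,P_{m-1})=0$! Therefore $\Hom(P_m\otimes V,P_{m-1})=0$, and the long exact sequence instead forces me to look one step further, or I have the direction of the Hom wrong. Re-examining: applying $\Hom(-,P_{m-1})$ to $0\to P_m\otimes V\to P_{m+1}\to P_{m+1}^V\to 0$ gives terms $\Hom(P_{m+1}^V,P_{m-1})$, $\Hom(P_{m+1},P_{m-1})=0$, $\Hom(P_m\otimes V,P_{m-1})=0$, then $\Ext(P_{m+1}^V,P_{m-1})$, $\Ext(P_{m+1},P_{m-1})=0$, $\Ext(P_m\otimes V,P_{m-1})$. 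So in fact $\Ext(P_{m+1}^V,P_{m-1})\cong\Ext(P_m\otimes V,P_{m-1})\cong\Ext(P_m,P_{m-1})\otimes V$. But $\Ext(P_m,P_{m-1})=0$ too by Theorem~\ref{th:rigids} ($r=m-1\le m$). That gives $\Ext(P_{m+1}^V,P_{m-1})=0$, which is still wrong! So I must apply a \emph{different} functor.

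The right approach is to instead use the sequence \eqref{eq:unique extension} itself—which exists by Lemma~\ref{le:truncated homomorphisms}, since $\Hom(P_m,P_{m+1}^V)\cong\cH_m/V$ produces the evaluation map $P_m\otimes(\cH_m/V)\to P_{m+1}^V$ whose kernel, by a dimension count ($\udim$) and Lemma~\ref{le:projective subrepresentations}/the argument in Lemma~\ref{le:reflected truncated preprojectives}, is $P_{m-1}$—and then apply $\Hom(P_{m+1}^V,-)$ to \eqref{eq:unique extension}, reading off
\[\cdots\to\Hom(P_{m+1}^V,P_m\otimes(\cH_m/V))\to\Hom(P_{m+1}^V,P_{m+1}^V)\to\Ext(P_{m+1}^V,P_{m-1})\to\Ext(P_{m+1}^V,P_m\otimes(\cH_m/V))\to\cdots\]
Now $\Hom(P_{m+1}^V,P_m)=0$ and $\Ext(P_{m+1}^V,P_m)=0$ by Lemma~\ref{le:basic homological properties}, so both outer terms vanish and we get $\Ext(P_{m+1}^V,P_{m-1})\cong\operatorname{End}(P_{m+1}^V)$, which is one-dimensional since $P_{m+1}^V$ is indecomposable with trivial endomorphism ring by Proposition~\ref{pro:indecomposables} and Lemma~\ref{le:unique morphisms}. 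This proves one-dimensionality, and since \eqref{eq:unique extension} is by construction a non-split short exact sequence (its middle term $P_m\otimes(\cH_m/V)$ is decomposable whereas $P_{m-1}\oplus P_{m+1}^V$ is not, as both summands are indecomposable of distinct dimension vectors, so the sequence cannot split), it represents a nonzero class and therefore spans $\Ext(P_{m+1}^V,P_{m-1})$.

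The main obstacle is establishing that the sequence \eqref{eq:unique extension} actually exists with kernel exactly $P_{m-1}$: one must check that the evaluation map $P_m\otimes(\cH_m/V)\to P_{m+1}^V$ coming from $\Hom(P_m,P_{m+1}^V)\cong\cH_m/V$ (Lemma~\ref{le:truncated homomorphisms}) is surjective with the correct kernel. Surjectivity and the identification of the kernel follow by a dimension-vector computation—$\udim(P_m\otimes(\cH_m/V)) - \udim P_{m+1}^V = \udim P_{m-1}$, using $\dim\cH_m = u_{m-1}\cdot(\text{something})$ and the Euler-form relation $\langle P_m,P_{m+1}\rangle$—together with Lemma~\ref{le:projective subrepresentations} to see that the kernel, being a subrepresentation of $P_m\otimes(\cH_m/V)$ with no proper obstruction, must be $P_{m-1}$; alternatively, apply $\Sigma_1(-)^\sigma$ as in Lemma~\ref{le:reflected truncated preprojectives} to reduce to the already-known AR sequence \eqref{eq:AR sequence}. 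Once that existence is in hand, the homological computation above is routine.
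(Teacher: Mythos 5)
There is a genuine gap, and it is instructive because your \emph{first} approach was actually the correct one — it is precisely the paper's proof — but you abandoned it owing to a misreading of Theorem~\ref{th:rigids}. You assert ``$\Ext(P_{m+1},P_{m-1})=0$ (take $r=m-1\le m+1$)'', but the theorem gives $\Ext(P_r,P_m)=0$ for $1\le r\le m$, i.e.\ the \emph{smaller}-index preprojective must sit in the \emph{left} slot. What the theorem yields here is $\Ext(P_{m-1},P_{m+1})=0$, not $\Ext(P_{m+1},P_{m-1})=0$. In fact $\Ext(P_{m+1},P_{m-1})$ is one-dimensional: since $\tau P_{m+1}=P_{m-1}$, the Auslander--Reiten formula gives $\Ext(P_{m+1},P_{m-1})\cong D\Hom(P_{m-1},\tau P_{m+1})=D\operatorname{End}(P_{m-1})\cong\CC$. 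Plugging this into your own long exact sequence (whose outer terms $\Hom(P_m\otimes V,P_{m-1})$ and $\Ext(P_m\otimes V,P_{m-1})$ do vanish, as you correctly argued) immediately gives $\Ext(P_{m+1}^V,P_{m-1})\cong\Ext(P_{m+1},P_{m-1})\cong\CC$. This is exactly the paper's argument; the paper then exhibits the spanning extension as the pullback of the AR sequence \eqref{eq:AR sequence} along $P_{m+1}^V\hookleftarrow P_{m+1}$, which cleanly produces \eqref{eq:unique extension} without needing to verify its existence separately.

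The fallback approach you switched to contains a second error and is also structurally circular. You claim ``$\Hom(P_{m+1}^V,P_m)=0$ and $\Ext(P_{m+1}^V,P_m)=0$ by Lemma~\ref{le:basic homological properties}.'' That lemma asserts $\Hom(P_{m+1}^V,P_\ell)=0$ and $\Ext(P_\ell,P_{m+1}^V)=0$; the Ext vanishing has the truncated preprojective in the \emph{second} slot, not the first. In fact, applying $\Hom(-,P_m)$ to \eqref{eq:truncated preprojectives} gives $\Ext(P_{m+1}^V,P_m)\cong\Hom(P_m\otimes V,P_m)\cong V^*$, which is nonzero whenever $V\ne 0$, so the right-hand outer term $\Ext(P_{m+1}^V,P_m\otimes(\cH_m/V))$ in your second long exact sequence does not vanish. (One can still push the argument through by identifying the map to $\Ext(P_{m+1}^V,P_{m+1}^V)\cong V^*\otimes(\cH_m/V)$ as an isomorphism, but this needs to be proved and is not free.) Finally, as you yourself note, this approach presupposes that the sequence \eqref{eq:unique extension} exists with kernel exactly $P_{m-1}$, which is essentially the content of the lemma; the paper's pullback construction avoids this circularity entirely. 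Go back to your first computation, correct the value of $\Ext(P_{m+1},P_{m-1})$, and you will have the proof.
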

\begin{proof}
  Applying the functor $\Hom(-,P_{m-1})$ to the sequence \eqref{eq:truncated preprojectives} gives an isomorphism $\Ext(P_{m+1}^V,P_{m-1})\cong\Ext(P_{m+1},P_{m-1})$ with a one-dimensional space.
  Writing $X$ for the unique extension of $P_{m+1}^V$ by $P_{m-1}$, this isomorphism gives rise to the following pullback diagram:
  \[\xymatrix{ & & 0 \ar[d] & 0\ar[d] & \\
    & & P_m\otimes V \ar@{=}[r]\ar[d] & P_m\otimes V\ar[d] & \\
    0 \ar[r] & P_{m-1} \ar@{=}[d]\ar[r]^(.425){\iota_{m-1}} & P_m\otimes \cH_m \ar[r]\ar[d]\ar@{}[dr]|(.7){\lrcorner} & P_{m+1} \ar[r]\ar[d] & 0\\
    0 \ar[r] & P_{m-1} \ar[r]^{\kappa_V} & X \ar[r]\ar[d] & P_{m+1}^V \ar[r]\ar[d] & 0\\
    & & 0 & 0 & }\]
  from which we immediately obtain the isomorphism $X\cong P_m\otimes(\cH_m/V)$.
\end{proof}

\begin{lemma}
  \label{le:preprojective homomorphism duality}
  The sequence \eqref{eq:AR sequence} gives rise to an isomorphism $\cH_m^*\cong \cH_{m-1}$.
\end{lemma}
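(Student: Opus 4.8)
The plan is to apply the contravariant functor $\Hom(-,P_m)$ to the Auslander--Reiten sequence \eqref{eq:AR sequence}, which exists for $m\ge2$, so the statement is understood in that range. This yields the long exact sequence
\[0\longrightarrow\Hom(P_{m+1},P_m)\longrightarrow\Hom(P_m\otimes\cH_m,P_m)\longrightarrow\Hom(P_{m-1},P_m)\longrightarrow\Ext(P_{m+1},P_m)\longrightarrow\cdots,\]
and the desired isomorphism will fall out once the two outer terms displayed are shown to vanish.

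First I would record the needed vanishings. Theorem~\ref{th:rigids} gives $\Hom(P_{m+1},P_r)=0$ for $1\le r\le m$; in particular $\Hom(P_{m+1},P_m)=0$. For the Ext term, I would use that $P_m\oplus P_{m+1}$ is rigid by Theorem~\ref{th:rigids}, hence $\Ext(P_{m+1},P_m)=0$ (alternatively, Auslander--Reiten duality together with Remark~\ref{rem:reflection recursion} gives $\Ext(P_{m+1},P_m)\cong\Hom(P_m,\tau P_{m+1})^*=\Hom(P_m,P_{m-1})^*=0$). Next I would identify the two middle terms: since $P_m$ is exceptional, $\Hom(P_m,P_m)=\CC$, and the tensor--hom identity for representations gives $\Hom(P_m\otimes\cH_m,P_m)\cong\Hom(P_m,P_m)\otimes\cH_m^*\cong\cH_m^*$, whereas $\Hom(P_{m-1},P_m)=\cH_{m-1}$ by definition. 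Substituting into the long exact sequence, the vanishing of the two outer terms forces an isomorphism $\cH_m^*\cong\cH_{m-1}$, as claimed.

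There is no genuine obstacle here beyond index bookkeeping. The single point requiring a little care is that $\Ext(P_{m+1},P_m)=0$ is not literally among the vanishings displayed in Theorem~\ref{th:rigids}, which only asserts $\Ext(P_r,P_m)=0$ for $1\le r\le m$, so it must be read off from the classification of rigid representations (or from Auslander--Reiten duality) as above. It is also worth noting that the hypothesis $m\ge2$ is essential: for $m=1$ one has $\cH_0=\Hom(P_0,P_1)=0$ with $P_0=0$, while $\cH_1\ne0$.
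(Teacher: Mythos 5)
Your argument is correct, and it takes a genuinely different route from the paper. You apply $\Hom(-,P_m)$ to the Auslander--Reiten sequence and kill the two flanking terms $\Hom(P_{m+1},P_m)$ and $\Ext(P_{m+1},P_m)$; this requires going slightly beyond the vanishings listed in Theorem~\ref{th:rigids}, which you handle correctly via rigidity of $P_m\oplus P_{m+1}$ (or Auslander--Reiten duality together with $\Hom(P_m,P_{m-1})=0$). The paper instead avoids any Ext-vanishing: it writes down the same linear map $\varphi\mapsto(\mathrm{id}\overline{\otimes}\varphi)\circ\iota_{m-1}$ that emerges from your identification $\Hom(P_m\otimes\cH_m,P_m)\cong\cH_m^*$, proves directly that it is injective by appealing to the injectivity of the evaluation maps $ev_V$ from Lemma~\ref{le:injective evaluation maps}, and concludes by a dimension count. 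So both proofs produce the same isomorphism; yours buys a cleaner, purely homological derivation that is also manifestly natural in the sequence, while the paper's avoids the long exact sequence entirely and uses only the Hom-side of Theorem~\ref{th:rigids} plus the explicit injectivity lemma already established. One small point worth being honest about: your proof tacitly invokes the tensor--hom adjunction for representations twisted by a finite-dimensional coefficient space; that is standard and unobjectionable, but it is exactly the step the paper sidesteps by working with the explicit contraction formula.
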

\begin{proof}
  Define a map $\cH_m^*\to \cH_{m-1}$ by $\varphi\mapsto\bar{\varphi}:=(id\overline{\otimes}\varphi)\circ\iota_{m-1}$, in words $\bar{\varphi}$ acts on $x\in P_{m-1}$ by contracting with the second factor in $\iota_{m-1}(x)$ to give an element of $P_m$.
  Suppose $\varphi\in \cH_m^*$ is a nonzero functional on $\cH_m$ and let $V\subsetneq \cH_m$ denote the kernel of $\varphi$.
  Then $\bar{\varphi}=0\in \cH_{m-1}$ if and only if the image of $\iota_{m-1}$ is contained in $P_m\otimes V\subsetneq P_m\otimes \cH_m$.
  But then Lemma~\ref{le:injective evaluation maps} implies $ev\circ\iota_{m-1}=ev_V\circ\iota_{m-1}\ne0$, a contradiction.
  Thus the map $\cH_m^*\to \cH_{m-1}$, $\varphi\mapsto\bar{\varphi}$ must be injective and hence an isomorphism.
\end{proof}
\begin{remark}
  The set $\Gr(\cH_m)$ is naturally a poset and Lemma~\ref{le:preprojective homomorphism duality} gives an identification of the opposite poset $\Gr(\cH_m)^{op}\cong \Gr(\cH_m^*)$ with $\Gr(\cH_{m-1})$.
  We write $\bar{V}\subset \cH_{m-1}$ for the subspace corresponding to $V\subset \cH_m$ under this identification.
  Under the isomorphism of $\cH_{m-1}$ with $\cH_m^*$, we have $\bar{V}=(\cH_m/V)^*$.
\end{remark}

\begin{corollary}
  \label{cor:truncated preprojective isomorphism}
  Suppose $V\in \Gr(\cH_m)$ has codimension-one in $\cH_m$.  Then $P_{m+1}^V\cong P_m^{\bar{V}}$.
\end{corollary}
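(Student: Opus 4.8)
The plan is to realize both $P_{m+1}^V$ and $P_m^{\bar V}$ as cokernels of injective morphisms $P_{m-1}\to P_m$, and then to observe that these two morphisms span the same line inside $\cH_{m-1}=\Hom(P_{m-1},P_m)$; since the cokernels of two nonzero maps differing by a scalar literally coincide (both being $P_m$ modulo the common image), this yields the isomorphism. Here $m\ge2$, which is forced by the appearance of $P_{m-1}$ and $\cH_{m-1}$. The only real work is bookkeeping: one must check that the morphism cut out of the extension sequence of Lemma~\ref{le:unique truncated extension} agrees, under the duality $\cH_m^*\cong\cH_{m-1}$ of Lemma~\ref{le:preprojective homomorphism duality}, with the expected generator of $\bar V$; once the two descriptions of this morphism are matched, everything is formal.

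Concretely, I would specialize \eqref{eq:unique extension}. As $V$ has codimension one, $\cH_m/V$ is one-dimensional, so fixing a nonzero functional $\varphi\in(\cH_m/V)^*$ gives an isomorphism $\mathrm{id}_{P_m}\otimes\varphi\colon P_m\otimes(\cH_m/V)\xrightarrow{\ \sim\ }P_m$; composing \eqref{eq:unique extension} with it produces a short exact sequence $0\to P_{m-1}\xrightarrow{f}P_m\to P_{m+1}^V\to 0$ with $f=(\mathrm{id}_{P_m}\otimes\varphi)\circ\kappa_V$. Using the formula $\kappa_V=(\mathrm{id}_{P_m}\otimes q)\circ\iota_{m-1}$ from the proof of Lemma~\ref{le:unique truncated extension}, where $q\colon\cH_m\to\cH_m/V$ is the quotient map, this rewrites as $f=(\mathrm{id}_{P_m}\overline{\otimes}(\varphi\circ q))\circ\iota_{m-1}$, which is precisely the element $\overline{\varphi\circ q}\in\cH_{m-1}$ associated by Lemma~\ref{le:preprojective homomorphism duality} to the functional $\varphi\circ q\in V^\perp=(\cH_m/V)^*$. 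Since $\bar V$ is, by definition and the remark following Lemma~\ref{le:preprojective homomorphism duality}, the image of $V^\perp$ in $\cH_{m-1}$, and $\dim V^\perp=1$, the morphism $f$ (which is nonzero, being injective) spans the line $\bar V$.

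To conclude, recall that $P_m^{\bar V}=\mathrm{coker}\big(ev_{\bar V}\colon P_{m-1}\otimes\bar V\to P_m\big)$. Since $\bar V$ is the one-dimensional span of $f$ inside $\Hom(P_{m-1},P_m)$, the image of $ev_{\bar V}$ equals $\Im f$, so $P_m^{\bar V}=P_m/\Im f$. On the other hand the sequence above gives $P_{m+1}^V\cong\mathrm{coker}\,f=P_m/\Im f$. Hence $P_{m+1}^V\cong P_m^{\bar V}$, and as noted the only delicate point is the verification that the map $f$ extracted from \eqref{eq:unique extension} is the one attached to $\varphi\circ q$ under the duality isomorphism.
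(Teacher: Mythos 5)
Your proof takes essentially the same route as the paper's: specialize the exact sequence of Lemma~\ref{le:unique truncated extension}, use the one-dimensionality of $\cH_m/V$ to identify $P_m\otimes(\cH_m/V)$ with $P_m$, and observe that under this identification $\kappa_V$ becomes a generator of $\bar V$. You merely spell out the bookkeeping — tracing $\kappa_V=(\mathrm{id}_{P_m}\otimes q)\circ\iota_{m-1}$ through the duality map $\varphi\mapsto\bar\varphi$ of Lemma~\ref{le:preprojective homomorphism duality} to match it with $\bar V=(\cH_m/V)^*$ — which the paper leaves implicit in the phrase ``identifies with a generator of~$\bar V$.''
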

\begin{proof}
  By Lemma~\ref{le:unique truncated extension}, we have the exact sequence
  \[\xymatrix{0 \ar[r] & P_{m-1} \ar[r]^(.35){\kappa_V} & P_m\otimes(\cH_m/V) \ar[r]^(.625){ev} & P_{m+1}^V \ar[r] & 0}.\]
  But $\cH_m/V$ is a one-dimensional vector space and so $P_m\otimes(\cH_m/V)\cong P_m$.
  Under this identification, the left hand morphism $\kappa_V$ in the sequence above identifies with a generator of $\bar{V}$ and thus $P_{m+1}^V\cong P_m^{\bar{V}}$.
\end{proof}

\subsection{Lifting to \texorpdfstring{$\mathbf{\widetilde{K(n)}}$}{K(n)}}
\label{Lifting}

Fix a natural number $n\geq 3$.
Write $W_n:=W_{K(n)}$ for the free group generated by the arrows $\alpha_i$, $1\le i\le n$, of $K(n)$ and denote by $e\in W_n$ its identity element.
In this section, we fix compatible bases for each $\cH_m:=\Hom(P_m,P_{m+1})$ and use these to lift the (truncated) preprojective representations of the quiver $K(n)$ to the universal cover~$\widetilde{K(n)}$.
This lifting will rigidify the situation, allowing more precise control over these representations and their subrepresentation structure.
Of particular importance is Corollary~\ref{cor:perpendicular} which has no reasonable analogue for $K(n)$.
One main advantage of the lifting is that those truncated preprojectives which can be lifted are exceptional representations on the universal covering quiver $\widetilde{K(n)}$.

We will mainly be interested in particular lifts $\tilde P_m$ of the preprojective representations $P_m$ of $K(n)$ to the universal cover $\widetilde{K(n)}$.
In the notation of Definition~\ref{def:covering quivers}, this means $G(\tilde P_m)=P_m$, where $G$ is the covering functor 
\[G:=G_{K(n)}:\rep\widetilde{K(n)}\to\rep K(n).\]

To construct the lifts, first recall that applying the BGP-reflection functor $\Sigma_i$ on $K(n)$ corresponds to applying the iterated reflection $\tilde\Sigma_i:=\prod_{w\in W_n}\Sigma_{(i,w)}$ on $\widetilde{K(n)}$.
Moreover, under this operation all sinks of~$\widetilde{K(n)}$ become sources and vice versa.
The preprojective lifts we use are defined by the following analogue of the recursions of Remark~\ref{rem:reflection recursion}.
\begin{itemize}
  \item We consider the lift $\tilde P_1$ satisfying $\udim(\tilde P_1)_{(1,e)}=1$ and $\udim(\tilde P_1)_{(i,w)}=0$ for $(i,w)\neq (1,e)$.
  \item We consider the lift $\tilde P_2$ satisfying $\udim(\tilde P_2)_{(2,e)}=\udim(\tilde P_2)_{(1,\alpha_i)}=1$ for $1\le i\le n$ and $\udim(\tilde P_2)_{(i,w)}=0$ for $(i,w)\notin\{(2,e),(1,\alpha_1),\ldots,(1,\alpha_n)\}$.
  \item For $m\geq 3$, we build the lifts $\tilde P_m$ recursively by applying reflection functors or as Auslander-Reiten translates.
    More precisely, we set
    \begin{align}
      \label{eq:recursive covers}
      \tilde P_m:=\tilde\Sigma_2(\tilde P_{m-1})^{\sigma}\qquad\text{or}\qquad\tilde P_{m+1}:=\tilde\Sigma_{1}\tilde\Sigma_2\tilde P_{m-1}=\tau^{-1} \tilde P_{m-1},
    \end{align}
    where $(-)^\sigma:\rep\widetilde{K(n)}^{op}\to\rep\widetilde{K(n)}$ is the lift of the corresponding functor for $K(n)$.
\end{itemize}

It will be rather important that our chosen lifts of $P_{2l}$ for $l\ge1$ and our chosen lifts of $P_{2l-1}$ for $l\ge1$ live on two different components of the universal covering quiver.
Indeed, recall that the group $W_n$ naturally acts on $\widetilde{K(n)}_0$ via translation, i.e.~$w.(i,w')=(i,ww')$, and this induces an action of $W_n$ on $\rep\widetilde{K(n)}$.
Following the notation of Section~\ref{sec:covering}, we write $\tilde P_{m,w}$ for the representation of $\widetilde{K(n)}$ obtained by translating the lifted preprojective representation~$\tilde P_m$ by the action of $w\in W_n$.
To simplify the notation, we abbreviate $\tilde P_{2l-1,j}:=\tilde P_{2l-1,\alpha_j}$ and $\tilde P_{2l,j}:=\tilde P_{2l,\alpha_j^{-1}}$.
\begin{lemma}
  \label{le:shifted subreps}
  For each $m\ge1$, the representation $\tilde P_{m+1}$ has precisely $n$ subrepresentations covering $P_m$.
  These are the representations $\tilde P_{m,j}$ corresponding to the $n$ different arrows of $K(n)$. 
\end{lemma}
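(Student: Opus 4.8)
The plan is to work on the universal cover and use the homological dictionary from Theorem~\ref{covering} together with the Auslander--Reiten sequence \eqref{eq:AR sequence}. First I would note that subrepresentations of $\tilde P_{m+1}$ covering $P_m$ are, by definition of the covering functor, the images of monomorphisms $\iota:\tilde X\hookrightarrow \tilde P_{m+1}$ in $\rep\widetilde{K(n)}$ with $G(\tilde X)\cong P_m$; since $G$ preserves indecomposability and the lifts of a given preprojective all live on one $W_n$-orbit of components, any such $\tilde X$ is isomorphic to some translate $\tilde P_{m,w}$ with $w\in W_n$. So the count reduces to determining for how many $w$ there is a (necessarily injective, by indecomposability and the fact that $\operatorname{End}\tilde P_{m,w}=\CC$) nonzero morphism $\tilde P_{m,w}\to\tilde P_{m+1}$, and then checking these give $n$ \emph{distinct} subrepresentations.

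The key computation is $\Hom_{\widetilde{K(n)}}(\tilde P_{m,w},\tilde P_{m+1})$. By Theorem~\ref{covering}, $\bigoplus_{w\in W_n}\Hom_{\widetilde{K(n)}}(\tilde P_{m,w},\tilde P_{m+1})\cong\Hom_{K(n)}(P_m,P_{m+1})=\cH_m$, which is $n$-dimensional. So it suffices to exhibit $n$ values of $w$ — namely the $w=\alpha_j$ (resp.\ $\alpha_j^{-1}$, matching the abbreviation $\tilde P_{m,j}$ in the excerpt) for $1\le j\le n$ — for which $\Hom_{\widetilde{K(n)}}(\tilde P_{m,j},\tilde P_{m+1})\ne 0$, each necessarily one-dimensional, and then dimension-counting forces $\Hom_{\widetilde{K(n)}}(\tilde P_{m,w},\tilde P_{m+1})=0$ for all other $w$. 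The existence of these $n$ maps I would get by lifting the evaluation map $ev:P_m\otimes\cH_m\to P_{m+1}$ from \eqref{eq:AR sequence}: with the chosen compatible bases of the $\cH_m$, the summand $P_m$ sitting over the $j$-th basis vector of $\cH_m$ lifts precisely to $\tilde P_{m,j}$ mapping into $\tilde P_{m+1}$, so $ev$ restricted to that summand is a lift of a nonzero map $\tilde P_{m,j}\to\tilde P_{m+1}$. Equivalently, one can run the reflection-functor recursion \eqref{eq:recursive covers}: the statement for $m=1$ is the (easy) base case — $\tilde P_2$ has $n$ one-dimensional subspaces at the vertices $(1,\alpha_j)$, each a copy of $\tilde P_{1,j}=S_{(1,\alpha_j)}$ — and applying $\tilde\Sigma_2(-)^\sigma$ carries the $n$ subrepresentations of $\tilde P_m$ covering $P_{m-1}$ bijectively to the $n$ subrepresentations of $\tilde P_{m+1}$ covering $P_m$, because $\Sigma_i$ is an exact equivalence on the relevant subcategory and, by Lemma~\ref{le:lifts of transjectives}, sends $\tilde P_{m-1,j}$ to (a translate identified with) $\tilde P_{m,j}$.

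Finally I would check distinctness: the images $\tilde P_{m,j}\hookrightarrow\tilde P_{m+1}$ for $1\le j\le n$ are pairwise different subrepresentations. This follows because their images under $G$ are the $n$ distinct subrepresentations $P_m\otimes\langle v_j\rangle\subset P_{m+1}$ (spans of the basis vectors $v_j\in\cH_m$), which are genuinely different submodules of $P_{m+1}$ since the $ev_{\langle v_j\rangle}$ have distinct images — and $G$ applied to a subrepresentation of $\tilde P_{m+1}$ recovers the corresponding subrepresentation of $P_{m+1}$ faithfully on a single component. I expect the main obstacle to be the bookkeeping that makes the translate $\tilde P_{m,\alpha_j}$ (or $\tilde P_{m,\alpha_j^{-1}}$ in even degree) the \emph{correct} sub-object — i.e.\ verifying that the arrow-labels track correctly through the reflection recursion and that the $W_n$-translate indices come out as claimed rather than as some other coset — and, relatedly, confirming there are no \emph{further} subrepresentations covering $P_m$ beyond these $n$, which is exactly where the $\dim\cH_m=n$ bound is essential: any extra one would contribute an extra dimension to $\bigoplus_w\Hom_{\widetilde{K(n)}}(\tilde P_{m,w},\tilde P_{m+1})$, contradicting Theorem~\ref{covering}.
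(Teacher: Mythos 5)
Your primary argument is correct but differs from the paper's: the paper simply notes the $m=1$ case is clear from the explicit support quiver of $\tilde P_2$ and then iterates the recursion \eqref{eq:recursive covers}, which is essentially your ``Equivalently'' paragraph. Your main route instead uses Theorem~\ref{covering} to get $\bigoplus_{w\in W_n}\Hom(\tilde P_{m,w},\tilde P_{m+1})\cong\cH_m$, which is $n$-dimensional, exhibits $n$ translates with nonzero Hom via the lifted evaluation maps, and concludes by counting. That trade is reasonable: the dimension count gives uniqueness without chasing how subobjects transform under the (only one-sided exact) reflection functors, but it does lean on the covering-theory fact, cited only implicitly in the paper, that indecomposable lifts of $P_m$ form a single $W_n$-orbit. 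One minor inaccuracy: the parenthetical claim that a nonzero morphism $\tilde P_{m,w}\to\tilde P_{m+1}$ is ``necessarily injective, by indecomposability and the fact that $\operatorname{End}\tilde P_{m,w}=\CC$'' does not follow from that reasoning --- injectivity here actually rests on subrepresentations of $\tilde P_{m+1}$ being direct sums of preprojectives together with $\Hom(P_m,P_r)=0$ for $r<m$ --- but your argument does not depend on it, since you construct the $n$ embeddings directly from the lifted evaluation maps and the bound $\dim\cH_m=n$ rules out any others.
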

\begin{proof}
  This is clear for $m=1$ and follows in general by applying the recursion \eqref{eq:recursive covers}.
\end{proof}
\begin{remark}
  Note that the dimension vectors of the lifted preprojectives $\tilde P_m$ are symmetric under permutations of the arrows of $K(n)$ (or rather the corresponding operation on $\widetilde{K(n)}$).
  In particular, they all have the same central vertex $(1,e)$ if $m$ is odd and central vertex $(2,e)$ if $m$ is even.
\end{remark}

\begin{corollary}
  \label{cor:AR lift}
  For $m\ge2$, the Auslander-Reiten sequence~\eqref{eq:AR sequence} on $K(n)$ lifts to the Auslander-Reiten sequence
  \begin{equation}
    \label{eq:lifted AR sequence}
    \xymatrix{0 \ar[r] & \tilde P_{m-1} \ar^(.4){\tilde\iota_{m-1}}[r] & \bigoplus_{j=1}^n \tilde P_{m,j} \ar[r] & \tilde P_{m+1} \ar[r] & 0}.
  \end{equation}
\end{corollary}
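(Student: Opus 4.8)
The statement is a lifting of the Auslander-Reiten sequence \eqref{eq:AR sequence} through the covering functor $G$, so the natural strategy is to produce a short exact sequence on $\widetilde{K(n)}$ whose image under $G$ is \eqref{eq:AR sequence} and then to verify it is almost split by checking dimension vectors and indecomposability. First I would invoke Lemma~\ref{le:shifted subreps}: the $n$ subrepresentations of $\tilde P_{m+1}$ covering $P_m$ are exactly $\tilde P_{m,1},\ldots,\tilde P_{m,n}$. Recall (Remark after Lemma~\ref{le:shifted subreps}, and the constructions via \eqref{eq:recursive covers}) that the $\tilde P_{m,j}$ are supported on translates of a single component and that the $\tilde P_{m,j}$ for distinct $j$ differ only by the translation $\alpha_j^{\pm1}$; in particular they agree over the central vertex, so their images overlap there. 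The sum $\sum_j \tilde P_{m,j}$ inside $\tilde P_{m+1}$ therefore realizes a surjection $\bigoplus_{j=1}^n \tilde P_{m,j}\twoheadrightarrow \tilde P_{m+1}$, and $G$ carries this to the evaluation map $P_m\otimes\cH_m\xrightarrow{ev}P_{m+1}$ once we use the chosen compatible bases of the $\cH_m$ (the $n$ basis vectors of $\cH_m$ correspond to the $n$ arrows, hence to the $n$ shifted copies $\tilde P_{m,j}$).

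The next step is to identify the kernel $\tilde K$ of this surjection. Since $G$ is exact (it is a covering functor restricted to each component, by Theorem~\ref{covering} and the surrounding discussion), $G\tilde K$ is the kernel of $ev$, which by \eqref{eq:AR sequence} is $P_{m-1}$. By Theorem~\ref{covering}, $G$ preserves indecomposability in the reverse direction only up to knowing the lift lies on one component, but here I can argue directly: the dimension vector of $\tilde K$ is $\udim\big(\bigoplus_j\tilde P_{m,j}\big)-\udim\tilde P_{m+1}$, and I would compute this explicitly using the dimension vectors of $\tilde P_m$ and $\tilde P_{m+1}$ from \eqref{eq:recursive covers}. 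The claim is that this difference equals $\udim\tilde P_{m-1}$ (the chosen lift, on the appropriate component — note $\tilde P_{m-1}$ and $\tilde P_{m+1}$ live on the same component, consistent with $\tilde P_{m+1}=\tau^{-1}\tilde P_{m-1}$). Combined with $G\tilde K=P_{m-1}$ and the fact that $\tilde K$ is a subrepresentation of the indecomposable-on-one-component object $\bigoplus_j\tilde P_{m,j}$, one concludes $\tilde K\cong\tilde P_{m-1}$; the embedding is the lift $\tilde\iota_{m-1}$ of $\iota_{m-1}$.

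Finally I would confirm that \eqref{eq:lifted AR sequence} is genuinely an Auslander-Reiten sequence and not merely exact. One clean way: $\tilde P_{m+1}=\tau^{-1}\tilde P_{m-1}$ on $\widetilde{K(n)}$ by construction \eqref{eq:recursive covers}, and $\tilde P_{m-1}$ is exceptional on its component (being a lift of a preprojective — the argument of Lemma~\ref{le:lifts of transjectives} applied on the component, or directly from the reflection-functor construction), so the AR sequence starting at $\tilde P_{m-1}$ exists and is unique up to isomorphism; applying $G$ to it yields an almost split sequence ending at $P_{m+1}$, which must be \eqref{eq:AR sequence}. Uniqueness of \eqref{eq:AR sequence} then forces the AR sequence on $\widetilde{K(n)}$ to have the middle term $G^{-1}$-compatible with $P_m\otimes\cH_m$, i.e. $\bigoplus_j\tilde P_{m,j}$ up to reordering the summands. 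The main obstacle is the bookkeeping in the middle term: one must be careful that the $n$ copies $\tilde P_{m,j}$ do not sit as a direct summand but genuinely overlap (they share the central vertex), so that their "sum" inside $\tilde P_{m+1}$ is a quotient of $\bigoplus_j\tilde P_{m,j}$ rather than a direct sum — verifying this overlap structure and that the induced surjection has the right kernel is where the real content lies, and it is most safely handled by transporting everything through the already-established sequence \eqref{eq:AR sequence} via exactness of $G$ rather than by a direct computation on $\widetilde{K(n)}$.
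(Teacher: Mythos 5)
Your proposal is correct and, via its third paragraph, captures the intended argument: the paper gives no written proof, and the Corollary is meant to follow immediately from Lemma~\ref{le:shifted subreps}, the recursion \eqref{eq:recursive covers} giving $\tau^{-1}\tilde P_{m-1}=\tilde P_{m+1}$, and the standard fact that covering functors respect Auslander-Reiten sequences. You are right to flag the direct construction of paragraphs one and two as the shakier route --- surjectivity of the summed inclusion $\bigoplus_j\tilde P_{m,j}\to\tilde P_{m+1}$ and the identification of its kernel depend on the actual linear maps rather than on supports alone --- and to prefer transporting the canonical AR sequence ending at $\tilde P_{m+1}$ through $G$, with Lemma~\ref{le:shifted subreps} pinning down the middle-term summands as the subrepresentations covering $P_m$.
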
 

\begin{example}
  \label{ex:lifted preprojectives}
  Here we explicitly describe the preprojective lifts $\tilde P_m$ for $m=2,3,4$ as well as their shifted preprojective subrepresentations as in Lemma~\ref{le:shifted subreps}.
  By Lemma~\ref{le:lifts of transjectives}, the lifts of all preprojectives are exceptional as representations of $\widetilde{K(n)}$ and thus they are uniquely determined by their dimension vectors.
  We make use of this fact below, stating only the support of the representation (also specifying those dimensions which are not one) and do not state the particular maps present in the lifts.
  We call this the \emph{support quiver} of the representation and let $W_n$ act on these quivers by translating all vertices and arrows.

  The representation $\tilde P_2$ is defined by the following quiver:
  \begin{equation}
    \label{dia:P2 quiver}
    \xymatrix@R40pt@C10pt{&(2,e)\ar_{\alpha_1}[ld]\ar^{\alpha_n}[rd]&\\ (1,\alpha_1)&\cdots&(1,\alpha_n).}
  \end{equation}
  Then $\tilde P_{1,j}\subset\tilde P_2$ corresponds to the one-dimensional space at vertex $(1,\alpha_j)$.
  Write $\cA_i$ for the quiver obtained from the one above by erasing the arrow $\alpha_i$ and the corresponding sink. 

  The representation $\tilde P_3$ is given by the quiver
  \begin{equation}
    \label{dia:P3 quiver}
    \xymatrix@R40pt@C10pt{&n-1&\\ \alpha_1^{-1}.\cA_1\ar^{\alpha_1}[ru]&\cdots&\alpha_n^{-1}.\cA_n,\ar_{\alpha_n}[lu]}
  \end{equation}
  where the top vertex of dimension $n-1$ is $(1,e)$ and the arrow from each $\alpha_j^{-1}.\cA_j$ emanates from its unique source~$(2,\alpha_j^{-1})$.
  Then $\tilde P_{2,j}\subset\tilde P_3$ has one-dimensional spaces at each vertex of the quiver
  \[\xymatrix@R40pt@C10pt{&&&(1,e)&&&\\ &&&(2,\alpha_j^{-1})\ar_{\alpha_j}[u]\ar_{\alpha_1}[llld]\ar_(.65){\alpha_{j-1}}[ld]\ar^(.65){\alpha_{j+1}}[rd]\ar^{\alpha_n}[rrrd]&&&\\ (1,\alpha_j^{-1}\alpha_1)&\cdots&(1,\alpha_j^{-1}\alpha_{j-1})&&(1,\alpha_j^{-1}\alpha_{j+1})&\cdots&(1,\alpha_j^{-1}\alpha_n).}\]

  The representation $\tilde P_4$ has support quiver
  \begin{equation}
    \label{dia:P4 quiver}
    \xymatrix@R40pt@C10pt{&n-1\ar_{\alpha_1}[ld]\ar^{\alpha_n}[rd]&\\ \alpha_1.\cB_1&\cdots&\alpha_n.\cB_n,}
  \end{equation}
  where the top vertex of dimension $n-1$ is $(2,e)$ and $\cB_i$ is the following analogue of the support quiver for $\tilde P_3$:
  \[\xymatrix@R40pt@C10pt{&&&2n-3&&&\\
    \alpha_1^{-1}.\cA_1\ar^{\alpha_1}[rrru]&\cdots&\alpha_{i-1}^{-1}.\cA_{i-1}\ar^(.35){\alpha_{i-1}}[ru]&\widehat{\alpha_i^{-1}.\cA_i}&\alpha_{i+1}^{-1}.\cA_{i+1}\ar_(.35){\alpha_{i+1}}[lu]&\cdots&\alpha_n^{-1}.\cA_n\ar_{\alpha_n}[lllu]}\]
  with top vertex $(1,e)$ having dimension $2n-3$.
  Now $\tilde P_{3,j}$ can be found as the subrepresentation corresponding to the subquiver
  \[\xymatrix@R40pt@C10pt{
    &&&\cA_j\ar^{\alpha_j}[d]&&&\\
    &&&n-1&&&\\
    \alpha_j\alpha_1^{-1}.\cA_1\ar^{\alpha_1}[rrru]&\cdots&\alpha_j\alpha_{j-1}^{-1}.\cA_{j-1}\ar^(.35){\alpha_{j-1}}[ru]&&\alpha_j\alpha_{j+1}^{-1}.\cA_{j+1}\ar_(.35){\alpha_{j+1}}[lu]&\cdots&\alpha_j\alpha_n^{-1}.\cA_n,\ar_{\alpha_n}[lllu]}\]
  where the central vertex is $(1,\alpha_j)$.
  Note that taking the subquiver $\cA_j$ together with the image of the map $\alpha_j$ gives a subrepresentation of $\tilde P_{3,j}$ isomorphic to $\tilde P_2$ while taking the subquiver $\alpha_j\alpha_i^{-1}.\cA_i$ together with the image of the map $\alpha_i$ gives a subrepresentation of $\tilde P_{3,j}$ isomorphic to $\tilde P_{2,\alpha_j\alpha_i^{-1}}$.
\end{example}

The next result establishes some basic homological properties of the translated preprojective representations.
\begin{lemma}
  \label{le:homdecomposition}
  For $m\geq 1$, the following hold.
  \begin{enumerate}
    \item We have $\Hom(P_m,P_{m+1})\cong\bigoplus\limits_{i=1}^n \Hom(\tilde P_{m,i},\tilde P_{m+1})$, where each $\Hom(\tilde P_{m,i},\tilde P_{m+1})$ is one-dimensional.
    \item The representations $\tilde P_{m,i}$ are pairwise orthogonal, i.e.\ for $i\neq j$ we have 
      \[\Hom(\tilde P_{m,i},\tilde P_{m,j})=0=\Ext(\tilde P_{m,i},\tilde P_{m,j}).\]
    \item For $j\in\{1,\ldots,n\}$, we have
      \[\Hom(\tilde P_{m+1},\tilde P_{m,j})=0,\quad\Ext(\tilde P_{m+1},\tilde P_{m,j})=0,\quad\Ext(\tilde P_{m,j},\tilde P_{m+1})=0.\]
    \item For each proper subset $I\subsetneq\{1,\ldots,n\}$, there exists a truncated preprojective representation~$\tilde P_{m+1}^I$ fitting into an exact sequence 
      \begin{equation}
        \label{ses1}
        \sesm{\bigoplus_{i\in I} \tilde P_{m,i}}{\tilde P_{m+1}}{\tilde P_{m+1}^I}{\pi_{m+1}^I}.
      \end{equation}
      Moreover, $G(\tilde P_{m+1}^I)$ is a truncated preprojective of $K(n)$ for each $I\subsetneq\{1,\ldots,n\}$.
  \end{enumerate}
\end{lemma}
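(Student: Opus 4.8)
The plan is to prove the four parts of Lemma~\ref{le:homdecomposition} essentially in order, bootstrapping each claim off the covering-theory dictionary of Theorem~\ref{covering} together with the recursive construction of the lifts $\tilde P_m$ in \eqref{eq:recursive covers}. Throughout, the key input is that $G(\tilde P_{m,i}) = P_m$ and $G(\tilde P_{m+1}) = P_{m+1}$, so that $\Hom(P_m,P_{m+1}) \cong \bigoplus_{w\in W_n}\Hom_{\widetilde{K(n)}}(\tilde P_{m,w}, \tilde P_{m+1})$ and similarly for $\Ext$; the whole game is to pin down which translates $\tilde P_{m,w}$ actually contribute. By Lemma~\ref{le:shifted subreps}, the only translates of $\tilde P_m$ that embed into $\tilde P_{m+1}$ are the $n$ subrepresentations $\tilde P_{m,1},\ldots,\tilde P_{m,n}$; since $P_m$ is exceptional and $\Hom(P_m,P_{m+1}) = \cH_m$ has dimension $n$ (this is $\dim u_{m+1}u_{m-1} - \cdots$; more cleanly, $\dim\cH_m = n$ follows from the Euler form and Theorem~\ref{th:rigids}, or by induction from the AR sequence \eqref{eq:AR sequence}), a counting/positivity argument forces each of these $n$ subrepresentation inclusions to span a one-dimensional $\Hom$-space on the cover and forces all other translates to contribute zero. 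This is part~(1); I would prove it by induction on $m$, using the base cases $m=1,2$ from Example~\ref{ex:lifted preprojectives} and the recursion \eqref{eq:recursive covers} together with the fact that $\tilde\Sigma_2(-)^\sigma$ and $\tau^{-1}$ are exact equivalences preserving the relevant $\Hom$/$\Ext$ groups.

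For part~(2), orthogonality of the $\tilde P_{m,i}$: since these are all translates of the single exceptional representation $\tilde P_m$ (note $\tilde P_{m,i}$ lives on the same connected component as $\tilde P_m$), and $\tilde P_m$ is exceptional hence a brick with $\Ext^1(\tilde P_m,\tilde P_m)=0$, it suffices to show that for $i\neq j$ the translate relating $\tilde P_{m,i}$ and $\tilde P_{m,j}$ — i.e.\ $\alpha_i\alpha_j^{-1}$ in the odd case, $\alpha_i^{-1}\alpha_j$ in the even case, which is a nontrivial element of $W_n$ — does not appear among the finitely many translates $w$ with $\Hom_{\widetilde{K(n)}}(\tilde P_m, \tilde P_{m,w})\neq 0$ or $\Ext_{\widetilde{K(n)}}(\tilde P_m,\tilde P_{m,w})\neq 0$. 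Again this reduces via \eqref{eq:recursive covers} to the base cases, where one reads it off the support quivers in Example~\ref{ex:lifted preprojectives}: the supports of $\tilde P_{2,i}$ and $\tilde P_{2,j}$ are disjoint, etc. Alternatively, and perhaps more cleanly, apply $G$: $\bigoplus_i \tilde P_{m,i}$ is a subrepresentation of $\tilde P_{m+1}$ whose image $\bigoplus_i P_m$ under $G$ has $\Hom(\bigoplus_i P_m, \bigoplus_i P_m) = \bigoplus_{w}\Hom_{\widetilde{K(n)}}(\bigoplus_i\tilde P_{m,i}, \bigoplus_j \tilde P_{m,j,w})$, and by the $m=1$ version of Lemma~\ref{le:shifted subreps} the relevant translate does not appear — I would spell this out by induction as above.

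Part~(3) follows the same pattern: $\Hom(\tilde P_{m+1},\tilde P_{m,j})$, when pushed down, is a summand of $\Hom(P_{m+1},P_m) = 0$ (by Theorem~\ref{th:rigids}), so it vanishes; likewise $\Ext(\tilde P_{m+1},\tilde P_{m,j})$ is a summand of $\Ext(P_{m+1},P_m)=0$ and $\Ext(\tilde P_{m,j},\tilde P_{m+1})$ is a summand of $\Ext(P_m, P_{m+1}) = 0$. Here one uses that $\Hom_Q(G\hat X, G\hat Y) \cong \bigoplus_\chi \Hom_{\widetilde{K(n)}}(\hat X_\chi, \hat Y)$ with all summands nonnegative-dimensional, so vanishing of the total forces vanishing of each summand — in particular the $\chi = e$ summand. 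For part~(4): apply the exact functor $\Hom(-, \text{--})$ machinery, or rather directly observe that the inclusion $\bigoplus_{i\in I}\tilde P_{m,i}\hookrightarrow\tilde P_{m+1}$ coming from Lemma~\ref{le:shifted subreps} (restrict the AR inclusion of Corollary~\ref{cor:AR lift}, or use that each $\tilde P_{m,i}$ individually embeds and, by part~(2), the images are in direct sum inside $\tilde P_{m+1}$ — this last point needs that $\Hom(\tilde P_{m,i},\tilde P_{m,j}) = 0$ so the sum of the subrepresentations is direct) is injective, and define $\tilde P_{m+1}^I$ to be its cokernel; this gives \eqref{ses1}. Applying $G$ to \eqref{ses1} yields $\ses{\bigoplus_{i\in I}P_m}{P_{m+1}}{G(\tilde P_{m+1}^I)}$, and the middle map is (after choosing coordinates) the evaluation map $ev_V$ for the subspace $V = \spn\{\text{basis vectors indexed by }I\}\in\Gr(\cH_m)$ — here I use part~(1) to identify $\bigoplus_{i\in I}\Hom(\tilde P_{m,i},\tilde P_{m+1})$ with a coordinate subspace $V$ of $\cH_m$, which is proper since $I\subsetneq\{1,\ldots,n\}$; therefore $G(\tilde P_{m+1}^I) \cong P_{m+1}^V$ is truncated preprojective by definition of \eqref{eq:truncated preprojectives}.

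The main obstacle is making the inductive step genuinely airtight: one must verify that applying $\tilde\Sigma_2(-)^\sigma$ (equivalently $\tau^{-1}$) to the short exact sequence $\ses{\tilde P_{m-1}}{\bigoplus_j\tilde P_{m,j}}{\tilde P_{m+1}}$ preserves exactness and carries $\tilde P_{m,j}$ to the correctly-indexed $\tilde P_{m+1,j}$ (with the index shift $\alpha_j \mapsto \alpha_j^{-1}$ built into the convention $\tilde P_{2l,j} := \tilde P_{2l,\alpha_j^{-1}}$), and that no preinjective summands intervene so that Remark~\ref{rem:reflection recursion}(2) applies — i.e.\ one needs to keep careful track of which component of $\widetilde{K(n)}$ everything lives on, as emphasized in the text before Lemma~\ref{le:shifted subreps}. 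Once the translate bookkeeping for $m = 1, 2$ is done explicitly from Example~\ref{ex:lifted preprojectives}, the inductive propagation of (1)--(4) is then formal from the covering-functor isomorphisms and the exactness of the reflection equivalences.
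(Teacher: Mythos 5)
Your proposal follows the paper's route in broad strokes: the covering dictionary of Theorem~\ref{covering}, the reflection recursions \eqref{eq:recursive covers}, and the identification of the map $\bigoplus_{i\in I}\tilde P_{m,i}\to\tilde P_{m+1}$ as a lift of the evaluation map $ev_V$ whose injectivity is inherited from Lemma~\ref{le:injective evaluation maps}. A few remarks. For part~(2), your first justification contains a factual error: the supports of $\tilde P_{2,i}$ and $\tilde P_{2,j}$ are \emph{not} disjoint for $i\ne j$ --- each contains the vertex $(1,e)$, since $\tilde P_{2,j}=\alpha_j^{-1}.\tilde P_2$ and $\alpha_j^{-1}\alpha_j=e$ (this is precisely why the $\tilde P_{2,j}$ all embed into $\tilde P_3$, which has an $(n-1)$-dimensional space at $(1,e)$). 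The vanishing $\Hom(\tilde P_{2,i},\tilde P_{2,j})=0$ is still true but requires a different (short) check; the paper avoids the issue entirely with a one-line argument that is cleaner than either of your two alternatives: since $\CC\cong\Hom(P_m,P_m)\cong\bigoplus_{w\in W_n}\Hom(\tilde P_{m,w},\tilde P_{m,j})$ by Theorem~\ref{covering}, and the identity translate of $\tilde P_{m,j}$ already accounts for that $\CC$, all other summands --- in particular those with $w$ corresponding to $\tilde P_{m,i}$, $i\ne j$ --- must vanish; no induction, support analysis, or bookkeeping of ``bad translates'' is needed, and the $\Ext$ vanishing follows identically from rigidity of $P_m$. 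For part~(3) your argument (pushing down via Theorem~\ref{covering} to $\Hom(P_{m+1},P_m)=\Ext(P_{m+1},P_m)=\Ext(P_m,P_{m+1})=0$, each cover space being a nonnegative summand) is a valid and arguably more direct alternative to the paper's reflection-recursion argument. Parts~(1) and~(4) match the paper.
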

\begin{remark}
  Note that when $I=\varnothing$, it follows from the definition that $\tilde P_{m+1}^I=\tilde P_{m+1}$.
\end{remark}
\begin{proof}
  Part (1) is immediate from Theorem~\ref{covering}.
  Part (2) is also a consequence of Theorem~\ref{covering}.
  Indeed, for $1\leq j\leq n$, we have 
  $$\CC\cong\Hom(P_m,P_m)\cong\bigoplus_{i=1}^n \Hom(\tilde P_{m,i},\tilde P_{m,j}).$$
  But $\Hom(\tilde P_{m,j},\tilde P_{m,j})\cong\CC$ and so we must have $\Hom(\tilde P_{m,i},\tilde P_{m,j})=0$ for $i\neq j$.
  The vanishing of $\Ext(\tilde P_{m,i},\tilde P_{m,j})$ follows in the same manner using that $P_m$ is exceptional.

  Part (3) is clear for $m=1$ and follows for $m\ge2$ by applying the reflection recursion \eqref{eq:recursive covers}.

  For part (4), observe that under the isomorphism from part (1) the subset $I\subsetneq\{1,\ldots,n\}$ corresponds to the subspace $V\subset \cH_m$ spanned by the generators of the direct summands $\Hom(\tilde P_{m,i},\tilde P_{m+1})$ for $i\in I$.
  The map $\bigoplus_{i\in I} \tilde P_{m,i}\to\tilde P_{m+1}$ is then a lift of the evaluation morphism $ev_V:P_m\otimes V\to P_{m+1}$ and hence is injective by Lemma~\ref{le:injective evaluation maps}.
  Taking the cokernel defines the truncated preprojective $\tilde P_{m+1}^I$ and the preceding discussion shows that $G(\tilde P_{m+1}^I)\cong P_{m+1}^V$ is truncated preprojective as well.
\end{proof}

It will be important to understand the possible subrepresentations of the truncated preprojective representations $\tilde P_{m+1}^I$.
\begin{lemma}
  \label{le:subrep}
  For $m\ge1$ and $I\subsetneq\{1,\ldots,n\}$, all non-trivial proper subrepresentations of $\tilde P_{m+1}^I$ are direct sums of preprojective representations.	
\end{lemma}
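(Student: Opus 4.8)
The plan is to reduce the statement for $\tilde P_{m+1}^I$ to the already-established case $I=\varnothing$ (that is, to $\tilde P_{m+1}$ itself), and then to leverage the covering functor $G$ together with Lemma~\ref{le:projective subrepresentations}, which handles the non-lifted truncated preprojectives on $K(n)$. First I would dispose of the trivial case $m=1$ directly: here $\tilde P_{m+1}^I$ has a very small support quiver (essentially diagram~\eqref{dia:P2 quiver} with the sinks indexed by $I$ removed), its dimension vector is $(n-|I|,1)$, and one checks by hand that every proper subrepresentation is a direct sum of copies of the simple $\tilde P_{1,j}=S_{(1,\alpha_j)}$, which is preprojective on $\widetilde{K(n)}$.

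For $m\ge 2$, the main step is to observe that a subrepresentation $U\subsetneq \tilde P_{m+1}^I$ pushes forward under $G$ to a subrepresentation $G(U)\subset G(\tilde P_{m+1}^I)=P_{m+1}^V$, where $V\in\Gr(\cH_m)$ is the subspace of Lemma~\ref{le:homdecomposition}(4). If $U$ is a proper subrepresentation then so is $G(U)$ — here I would argue that $G$ reflects proper containment on the relevant component, using that $G$ is a covering functor restricted to a connected component and that $\tilde P_{m+1}^I$ is supported on a single component (being a quotient of the connected $\tilde P_{m+1}$). Then Lemma~\ref{le:projective subrepresentations} gives $G(U)\cong \bigoplus_r P_r^{a_r}$ with $1\le r\le m$, so each indecomposable summand of $U$ covers some $P_r$ with $1\le r\le m$. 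By Theorem~\ref{covering} the functor $G$ preserves indecomposability, so it remains to see that any indecomposable representation of $\widetilde{K(n)}$ covering a preprojective $P_r$ is itself preprojective; this is exactly the content of Lemma~\ref{le:lifts of transjectives}. Hence every indecomposable summand of $U$ is preprojective, and $U$ is a direct sum of preprojectives as claimed.

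An alternative, more self-contained route — which I would at least sketch as a backup — is the inductive argument parallel to the proof of Lemma~\ref{le:projective subrepresentations}: use Lemma~\ref{le:reflected truncated preprojectives} (suitably lifted via the iterated reflection $\tilde\Sigma_1$, noting $\tilde P_{m+1}^I$ is indecomposable and not rigid so has no summand $S_{(1,e)}$) to see that any subrepresentation of $\tilde P_{m+1}^I$ with no summand isomorphic to $\tilde P_{1,j}$ arises as $\tilde\Sigma_2(-)^\sigma$ applied to a subrepresentation of $\tilde\Sigma_1(\tilde P_{m+1}^I)^\sigma$, which is again a lifted truncated preprojective one level down; then the recursion~\eqref{eq:recursive covers} together with the classification of $\tilde P_r$ closes the induction, the summands isomorphic to $\tilde P_{1,j}$ being accounted for separately in the base case.

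The main obstacle I anticipate is the bookkeeping around connected components: one must be careful that $\tilde P_{m+1}^I$ and all its subrepresentations live on a single connected component of $\widetilde{K(n)}$ so that the covering-functor properties from Theorem~\ref{covering} and Lemma~\ref{le:lifts of transjectives} genuinely apply, and that ``$G$ sends proper subrepresentations to proper subrepresentations'' — which needs that $G$ does not collapse dimension vectors on a fixed component, a consequence of $F_Q$ (and hence $G$) being injective on dimension vectors of a single component. Once this is pinned down the rest is a routine chain of citations; the real work was already done in Lemmas~\ref{le:projective subrepresentations} and~\ref{le:reflected truncated preprojectives}.
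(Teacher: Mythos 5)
Your main route is exactly the paper's proof: project under $G$ to land in a truncated preprojective $P_{m+1}^V$ of $K(n)$ (Lemma~\ref{le:homdecomposition}(4)), invoke Lemma~\ref{le:projective subrepresentations} to see the projection is a direct sum of preprojectives $P_r$, and conclude via Lemma~\ref{le:lifts of transjectives} that the indecomposable summands upstairs are preprojective. The extra care you take over $G$ reflecting properness and Krull--Remak--Schmidt is correct but routine (and is left implicit in the paper), and your separate $m=1$ base case is unnecessary since Lemma~\ref{le:projective subrepresentations} already covers $m\ge 1$.
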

\begin{proof}
  By Lemma~\ref{le:homdecomposition}, the projected representation $G(\tilde P_{m+1}^I)$ is a truncated preprojective of $K(n)$.
  Then Lemma~\ref{le:projective subrepresentations} shows that all subrepresentations of $G(\tilde P_{m+1}^I)$ are direct sums of preprojective representations of $K(n)$.
  But any subrepresentation of $\tilde P_{m+1}^I$ also projects to a subrepresentation of $G(\tilde P_{m+1}^I)$.
  Since, following Lemma~\ref{le:lifts of transjectives}, any lift of a preprojective of $K(n)$ will be a preprojective representation of $\widetilde{K(n)}$, this gives the result.
\end{proof}

In what follows, we will need to carefully understand the homological properties of the truncated preprojectives for $\widetilde{K(n)}$.
\begin{lemma}
  \label{le:properties}
  For $m\geq1$ and $I\subsetneq\{1,\ldots,n\}$, the following hold.
  \begin{enumerate}
    \item For $j\in\{1,\ldots,n\}$, we have $\Ext(\tilde P_{m,j},\tilde P_{m+1}^I)=0$.  Also, $\Hom(\tilde P_{m,j},\tilde P_{m+1}^I)\ne0$ if and only if $j\not\in I$, in which case 
      \[\Hom(\tilde P_{m,j},\tilde P_{m+1}^I)\cong\Hom(\tilde P_{m,j},\tilde P_{m+1})\cong\CC.\]
    \item We have $\Hom(\tilde P_{m+1},\tilde P_{m+1}^I)\cong\CC$ and $\Ext(\tilde P_{m+1},\tilde P_{m+1}^I)=0$. 
    \item For $j\in\{1,\ldots,n\}$, we have $\Hom(\tilde P_{m+1}^I,\tilde P_{m,j})=0$.
      Also, $\Ext(\tilde P_{m+1}^I,\tilde P_{m,j})\ne0$ if and only if $j\in I$, in which case  
      \[\Ext(\tilde P_{m+1}^I,\tilde P_{m,j})\cong\CC.\] 
    \item For any $J\subsetneq\{1,\ldots,n\}$, we have $\Hom(\tilde P_{m+1}^J,\tilde P_{m+1}^I)\ne0$ if and only if $J\subset I$, in which case
      \[\Hom(\tilde P_{m+1}^J,\tilde P_{m+1}^I)\cong\CC\quad\text{and}\quad\Ext(\tilde P_{m+1}^J,\tilde P_{m+1}^I)=0.\]
      In particular, $\tilde P_{m+1}^I$ is an exceptional representation of $\widetilde{K(n)}$.
  \end{enumerate}
\end{lemma}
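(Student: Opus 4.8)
The plan is to prove the four parts of Lemma~\ref{le:properties} in order, each time by applying a suitable $\Hom$- or $\Ext$-functor to the defining short exact sequence \eqref{ses1} of $\tilde P_{m+1}^I$, namely
\[
\sesm{\bigoplus_{i\in I} \tilde P_{m,i}}{\tilde P_{m+1}}{\tilde P_{m+1}^I}{\pi_{m+1}^I},
\]
and reading off the resulting long exact sequence. The inputs are the orthogonality and vanishing statements of Lemma~\ref{le:homdecomposition}: that $\Hom(\tilde P_{m,i},\tilde P_{m,j})=\Ext(\tilde P_{m,i},\tilde P_{m,j})=0$ for $i\ne j$, that $\Hom(\tilde P_{m,i},\tilde P_{m,i})\cong\CC$ is spanned by the identity, that $\Hom(\tilde P_{m,j},\tilde P_{m+1})\cong\CC$ and $\Ext(\tilde P_{m,j},\tilde P_{m+1})=0$, and the dual statements $\Hom(\tilde P_{m+1},\tilde P_{m,j})=\Ext(\tilde P_{m+1},\tilde P_{m,j})=0$.

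For part (1): apply $\Hom(\tilde P_{m,j},-)$ to \eqref{ses1}. The term $\Hom(\tilde P_{m,j},\bigoplus_{i\in I}\tilde P_{m,i})$ is $\CC$ if $j\in I$ (spanned by the inclusion into the $j$-th summand) and $0$ if $j\notin I$, while $\Ext(\tilde P_{m,j},\bigoplus_{i\in I}\tilde P_{m,i})=0$ always. Since $\Hom(\tilde P_{m,j},\tilde P_{m+1})\cong\CC$ and $\Ext(\tilde P_{m,j},\tilde P_{m+1})=0$, the long exact sequence immediately forces $\Ext(\tilde P_{m,j},\tilde P_{m+1}^I)=0$. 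For $j\notin I$ the first map is between a $0$ and a $\CC$, so $\Hom(\tilde P_{m,j},\tilde P_{m+1}^I)\cong\CC$; for $j\in I$ the map $\Hom(\tilde P_{m,j},\bigoplus \tilde P_{m,i})\to\Hom(\tilde P_{m,j},\tilde P_{m+1})$ sends the canonical inclusion to the corresponding generator of $\cH_m$, which is nonzero by Lemma~\ref{le:injective evaluation maps} (this is the map $ev$ restricted to the $j$-summand), hence is an isomorphism $\CC\to\CC$ and $\Hom(\tilde P_{m,j},\tilde P_{m+1}^I)=0$. Part (2) is the lift of Lemma~\ref{le:unique preprojective morphism}: apply $\Hom(\tilde P_{m+1},-)$ to \eqref{ses1}, use $\Hom(\tilde P_{m+1},\tilde P_{m,i})=\Ext(\tilde P_{m+1},\tilde P_{m,i})=0$ from Lemma~\ref{le:homdecomposition}(3) to get $\Hom(\tilde P_{m+1},\tilde P_{m+1}^I)\cong\Hom(\tilde P_{m+1},\tilde P_{m+1})\cong\CC$ (spanned by $\pi_{m+1}^I$) and $\Ext(\tilde P_{m+1},\tilde P_{m+1}^I)\cong\Ext(\tilde P_{m+1},\tilde P_{m+1})=0$ by rigidity of $\tilde P_{m+1}$ (which follows from Lemma~\ref{le:lifts of transjectives}, as $\tilde P_{m+1}$ is preprojective on $\widetilde{K(n)}$).

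For part (3): apply $\Hom(-,\tilde P_{m,j})$ to \eqref{ses1}. Using $\Hom(\tilde P_{m+1},\tilde P_{m,j})=0$, the initial segment gives $\Hom(\tilde P_{m+1}^I,\tilde P_{m,j})\hookrightarrow\Hom(\tilde P_{m+1},\tilde P_{m,j})=0$, so it vanishes; then $0\to\Hom(\bigoplus_{i\in I}\tilde P_{m,i},\tilde P_{m,j})\to\Ext(\tilde P_{m+1}^I,\tilde P_{m,j})\to\Ext(\tilde P_{m+1},\tilde P_{m,j})=0$ identifies $\Ext(\tilde P_{m+1}^I,\tilde P_{m,j})\cong\Hom(\bigoplus_{i\in I}\tilde P_{m,i},\tilde P_{m,j})$, which by orthogonality is $\CC$ precisely when $j\in I$ and $0$ otherwise. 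For part (4): apply $\Hom(-,\tilde P_{m+1}^I)$ to the sequence \eqref{ses1} defining $\tilde P_{m+1}^J$, i.e. with $J$ in place of $I$; using part (1) (which gives $\Ext(\tilde P_{m,i},\tilde P_{m+1}^I)=0$ for all $i$, and $\Hom(\tilde P_{m,i},\tilde P_{m+1}^I)\cong\CC$ iff $i\notin I$) together with part (2) ($\Hom(\tilde P_{m+1},\tilde P_{m+1}^I)\cong\CC$, $\Ext=0$), one gets the exact sequence
\[
0\to\Hom(\tilde P_{m+1}^J,\tilde P_{m+1}^I)\to\CC\xrightarrow{\ \rho\ }\bigoplus_{i\in J}\Hom(\tilde P_{m,i},\tilde P_{m+1}^I)\to\Ext(\tilde P_{m+1}^J,\tilde P_{m+1}^I)\to 0,
\]
where $\rho$ is precomposition with the inclusion $\bigoplus_{i\in J}\tilde P_{m,i}\hookrightarrow\tilde P_{m+1}$. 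If $J\not\subset I$ pick $i_0\in J\setminus I$; then the $i_0$-component of $\rho$ is precomposition $\CC=\Hom(\tilde P_{m+1},\tilde P_{m+1}^I)\to\Hom(\tilde P_{m,i_0},\tilde P_{m+1}^I)=\CC$ which sends $\pi_{m+1}^I$ to $\pi_{m+1}^I\circ(\text{inclusion})$, and this is nonzero — equivalently $\tilde P_{m,i_0}$ is not contained in the kernel of $\pi_{m+1}^I$, which is $\bigoplus_{i\in I}\tilde P_{m,i}$, again by Lemma~\ref{le:injective evaluation maps} / orthogonality — so $\rho$ is injective and $\Hom(\tilde P_{m+1}^J,\tilde P_{m+1}^I)=0$. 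If $J\subset I$ then every target summand $\Hom(\tilde P_{m,i},\tilde P_{m+1}^I)$ with $i\in J$ vanishes by part (1), so $\rho=0$ and we get $\Hom(\tilde P_{m+1}^J,\tilde P_{m+1}^I)\cong\CC$ and $\Ext(\tilde P_{m+1}^J,\tilde P_{m+1}^I)=0$. Taking $J=I$ gives $\Hom(\tilde P_{m+1}^I,\tilde P_{m+1}^I)\cong\CC$ and $\Ext(\tilde P_{m+1}^I,\tilde P_{m+1}^I)=0$, so $\tilde P_{m+1}^I$ is rigid with trivial endomorphism ring, hence indecomposable, hence exceptional.

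\textbf{The main obstacle} is bookkeeping the precomposition/postcomposition maps between one-dimensional spaces so that ``nonzero'' is justified rather than merely asserted: at each point where a map $\CC\to\CC$ must be shown to be an isomorphism, one has to recognise it as (a component of) the evaluation map $ev_V$ and invoke Lemma~\ref{le:injective evaluation maps}, or equivalently use that $\ker\pi_{m+1}^I=\bigoplus_{i\in I}\tilde P_{m,i}$ together with the orthogonality from Lemma~\ref{le:homdecomposition}(2) to see that a summand $\tilde P_{m,i}$ with $i\notin I$ maps nontrivially into $\tilde P_{m+1}^I$. Everything else is a routine diagram chase through four long exact sequences, and parts (1)--(2) must be proved before part (4) since the latter feeds on them; there is no genuine induction on $m$ needed here, as all inputs are already packaged in Lemma~\ref{le:homdecomposition}.
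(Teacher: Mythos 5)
Your proof is correct and follows essentially the same strategy as the paper's: apply the relevant $\Hom$-functors to the defining short exact sequence \eqref{ses1} and read off the long exact sequences using Lemma~\ref{le:homdecomposition}. The only substantive deviation is in part (3): the paper gets $\Hom(\tilde P_{m+1}^I,\tilde P_{m,j})=0$ by descending to $K(n)$ via Theorem~\ref{covering} and invoking Lemma~\ref{le:basic homological properties}, whereas you deduce it directly from the long exact sequence via the injection $\Hom(\tilde P_{m+1}^I,\tilde P_{m,j})\hookrightarrow\Hom(\tilde P_{m+1},\tilde P_{m,j})=0$; both are valid, yours is slightly more self-contained. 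One small observation: in part (1), the nonvanishing of the map $\Hom(\tilde P_{m,j},\bigoplus_{i\in I}\tilde P_{m,i})\to\Hom(\tilde P_{m,j},\tilde P_{m+1})$ for $j\in I$ needs no separate verification since $\Hom(\tilde P_{m,j},-)$ is left exact, so that map is automatically injective between two one-dimensional spaces; your explicit check is harmless but redundant. By contrast, your careful justification of the injectivity of $\rho$ in part (4) when $J\not\subset I$ is genuinely needed (it is not a consequence of left exactness there) and fills in a step the paper leaves implicit.
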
 
\begin{proof}
  Applying $\Hom(\tilde P_{m,j},-)$ to the sequence \eqref{ses1} gives the exact sequence
  \[\xymatrix{0 \ar[r] & \Hom(\tilde P_{m,j},\bigoplus_{i\in I} \tilde P_{m,i}) \ar[r] & \Hom(\tilde P_{m,j},\tilde P_{m+1}) \ar[r] & \Hom(\tilde P_{m,j},\tilde P_{m+1}^I) \ar[r] & 0},\]
  where the final zero follows from Lemma~\ref{le:homdecomposition}.(2).
  This also gives an isomorphism $\Ext(\tilde P_{m,j},\tilde P_{m+1}^I)\cong\Ext(\tilde P_{m,j},\tilde P_{m+1})=0$ by Lemma~\ref{le:homdecomposition}.(3).
  Now the middle space in the sequence above is one-dimensional while the left-hand space vanishes if and only if $j\notin I$, this proves part (1).

  Part (2) is an immediate consequence of Lemma~\ref{le:unique preprojective morphism} together with Theorem~\ref{covering} or can be obtained directly by applying $\Hom(\tilde P_{m+1},-)$ to the sequence \eqref{ses1}.
  The first part of (3) follows from Theorem~\ref{covering} together with Lemma~\ref{le:basic homological properties}.
  For the second part of (3), we apply $\Hom(-,\tilde P_{m,j})$ to the sequence \eqref{ses1}.
  Then taking into account Lemma~\ref{le:homdecomposition} part (3), we get the isomorphism
  \[\Ext(\tilde P_{m+1}^I,\tilde P_{m,j})\cong\Hom(\textstyle{\bigoplus_{i\in I}}\tilde P_{m,i},\tilde P_{m,j}).\] 
  Then Lemma~\ref{le:homdecomposition}.(2) gives the final claim of part (3).

  Part (4) follows by applying $\Hom(-,\tilde P^I_{m+1})$ to the sequence \eqref{ses1} for $J$ then using parts (1) and (2).
\end{proof}

\begin{example}
  \label{ex:truncated lifts}
  Fix $I\subsetneq\{1,\ldots,n\}$.
  Building on Example~\ref{ex:lifted preprojectives}, we describe here the truncated preprojectives $\tilde P_m^I$ for $m=2,3,4$.
  Following Lemma~\ref{le:properties}.(4), we can do this by simply specifying their dimension vectors as we did above.

  The support quiver of $\tilde P_2^I$ is obtained from the quiver \eqref{dia:P2 quiver} of $\tilde P_2$ by removing the sinks $(1,\alpha_i)$ for $i\in I$.
  The support quiver of $\tilde P_3^I$ is obtained from the quiver \eqref{dia:P3 quiver} of $\tilde P_3$ by removing the subquivers $\alpha_i^{-1}.\cA_i$ for $i\in I$ and decreasing the dimension of the space at vertex $(1,e)$ by $|I|$.

  The support quiver of $\tilde P_4^I$ is given by the following analogue of the quiver \eqref{dia:P4 quiver} of $\tilde P_4$:
  \[\xymatrix@R40pt@C10pt{&n-1-|I|\ar_{\alpha_1}[ld]\ar^{\alpha_n}[rd]&\\ \alpha_1.\cB_1^I&\cdots&\alpha_n.\cB_n^I,}\]
  where the top vertex of dimension $n-1-|I|$ is again $(2,e)$ and $\cB_i^I$ is simply a space of dimension~$n-1-|I|$ if $i\in I$ and otherwise is the quiver
  \[\xymatrix@R40pt@C10pt{&&&2n-3-|I|&&&\\
    \alpha_1^{-1}.\cA_1\ar^{\alpha_1}[rrru]&\cdots&\alpha_{i-1}^{-1}.\cA_{i-1}\ar^(.35){\alpha_{i-1}}[ru]&\widehat{\alpha_i^{-1}.\cA_i}&\alpha_{i+1}^{-1}.\cA_{i+1}\ar_(.35){\alpha_{i+1}}[lu]&\cdots&\alpha_n^{-1}.\cA_n.\ar_{\alpha_n}[lllu]}\]
\end{example} 

\begin{lemma}
  \label{le:truncated tau}
  For $m\ge3$ and $I\subsetneq\{1,\ldots,n\}$, we have $\tilde\Sigma_2(\tilde P_{m-1}^I)^\sigma=\tilde P_m^I$ and $\tau\tilde P_{m+1}^I=\tilde P_{m-1}^I$.
\end{lemma}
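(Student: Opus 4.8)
The plan is to derive the identity $\tilde\Sigma_2(\tilde P_{m-1}^I)^\sigma=\tilde P_m^I$ directly from the defining short exact sequence \eqref{ses1} by applying the lifted reflection functor $\tilde\Sigma_2(-)^\sigma$, and then obtain the statement about $\tau$ as an immediate consequence. First I would check that the hypotheses allowing us to apply $\tilde\Sigma_2(-)^\sigma$ exactly to \eqref{ses1} are satisfied: by Lemma~\ref{le:subrep} (or rather its construction together with Lemma~\ref{le:lifts of transjectives}) none of the three terms $\bigoplus_{i\in I}\tilde P_{m-1,i}$, $\tilde P_{m-1}$, $\tilde P_m^I$ has a direct summand isomorphic to the simple $S_{(2,w)}$ for any $w$ — the first two are (sums of translates of) preprojectives and $\tilde P_m^I$ is exceptional by Lemma~\ref{le:properties}.(4) and has the right support by Example~\ref{ex:truncated lifts}. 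Hence $\tilde\Sigma_2$ restricted to the relevant subcategory is exact and $\tilde\Sigma_2^2$ is the identity on these representations, so applying $\tilde\Sigma_2(-)^\sigma$ to \eqref{ses1} yields an exact sequence
\[\xymatrix@C18pt{0 \ar[r] & \bigoplus_{i\in I}\tilde\Sigma_2(\tilde P_{m-1,i})^\sigma \ar[r] & \tilde\Sigma_2(\tilde P_{m-1})^\sigma \ar[r] & \tilde\Sigma_2(\tilde P_m^I)^\sigma \ar[r] & 0}.\]

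Next I would identify the first two terms. By the recursion \eqref{eq:recursive covers}, $\tilde\Sigma_2(\tilde P_{m-1})^\sigma=\tilde P_m$. For the left term, the point is that the reflection functor commutes with the $W_n$-translation action in the appropriate sense: translating by $w$ on $\widetilde{K(n)}$ and then reflecting is the same as reflecting and then translating by the induced element, so $\tilde\Sigma_2(\tilde P_{m-1,i})^\sigma$ is a translate of $\tilde\Sigma_2(\tilde P_{m-1})^\sigma=\tilde P_m$ covering $P_{m-1}$ and lying over the arrow $\alpha_i$; by Lemma~\ref{le:shifted subreps} there are exactly $n$ such subrepresentations of $\tilde P_m$, namely the $\tilde P_{m-1,j}$, and matching the translation indices (using the abbreviation conventions $\tilde P_{2l-1,j}=\tilde P_{2l-1,\alpha_j}$, $\tilde P_{2l,j}=\tilde P_{2l,\alpha_j^{-1}}$ and the fact that $\sigma$-twisting interchanges the parities) shows $\tilde\Sigma_2(\tilde P_{m-1,i})^\sigma=\tilde P_{m,i}$. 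Therefore the displayed sequence reads
\[\xymatrix@C18pt{0 \ar[r] & \bigoplus_{i\in I}\tilde P_{m,i} \ar[r] & \tilde P_m \ar[r] & \tilde\Sigma_2(\tilde P_m^I)^\sigma \ar[r] & 0},\]
which is precisely the defining sequence \eqref{ses1} for $\tilde P_{m+1}^I$ with $m$ replaced by $m-1$... careful: \eqref{ses1} with index $m$ has middle term $\tilde P_{m+1}$, so the sequence just obtained is \eqref{ses1} at index $m-1$, whose cokernel is by definition $\tilde P_m^I$. Since the cokernel of an injection is determined up to canonical isomorphism, we conclude $\tilde\Sigma_2(\tilde P_m^I)^\sigma\cong\tilde P_m^I$ — wait, the left side should be $\tilde\Sigma_2(\tilde P_{m-1}^I)^\sigma$; re-indexing the whole argument by starting from \eqref{ses1} at index $m-1$ gives $\tilde\Sigma_2(\tilde P_{m-1}^I)^\sigma=\tilde P_m^I$, as desired. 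Finally, for the Auslander–Reiten translate, recall $\tau=\tilde\Sigma_2\tilde\Sigma_1$ on $\widetilde{K(n)}$ and $\tilde\Sigma_1(M^\sigma)=(\tilde\Sigma_2 M)^\sigma$; applying $\tilde\Sigma_2(-)^\sigma$ once more to the identity just proven (legitimate, since $\tilde P_m^I$ likewise has no $S_{(1,w)}$ summand, being exceptional with the support of Example~\ref{ex:truncated lifts}) gives $\tilde\Sigma_2(\tilde P_m^I)^\sigma=\tilde P_{m+1}^I$; combining the two yields $\tau\tilde P_{m+1}^I=\tilde\Sigma_2\tilde\Sigma_1\tilde P_{m+1}^I=\tilde\Sigma_2(\tilde\Sigma_2(\tilde P_{m+1}^I)^\sigma)^\sigma=\tilde P_{m-1}^I$.

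The main obstacle I anticipate is the bookkeeping in the second step — verifying that the summands $\tilde\Sigma_2(\tilde P_{m-1,i})^\sigma$ are exactly the $\tilde P_{m,i}$ with the correct translation label, given the parity-dependent abbreviation $\tilde P_{2l-1,j}=\tilde P_{2l-1,\alpha_j}$ versus $\tilde P_{2l,j}=\tilde P_{2l,\alpha_j^{-1}}$ and the way $\sigma$ swaps the two components of the covering quiver. This is a matter of carefully tracking how the word in $W_n$ indexing a translate transforms under one reflection step; the clean way to do it is to invoke Lemma~\ref{le:shifted subreps}, which guarantees there are precisely $n$ preprojective subrepresentations of $\tilde P_m$ covering $P_{m-1}$, so that the injection $\bigoplus_{i\in I}\tilde\Sigma_2(\tilde P_{m-1,i})^\sigma\hookrightarrow\tilde P_m$ must hit $|I|$ of them, and then a support/translation-index comparison with Example~\ref{ex:lifted preprojectives} pins down the labeling. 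Everything else is a routine application of exactness of $\tilde\Sigma_2$ on the subcategory avoiding the relevant simples, which is already licensed by Lemma~\ref{le:lifts of transjectives} and the exceptionality statement in Lemma~\ref{le:properties}.(4).
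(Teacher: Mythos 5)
Your argument for the first identity is exactly the paper's: apply $\tilde\Sigma_2(-)^\sigma$ to the sequence \eqref{ses1} at index $m-1$, invoke the recursion \eqref{eq:recursive covers} to identify the outer two terms as $\bigoplus_{i\in I}\tilde P_{m-1,i}$ and $\tilde P_m$, and conclude by uniqueness of the cokernel. The extra care you take verifying exactness and matching translation labels is sound but is left implicit in the paper, which treats these as immediate from \eqref{eq:recursive covers} and Lemma~\ref{le:shifted subreps}.

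For the $\tau$-identity the paper is terser and structurally cleaner: it simply applies $\tau^{-1}=\tilde\Sigma_1\tilde\Sigma_2$ to the defining sequence of $\tilde P_{m-1}^I$ and reads off $\tau^{-1}\tilde P_{m-1}^I=\tilde P_{m+1}^I$, rather than iterating the $\tilde\Sigma_2(-)^\sigma$ identity and composing as you do. Your route is valid, but watch the bookkeeping in the final displayed equation. Under the paper's convention $\tilde\Sigma_2(-)^\sigma=(-)^\sigma\circ\tilde\Sigma_2$ (which is forced by $P_m^\sigma=\Sigma_2 P_{m-1}$ in Remark~\ref{rem:reflection recursion}), the expression $\tilde\Sigma_2\bigl(\tilde\Sigma_2(\tilde P_{m+1}^I)^\sigma\bigr)^\sigma$ expands to $(-)^\sigma\circ\tilde\Sigma_2\circ(-)^\sigma\circ\tilde\Sigma_2=\tilde\Sigma_1\tilde\Sigma_2=\tau^{-1}$, not $\tau=\tilde\Sigma_2\tilde\Sigma_1$. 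The correct composition is $\tau=\tilde\Sigma_2\circ(-)^\sigma\circ\tilde\Sigma_2\circ(-)^\sigma$ with the $\sigma$-twists on the \emph{inside}: explicitly, $\tilde\Sigma_1\tilde P_{m+1}^I=\bigl(\tilde\Sigma_2[(\tilde P_{m+1}^I)^\sigma]\bigr)^\sigma=(\tilde P_m^I)^\sigma$ (using your identity at index $m+1$ and $\tilde\Sigma_2^2=\mathrm{id}$), and then $\tau\tilde P_{m+1}^I=\tilde\Sigma_2\bigl[(\tilde P_m^I)^\sigma\bigr]=\tilde P_{m-1}^I$ (using the identity at index $m$). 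So your two ingredients \emph{do} combine correctly; only the one-line formula as written points in the wrong direction. Given that you also caught your own off-by-one at the start, this reads as a slip in the write-up rather than a gap in the reasoning.
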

\begin{proof}
  By Lemma~\ref{le:homdecomposition}, we get the short exact sequence \eqref{ses1} defining the truncated preprojective $\tilde P_{m-1}^I$:
  \[\ses{\bigoplus_{i\in I} \tilde P_{m-2,i}}{\tilde P_{m-1}}{\tilde P_{m-1}^I}.\]
  Applying the functor $\tilde\Sigma_2(-)^\sigma$ to this sequence and recalling the reflection recursion \eqref{eq:recursive covers}, we obtain a short exact sequence
  \[\ses{\bigoplus_{i\in I} \tilde P_{m-1,i}}{\tilde P_m}{\tilde\Sigma_2(\tilde P_{m-1}^I)^\sigma}.\]
  The equality $\tilde\Sigma_2(\tilde P_{m-1}^I)^\sigma=\tilde P_m^I$ immediately follows.
  The equality $\tilde P_{m+1}^I=\tau^{-1}\tilde P_{m-1}^I$ is obtained in the same way using the functor $\tilde\Sigma_1\tilde\Sigma_2=\tau^{-1}$.
\end{proof}

We now introduce notation for locating specific lifted preprojectives as subrepresentations of our standard lifted preprojective representations.
Since we work only on two fixed components of $\widetilde{K(n)}$, the following notation will be useful in describing paths in these components.
For $k\ge1$, set
\[A_1^{(k)}:=\{(i_1,\ldots,i_k)\mid i_j\in\{1,\ldots,n\}\text{ for $1\le j\le k$}\}.\]
Depending on context (in particular, the parity of $m+1$), we will sometimes identify the word~$(i_1,\ldots,i_k)$ with the element $\alpha_{i_1}\alpha_{i_2}^{-1}\alpha_{i_3}\cdots\alpha_{i_k}^{(-1)^{k+1}}\in W_n$ and sometimes with the element $\alpha_{i_1}^{-1}\alpha_{i_2}\alpha_{i_3}^{-1}\cdots\alpha_{i_k}^{(-1)^k}\in W_n$.
In this way, to each word $\ui=(i_1,\ldots,i_k)\in A_1^{(k)}$ with $1\leq k\leq m$, we associate a preprojective subrepresentation~$\tilde P_{m+1-k,\ui}\subset\tilde P_{m+1}$ which lifts $P_{m+1-k}$.
More precisely, we obtain a sequence of preprojective subrepresentations which uniquely determines the desired inclusion:
\[\tilde P_{m+1-k,\ui}\subset\tilde P_{m+2-k,(i_1,\ldots,i_{k-1})}\subset\ldots\subset\tilde P_{m-1,(i_1,i_2)}\subset\tilde P_{m,i_1}\subset\tilde P_{m+1}.\]
Note that, although there is a translate of $\tilde P_{m+1-k}$ corresponding to each word $\ui\in A_1^{(l)}$ for $1\le l<k$, these will not be naturally equipped with a canonical inclusion to $\tilde P_{m+1}$.

To emphasize this point, consider a word $\ui\in A_1^{(k)}$ with $i_j=i_{j+1}$ for some $j$ and write $\ui'\in A_1^{(k-2)}$ for the word obtained from $\ui$ by removing the terms $i_j$ and $i_{j+1}$.
Then the representations $\tilde P_{m+1-k,\ui}$ and $\tilde P_{m+1-k,\ui'}$ are in fact equal, however $\tilde P_{m+1-k,\ui}$ is naturally identified as a subrepresentation of $\tilde P_{m+1}$ while $\tilde P_{m+1-k,\ui'}$ is not.
Indeed, by considering the support quiver of $\tilde P_{3,i}$ from Example~\ref{ex:lifted preprojectives}, we see that each $\tilde P_{2,(i,i)}$ is just a copy of $\tilde P_2$ when viewed as representations of $\widetilde{K(n)}$, however these provide distinct subrepresentations of $\tilde P_4$.
\begin{lemma}
  \label{le:special subrepresentations}
  Consider a non-empty subset $I\subsetneq\{1,\ldots,n\}$ and fix an element $j\in I$. 
  \begin{enumerate}
    \item For $m\ge2$, we have $\Hom(\tilde P_{m,j},\tau\tilde P_{m+1}^I)\cong\CC$.
      Moreover, the kernel of a nonzero morphism $\tilde P_{m,j}\to\tau\tilde P_{m+1}^I$ is the following representation 
      \[\tilde P_m(I,j):=
        \begin{cases}
          \bigoplus_{\substack{1\leq i\leq n\\i\neq j}}\tilde P_{m-1,(j,i)}\oplus \bigoplus_{i\in I, i\ne j}\tilde P_{m-2,(j,j,i)} & \text{ if $m\geq 3$;}\\
          \bigoplus_{\substack{1\leq i\leq n\\i\neq j}}\tilde P_{1,(j,i)} & \text{ if $m=2$.}
        \end{cases}\]
    \item For $m\ge3$, any nonzero morphism $\tilde P_{m,j}\to\tau\tilde P_{m+1}^I$ is surjective.
  \end{enumerate}
\end{lemma}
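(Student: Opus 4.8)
The plan is to analyze the morphism $\tilde P_{m,j}\to\tau\tilde P_{m+1}^I$ by first establishing that the source $\tilde P_{m,j}$ and the target $\tau\tilde P_{m+1}^I = \tilde P_{m-1}^I$ (using Lemma~\ref{le:truncated tau}) have comparable "sizes" in a suitable sense, then pinning down the kernel explicitly by identifying the relevant preprojective subrepresentations, and finally using a dimension count to force surjectivity in part (2). For part (1), the one-dimensionality of $\Hom(\tilde P_{m,j},\tau\tilde P_{m+1}^I)$ should follow from Lemma~\ref{le:properties}.(1) applied with $m$ replaced by $m-1$: since $j\in I$ and $\tilde P_{m-1}^I = \tilde P_{(m-1)+1}^I$, we get $\Hom(\tilde P_{m-1,j},\tilde P_{m-1}^I)\cong\CC$ — but wait, the source here is $\tilde P_{m,j}$, not $\tilde P_{m-1,j}$, so instead I would use the AR-type machinery: apply $\Hom(\tilde P_{m,j},-)$ to a short exact sequence expressing $\tau\tilde P_{m+1}^I$ and reduce to known Hom/Ext groups via Lemma~\ref{le:properties} and Lemma~\ref{le:homdecomposition}. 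Alternatively, translate the statement down one level using $\tilde\Sigma_2(-)^\sigma$ via Lemma~\ref{le:truncated tau} to reduce the case $m\ge3$ to a statement about $\tilde P_{m-1,j}$ and $\tau\tilde P_m^I$.

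For the identification of the kernel, I would work directly with the support-quiver descriptions from Examples~\ref{ex:lifted preprojectives} and \ref{ex:truncated lifts}. The key point is that a nonzero map $\tilde P_{m,j}\to\tau\tilde P_{m+1}^I = \tilde P_{m-1}^I$ factors through (or relates to) the canonical inclusion $\tilde P_{m-1,j}\subset\tilde P_m$ and the projection $\pi_{m-1}^I:\tilde P_{m-1}\to\tilde P_{m-1}^I$ from the sequence \eqref{ses1}. I expect the kernel to consist precisely of those preprojective subrepresentations of $\tilde P_{m,j}$ that get "killed" by the truncation: geometrically, $\tilde P_{m,j}$ contains subrepresentations $\tilde P_{m-1,(j,i)}$ for $i\ne j$ (from the AR sequence \eqref{eq:lifted AR sequence} structure applied to $\tilde P_{m,j}$) and, when $m\ge3$, further subrepresentations $\tilde P_{m-2,(j,j,i)}$ for $i\in I$; these latter ones land exactly in the summands $\tilde P_{m-2,i}$ that are quotiented out in forming $\tilde P_{m-1}^I$. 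Using Lemma~\ref{le:subrep}, the kernel is a direct sum of preprojectives, and matching dimension vectors (the dimension vector of $\tilde P_m(I,j)$ should equal $\udim\tilde P_{m,j} - \udim(\text{image})$) together with Lemma~\ref{le:homdecomposition}.(2) for orthogonality should determine it uniquely.

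For part (2), once the kernel is identified, surjectivity is a dimension count: I would compute $\udim\tilde P_{m,j} - \udim\tilde P_m(I,j)$ and check it equals $\udim\tilde P_{m-1}^I = \udim\tau\tilde P_{m+1}^I$. Since $\udim\tilde P_{m,j} = \udim P_m$ (it covers $P_m$), $\udim\tilde P_{m-1}^I = \udim P_{m-1} - |I|\cdot\udim P_{m-2}$ via Lemma~\ref{le:homdecomposition}.(4) and the discussion after Corollary~\ref{cor:base fibers}, and $\udim\tilde P_m(I,j)$ is the sum of the dimension vectors of its preprojective summands (which are translates of $P_{m-1}$ and $P_{m-2}$), this becomes an identity among Chebyshev polynomials $u_k$ that should hold using the recursion $u_{k+1} = nu_k - u_{k-1}$ — e.g. in the $m\ge3$ case the count $u_m = (n-1)u_{m-1} + (|I|-1)u_{m-2} + \bigl(u_{m-1} - |I|u_{m-2}\bigr)$, which simplifies to $u_m = nu_{m-1} - u_{m-2}$. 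The main obstacle I anticipate is verifying that $\tilde P_m(I,j)$ as written is genuinely a subrepresentation of $\tilde P_{m,j}$ (i.e. that the listed translates come equipped with compatible canonical inclusions into $\tilde P_{m,j}$, not merely isomorphic copies) and that it is exactly the kernel rather than a proper sub-kernel — this requires careful bookkeeping with the word combinatorics in $A_1^{(k)}$ and the non-cancellation phenomenon flagged right before the lemma statement (that $\tilde P_{m-2,(j,j,i)}$ sits inside $\tilde P_{m,j}$ via a path that does \emph{not} reduce). I would handle this by an induction on $m$ using $\tilde\Sigma_2(-)^\sigma$ and Lemma~\ref{le:truncated tau}, with the base case $m=2$ (where $\tau\tilde P_3^I$ is concentrated in small dimensions and the kernel is visibly $\bigoplus_{i\ne j}\tilde P_{1,(j,i)}$) checked by hand from the quiver \eqref{dia:P3 quiver}.
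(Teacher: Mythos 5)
Your overall plan — pin down one-dimensionality of the Hom-space, identify the kernel explicitly via the support-quiver descriptions in small cases, then propagate by the reflection recursion $\tilde\Sigma_2(-)^\sigma$ and close part (2) with a dimension count — is structurally close to the paper's proof. However, there is a genuine gap in the induction: you propose only $m=2$ as the base case, but the induction cannot start there. Two things go wrong. First, the formula for $\tilde P_m(I,j)$ changes shape at $m=3$ (the summands $\bigoplus_{i\in I, i\ne j}\tilde P_{m-2,(j,j,i)}$ appear only for $m\ge3$), so you cannot expect a clean transport of the $m=2$ kernel. Second, and more seriously, the map $\tilde P_{2,j}\to\tau\tilde P_3^I$ is \emph{not} surjective (this is exactly the content of Remark~\ref{rem:special case}); its image $\tilde K_2$ is a proper subrepresentation of $\tau\tilde P_3^I$, and the cokernel is a sum of simple injectives supported at sources which vanish under $\tilde\Sigma_2$. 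Consequently, reflecting the exact sequence $0\to\tilde P_2(I,j)\to\tilde P_{2,j}\to\tilde K_2\to 0$ does not hand you the $m=3$ statement about $\tau\tilde P_4^I=\tilde P_2^I$: the third term after reflection is $\tilde\Sigma_2(\tilde K_2)^\sigma$, and the "missing" cokernel piece gets absorbed into the reflected kernel rather than disappearing harmlessly. This is precisely why the paper verifies both $m=2$ \emph{and} $m=3$ by hand from Examples~\ref{ex:lifted preprojectives} and \ref{ex:truncated lifts} (surjectivity of $\tilde P_{3,j}\to\tilde P_2^I$ is visible there), and only then runs the reflection recursion from $m=3$ upward. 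You would need to add that $m=3$ check; without it, the induction has no legitimate starting point for the surjective regime.

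Two smaller remarks. For the one-dimensionality in part (1), your proposed detour (applying $\Hom(\tilde P_{m,j},-)$ to a defining short exact sequence for $\tau\tilde P_{m+1}^I$) can be made to work but requires identifying which translate of $\tilde P_{m-2}$ supports the unique nonzero $\Ext$-group; the paper's route is shorter and cleaner: $\dim\Hom(\tilde P_{m,j},\tau\tilde P_{m+1}^I)=\dim\Ext(\tilde P_{m+1}^I,\tilde P_{m,j})=1$ by the Auslander-Reiten formula together with Lemma~\ref{le:properties}.(3). Finally, your Chebyshev identity $(n-1)u_{m-1}+(|I|-1)u_{m-2}+(u_{m-1}-|I|u_{m-2})=u_m$ is correct (and likewise in the second coordinate), but note that this dimension count only delivers surjectivity \emph{after} you have independently identified the kernel as $\tilde P_m(I,j)$ — so the argument for part (2) still rests entirely on fixing the induction base for part (1).
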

\begin{proof}
  The first claim of part (1) follows immediately by applying the Auslander-Reiten formulas \cite[Theorem IV.2.13]{ass} to Lemma~\ref{le:properties}.(3).
  Indeed, this gives
  \[\dim\Hom(\tilde P_{m,j},\tau\tilde P_{m+1}^I)=\dim\Ext(\tilde P_{m+1}^I,\tilde P_{m,j})=1.\]
  We establish the final claim of part (1) directly for $m=2,3$ and then deduce the general case by applying the reflection recursions \eqref{eq:recursive covers}.
  Using the description in Example~\ref{ex:truncated lifts}, it is not hard to see that $\tau\tilde P_3^I$ is indecomposable with one-dimensional spaces at only the vertices $(1,e)$ and $(2,\alpha_i^{-1})$ for $i\in I$.
  But then for~$j\in I$, the image of the unique homomorphism $\tilde P_{2,j}\to\tau\tilde P_3^I$ is the representation with support quiver
  \[(2,\alpha_j^{-1})\xrightarrow{\alpha_j}(1,e).\]
  From the support quiver of $\tilde P_{2,j}$ given in Example~\ref{ex:lifted preprojectives}, we see that the kernel of this map is precisely $\tilde P_2(I,j)$.

  For the $m=3$ case, we note that $\tau\tilde P_4^I=\tilde P_2^I$ by Lemma~\ref{le:truncated tau}.
  Then the claimed structure $\tilde P_3(I,j)$ of the kernel and the surjectivity of the map $\tilde P_{3,j}\to\tau\tilde P_4^I$ are immediate from the explicit descriptions of $\tilde P_{3,j}$ and $\tilde P_2^I$ in Example~\ref{ex:lifted preprojectives} and Example~\ref{ex:truncated lifts} respectively.
  The general cases for parts (1) and (2) both then follow using the reflection recursions \eqref{eq:recursive covers} taking into account Remark~\ref{rem:reflection recursion}.(2).
\end{proof}

\begin{remark}
  \label{rem:special case}
  We should point out that the case $m=2$ of Lemma~\ref{le:special subrepresentations} is rather special because it is the only one for which the unique morphism $\tilde P_{m,j}\to\tau\tilde P_{m+1}^I$ is not surjective.
  Indeed, recall from the proof of Lemma~\ref{le:special subrepresentations} that the image of the unique homomorphism $\tilde P_{2,j}\to\tau\tilde P_3^I$ is the representation with support quiver
  \begin{equation}
    \label{eq:special case}
    (2,\alpha_j^{-1})\xrightarrow{\alpha_j}(1,e).
  \end{equation}
  If we factor out the image of this morphism from $\tau\tilde P_3^I$, the remaining representation is a direct sum of the simple injective representations corresponding to the vertices $(2,\alpha_i^{-1})$ for $i\in I$, $i\ne j$.
  Note that these disappear after reflecting at all sources.
\end{remark}

The following orthogonality property is a primary reason we need to lift to the universal cover of $K(n)$.
\begin{corollary}
  \label{cor:perpendicular}
  Consider a non-empty subset $I\subsetneq\{1,\ldots,n\}$ and fix an element $j\in I$.
  For $m\geq 2$, we have $\tilde P_m(I,j)\in (\tilde P_{m+1}^I)^\perp$.
\end{corollary}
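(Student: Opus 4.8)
The goal is to show $\tilde P_m(I,j)$ lies in the perpendicular category $(\tilde P_{m+1}^I)^\perp$, i.e.\ that both $\Hom(\tilde P_{m+1}^I,\tilde P_m(I,j))=0$ and $\Ext(\tilde P_{m+1}^I,\tilde P_m(I,j))=0$. Since $\tilde P_m(I,j)$ is by definition a direct sum of translated preprojective representations $\tilde P_{m-1,(j,i)}$ (for $i\neq j$) and $\tilde P_{m-2,(j,j,i)}$ (for $i\in I$, $i\neq j$) — with the obvious simplification in the $m=2$ case — it suffices to verify the vanishing of $\Hom(\tilde P_{m+1}^I,-)$ and $\Ext(\tilde P_{m+1}^I,-)$ against each of these indecomposable summands separately.

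\textbf{Key steps.} First I would dispose of the $\Hom$ vanishing: each summand of $\tilde P_m(I,j)$ is a lifted preprojective $\tilde P_{m+1-k,\ui}$ with $k\geq 2$, so it lifts $P_{m+1-k}$ with $m+1-k\leq m-1$; by Theorem~\ref{covering} together with Lemma~\ref{le:basic homological properties} (applied to the projection $G(\tilde P_{m+1}^I)=P_{m+1}^V$, a truncated preprojective, against $P_\ell$ for $\ell\geq 1$) we get $\Hom(\tilde P_{m+1}^I,\tilde P_{m+1-k,\ui})=0$. Second, for the $\Ext$ vanishing I would use the Auslander–Reiten formula: $\Ext(\tilde P_{m+1}^I, \tilde P_{m+1-k,\ui})\cong D\,\overline{\Hom}(\tilde P_{m+1-k,\ui},\tau\tilde P_{m+1}^I)$, so it is enough to show any morphism $\tilde P_{m+1-k,\ui}\to\tau\tilde P_{m+1}^I$ factors through an injective — and in fact I expect these morphisms to all vanish. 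The crucial input is Lemma~\ref{le:special subrepresentations}: the morphism space $\Hom(\tilde P_{m,j},\tau\tilde P_{m+1}^I)$ is one-dimensional with kernel precisely $\tilde P_m(I,j)$, and for $m\geq 3$ that morphism is surjective. Restricting the unique nonzero map $\tilde P_{m,j}\to\tau\tilde P_{m+1}^I$ to the subrepresentation $\tilde P_m(I,j)\subset\tilde P_{m,j}$ gives zero by construction; since every summand $\tilde P_{m+1-k,\ui}$ of $\tilde P_m(I,j)$ embeds in $\tilde P_{m,j}$ and $\Hom(\tilde P_{m,j},\tau\tilde P_{m+1}^I)$ is one-dimensional, any map $\tilde P_{m+1-k,\ui}\to\tau\tilde P_{m+1}^I$ must extend (using that the relevant $\Ext$ between preprojective summands vanishes, e.g.\ via Theorem~\ref{th:rigids} and orthogonality from Lemma~\ref{le:homdecomposition}) to a map on $\tilde P_{m,j}$, hence be a scalar multiple of the unique one restricted to the summand, which is $0$. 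This forces $\overline{\Hom}(\tilde P_m(I,j),\tau\tilde P_{m+1}^I)=0$ and hence $\Ext(\tilde P_{m+1}^I,\tilde P_m(I,j))=0$.

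\textbf{Main obstacle.} The delicate point is the extension-of-morphisms argument: a priori a nonzero map $\tilde P_{m+1-k,\ui}\to\tau\tilde P_{m+1}^I$ from a summand need not extend to $\tilde P_{m,j}$, so I must check that the obstruction — living in $\Ext(\tilde P_{m,j}/\tilde P_{m+1-k,\ui}\text{-type quotient}, \tau\tilde P_{m+1}^I)$ or more precisely in $\Ext^1$ of the relevant cokernel of the inclusion against the target — actually vanishes. Since the inclusions of the summands into $\tilde P_{m,j}$ split off preprojective complements (the other summands of $\tilde P_m(I,j)$ together with the image part), and $\tau\tilde P_{m+1}^I=\tilde P_{m-1}^I$ is exceptional with controlled $\Ext$ against preprojectives, this should go through, but it is where the bookkeeping with Lemma~\ref{le:homdecomposition} and Lemma~\ref{le:properties} is needed. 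The special case $m=2$ (Remark~\ref{rem:special case}, where the map is not surjective) I would handle directly: there $\tilde P_2(I,j)=\bigoplus_{i\neq j}\tilde P_{1,(j,i)}$ is semisimple at sink vertices, $\tau\tilde P_3^I$ has one-dimensional spaces only at $(1,e)$ and $(2,\alpha_i^{-1})$ for $i\in I$, and a direct inspection of support quivers gives both vanishings immediately.
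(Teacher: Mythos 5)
Your proposal is correct in substance but takes a genuinely different route from the paper, primarily for the $\Ext$ vanishing. The paper applies $\Hom(\tilde P_{m+1}^I,-)$ once to the short exact sequence $\ses{\tilde P_m(I,j)}{\tilde P_{m,j}}{\tau\tilde P_{m+1}^I}$ and reads off both vanishings from the resulting long exact sequence: the $\Hom$ vanishing falls out immediately from $\Hom(\tilde P_{m+1}^I,\tilde P_{m,j})=0$ (Lemma~\ref{le:properties}.(3)), and the $\Ext$ vanishing follows by a dimension count showing that the connecting map $\Ext(\tilde P_{m+1}^I,\tilde P_{m,j})\to\Ext(\tilde P_{m+1}^I,\tau\tilde P_{m+1}^I)$ is an isomorphism between one-dimensional spaces while $\Hom(\tilde P_{m+1}^I,\tau\tilde P_{m+1}^I)=0$. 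You instead decompose $\tilde P_m(I,j)$ into its preprojective summands and dualize via the Auslander--Reiten formula, reducing to $\Hom(\tilde P_{m+1-k,\ui},\tau\tilde P_{m+1}^I)=0$ for each summand. That reduction is sound, and the summand-level vanishings can in fact be checked directly: by Theorem~\ref{covering} the only translate of $\tilde P_{m-1}$ with nonzero $\Hom$ to $\tilde P_{m-1}^I=\tau\tilde P_{m+1}^I$ is the trivial one (since $\Hom(P_{m-1},P_{m-1}^V)\cong\CC$), and the remaining summands $\tilde P_{m-2,(j,j,i)}$ reduce to the translates $\tilde P_{m-2,i}$ with $i\in I$, killed by Lemma~\ref{le:properties}.(1). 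This direct check bypasses the extension-of-morphisms argument you flag as the main obstacle, and is worth recording because your description of that argument is slightly off: the sequence $\ses{\tilde P_m(I,j)}{\tilde P_{m,j}}{\tau\tilde P_{m+1}^I}$ is \emph{not} split (its class generates the one-dimensional $\Ext(\tau\tilde P_{m+1}^I,\tilde P_m(I,j))$... wait, in fact $\Hom(\tilde P_{m,j},\tau\tilde P_{m+1}^I)\cong\CC$ is generated precisely by the quotient map), so $\tilde P_{m,j}$ does not ``split off preprojective complements together with the image part.'' The correct obstruction to extending a map $\tilde P_m(I,j)\to\tau\tilde P_{m+1}^I$ over the inclusion into $\tilde P_{m,j}$ lives in $\Ext(\tau\tilde P_{m+1}^I,\tau\tilde P_{m+1}^I)$, which vanishes because $\tilde P_{m+1}^I$ is exceptional (Lemma~\ref{le:properties}.(4) plus the Auslander--Reiten formula). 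With that fix, your plan closes; your $\Hom$ vanishing and your treatment of $m=2$ essentially match the paper's. In sum, your approach trades one application of the long exact sequence for an AR-dualization plus a per-summand analysis; the paper's argument is more economical, but yours exposes finer information about which translated preprojectives inside $\tilde P_{m,j}$ interact with $\tau\tilde P_{m+1}^I$.
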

\begin{proof}
  If $m\geq 3$, we consider the long exact sequence obtained when applying $\Hom(\tilde P_{m+1}^I,-)$ to the sequence
  \[\ses{\tilde P_m(I,j)}{\tilde P_{m,j}}{\tau\tilde P_{m+1}^I}.\]
  Following Lemma~\ref{le:properties}.(3), we have $\Hom(\tilde P_{m+1}^I,\tilde P_{m,j})=0$ and using the long exact sequence this immediately implies $\Hom\big(\tilde P_{m+1}^I,\tilde P_m(I,j)\big)=0$.
  From Lemma~\ref{le:properties}.(3) again, we have $\Ext(\tilde P_{m+1}^I,\tilde P_{m,j})\cong\CC$.
  Using the Auslander-Reiten formulas \cite[Theorem IV.2.13]{ass} and Lemma~\ref{le:properties}.(4), we get
  \[\dim\Ext(\tilde P_{m+1}^I,\tau\tilde P_{m+1}^I)=\dim\Hom(\tilde P_{m+1}^I,\tilde P_{m+1}^I)=1.\]
  It follows that the surjective map $\Ext(\tilde P_{m+1}^I,\tilde P_{m,j})\to\Ext(\tilde P_{m+1}^I,\tau\tilde P_{m+1}^I)$ appearing in the long exact sequence is in fact an isomorphism.
  But the Auslander-Reiten formulas and Lemma~\ref{le:properties}.(4) again imply 
  \[\dim\Hom(\tilde P_{m+1}^I,\tau\tilde P_{m+1}^I)=\dim\Ext(\tilde P_{m+1}^I,\tilde P_{m+1}^I)=0.\]
  Combining with the preceding discussion, this gives $\Ext\!\big(\tilde P_{m+1}^I,\tilde P_m(I,j)\big)=0$.
  
  If $m=2$, we have $\Hom\!\big(\tilde P_3^I,\tilde P_2(I,j)\big)=0$ since $\tilde P_2(I,j)$ is a direct sum of simple projective representations.
  Using the explicit description of $\tilde P_3^I$ from Example~\ref{ex:truncated lifts}, we see that each $\tilde P_{1,(j,i)}$ for $i\ne j$ is supported at a vertex which is not a neighbor of the support of $\tilde P_3^I$ and this implies $\Ext\!\big(\tilde P_3^I,\tilde P_{1,(j,i)}\big)=0$ for $i\neq j$.
\end{proof}

The next step is to introduce notation in order to locate truncated preprojectives as quotients of other truncated preprojectives in the universal covering.
\begin{definition}
  For $I\subsetneq\{1,\ldots,n\}$, write $I^c=\{1,\ldots,n\}\setminus I$ for the complementary subset.
  A sequence of subsets $\bfI=(I_0,I_1,\ldots,I_k)$, $k\ge0$, in $\{1,\ldots,n\}$ is \emph{admissible} if the following hold:
  \begin{enumerate}
    \item if $k\ge1$, then $|I_0|=n-1$ and $|I_l|=n-2$ for $1\le l\le k-1$;
    \item the sets $I'_l$ defined recursively by $I'_0=I_0$ and $I'_{l+1}:=I_{l+1}\cup (I'_l)^c$ for $0\le l\le k$ satisfy $(I'_l)^c\cap I_{l+1}=\varnothing$ for $0\le l\le k$.
  \end{enumerate}
  Here we take $I_{k+1}=\varnothing$ so that there is no condition imposed on the set $(I'_k)^c$.
  Given an admissible sequence $\bfI=(I_0,\ldots,I_k)$ with $k\ge1$, define a new admissible sequence $\delta\bfI=(I'_1,I_2,\ldots,I_k)$.
\end{definition}

In the same way as for the preprojective lifts $\tilde P_{m+1}$, $m\ge1$, we can define truncated preprojectives $\tilde P_{m+1,w}^I$ of any translate $\tilde P_{m+1,w}$, where $w\in W_n$ and $I\subsetneq\{1,\ldots,n\}$.
That is, taking $\varepsilon=(-1)^{m+1}$ we have an exact sequence
\[\ses{\bigoplus_{i\in I} \tilde P_{m,w\alpha_i^\varepsilon}}{\tilde P_{m+1,w}}{\tilde P_{m+1,w}^I}.\]
These quotients are unique in the sense that $\Hom(\tilde P_{m+1,w},\tilde P_{m+1,w}^I)=\CC$ for all $w\in W_n$ and $I\subsetneq\{1,\ldots,n\}$.
We adopt similar notation for truncated preprojectives $\tilde P_{m+1-k,\ui}^I$ for $\ui\in A_1^{(k)}$ and $I\subsetneq\{1,\ldots,n\}$.

By Lemma~\ref{le:properties}.(1), we can quotient out the lifted preprojectives successively, i.e.\ for any proper subsets $J\subsetneq I\subsetneq\{1,\ldots,n\}$ we have a short exact sequence
\begin{equation}
  \label{eq:truncted sequence}
  \sesm{\bigoplus_{i\in I\setminus J}\tilde P_{m,i}}{\tilde P_{m+1}^J}{\tilde P_{m+1}^I}{\pi_{m+1}^{J,I}}.
\end{equation}
For any sequence of proper subsets $K\subsetneq J\subsetneq I\subsetneq\{1,\ldots,n\}$, the quotient maps satisfy $\pi_{m+1}^{K,I}=\pi_{m+1}^{J,I}\circ\pi_{m+1}^{K,J}$ and $\pi_{m+1}^I=\pi_{m+1}^{J,I}\circ\pi_{m+1}^J$.
For $w\in W_n$, we write $\pi_{m+1,w}^{I,J}:\tilde P_{m+1,w}^I\to\tilde P_{m+1,w}^J$ for the translated morphism with similar notation for truncated preprojectives $\tilde P_{m+1-k,\ui}^I$ for $\ui\in A_1^{(k)}$ and $I\subsetneq\{1,\ldots,n\}$.
\begin{lemma}
  \label{le:truncated quotients}
  For $m\ge3$, the following hold:
  \begin{enumerate}
    \item For $I\subsetneq\{1,\ldots,n\}$ with $|I|=n-1$ and $I^c=\{j\}$, we have an isomorphism 
      \[\tilde P_{m+1}^I\cong\tilde P_{m,j}^{I^c}=\tilde P_{m,j}/\tilde P_{m-1,(j,j)}.\]
    \item Consider an admissible sequence $\bfI=(I_0,\ldots,I_k)$ with $0<k<m$ and write $(I'_l)^c=\{i_l\}=:J_l$ for $0\le l\le k-1$.
      For $\ui=(i_0,\ldots,i_{k-1})$, there exists a commutative diagram:
      \begin{equation}
        \label{dia:truncated coherence}
        \xymatrix@R40pt@C70pt{\tilde P_{m+1} \ar^(.4){\pi_{m+1}^{I'_0}}[d] & \ar[l] \tilde P_{m,i_0} \ar_(.4){\pi_{m,i_0}^{J_0}}[dl]\ar^(.4){\pi_{m,i_0}^{I'_1}}[d] & \ar[l] \tilde P_{m-1,(i_0,i_1)} \ar_(.4){\pi_{m-1,(i_0,i_1)}^{J_1}\;}[dl]\ar^(.4){\pi_{m-1,(i_0,i_1)}^{I'_2}}[d] & \ar[l] \ar[dl] \cdots & \ar[l] \tilde P_{m+1-k,\ui} \ar_(.4){\pi_{m+1-k,\ui}^{J_{k-1}}\;}[dl]\ar^(.4){\pi_{m+1-k}^{I'_k}}[d]\\
        \tilde P_{m+1}^{I'_0}\ar^{\pi_{m,i_0}^{J_0,I'_1}}[r] & \tilde P_{m,i_0}^{I'_1}\ar^{\pi_{m-1,(i_0,i_1)}^{J_1,I'_2}}[r] & \tilde P_{m-1,(i_0,i_1)}^{I'_2}\ar^(.6){\pi_{m-2,(i_0,i_1,i_2)}^{J_2,I'_3}}[r] & \cdots\ar^{\pi_{m+1-k,\ui}^{J_{k-1},I'_k}}[r] & \tilde P_{m+1-k,\ui}^{I'_k}}
      \end{equation}
      where the composed map $\pi_{m+1}^\bfI:\tilde P_{m+1}\to\tilde P_{m+1-k,\ui}^{I'_k}$ is unique up to scaling and therefore we write
      \[\tilde P_{m+1}^\bfI:=\tilde P_{m+1-k,\ui}^{I'_k}.\]
      These truncated preprojective representations satisfy 
      \[\tilde P_{m+1}^\bfI=\tilde P_{m,i_0}^{\delta\bfI}=\cdots=\tilde P_{m+1-k,\ui}^{\delta^k\bfI}.\]
  \end{enumerate}
\end{lemma}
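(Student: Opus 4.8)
\textbf{Proof plan for Lemma~\ref{le:truncated quotients}.}

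The plan is to prove part (1) first, as it is the base case $k=1$ (with $\bfI=(I)$, $|I|=n-1$) of the coherence phenomenon in part (2), and then to establish part (2) by induction on $k$. For part (1), I would apply $\Hom(-,\tilde P_m^{I^c})$ (or more directly compare universal properties) to the defining sequence $\ses{\bigoplus_{i\in I}\tilde P_{m-1,(j,i)}}{\tilde P_{m,j}}{\tilde P_{m,j}^{I^c}}$ from Lemma~\ref{le:properties}.(1) applied to the translate $\tilde P_{m,j}$: here $I^c=\{j\}$, so $\tilde P_{m,j}^{I^c}=\tilde P_{m,j}/\tilde P_{m-1,(j,j)}$. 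Separately, the defining sequence for $\tilde P_{m+1}^I$ is $\ses{\bigoplus_{i\in I}\tilde P_{m,i}}{\tilde P_{m+1}}{\tilde P_{m+1}^I}$. The key point is that $\tilde P_{m+1}/\big(\bigoplus_{i\ne j}\tilde P_{m,i}\big)$ should already be computed — quotienting $\tilde P_{m+1}$ by all but one of its copies of $\tilde P_m$. Using Lemma~\ref{le:homdecomposition}.(2) (pairwise orthogonality of the $\tilde P_{m,i}$) together with the fact that $\sum_i\tilde P_{m,i}$ as a subrepresentation of $\tilde P_{m+1}$ has the $\tilde P_{m,i}$ in ``general position'' (their pairwise intersections being controlled), I would identify this intermediate quotient, and then show that the further quotient by the image of $\tilde P_{m,j}$'s submodule $\tilde P_{m-1,(j,j)}$ matches $\tilde P_{m+1}^I$ on the nose. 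Concretely, I expect the cleanest route is to verify both sides are the (unique, by Lemma~\ref{le:properties}.(2) and its translates) image of a nonzero map out of $\tilde P_{m+1}$ with the prescribed dimension vector, which by Lemma~\ref{le:properties}.(4) pins down the isomorphism type; matching dimension vectors can be read off from Example~\ref{ex:truncated lifts}.

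For part (2), I would induct on $k$. The base case $k=1$ is exactly part (1) (with $\bfI=(I_0)$, $|I_0|=n-1$, $I'_0=I_0$, $J_0=I_0^c=\{i_0\}$), giving $\tilde P_{m+1}^{I'_0}\cong\tilde P_{m,i_0}^{J_0}=\tilde P_{m,i_0}/\tilde P_{m-1,(i_0,i_0)}$, and simultaneously $\tilde P_{m+1}^{I'_0}=\tilde P_{m,i_0}^{\delta\bfI}$ unwinds the notation $\delta\bfI=(I'_1)$ with $I'_1=I_1\cup(I'_0)^c$; here since $k=1$ we have $I_1=\varnothing$ so $I'_1=\{i_0\}=J_0$, consistent. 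For the inductive step, given $\bfI=(I_0,\ldots,I_k)$ admissible with $k\ge2$, I apply the case-$k=1$ identification to peel off the first step: $\tilde P_{m+1}^{I'_0}\cong\tilde P_{m,i_0}^{J_0}$, and then observe $\delta\bfI=(I'_1,I_2,\ldots,I_k)$ is an admissible sequence for the representation $\tilde P_{m,i_0}$ with parameters shifted by one ($m\mapsto m-1$, $k\mapsto k-1$), to which the inductive hypothesis applies, yielding $\tilde P_{m,i_0}^{\delta\bfI}=\tilde P_{m-1,(i_0,i_1)}^{\delta^2\bfI}=\cdots=\tilde P_{m+1-k,\ui}^{\delta^k\bfI}$. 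The commutativity of the diagram~\eqref{dia:truncated coherence} then follows square by square: each square is an instance of the coherence of the projection maps $\pi^{K,I}=\pi^{J,I}\circ\pi^{K,J}$ and $\pi^I=\pi^{J,I}\circ\pi^J$ recorded just before the lemma, applied on the appropriate translate, combined with the functoriality of ``restrict to a preprojective subrepresentation then project.'' Uniqueness up to scaling of the composite $\pi_{m+1}^\bfI$ follows from $\Hom(\tilde P_{m+1},\tilde P_{m+1-k,\ui}^{I'_k})$ being one-dimensional, which I would deduce from Lemma~\ref{le:properties}.(4) combined with Theorem~\ref{covering} (the space of maps from $\tilde P_{m+1}$ factoring through the chain of subrepresentations is one-dimensional because at each stage $\Hom$ from the relevant preprojective is one-dimensional and nonzero).

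The main obstacle I anticipate is part (1): correctly identifying the intermediate quotient $\tilde P_{m+1}\big/\bigoplus_{i\in I,\,i\ne j}\tilde P_{m,i}$ and checking that the residual copy of $\tilde P_{m,j}$ inside it is embedded with the ``right'' submodule $\tilde P_{m-1,(j,j)}$ playing the role of the kernel of $ev$ restricted to the last summand — i.e., that the construction of $\tilde P_{m+1}^I$ via simultaneously killing all of $\bigoplus_{i\in I}\tilde P_{m,i}$ agrees with first killing the others and then forming $\tilde P_{m,j}/\tilde P_{m-1,(j,j)}$. This is essentially a claim about how the $n$ copies $\tilde P_{m,i}\subset\tilde P_{m+1}$ overlap inside $\tilde P_{m+1}$; the needed input is the lifted Auslander-Reiten sequence~\eqref{eq:lifted AR sequence}, which tells us $\bigcap_i \tilde P_{m,i}$-type data, together with the explicit support-quiver descriptions in Examples~\ref{ex:lifted preprojectives} and~\ref{ex:truncated lifts}. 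Once part (1) is secured, part (2) is a bookkeeping induction with no genuine difficulty beyond carefully tracking the recursive definition of $I'_l$ and verifying that admissibility of $\bfI$ is exactly what guarantees admissibility of $\delta\bfI$ (condition (2) in the definition of admissible is built to be stable under $\delta$).
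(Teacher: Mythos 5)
Your instinct that the lifted Auslander--Reiten sequence~\eqref{eq:lifted AR sequence} is the key input for part (1) is correct, and your handling of part (2) (induct on $k$ by peeling off the first step and checking admissibility of $\delta\bfI$) matches the paper. But your description of part (1) is muddled in a way worth flagging: the ``intermediate quotient'' $\tilde P_{m+1}/\bigoplus_{i\ne j}\tilde P_{m,i}$ you propose to compute first \emph{is} $\tilde P_{m+1}^I$ by definition (recall $I^c=\{j\}$, so $\bigoplus_{i\in I}\tilde P_{m,i}=\bigoplus_{i\ne j}\tilde P_{m,i}$), so there is no ``further quotient'' step left to take, and the plan as stated is circular. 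The paper's argument is a single $3\times3$ snake-lemma diagram: take the lifted AR sequence $0\to\tilde P_{m-1}\to\bigoplus_{i=1}^n\tilde P_{m,i}\to\tilde P_{m+1}\to0$ as the middle row, use the direct-sum decomposition $\bigoplus_{i=1}^n\tilde P_{m,i}=\big(\bigoplus_{i\in I}\tilde P_{m,i}\big)\oplus\tilde P_{m,j}$ for the middle column, and the defining sequence~\eqref{ses1} for $\tilde P_{m+1}^I$ as the right column; the bottom row then reads $0\to\tilde P_{m-1}\to\tilde P_{m,j}\to\tilde P_{m+1}^I\to0$, and one identifies the image of $\tilde P_{m-1}$ in $\tilde P_{m,j}$ as the canonical copy $\tilde P_{m-1,(j,j)}$.

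Your alternative ``cleanest route'' --- match dimension vectors (via $u_{m+1}=nu_m-u_{m-1}$, cf.\ Corollary~\ref{cor:truncated preprojective isomorphism}) and invoke exceptionality from Lemma~\ref{le:properties}.(4) --- would establish the \emph{isomorphism class} in part (1), but it does not by itself produce the short exact sequence $0\to\tilde P_{m-1,(j,j)}\to\tilde P_{m,j}\to\tilde P_{m+1}^I\to0$ with its explicit maps, and it is precisely that sequence (together with the coherences $\pi^{K,I}=\pi^{J,I}\circ\pi^{K,J}$) which feeds the commutative diagram~\eqref{dia:truncated coherence} in part (2). So the snake-lemma route is not merely cleaner but is what actually powers the iteration. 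Your deduction of uniqueness of $\pi_{m+1}^\bfI$ from one-dimensionality of the relevant $\Hom$ spaces is fine.
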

\begin{proof}
  The Auslander-Reiten sequence~\eqref{eq:lifted AR sequence} gives rise to the following commutative diagram:
  \[\xymatrix{& & 0\ar[d]& 0\ar[d]&\\
    & & \bigoplus_{i\in I} \tilde P_{m,i}\ar[d]\ar@{=}[r]& \bigoplus_{i\in I} \tilde P_{m,i}\ar[d]&\\
    0\ar[r]&\tilde P_{m-1} \ar[r]\ar@{=}[d] & \bigoplus_{i=1}^n \tilde P_{m,i}\ar[d]\ar[r]\ar@{}[dr]|(.7){\lrcorner} & \tilde P_{m+1}\ar[r]\ar[d]& 0\\
    0\ar[r] &\tilde P_{m-1}\ar[r] &  \tilde P_{m,j}\ar[r]\ar[d] & \tilde P_{m+1}^I\ar[r]\ar[d] & 0\\
  & & 0& 0&}\]
  The image of the inclusion $\tilde P_{m-1}\into \tilde P_{m,j}$ in the bottom row is the subrepresentation $\tilde P_{m-1,(j,j)}\subset\tilde P_{m,j}$ and part (1) follows.

  The first claim of part (2) is then immediate by repeatedly applying part (1) while the final claim of part (2) is a consequence of the definition of $\delta$.
\end{proof}

\section{Quiver Grassmannians}
\label{QG}

\noindent 
In this section, we aim to establish the existence of cell decompositions for quiver Grassmannians of (truncated) preprojective representations of $K(n)$ and its universal covering quiver.
By a \emph{cell decomposition} of an algebraic variety $X$, we mean a filtration $\varnothing=X_{k+1}\subset X_k\subset \cdots \subset X_2\subset X_1=X$ of $X$ by closed subsets $X_i\subset X$ so that each $X_i\setminus X_{i+1}$ is isomorphic to an affine space.
Alternatively, a cell decomposition of $X$ is a collection of disjoint locally closed subsets $U_1,\ldots,U_k\subset X$, each isomorphic to an affine space, such that each $X_i=U_i\sqcup U_{i+1}\sqcup\cdots\sqcup U_k$ is closed in $X$ with $X_1=X$.
We call the subsets $U_i\subset X$ the \emph{affine cells} for this cell decomposition.
Given varieties $X$ and $Y$ each with cell decompositions, we may choose an ordering on products of their affine cells (e.g.\ lexicographic) to get a cell decomposition of $X\times Y$. 
Given a variety $X$ with a cell decomposition, we call a subvariety $U\subset X$ \emph{compatible} with the cell decomposition if $U$ can be written as the union over a subset of the affine cells for $X$.
In this case, $U$ also has a cell decomposition given by taking exactly those affine cells for $X$ which are contained in $U$.

\subsection{Torus Actions and the Bia\l{}ynicki-Birula Decomposition}
\label{sec:bb}

The aim of Section~\ref{torusaction} is to define a $\CC^*$-action on quiver Grassmannians which can be used to simplify the calculation of homological invariants in general.
If the quiver Grassmannian is smooth, which is for instance the case for exceptional representations by \cite{cr}, it can also be used to stratify the quiver Grassmannians using the results of Bia\l{}ynicki-Birula.
More specifically, let $X$ be a smooth projective variety with a $\C^\ast$-action.
For a connected component of the fixed point set $C\subset X^{\C^\ast}$, we define its attracting set as
\[\Att(C):=\{y\in X\mid \lim_{t\to 0}t.y\in C\}.\]
The following result of Bia\l{}ynicki-Birula relates the geometry of $X$ to the geometry of its $\CC^*$-fixed points (see \cite[Section 4]{bb} or \cite[Section 4]{ca}).
\begin{theorem}
  \label{thm:bb}
  Let $X$ be a smooth projective complex variety with a $\C^\ast$-action.
  Then each attracting set $\Att(C)$ is a locally closed $\CC^*$-invariant subvariety of $X$ and the natural map $\Att(C)\to C$ is an affine bundle.
  Moreover, assuming $X^{\C^\ast}=\coprod_{i=1}^r C_i$ is a decomposition of the fixed point set of $X$ into finitely many connected components, we have $X=\coprod_{i=1}^r \Att(C_i)$, where we can choose an ordering such that $\coprod_{i=1}^s\Att(C_i)$ is closed for $1\leq s\leq r$.
  In particular, we have an equality of Euler characteristics $\chi(X)=\chi(X^{\C^\ast})$.
\end{theorem}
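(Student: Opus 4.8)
The plan is to follow Bia\l{}ynicki-Birula's original argument \cite{bb}, sketching the main points rather than reproducing the technical core. First I would reduce to a local analysis near the fixed locus. Since $X$ is smooth and the $\CC^*$-action is algebraic, each fixed point $x\in C$ admits a $\CC^*$-stable affine neighborhood on which the action linearizes, so a neighborhood of $x$ in $X$ is $\CC^*$-equivariantly identified with a neighborhood of $0$ in the tangent space $T_xX$ equipped with its induced linear action. Decomposing $T_xX=\bigoplus_{w\in\ZZ}(T_xX)_w$ into weight spaces, the subspace $(T_xX)_{\ge0}:=\bigoplus_{w\ge0}(T_xX)_w$ is precisely the set of vectors $v$ for which $\lim_{t\to0}t.v$ exists, and $(T_xX)_0=T_xC$. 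This exhibits $\Att(C)$ locally as the total space of a vector bundle over $C$ (namely $T_xC$ together with the nonnegative weight part of the normal bundle), whence $\Att(C)$ is smooth, locally closed, and $\CC^*$-invariant, and the projection $\Att(C)\to C$ is locally a vector bundle. Globally these local models glue along transition maps that are fiberwise affine but not necessarily linear, which is why one only gets an affine bundle; I would invoke \cite[Section 4]{bb} (see also \cite{ca}) for this gluing statement.

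Next I would establish the set-theoretic decomposition $X=\coprod_i\Att(C_i)$. For any $y\in X$, the orbit closure $\overline{\CC^*.y}$ is a complete rational curve carrying at most two fixed points, namely $\lim_{t\to0}t.y$ and $\lim_{t\to\infty}t.y$; in particular the limit $\lim_{t\to0}t.y$ exists and lies in a unique fixed component $C_i$, so $y\in\Att(C_i)$, and the attracting sets of distinct components are disjoint. To obtain the ordering for which $\coprod_{i=1}^s\Att(C_i)$ is closed, I would observe that $\overline{\Att(C_i)}\setminus\Att(C_i)$ is contained in the union of the attracting sets $\Att(C_j)$ of components $C_j$ that are "below" $C_i$ in the partial order defined by $C_j\preceq C_i$ whenever $\overline{\Att(C_i)}\cap C_j\ne\varnothing$; any linear extension of this partial order, listed from top to bottom, yields the required filtration by closed subsets. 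Concretely, one may fix a $\CC^*$-equivariant projective embedding of $X$ and order the components by the values of the associated weight (Morse-type) function.

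Finally, the Euler characteristic statement follows formally. The (compactly supported) Euler characteristic is additive over the locally closed stratification $X=\coprod_i\Att(C_i)$, and for complex varieties it agrees with the ordinary one. Since $\Att(C_i)\to C_i$ is an affine bundle with fibers isomorphic to $\AA^{k_i}$, and $\chi(\AA^{k_i})=1$, the Leray–Hirsch-type multiplicativity of $\chi$ along fibrations with affine-space fibers gives $\chi(\Att(C_i))=\chi(C_i)$. Summing over $i$ yields $\chi(X)=\sum_i\chi(C_i)=\chi\big(X^{\CC^*}\big)$.

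The genuinely technical ingredient is the affine bundle assertion: that the local linear models around a fixed component glue to a global fibration over that component. This is the heart of \cite{bb}, and I would cite it rather than reprove it; the remaining parts — the orbit-closure dichotomy, the construction of the ordering, and the additivity and multiplicativity of Euler characteristics — are then routine bookkeeping.
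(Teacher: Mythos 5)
The paper does not prove this theorem; it is stated as a cited result with references to \cite[Section 4]{bb} and \cite[Section 4]{ca}, and the argument you sketch is precisely the standard Bia\l{}ynicki-Birula proof found there. Your outline — local linearization near the fixed locus, identification of $\Att(C)$ with the nonnegative-weight part of the tangent space, existence of limits for projective $X$, ordering the strata by closure relations or a Morse-type function, and the additivity/multiplicativity of Euler characteristics over the resulting affine stratification — is correct and matches the cited source, so there is nothing to compare against in the paper itself beyond noting that you have faithfully reproduced the argument it delegates to the literature.
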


If each component $C_i$ admits a cell decomposition, Theorem~\ref{thm:bb} implies the same is true of $X$.
Indeed, we can trivialize each affine bundle $\Att(C_i)\to C_i$ over each affine piece of $C_i$ and then taking the natural ordering of the resulting affine spaces gives a cell decomposition of $X$.

\subsection{Torus Actions on Quiver Grassmannians}
\label{torusaction}

Fix a vector space $X$ of dimension $n$ and let $k\leq n$.
We first consider a natural $\CC^*$-action on the usual Grassmannian $\Gr_k(X)$ which is compatible with a given direct sum decomposition of the vector space $X$.
Then we generalize the concept to quiver Grassmannians and observe that the $\CC^*$-fixed point sets can be calculated in an analogous manner.

Given a basis $\mathcal B=\{v_1,\ldots,v_n\}$ of $X$ and a map $d:\{1,\ldots,n\}\to\ZZ$, we get a $\C^\ast$-action on $X$ when linearly extending the definition $t.v_r:=t^{d(r)}v_r$ for $r=1,\ldots,n$ to all of $X$.
This naturally induces an action of $\C^\ast$ on the Grassmannian $\Gr_k(X)$.
Our goal is to understand the fixed points of such an action.

For this recall that we can represent each subspace~$U\in\Gr_k(X)$ uniquely by a $k\times n$ matrix $M(U)$ whose rows provide a basis for $U$ when expanded as coefficient vectors in the basis $\mathcal B$.
The uniqueness of $M(U)$ comes from requiring that it be in row-echelon form, i.e.\ there exists a unique sequence $1\leq i_1<\ldots<i_k\leq n$ so that $M(U)$ is of the form
\[M(U):=
  \begin{pmatrix}
    \ast&\cdots &\ast &1&0 &\cdots& 0 & 0 & 0 &\cdots&0&0&0&\cdots&0\\
    \ast&\cdots &\ast&0&\ast&\cdots &\ast&1&0&\cdots&0&0&0&\cdots&0\\
    \ast &\cdots&\ast&0&\ast &\cdots&\ast&0&\ast&\cdots&0&0&0&\cdots&0\\[-0.4em]
    \vdots &\ddots&\vdots&\vdots&\vdots &\ddots&\vdots&\vdots&\vdots&\ddots&\vdots&\vdots&\vdots&\ddots&\vdots\\
    \ast&\cdots &\ast&0&\ast&\cdots &\ast&0&\ast&\cdots&0&0&0&\cdots&0\\
    \ast&\cdots &\ast&0&\ast&\cdots &\ast&0&\ast&\cdots&\ast&1&0&\cdots&0
  \end{pmatrix}\in M_{k,n}(\CC),\]
where the unit vectors are in the columns $\bfi=(i_1,\ldots,i_k)$.
The set of all $U\in\Gr_k(X)$ represented by matrices of this fixed form gives the \emph{Schubert cell} $X_\bfi$.

The $\C^\ast$-action on $U\in\Gr_k(X)$ can then be described in terms of the matrix representation $M(U)$, that is for $U\in X_\bfi$ we have
\[M(t.U)_{qr}=t^{d(r)-d(i_q)} M(U)_{qr}\]
for $q=1,\ldots,k$ and $r=1,\ldots,n$.
Observe that each Schubert cell $X_\bfi$ is invariant under this $\CC^*$-action.

Assume that $X=\bigoplus_{l=1}^m X_l$ is a direct sum decomposition of $X$ and fix a basis $\mathcal B=\{v_1,\ldots,v_n\}$ of~$X$ which is compatible with this decomposition, i.e.\ there exist indices $0=r_0<r_1<r_2<\cdots<r_{m-1}<r_m=n$ such that
\[v_{r_0+1},\ldots,v_{r_1}\in X_1,\quad v_{r_1+1},\ldots,v_{r_2}\in X_2,\quad\cdots\quad v_{r_{m-1}+1},\ldots,v_{r_m}\in X_m.\]
\begin{lemma}
  \label{le:usualGrass}
  Consider a map $d:\{1,\ldots,n\}\to\ZZ$ such that $d(r)=d(r')$ if $v_r,v_{r'}\in X_l$ for some $l$ and $d(r)\neq d(r')$ if $v_r\in X_l$ and $v_{r'}\in X_{l'}$ with $l\neq l'$.
  Then under the $\CC^*$-action determined by $d$, we have $U\in\Gr_k(X)^{\C^\ast}$ if and only if $U=\bigoplus_{l=1}^m U\cap X_l$.
\end{lemma}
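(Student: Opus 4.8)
The plan is to prove both implications by working with the row-echelon matrix representation $M(U)$ and analyzing which subspaces are genuinely stabilized by the torus.

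First I would dispose of the easy direction: suppose $U=\bigoplus_{l=1}^m (U\cap X_l)$. Each $X_l$ is spanned by basis vectors $v_r$ on which $t$ acts by a single scalar $t^{d(r)}$ (by the hypothesis on $d$, this scalar depends only on $l$, not on $r$), so $X_l$ is $\CC^*$-invariant and $t$ acts on $X_l$ as the scalar $t^{c_l}$ where $c_l$ is the common value of $d$ on the indices of $X_l$. Consequently $t$ preserves each summand $U\cap X_l$, hence preserves their direct sum $U$, so $U\in\Gr_k(X)^{\CC^*}$.

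For the converse, suppose $U\in\Gr_k(X)^{\CC^*}$, and let $U\in X_\bfi$ with pivot columns $\bfi=(i_1<\cdots<i_k)$. Using the formula $M(t.U)_{qr}=t^{d(r)-d(i_q)}M(U)_{qr}$ and the fact that the row-echelon form is unique, $t.U=U$ forces $M(U)_{qr}(t^{d(r)-d(i_q)}-1)=0$ for all $q,r$ and all $t\in\CC^*$; hence whenever $M(U)_{qr}\neq0$ we must have $d(r)=d(i_q)$. In other words, in each row $q$, every nonzero entry sits in a column $r$ with $d(r)=d(i_q)$, i.e.\ in a column whose basis vector lies in the same summand $X_{l(q)}$ as $v_{i_q}$ (here I use the ``only if'' part of the hypothesis on $d$: distinct summands get distinct $d$-values, so $d(r)=d(i_q)$ is equivalent to $v_r,v_{i_q}\in X_{l(q)}$). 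Therefore the $q$-th basis row of $M(U)$ is a vector lying entirely in $X_{l(q)}$, so $U$ is spanned by vectors each contained in a single summand $X_l$. Grouping the rows by the summand they belong to exhibits $U$ as $\sum_{l=1}^m (U\cap X_l)$, and since the $X_l$ are independent this sum is direct, giving $U=\bigoplus_{l=1}^m (U\cap X_l)$.

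The only mildly delicate point — and the step I expect to require the most care — is the appeal to uniqueness of the row-echelon form: I need that the Schubert cell containing $U$ is the same as the one containing $t.U$ (which is immediate since each $X_\bfi$ is $\CC^*$-invariant, as noted before the lemma), so that comparing $M(t.U)$ with $M(U)$ entrywise is legitimate. Everything else is a direct unwinding of the matrix action and the defining properties of $d$.
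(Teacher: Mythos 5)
Your proof is correct and takes essentially the same approach as the paper: the forward direction observes that each $X_l$ is a $\CC^*$-eigenspace so $U\cap X_l$ and hence their direct sum is preserved, and the converse uses the row-echelon matrix $M(U)$ together with $M(t.U)_{qr}=t^{d(r)-d(i_q)}M(U)_{qr}$ to force the nonzero entries of row $q$ into the block of columns belonging to the summand $X_{l(q)}$ containing the pivot $v_{i_q}$. You spell out slightly more detail (including the remark that the Schubert cell is $\CC^*$-invariant, so comparing $M(t.U)$ with $M(U)$ is legitimate) than the paper's terse version, but the argument is the same.
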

\begin{proof}
  Assume $U=\bigoplus_{l=1}^m U\cap X_l$.
  Then any $u\in U$ can be written uniquely as $u=\sum_{l=1}^m u_l$ for some $u_l\in U\cap X_l$.
  It follows that $t.u=\sum_{l=1}^m t.u_l=\sum_{l=1}^m t^{d(r_l)}u_l\in\bigoplus_{l=1}^m U\cap X_l=U$ and thus $t.U=U$. 

  For the reverse direction, assume $U\in X_{\bfi}$ is a $\CC^*$-fixed point represented by the matrix $M(U)$.
  Then, if~$v_{i_q}\in X_l$, the assumptions on $d$ imply $M(U)_{qr}=0$ unless $r_{l-1}+1\le r\le r_l$.
  That is, $M(U)$ has the shape of a block matrix representing the decomposition $U=\bigoplus_{l=1}^m U\cap X_l$.
\end{proof}
The next step is to generalize this to quiver Grassmannians.
Let $Q$ be an acyclic quiver.
Choose a map $d:\hat Q_0\to\ZZ$ and fix a representation $X\in\rep Q$ which can be lifted to $\hat Q$.
We consider the decomposition $X_i=\bigoplus_{\chi\in A_Q} X_{(i,\chi)}$ and define a $\CC^*$-action on each $X_{(i,\chi)}$ via $t.x_{(i,\chi)}=t^{d(i,\chi)}x_{(i,\chi)}$ which is then extended linearly to each $X_i$.
Associated to each subspace $U_i$, there is a corresponding subspace $t.U_i$ for each $t\in\CC^*$.
In general, this does not induce a $\CC^*$-action on the quiver Grassmannians $\Gr_\bfe(X)$ since $t.U=(t.U_i)_{i\in Q_0}$ is not necessarily a subrepresentation of $X$ for every $U\in\Gr_\bfe(X)$.
Indeed, for this such an action must satisfy $X_\alpha(t.U_i)\subset t.U_j$ for every arrow $\alpha:i\to j$ of $Q$ and every $t\in\CC^*$. 
\begin{lemma}
  Let $X$ be a representation of $Q$ which can be lifted to $\hat Q$.
  Fix an integer $c_\alpha\in\ZZ$ for each~$\alpha\in Q_1$.
  Suppose $d:\hat Q_0\to\ZZ$ satisfies $d(j,\chi+e_\alpha)-d(i,\chi)=c_\alpha$ for each arrow $\alpha:i\to j$ of $Q$ and each $\chi\in A_Q$.
  Then the $\CC^*$-action on $X$ determined by $d$ induces a $\CC^*$-action on $\Gr_\bfe(X)$.
\end{lemma}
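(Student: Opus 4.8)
\textit{Proof proposal.} The plan is to show directly that for any subrepresentation $U\subseteq X$ with $\udim U=\bfe$ and any $t\in\CC^*$, the tuple $t.U=(t.U_i)_{i\in Q_0}$ is again a subrepresentation of $X$ of dimension vector $\bfe$; the assertion that this assignment is an algebraic $\CC^*$-action then comes for free, being inherited from the standard $\GL$-action on the product of ordinary Grassmannians $\prod_{i\in Q_0}\Gr_{e_i}(X_i)$, of which $\Gr_\bfe(X)$ is a closed subvariety.

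First I would record the only use of the lifting hypothesis: writing $X=F_Q\hat X$ and identifying $X_i=\bigoplus_{\chi\in A_Q}X_{(i,\chi)}$, the structure map $X_\alpha$ of $X$ for an arrow $\alpha\colon i\to j$ respects the $A_Q$-grading in the sense that $X_\alpha\big(X_{(i,\chi)}\big)\subseteq X_{(j,\chi+e_\alpha)}$ for every $\chi\in A_Q$, since by definition of the covering functor $X_\alpha$ is assembled from the maps $\hat X_{(\alpha,\chi)}\colon X_{(i,\chi)}\to X_{(j,\chi+e_\alpha)}$ attached to the arrows $(\alpha,\chi)\colon(i,\chi)\to(j,\chi+e_\alpha)$ of $\hat Q$.

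Next I would compute the interaction of the action with the structure maps. For $x\in X_{(i,\chi)}$ we have $X_\alpha(t.x)=t^{d(i,\chi)}X_\alpha(x)$, while $X_\alpha(x)\in X_{(j,\chi+e_\alpha)}$, so $t.\big(X_\alpha(x)\big)=t^{d(j,\chi+e_\alpha)}X_\alpha(x)$. The hypothesis $d(j,\chi+e_\alpha)-d(i,\chi)=c_\alpha$ then gives $X_\alpha(t.x)=t^{-c_\alpha}\,t.\big(X_\alpha(x)\big)$, and since both sides are linear in $x$ this identity holds for all $x\in X_i$. The point worth emphasizing is that the twisting scalar $t^{-c_\alpha}$ depends only on the arrow $\alpha$ and not on the grading index $\chi$.

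Finally I would conclude: if $U\subseteq X$ is a subrepresentation then for each arrow $\alpha\colon i\to j$ we get $X_\alpha(t.U_i)=t^{-c_\alpha}\,t.\big(X_\alpha(U_i)\big)\subseteq t^{-c_\alpha}\,t.U_j=t.U_j$, using $X_\alpha(U_i)\subseteq U_j$ and the fact that rescaling a subspace by the unit $t^{-c_\alpha}$ changes nothing. Hence $t.U$ is a subrepresentation, and as each map $x\mapsto t.x$ is a linear isomorphism of $X_i$, we have $\udim(t.U)=\udim U=\bfe$, so $t.U\in\Gr_\bfe(X)$. Since $t\mapsto(x\mapsto t.x)$ is a group homomorphism $\CC^*\to\prod_{i\in Q_0}\GL(X_i)$ acting algebraically on $\prod_{i\in Q_0}\Gr_{e_i}(X_i)$ and, as just shown, preserving the closed subvariety $\Gr_\bfe(X)$, it restricts to a $\CC^*$-action on $\Gr_\bfe(X)$. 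There is no serious obstacle: the whole argument hinges on the uniformity of the twist $t^{-c_\alpha}$ along each arrow, which is precisely what the condition on $d$ guarantees and is exactly what makes the rescaling invisible at the level of subspaces.
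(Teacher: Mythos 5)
Your proof is correct and takes essentially the same approach as the paper: both compute $X_\alpha(t.u_i)=t^{-c_\alpha}\,t.(X_\alpha(u_i))$ using the $A_Q$-grading supplied by the lift and the hypothesis on $d$, and then observe that the uniform unit twist $t^{-c_\alpha}$ leaves subspaces invariant. Your version is slightly more complete in that it also notes the algebraicity of the action is inherited from the ambient product of ordinary Grassmannians, a point the paper leaves implicit.
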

\begin{proof} 
  Fix $U\in\Gr_\bfe(X)$ and consider $u_i\in U_i$.
  Since $U$ is a subrepresentation, for an arrow $\alpha:i\to j$ of $Q$ we may write $X_\alpha(u_i)=u_j$ for some $u_j\in U_j$.

  As $X$ can be lifted to $\hat Q$, for any arrow $\alpha\in Q_1$ we can write $X_\alpha:X_i\to X_j$ as a block matrix consisting of linear maps $X_{(\alpha,\chi)}:X_{(i,\chi)}\to X_{(j,\chi+e_\alpha)}$ for $\chi\in A_Q$.
  Then writing $u_i=\sum_{\chi\in A_Q} u_{(i,\chi)}$ for some vectors $u_{(i,\chi)}\in X_{(i,\chi)}$, we have $X_\alpha(u_{(i,\chi)})=X_{(\alpha,\chi)}(u_{(i,\chi)})\in X_{(j,\chi+e_\alpha)}$, say $X_{(\alpha,\chi)}(u_{(i,\chi)})=u_{(j,\chi+e_\alpha)}$.
  It follows that $u_j=\sum_{\chi\in A_Q} u_{(j,\chi+e_\alpha)}$ and so
  \begin{align*}
    X_\alpha(t.u_i)
    &=\sum\limits_{\chi\in A_Q} t^{d(i,\chi)}X_{(\alpha,\chi)}(u_{(i,\chi)})=\sum\limits_{\chi\in A_Q} t^{d(i,\chi)}u_{(j,\chi+e_\alpha)}=t^{-c_\alpha}\sum\limits_{\chi\in A_Q} t.u_{(j,\chi+e_\alpha)}=t^{-c_\alpha} t.u_j.
  \end{align*}
  Therefore $X_\alpha(t.U_i)\subset t.U_j$ for every arrow $\alpha:i\to j$ of $Q$ and we obtain a $\CC^*$-action on $\Gr_\bfe(X)$.
\end{proof}

The next result provides the conditions on the map $d:\hat Q_0\to\ZZ$ needed to get an analogue of Lemma~\ref{le:usualGrass}
\begin{lemma}
  \label{le:degree condition} 
  Let $\hat X\in\rep\hat Q$ be an indecomposable representation of $\hat Q$.
  There exists $d:\supp(\hat X)\to\ZZ$ and $c_\alpha\in\mathbb N_+$ for each $\alpha\in Q_1$ such that
  \begin{enumerate}
    \item for $(i,\chi),\,(i,\chi')\in\supp(\hat X)$ with $\chi\neq\chi'$, we have $d(i,\chi)\ne d(i,\chi')$;
    \item for $(i,\chi),\,(j,\chi')\in\supp(\hat X)$, we have $d(j,\chi')-d(i,\chi)=c_\alpha$ if and only if $\chi'=\chi+e_\alpha$.
  \end{enumerate}
\end{lemma}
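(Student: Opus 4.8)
The plan is to produce the function $d$ in the simplest possible shape: independent of the $Q_0$-coordinate and given by a single group homomorphism $\ell\colon A_Q\to\ZZ$. Concretely, I would set
\[d(i,\chi):=\ell(\chi)\quad\text{for }(i,\chi)\in\supp(\hat X),\qquad c_\alpha:=\ell(e_\alpha).\]
With this choice the ``$\Leftarrow$''-direction of (2) is automatic, since for any $i,j\in Q_0$, any $\alpha\in Q_1$, and any $\chi\in A_Q$,
\[d(j,\chi+e_\alpha)-d(i,\chi)=\ell(\chi+e_\alpha)-\ell(\chi)=\ell(e_\alpha)=c_\alpha;\]
this is exactly the hypothesis of the preceding lemma that produces the induced $\CC^*$-action on quiver Grassmannians, and $d$ even extends verbatim to all of $\hat Q_0$. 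The only feature of $\hat X$ that will be used is that $\supp(\hat X)$ is finite, which holds because $\hat X$ is finite-dimensional; in particular indecomposability will not be needed.

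It remains to choose $\ell$ so that (1) and the ``$\Rightarrow$''-direction of (2) hold, and here I would use the freeness of $A_Q\cong\ZZ^{Q_1}$. Let $P\subset A_Q$ be the finite set of second coordinates occurring in $\supp(\hat X)$ and put
\[\Delta:=\bigl\{\chi-\chi'\bigm|\chi,\chi'\in P,\ \chi\neq\chi'\bigr\}\cup\bigl\{\chi-\chi'-e_\alpha\bigm|\chi,\chi'\in P,\ \alpha\in Q_1,\ \chi-\chi'-e_\alpha\neq0\bigr\},\]
a finite subset of $A_Q\setminus\{0\}$. Each $v\in\Delta$ determines a proper rational hyperplane $\{\ell\mid\ell(v)=0\}$ in the $\mathbb{Q}$-vector space $\Hom(A_Q,\mathbb{Q})\cong\mathbb{Q}^{Q_1}$, and I claim one can pick an integral $\ell$ with $\ell(e_\alpha)\ge1$ for every $\alpha$ — so that in particular $c_\alpha\in\NN_+$ — which avoids all of these finitely many hyperplanes. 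This is an elementary genericity statement; for instance $\ell(e_\alpha):=M^{r(\alpha)}$, with $r\colon Q_1\to\{1,\dots,|Q_1|\}$ a bijection and $M$ an integer larger than $\sum_{\alpha}|v_\alpha|$ for every $v\in\Delta$, works: for such $M$ the quantity $\ell(v)=\sum_\alpha v_\alpha M^{r(\alpha)}$ is dominated by its top term and hence nonzero.

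Finally I would verify the two conditions against this $\ell$. For (1): if $(i,\chi),(i,\chi')\in\supp(\hat X)$ with $\chi\neq\chi'$, then $\chi-\chi'\in\Delta$, hence $d(i,\chi)-d(i,\chi')=\ell(\chi-\chi')\neq0$. For the ``$\Rightarrow$''-direction of (2): if $(i,\chi),(j,\chi')\in\supp(\hat X)$ satisfy $d(j,\chi')-d(i,\chi)=c_\alpha$, then $\ell(\chi'-\chi-e_\alpha)=0$; since $\chi,\chi'\in P$, were $\chi'-\chi-e_\alpha$ nonzero it would lie in $\Delta$, contradicting the choice of $\ell$, so $\chi'=\chi+e_\alpha$. (If one also wants the $c_\alpha$ pairwise distinct one simply enlarges $\Delta$ by the vectors $e_\alpha-e_{\alpha'}$, $\alpha\ne\alpha'$, which changes nothing.) I do not expect a genuine obstacle here: the only thing requiring care is the bookkeeping of exactly which difference vectors must be thrown into $\Delta$, and the single observation that collapses everything to a linear-algebra exercise over $\mathbb{Q}^{Q_1}$ is that $d$ may be chosen to ignore the $Q_0$-coordinate, which trivializes the arrow-consistency requirement.
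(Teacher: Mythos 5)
Your proof is correct and follows essentially the same strategy as the paper: define $d(i,\chi)$ via a group homomorphism $A_Q\to\ZZ$ (ignoring the $Q_0$-coordinate) with coefficients $c_\alpha$ growing geometrically so that no nontrivial ``bad'' integer combination of the $e_\alpha$ arising from $\supp(\hat X)$ can vanish. The paper normalizes $\supp(\hat X)$ to contain $(i',0)$ and uses the connectedness of the support (hence the bound $K$) to control the coordinates $\kappa_l$, while you collect the finitely many offending difference vectors into $\Delta$ and avoid the corresponding hyperplanes directly; this is a mild repackaging of the same estimate and has the pleasant side effect of showing that indecomposability is not actually needed --- only that $\supp(\hat X)$ is finite.
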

\begin{proof}
  For convenience we introduce the notation $Q_1=\{\alpha_1,\ldots,\alpha_n\}$.
  Since $\hat X$ is finite-dimensional and indecomposable, the support quiver $\supp(\hat X)$ is a connected and finite subquiver of $\hat Q$.
  In order to prove the statement, we may assume that $(i',0)\in\supp(\hat X)$ for some $i'\in Q_0$.
  Let $K$ be the maximal length of a path in $\supp(\hat X)$ starting or ending in $(i',0)$ such that the underlying graph of the path has no cycles.
  This implies that, for $(i,\chi)\in\supp(\hat X)$ with $\chi=\sum_{l=1}^n \kappa_l e_{\alpha_l}$, we have $|\kappa_l|\leq K$.

  Set $c_{\alpha_1}=1$ and choose $c_{\alpha_l}$ recursively in such way that 
  \[c_{\alpha_l}\geq 2(K+1)\sum_{k=1}^{l-1} c_{\alpha_k}\]
  for $l=2,\ldots,n$.  
  Then let $f:A_Q\to\ZZ$ be the group homomorphism defined by $f(e_\alpha)=c_\alpha$ for all $\alpha\in Q_1$ and define $d(i,\chi):=f(\chi)$ for $(i,\chi)\in\supp(\hat X)$.

  To check property (1), assume that $d(i,\chi)=d(i,\chi')$ for $\chi=\sum_{l=1}^n \kappa_l e_{\alpha_l}$ and $\chi'=\sum_{l=1}^n \kappa'_l e_{\alpha_l}$.
  This implies
  \[\sum_{l=1}^{n-1}(\kappa_l-\kappa'_l)c_{\alpha_l}=(\kappa'_n-\kappa_n)c_{\alpha_n}.\]
  But we have $|\kappa_l-\kappa'_l|\leq |\kappa_l|+|\kappa'_l|\le 2K$ and thus we obtain 
  \[|\kappa'_n-\kappa_n|c_{\alpha_n}=\left|\sum_{l=1}^{n-1}(\kappa_l-\kappa'_l)c_{\alpha_l}\right|\le 2K\sum_{l=1}^{n-1}c_{\alpha_l} < c_{\alpha_n}.\]
  This inductively yields $\kappa_l=\kappa'_l$ for $l=n,\ldots,1$ by the choice of the $c_{\alpha_l}$ and thus $\chi=\chi'$. 

  By definition, we have $d(j,\chi+e_\alpha)-d(i,\chi)=c_\alpha$ when $(j,\chi+e_\alpha)\in\supp(\hat X)$.
  Now assuming $d(j,\chi')-d(i,\chi)=c_\alpha$, an analogous argument to the one above shows that $\chi'=\chi+e_\alpha$.
\end{proof}

In the following, we say that $d:\supp(\hat X)\to\ZZ$ satisfies the \emph{degree condition} for $\hat X$ if it has the properties of Lemma~\ref{le:degree condition}.
\begin{theorem}
  \label{thm:torusfixedpoints}
  Let $X$ be a representation of $Q$ which can be lifted to a representation $\hat X$ of $\hat Q$ and choose $d:\supp(\hat X)\to\ZZ$ such that it satisfies the degree condition for $\hat X$.
  Then the $\CC^*$-action on $\bigoplus_{i\in Q_0} X_i$ determined by $t.x_{(i,\chi)}=t^{d(i,\chi)}x_{(i,\chi)}$ for $x_{(i,\chi)}\in X_{(i,\chi)}$ induces a $\CC^*$-action on $\Gr_\bfe^Q(X)$ such that
  \[\Gr^Q_\bfe(X)^{\CC^*}\cong\bigsqcup_{\hat\bfe} \Gr^{\hat Q}_{\hat\bfe}(\hat X),\]
  where $\hat\bfe$ runs through all dimension vectors compatible with $\bfe$.
\end{theorem}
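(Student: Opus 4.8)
The plan is to combine the two preparatory lemmas (the one producing a $\CC^*$-action on $\Gr_\bfe^Q(X)$ from a map $d$ satisfying $d(j,\chi+e_\alpha)-d(i,\chi)=c_\alpha$, and Lemma~\ref{le:degree condition} producing such a $d$ with the extra separation property (1)) and then to identify the fixed locus by a ``block matrix'' argument analogous to Lemma~\ref{le:usualGrass}, carried out compatibly across all vertices of $Q$. First I would reduce to the indecomposable case: a general liftable $X$ is a direct sum of indecomposable summands, each of which lifts; choosing $d$ on the support of each summand as in Lemma~\ref{le:degree condition}, and then shifting the integer values on distinct summands into disjoint ``bands'' far apart (this is a purely combinatorial adjustment, using that each support is finite), one obtains a single $d:\supp(\hat X)\to\ZZ$ satisfying the degree condition for $\hat X=\bigoplus$ of the chosen lifts. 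Since property (2) of the degree condition gives exactly $d(j,\chi+e_\alpha)-d(i,\chi)=c_\alpha$ for arrows in the support, the earlier lemma applies and we get a genuine $\CC^*$-action on $\Gr_\bfe^Q(X)$.

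Next I would analyze the fixed points. Fix $U=(U_i)_{i\in Q_0}\in\Gr_\bfe^Q(X)$. On each individual space $X_i=\bigoplus_{\chi} X_{(i,\chi)}$ the action is the diagonal torus action of Lemma~\ref{le:usualGrass} with respect to a basis refining the decomposition into the $X_{(i,\chi)}$; by property (1) of the degree condition, the weights $d(i,\chi)$ for distinct $\chi$ (with $(i,\chi)\in\supp\hat X$) are distinct, so Lemma~\ref{le:usualGrass} tells us $U_i$ is $\CC^*$-fixed as a point of $\Gr_{e_i}(X_i)$ if and only if $U_i=\bigoplus_{\chi} (U_i\cap X_{(i,\chi)})$. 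Hence $U\in\Gr_\bfe^Q(X)^{\CC^*}$ forces $U_i=\bigoplus_\chi U_{(i,\chi)}$ with $U_{(i,\chi)}:=U_i\cap X_{(i,\chi)}$ for every $i$. Conversely, if every $U_i$ decomposes this way then $U$ is fixed, since $t$ acts on $U_i$ by the scalar $t^{d(i,\chi)}$ on the summand $U_{(i,\chi)}$, preserving each summand. So the fixed locus is precisely the set of subrepresentations $U$ of $X$ that respect the covering decomposition at every vertex.

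It remains to see that such a $U$ is the same datum as a subrepresentation $\hat U\subset\hat X$ of $\hat Q$. Given $U$ fixed, set $\hat U_{(i,\chi)}:=U_{(i,\chi)}\subseteq X_{(i,\chi)}=\hat X_{(i,\chi)}$. The subrepresentation condition for $U$ in $X$ says $X_\alpha(U_i)\subseteq U_j$ for each $\alpha:i\to j$; writing $X_\alpha$ in block form with blocks $X_{(\alpha,\chi)}:X_{(i,\chi)}\to X_{(j,\chi+e_\alpha)}$ (possible because $X$ is lifted, i.e. only these ``diagonal'' blocks are nonzero), and using that both $U_i$ and $U_j$ are direct sums of their intersections with the $X_{(\cdot,\cdot)}$, this is equivalent to $X_{(\alpha,\chi)}(\hat U_{(i,\chi)})\subseteq\hat U_{(j,\chi+e_\alpha)}=\hat U_{t(\alpha,\chi)}$ for all $\chi$ and all $\alpha$, which is exactly the condition that $\hat U$ be a subrepresentation of $\hat X$ on $\hat Q$. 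The dimension vector $\hat\bfe$ of $\hat U$ satisfies $F_Q(\hat\bfe)=\bfe$ by construction, i.e. it is compatible with $\bfe$, and distinct $\hat\bfe$ give disjoint pieces. This establishes a bijection of sets $\Gr_\bfe^Q(X)^{\CC^*}\cong\bigsqcup_{\hat\bfe}\Gr_{\hat\bfe}^{\hat Q}(\hat X)$; that it is an isomorphism of varieties is routine, since on the locally closed stratum where the ``pivot positions'' (equivalently, the $\dim U_{(i,\chi)}$) are fixed, the construction is given by polynomial maps in the matrix entries in both directions.

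The main obstacle I anticipate is the bookkeeping in the reduction to the indecomposable case: one must choose the weight function $d$ globally on $\supp(\hat X)$ so that it simultaneously (a) restricts on each indecomposable summand's support to a function of the form guaranteed by Lemma~\ref{le:degree condition} (so the $c_\alpha$'s agree across summands, which is needed for the action to be well-defined on $\Gr_\bfe^Q(X)$), and (b) separates weights coming from different summands so that the fixed-point analysis via Lemma~\ref{le:usualGrass} goes through vertex by vertex. Concretely, one fixes the common $c_\alpha$'s once (large enough to work for every summand's support bound $K$), defines $d$ on each summand's support by the group homomorphism $f$, and then translates each summand's values by a sufficiently generic constant; the only thing to check is that these translations can be chosen to avoid the finitely many ``accidental'' coincidences, which is elementary. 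The rest of the argument is a direct unwinding of the block-matrix description of lifted representations.
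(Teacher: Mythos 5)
Your argument from the second paragraph onward is correct and is essentially the paper's proof: property~(2) of the degree condition makes the unnumbered lemma applicable, giving the $\CC^*$-action, and then property~(1) together with Lemma~\ref{le:usualGrass} applied vertex-by-vertex identifies the fixed points with subrepresentations whose components split as $U_i=\bigoplus_{\chi}U_i\cap X_{(i,\chi)}$; the block-triangular form of a lifted $X$ then turns such a $U$ into a subrepresentation of $\hat X$. Your explicit unwinding of that last step, and the remark about the variety structure on strata of fixed combinatorial type, fills in details the paper leaves implicit.

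The first paragraph, however, is both unnecessary and subtly wrong. It is unnecessary because the theorem's hypothesis already postulates a $d$ satisfying the degree condition for $\hat X$; you are not asked to produce one, and Lemma~\ref{le:degree condition} (for indecomposable $\hat X$) covers all the cases the paper actually uses. It is subtly wrong because shifting the values of $d$ on each indecomposable summand by a generic constant can destroy the ``if'' direction of property~(2): if $(i,\chi)$ lies in $\supp(\hat X_1)$ and $(j,\chi+e_\alpha)$ lies in $\supp(\hat X_2)$ with $\hat X_1\ne\hat X_2$ indecomposable summands, then property~(2) requires $d(j,\chi+e_\alpha)-d(i,\chi)=c_\alpha$, which forces the two shifts to coincide. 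If one really wants a $d$ for decomposable $\hat X$, the right move is to rerun the proof of Lemma~\ref{le:degree condition} with $K$ a uniform bound on $|\kappa_l|$ over all of $\supp(\hat X)$ and $d(i,\chi)=f(\chi)$ for the single group homomorphism $f:A_Q\to\ZZ$, with no per-summand translation at all.
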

\begin{proof}
  A representation $U\in\Gr_{\bfe}(X)$ is a $\CC^*$-fixed point if and only if $t.U=U$ for all $t\in\C^\ast$, i.e.\ $t.U_i=U_i$ for all $i\in Q_0$ and all $t\in\C^\ast$.
  Thus, apart from being a subrepresentation of $X$, each component $U_i$ is a fixed point of the induced $\CC^*$-actions on the usual Grassmannians of vector subspaces $\Gr_{\bfe_i}(X_i)$.
  By Lemma~\ref{le:usualGrass}, this holds precisely when we have a decomposition 
  \[U_i=\bigoplus_{\chi\in A_Q} U_i\cap X_{(i,\chi)}\]
  which is equivalent to $U$ being liftable to the universal abelian covering $\hat Q$.
\end{proof}
The next step is to iterate the $\CC^*$-actions, keeping in mind the following idea: every representation $X$ which lifts to the universal covering quiver also lifts to the universal abelian covering quiver and to the iterated universal abelian covering quivers, i.e.\ to each $\hat Q^{(k)}:=\widehat{\hat Q^{(k-1)}}$ with $\hat Q^{(1)}:=\hat Q$.
Now it is straightforward to check that there exist natural surjective morphisms $f_k:\widetilde Q\to \hat Q^{(k)}$ which become injective on finite subquivers if $k\gg 0$, see also \cite[Section 3.4]{wei}.
Since the support of $X$ is finite as a representation of $\tilde Q$, we can find $k\geq 0$ such that the full subquiver with vertices $\supp(X)\subseteq \hat Q^{(k+1)}_0$ is a tree.
Thus, writing $\hat X^{(\ell)}$ for the lift of $X$ to $\hat Q^{(\ell)}$, there exists a $\CC^*$-action on the vector spaces $\hat X^{(k)}_\beta$ for $\beta\in \hat Q^{(k-1)}_0\times A_{\hat Q^{(k-1)}}$ which induces $\CC^*$-actions on the quiver Grassmannians $\Gr_{\hat\bfe^{(k)}}^{\hat Q^{(k)}}\big(\hat X^{(k)}\big)$ such that the fixed point sets are precisely $\Gr_{\hat\bfe^{(k+1)}}^{\hat Q^{(k+1)}}\big(\hat X^{(k+1)}\big)$.
If we denote these iterated $\CC^*$-fixed points by $\Gr^Q_\bfe(X)^{(k+1)}$, we obtain the following result.
\begin{corollary}
  Let $X$ be a representation which can be lifted to $\tilde Q$.
  Then there exists an iterated torus action such that
  \[\Gr^Q_\bfe(X)^{(k+1)}\cong \bigsqcup_{\hat\bfe^{(k)}} \Gr_{\hat\bfe^{(k)}}^{\hat Q^{(k)}}\big(\hat X^{(k)}\big)^{\CC^*}\cong \bigsqcup_{\hat\bfe^{(k+1)}} \Gr^{\hat Q^{(k+1)}}_{\hat\bfe^{(k+1)}}\big(\hat X^{(k+1)}\big)\cong \bigsqcup_{\tilde\bfe} \Gr^{\tilde Q}_{\tilde\bfe}(\tilde X),\]
  where $\hat\bfe^{(k)},\,\hat\bfe^{(k+1)},\,\tilde\bfe$ run through all dimension vectors compatible with $\bfe$.
\end{corollary}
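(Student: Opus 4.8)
This is essentially a bookkeeping consequence of iterating Theorem~\ref{thm:torusfixedpoints} along the tower of iterated universal abelian coverings $Q=\hat Q^{(0)},\ \hat Q^{(1)}=\hat Q,\ \hat Q^{(\ell)}=\widehat{\hat Q^{(\ell-1)}},\ldots$, so the plan is mainly to spell out how the pieces fit together. First I would record the stabilization input. Since $X$ lifts to $\tilde Q$, composing the covering functors shows that $X$ also lifts to every $\hat Q^{(\ell)}$; write $\hat X^{(\ell)}$ for its lift, with $\hat X^{(0)}=X$. As $\tilde X$ is finite-dimensional, its support $\supp(\tilde X)\subset\tilde Q_0$ is a finite subquiver, and the natural surjections $f_\ell:\tilde Q\to\hat Q^{(\ell)}$ restrict to isomorphisms on any fixed finite subquiver once $\ell\gg0$ (see \cite[Section 3.4]{wei}). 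I would fix $k\ge0$ with $f_{k+1}$ injective on $\supp(\tilde X)$; then $f_{k+1}$ identifies $\supp(\tilde X)$ with $\supp(\hat X^{(k+1)})$ as quivers, so the latter is a forest, and, taking $\hat X^{(k+1)}$ to be the pushdown of $\tilde X$ along $f_{k+1}$, this identification carries $\tilde X$ to $\hat X^{(k+1)}$. Hence for every dimension vector $\hat\bfe^{(k+1)}$ supported on $\supp(\hat X^{(k+1)})$ we obtain $\Gr^{\hat Q^{(k+1)}}_{\hat\bfe^{(k+1)}}(\hat X^{(k+1)})\cong\Gr^{\tilde Q}_{\tilde\bfe}(\tilde X)$ for the corresponding $\tilde\bfe$, which is the last isomorphism in the displayed chain.

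Next I would set up a single step of the iteration. For $0\le\ell\le k$, Theorem~\ref{thm:torusfixedpoints} applies with $Q$ replaced by $\hat Q^{(\ell)}$, the representation replaced by $\hat X^{(\ell)}$, and its lift to $\widehat{\hat Q^{(\ell)}}=\hat Q^{(\ell+1)}$ taken to be $\hat X^{(\ell+1)}$, once one produces a map $d^{(\ell)}:\supp(\hat X^{(\ell+1)})\to\ZZ$ satisfying the degree condition for $\hat X^{(\ell+1)}$. Although Lemma~\ref{le:degree condition} is phrased for an indecomposable representation, its recursive construction of a homomorphism $f:A_{\hat Q^{(\ell)}}\to\ZZ$ — with the bound $K$ on the occurring exponents taken over the (still finite) support of $\hat X^{(\ell+1)}$ — applies verbatim and yields such a $d^{(\ell)}$, namely $d^{(\ell)}(i,\chi)=f(\chi)$. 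Feeding $d^{(\ell)}$ into Theorem~\ref{thm:torusfixedpoints} produces, for each dimension vector $\hat\bfe^{(\ell)}$, a $\CC^*$-action on $\Gr^{\hat Q^{(\ell)}}_{\hat\bfe^{(\ell)}}(\hat X^{(\ell)})$ with
\[\Gr^{\hat Q^{(\ell)}}_{\hat\bfe^{(\ell)}}(\hat X^{(\ell)})^{\CC^*}\cong\bigsqcup_{\hat\bfe^{(\ell+1)}}\Gr^{\hat Q^{(\ell+1)}}_{\hat\bfe^{(\ell+1)}}(\hat X^{(\ell+1)}),\]
the union over all $\hat\bfe^{(\ell+1)}$ compatible with $\hat\bfe^{(\ell)}$.

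Finally I would assemble the tower. Applying the $\ell=0$ action gives $\Gr^Q_\bfe(X)^{\CC^*}\cong\bigsqcup_{\hat\bfe^{(1)}}\Gr^{\hat Q^{(1)}}_{\hat\bfe^{(1)}}(\hat X^{(1)})$; taking $\CC^*$-fixed points of the $\ell=1$ action on each summand, then the $\ell=2$ action, and so on, and using transitivity of compatibility of dimension vectors (so that $\hat\bfe^{(\ell+1)}$ compatible with $\hat\bfe^{(\ell)}$ and $\hat\bfe^{(\ell)}$ compatible with $\bfe$ force $\hat\bfe^{(\ell+1)}$ compatible with $\bfe$), after $k$ applications the iterated fixed locus is $\bigsqcup_{\hat\bfe^{(k)}}\Gr^{\hat Q^{(k)}}_{\hat\bfe^{(k)}}(\hat X^{(k)})$ with $\hat\bfe^{(k)}$ running over all dimension vectors compatible with $\bfe$. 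By definition $\Gr^Q_\bfe(X)^{(k+1)}$ is the $\CC^*$-fixed locus of this for the $\ell=k$ action, i.e.\ it equals $\bigsqcup_{\hat\bfe^{(k)}}\Gr^{\hat Q^{(k)}}_{\hat\bfe^{(k)}}(\hat X^{(k)})^{\CC^*}$, which is the first isomorphism of the Corollary; the $\ell=k$ instance of the displayed isomorphism above, summed over $\hat\bfe^{(k)}$ and again composing compatibilities, gives the second; and the identification $\hat X^{(k+1)}\cong\tilde X$ of the first paragraph, composing compatibilities once more, gives the third. There is no serious obstacle — the statement is a formal iteration of Theorem~\ref{thm:torusfixedpoints} — and the only points needing care are the stabilization claim of the first paragraph (that the support becomes a forest under the $f_\ell$ and that $\hat X^{(k+1)}$ is then literally $\tilde X$) and the mild extension of Lemma~\ref{le:degree condition} from indecomposable to finite-support representations.
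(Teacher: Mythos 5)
Your argument is correct and follows the same route as the paper's discussion preceding the corollary: lift along the tower $\hat Q^{(\ell)}$, invoke the stabilization of Weist's surjections $f_\ell$ so that $\supp(\hat X^{(k+1)})$ becomes a forest identified with $\supp(\tilde X)$, and apply Theorem~\ref{thm:torusfixedpoints} at each level of the tower, with compatibility of dimension vectors composing along the way. You are slightly more careful than the paper in flagging that Lemma~\ref{le:degree condition} is stated for an indecomposable representation but that its construction of the degree map works verbatim for any finite-support representation; that observation is correct and worth making.
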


Define the $F$-polynomial of a representation $X$ by 
\[F_X=\sum_{\bfe\in\NN^{Q_0}}\chi(\Gr_\bfe(X))y^\bfe\in\ZZ[y_i\mid i\in Q_0].\]
\begin{corollary}
  \label{fpoly}
  Let $X$ be a representation which can be lifted to the universal covering quiver.
  \begin{enumerate}
    \item If $\Gr_\bfe(X)$ is smooth and $\Gr^{\hat Q}_\tbfe(\hat X)$ has a cell decomposition, then $\Gr_\bfe(X)$ has a cell decomposition.
    \item We have $F_X=SF_{\tilde X}$ where $SF_{\tilde X}$ is obtained from $F_{\tilde X}$ by applying $S:\ZZ[y_{(i,w)}\mid i\in Q_0,w\in W_Q]\to\ZZ[y_i\mid i\in Q_0]$ given by $S(y_{(i,w)})=y_i$ for all $i\in Q_0$ and $w\in W_Q$.
  \end{enumerate} 
\end{corollary}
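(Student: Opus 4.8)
\emph{Plan.} Both parts follow by combining the (iterated) torus action constructed above with the Bia\l{}ynicki-Birula theorem. For part~(1), the key observation is that $\Gr_\bfe(X)$ is projective, being closed in a product of ordinary Grassmannians, and is smooth by hypothesis, so Theorem~\ref{thm:bb} applies to the $\CC^*$-action produced by Theorem~\ref{thm:torusfixedpoints}. Since the fixed locus of a smooth projective variety under a $\CC^*$-action is again smooth and projective, each component of $\coprod_{\hat\bfe}\Gr^{\hat Q}_{\hat\bfe}(\hat X)$ is smooth, and iterating this remark along the chain of torus actions constructed above, smoothness propagates all the way to the quiver Grassmannians of $\tilde X$ over $\widetilde Q$. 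I would then invoke Theorem~\ref{thm:bb}: writing a fixed locus as $\coprod_i C_i$ (a finite union, since $\hat X$, resp.\ $\tilde X$, has finite support), the attracting sets give $\Gr_\bfe(X)=\coprod_i\Att(C_i)$ with $\Att(C_i)\to C_i$ an affine bundle and each partial union $\coprod_{i\le s}\Att(C_i)$ closed. If each $C_i$ carries a cell decomposition — which, descending the chain of torus actions, reduces to the cell-decomposition hypothesis (imposed either on $\hat Q$, or, after iterating, on $\widetilde Q$) — then trivializing this affine bundle over each affine cell of $C_i$ and refining the Bia\l{}ynicki-Birula ordering by the cell orderings inside the $C_i$ yields a cell decomposition of $\Gr_\bfe(X)$; this last bookkeeping is exactly the remark following Theorem~\ref{thm:bb}.

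For part~(2), I would use the Euler characteristic identity $\chi(Y)=\chi(Y^{\CC^*})$, which holds for any algebraic $\CC^*$-action on a complex variety $Y$ and not only in the smooth setting of Theorem~\ref{thm:bb} — one reduces by stratification to the fact that a fixed-point-free $\CC^*$-action has vanishing (compactly supported) Euler characteristic. Applying this at each stage of the iterated torus action together with Theorem~\ref{thm:torusfixedpoints} yields
\[\chi\big(\Gr^Q_\bfe(X)\big)=\sum_{\tbfe}\chi\big(\Gr^{\widetilde Q}_{\tbfe}(\tilde X)\big),\]
a finite sum over the dimension vectors $\tbfe$ compatible with $\bfe$ (only finitely many contribute, as $\tilde X$ has finite support). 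It then remains to compare monomials: compatibility of $\tbfe$ with $\bfe$ means $\bfe_i=\sum_{w\in W_Q}\tbfe_{(i,w)}$ for each $i\in Q_0$, hence $y^\bfe=\prod_i y_i^{\bfe_i}=\prod_{i,w}y_i^{\tbfe_{(i,w)}}=S(y^{\tbfe})$; summing over all $\bfe$ and all compatible $\tbfe$ gives $F_X=SF_{\tilde X}$.

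The calculations involved are light. The two points that genuinely need care are that smoothness is inherited by $\CC^*$-fixed loci — so that the iteration in part~(1) remains within the hypotheses of Theorem~\ref{thm:bb} — and that the Euler characteristic identity used in part~(2) requires no smoothness assumption; I do not expect any serious obstacle beyond verifying these.
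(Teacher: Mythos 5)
Your proof is correct and follows exactly the approach the paper intends (the paper states Corollary~\ref{fpoly} without a formal proof, leaving it as an immediate consequence of Theorem~\ref{thm:bb}, Theorem~\ref{thm:torusfixedpoints}, and the iterated-torus-action corollary that precedes it). Part~(1) as stated only needs a single application of Theorem~\ref{thm:bb} — the hypothesis already places the cell decomposition on $\hat Q$ — so your observation that smoothness propagates to the fixed components is not strictly needed there; but it \emph{is} exactly what one needs to iterate the corollary all the way down to $\widetilde Q$, which is how it is used later (e.g.\ in Theorem~\ref{celldec}), so it is a sensible addition rather than a misstep. Part~(2) is handled cleanly: you are right that the identity $\chi(Y)=\chi(Y^{\CC^*})$ does not need the smoothness hypothesis of Theorem~\ref{thm:bb}, and the finiteness of the sum and the monomial bookkeeping $S(y^{\tbfe})=y^{F_Q(\tbfe)}$ are both correct. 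No gaps.
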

An important special case for this is the case of exceptional representations.
In this case the quiver Grassmannians $\Gr_\bfe(X)$ are smooth by \cite[Corollary 4]{cr}.
Moreover, every exceptional representation is a tree module by \cite{rin1} which means that it can be lifted to the universal covering.

\subsection{\texorpdfstring{$\mathbf{GL_n}$}{Gln}-Action on Arrows of \texorpdfstring{$\mathbf{K(n)}$}{K(n)}}

The goal of this section is to prove Theorem~\ref{thm:truncpp} showing that quiver Grassmannians of truncated preprojectives $P_{m+1}^V$ are smooth and only depend on the dimension of the subspace $V\subsetneq \cH_m$.
We begin by observing that $GL_n(\CC)$ naturally acts on the vector space $A_1=\bigoplus_{i=1}^n \CC\alpha_i$ spanned by the arrows of $K(n)$ and hence $GL_n(\CC)$ acts on $\rep K(n)$ via the induced action on the path algebra $A(n)$.
More precisely, given a representation $M=(M_1,M_2,M_{\alpha_i})$ of $K(n)$ and $g=(g_{ij})\in GL_n(\CC)$, the representation $g.M$ is given by $(M_1,M_2,(g.M)_{\alpha_i})$ with $(g.M)_{\alpha_i}=\sum\limits_{j=1}^n g_{ij}M_{\alpha_j}$.
Note that $M$ and $g.M$ are not necessarily isomorphic as representations of $K(n)$.
\begin{lemma}
  \label{le:hom equivariance}
  For any morphism $\theta:M\to N$ between representations $M,N\in\rep K(n)$, the same maps $\theta_1:M_1\to N_1$ and $\theta_2:M_2\to N_2$ give a morphism $\theta^g:g.M\to g.N$ for any $g\in GL_n(\CC)$.
  In particular, the hom-spaces $\Hom(M,N)$ and $\Hom(g.M,g.N)$ are canonically identified for each $g\in G$.
\end{lemma}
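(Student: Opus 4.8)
The plan is to verify directly that the pair $(\theta_1,\theta_2)$ intertwines the twisted structure maps, and then to observe that the resulting assignment $\theta\mapsto\theta^g$ is $\CC$-linear and invertible, so that it furnishes a canonical identification of hom-spaces.

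First I would unwind the definitions. Since $\theta:M\to N$ is a morphism of representations of $K(n)$, the only condition it satisfies is the commutativity relation $\theta_1\circ M_{\alpha_i}=N_{\alpha_i}\circ\theta_2$ for $i=1,\ldots,n$. I want the analogous relation for the twisted representations, namely $\theta_1\circ(g.M)_{\alpha_i}=(g.N)_{\alpha_i}\circ\theta_2$. Using the definition $(g.M)_{\alpha_i}=\sum_{j=1}^n g_{ij}M_{\alpha_j}$ together with bilinearity of composition of linear maps, this is an immediate computation:
\[
\theta_1\circ(g.M)_{\alpha_i}=\sum_{j=1}^n g_{ij}\bigl(\theta_1\circ M_{\alpha_j}\bigr)=\sum_{j=1}^n g_{ij}\bigl(N_{\alpha_j}\circ\theta_2\bigr)=\Bigl(\sum_{j=1}^n g_{ij}N_{\alpha_j}\Bigr)\circ\theta_2=(g.N)_{\alpha_i}\circ\theta_2 .
\]
Hence $\theta^g:=(\theta_1,\theta_2)$ is indeed a morphism $g.M\to g.N$.

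Next I would address the ``canonical identification'' claim. The assignment $\theta\mapsto\theta^g$ does not alter the underlying linear maps, so it is visibly $\CC$-linear and defines a map $\Hom(M,N)\to\Hom(g.M,g.N)$. For invertibility I would use that the assignment $M\mapsto g.M$ is a genuine $GL_n(\CC)$-action on $\rep K(n)$; in particular $g^{-1}.(g.M)=M$ and $g^{-1}.(g.N)=N$, so applying the same construction with $g^{-1}$ in place of $g$ produces a linear map $\Hom(g.M,g.N)\to\Hom(M,N)$, and the two maps are mutually inverse since each is the identity on the underlying linear maps. This gives the desired isomorphism $\Hom(M,N)\cong\Hom(g.M,g.N)$, and it is canonical precisely in the sense that it is induced by the identity maps on $M_1=(g.M)_1$ and $M_2=(g.M)_2$.

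There is essentially no obstacle here: the statement is a formal consequence of the fact that the $GL_n(\CC)$-action merely replaces each arrow map by a fixed linear combination of the arrow maps, while a morphism of representations is by definition compatible with arbitrary linear combinations of those maps. The only point I would take care to state explicitly is that the identification is ``the identity on underlying data'', which is what makes the word \emph{canonical} meaningful and is what will be used in later arguments.
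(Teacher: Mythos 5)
Your proof is correct and follows essentially the same route as the paper: a direct computation showing that $(\theta_1,\theta_2)$ intertwines the twisted arrow maps $(g.M)_{\alpha_i}=\sum_j g_{ij}M_{\alpha_j}$, which is immediate from linearity. You go slightly further than the paper by spelling out why the assignment $\theta\mapsto\theta^g$ is a linear isomorphism (via $g^{-1}$), which the paper leaves implicit; this is a welcome bit of extra care rather than a deviation.
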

\begin{proof}
  Suppose $\theta:M\to N$ is a morphism of representations, i.e.\ $\theta_2\circ M_{\alpha_j}=N_{\alpha_j}\circ\theta_1$ for $1\le j\le n$.
  Then for $g=(g_{ij})\in GL_n(\CC)$ and $1\le i\le n$, we have
  \[\theta_2\circ (g.M)_{\alpha_i}=\sum\limits_{j=1}^n g_{ij}(\theta_2\circ M_{\alpha_j})=\sum\limits_{j=1}^n g_{ij}(N_{\alpha_j}\circ\theta_1)=(g.N)_{\alpha_i}\circ\theta_1\]
  so that $\theta$ also gives a morphism from $g.M$ to $g.N$. 
\end{proof}
\begin{corollary}
  \label{cor:indecomposability}
  For $M\in\rep K(n)$ and $g\in GL_n(\CC)$, the representation $M$ is indecomposable if and only if $g.M$ is indecomposable.
\end{corollary}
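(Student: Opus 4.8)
The plan is to deduce the statement directly from Lemma~\ref{le:hom equivariance}, once we observe that $M\mapsto g.M$ is a $GL_n(\CC)$-action on $\rep K(n)$ compatible with direct sums. First I would check from the defining formula $(g.M)_{\alpha_i}=\sum_j g_{ij}M_{\alpha_j}$ that $(gh).M=g.(h.M)$ and $1.M=M$, so in particular $g^{-1}.(g.M)=M$. Next I would note that this operation preserves direct sum decompositions: if $M=M'\oplus M''$, then every arrow map $M_{\alpha_j}$ is block diagonal with blocks $M'_{\alpha_j}$ and $M''_{\alpha_j}$, hence so is each $(g.M)_{\alpha_i}=\sum_j g_{ij}M_{\alpha_j}$, with blocks $(g.M')_{\alpha_i}$ and $(g.M'')_{\alpha_i}$; thus $g.M=g.M'\oplus g.M''$, and the decomposition is nontrivial exactly when the original one is.

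Granting this, the corollary is immediate in both directions. If $g.M$ is indecomposable but $M=M'\oplus M''$ nontrivially, then $g.M=g.M'\oplus g.M''$ is a nontrivial decomposition, a contradiction; conversely, if $M$ is indecomposable but $g.M=N'\oplus N''$ nontrivially, then applying $g^{-1}$ yields the nontrivial decomposition $M=g^{-1}.N'\oplus g^{-1}.N''$, again a contradiction.

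An equivalent, perhaps more conceptual route is to use Lemma~\ref{le:hom equivariance} with $N=M$: it provides a canonical $\CC$-linear bijection $\Hom(M,M)\xrightarrow{\sim}\Hom(g.M,g.M)$, and since composition of morphisms of $K(n)$ is computed componentwise, this bijection is in fact an isomorphism of $\CC$-algebras carrying $\mathrm{id}_M$ to $\mathrm{id}_{g.M}$. As a finite-dimensional representation is indecomposable precisely when its endomorphism algebra contains no idempotents besides $0$ and $1$, and algebra isomorphisms preserve idempotents, the claim follows. I do not anticipate any genuine obstacle here; the only points requiring (minimal) care are the group-action identity and the block-diagonal bookkeeping, both entirely routine.
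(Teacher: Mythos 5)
Your proposal is correct, and you actually give two sound arguments. The second one—transporting the endomorphism $\CC$-algebra along the canonical bijection from Lemma~\ref{le:hom equivariance}, noting that componentwise composition makes it an algebra isomorphism and then invoking the idempotent characterization of indecomposability—is essentially the paper's approach; the paper phrases the same idea in terms of split epimorphisms $\theta\colon M\onto N$ onto a nonzero quotient with nonzero kernel, observing via Lemma~\ref{le:hom equivariance} that $\theta$ is a split epimorphism if and only if $\theta^g$ is. Your first argument is genuinely different and more elementary: it bypasses Lemma~\ref{le:hom equivariance} entirely and checks directly that $g.(-)$ is a group action (so $g^{-1}.(g.M)=M$) that carries block-diagonal arrow maps to block-diagonal arrow maps, hence $g.(M'\oplus M'')=g.M'\oplus g.M''$. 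This matrix-level computation is self-contained and makes the group-action structure explicit, whereas the paper's and your second route piggyback on the $\Hom$-equivariance that Lemma~\ref{le:hom equivariance} is set up to provide and that the section reuses elsewhere; both are equally rigorous, so the choice is largely one of exposition.
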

\begin{proof}
  The representation $M$ is decomposable if there exists a split epimorphism $\theta:M\onto N$ for some nonzero representation $N$.
  But this occurs exactly when the map $\theta^g:g.M\onto g.N$ is also a split epimorphism.
\end{proof}

While the reflection functors are not $GL_n(\CC)$-equivariant, they do admit the following twisted equivariance.
\begin{lemma}\label{compatibleGSigma}
  For $g\in G$, the reflection functors $\Sigma_i:\rep K(n)\to\rep K(n)$ satisfy $g.\Sigma_i(M)=\Sigma_i(g^{-T}.M)$. 
\end{lemma}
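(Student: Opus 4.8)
The plan is to verify the identity directly by tracing the action of $g$ through the explicit kernel/cokernel descriptions of the BGP reflection functors. I will carry this out for $\Sigma_2$, the reflection at the source vertex $2$; the case of $\Sigma_1$ at the sink vertex $1$ is entirely dual, with kernels replacing cokernels, and the transpose appears there for the same reason. Throughout, $\Sigma_2(M)$ and $g.\Sigma_2(M)$ are representations of $K(n)^{op}$, on which $GL_n(\CC)$ acts by the same formula $(g.N)_{\alpha_i}=\sum_j g_{ij}N_{\alpha_j}$ used for $\rep K(n)$.

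The first step is to package the structure of a representation $M=(M_1,M_2,(M_{\alpha_i})_i)$ of $K(n)$ into the single linear map
\[\mu_M\colon M_2\longrightarrow M_1\otimes A_1,\qquad m\longmapsto\sum_{i=1}^n M_{\alpha_i}(m)\otimes\alpha_i,\]
so that, by the definition of the reflection at a source, $\Sigma_2 M$ is the representation of $K(n)^{op}$ with $(\Sigma_2 M)_1=M_1$, with $(\Sigma_2 M)_2=\operatorname{coker}(\mu_M)$, and with structure map the quotient $q_M\colon M_1\otimes A_1\twoheadrightarrow\operatorname{coker}(\mu_M)$ --- here the $n$ maps $M_1\to(\Sigma_2 M)_2$ labelled by the reversed arrows are recovered by restricting $q_M$ along the fixed basis $\alpha_1,\dots,\alpha_n$ of $A_1$. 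The analogous description with $\ker$ in place of $\operatorname{coker}$, applied to the map $M_2\otimes A_1\to M_1$, $m\otimes\alpha_i\mapsto M_{\alpha_i}(m)$, gives $\Sigma_1$.

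The second step records how $g$ interacts with $\mu$. Writing $\underline g\in GL(A_1)$ for the standard action $\underline g\,\alpha_i=\sum_j g_{ji}\alpha_j$ (a genuine group homomorphism, with $\underline{g^T}\alpha_i=\sum_j g_{ij}\alpha_j$), unwinding $(g.M)_{\alpha_i}=\sum_j g_{ij}M_{\alpha_j}$ gives $\mu_{g.M}=(\mathrm{id}_{M_1}\otimes\underline g)\circ\mu_M$; by the same computation on the $K(n)^{op}$ side the structure map of $g.\Sigma_2(M)$ is $q_M\circ(\mathrm{id}_{M_1}\otimes\underline{g^T})$. Applying the first formula with $g$ replaced by $g^{-T}$, the vertex-$2$ space of $\Sigma_2(g^{-T}.M)$ is $\operatorname{coker}\big((\mathrm{id}\otimes\underline{g^{-T}})\circ\mu_M\big)$ with structure map the quotient map. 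Since $\underline{g^T}$ is invertible, the kernel of $q_M\circ(\mathrm{id}\otimes\underline{g^T})$ is $(\mathrm{id}\otimes\underline{g^{-T}})(\Im\mu_M)$, which is exactly the image of $(\mathrm{id}\otimes\underline{g^{-T}})\circ\mu_M$. Thus the vertex-$2$ structure maps of $g.\Sigma_2(M)$ and $\Sigma_2(g^{-T}.M)$ are two surjections with the same kernel, so there is a unique isomorphism between their cokernels intertwining them; together with $\mathrm{id}_{M_1}$ at vertex $1$ this is the desired isomorphism, and Lemma~\ref{le:hom equivariance} shows it is natural in $M$. The identical argument, with $\ker$ in place of $\operatorname{coker}$, handles $\Sigma_1$.

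The only real obstacle is the bookkeeping of the transpose. The $GL_n(\CC)$-action mixes the arrows $\alpha_i$ among themselves, whereas $\Sigma_i$ produces its structure map by reading off the maps $M_{\alpha_i}$ as coordinates against the fixed basis $(\alpha_i)$ of $A_1$; this mismatch --- which amounts to silently identifying $A_1$ with its dual via the standard, non-$GL_n$-invariant pairing --- is what converts $g$ into $g^{T}$, and comparing the kernels of the maps twisted respectively by $\underline{g^T}$ and $\underline{g^{-T}}=(\underline{g^T})^{-1}$ is what pins the remaining twist down to $g^{-T}$ rather than $g$, $g^{-1}$, or $g^{T}$. Getting this direction exactly right is the entire content of the lemma; everything else is a one-line diagram chase.
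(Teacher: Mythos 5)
Your proof is correct and is essentially the paper's argument: both compute the effect of $g$ on the cokernel (resp.\ kernel) sequence defining $\Sigma_2$ (resp.\ $\Sigma_1$) and identify the resulting twist as $g^{-T}$. The only cosmetic difference is that you phrase the middle space as $M_1\otimes A_1$ and act by $\underline g\in GL(A_1)$, whereas the paper writes $\bigoplus_{i=1}^n M_1$ with the block matrix $g^T$ and deduces $g.\Sigma_2(M)=\Sigma_2(g^{-T}.M)$ from the twisted exact sequence directly; under the obvious identification these are the same computation, with your version making explicit the canonical isomorphism of cokernels that the paper leaves implicit.
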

\begin{proof}
  We present all the details for $\Sigma_2$, the proof for $\Sigma_1$ is similar.
  For $M\in\rep K(n)$, we have $\Sigma_2(M)=(M_1,M'_2,M'_{\alpha_i})$, where $M'_2$ fits into the following exact sequence:
  \[\xymatrix@C30pt{0 \ar[r] & M_2 \ar[r]^(.4){\bigoplus\limits_{i=1}^n M_{\alpha_i}} & \bigoplus\limits_{i=1}^n M_1 \ar[r]^\pi & M'_2 \ar[r] & 0}\]
  and $M'_{\alpha_i}=\pi\circ\iota_i$ for $\iota_i:M_1\to\bigoplus_{i=1}^n M_1$ the inclusion of the $i$-th factor.
  Then for $g=(g_{ij})\in GL_n(\CC)$, we have $g.\Sigma_2(M)=(M_1,M'_2,(g.M')_{\alpha_i})$ with 
  \[(g.M')_{\alpha_i}=\sum\limits_{j=1}^n g_{ij}M'_{\alpha_j}=\sum\limits_{j=1}^n g_{ij}(\pi\circ\iota_j)=\pi\circ\sum\limits_{j=1}^n g_{ij}\iota_j=\pi\circ g^T\circ\iota_i.\]
  In particular, we may construct $g.\Sigma_2(M)$ using the following exact sequence:
  \[\xymatrix@C50pt{0 \ar[r] & M_2 \ar[r]^(.45){g^{-T}\circ\bigoplus\limits_{i=1}^n M_{\alpha_i}} & \bigoplus\limits_{i=1}^n M_1 \ar[r]^{\pi\circ g^T} & M'_2 \ar[r] & 0},\]
  i.e. $g.\Sigma_2(M)=\Sigma_2(g^{-T}.M)$.
\end{proof}

Write $\Ind\big(K(n),d\big)$ for the set of isomorphism classes of indecomposable representations of $K(n)$ with dimension vector $d$ for $d\in\NN^{K(n)_0}$. As the $\GL_n(\CC)$-action commutes with the natural base change action, we can define a $\GL_n(\CC)$-action on $\Ind\big(K(n),d\big)$.
\begin{proposition} 
  \label{indecomposables}
  Let $m\geq 1$ and $d(m,r)=\udim P_{m+1}-r\udim P_m$ with $0\leq r\leq n-1$.
  The action of $\GL_n(\CC)$ is transitive on $\Ind\big(K(n),d(m,r)\big)$.
\end{proposition}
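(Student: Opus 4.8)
The plan is to prove Proposition~\ref{indecomposables} by induction on $m$, reducing to the base case $m=1$ via the reflection functor $\Sigma_2(-)^\sigma$ together with its twisted $\GL_n(\CC)$-equivariance from Lemma~\ref{compatibleGSigma}.

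For the base case $m=1$ I would return to the explicit parametrization used in the proof of Proposition~\ref{pro:indecomposables}: here $d(1,r)=(l,1)$ with $l=n-r$, an indecomposable representation $X$ is recorded by its $l\times n$ matrix $M_X$ of maximal rank whose $i$-th column is $X_{\alpha_i}$, and its isomorphism class is determined by the row span $\langle M_X\rangle\in\Gr_l(\CC^n)$. First I would observe that the $\GL_n(\CC)$-action on arrows replaces the $i$-th column $X_{\alpha_i}$ of $M_X$ by $\sum_{j=1}^n g_{ij}X_{\alpha_j}$, i.e.\ $M_{g.X}=M_X g^T$, so that $\langle M_{g.X}\rangle=g\cdot\langle M_X\rangle$ for the standard action of $\GL_n(\CC)$ on $\Gr_l(\CC^n)$. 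Since $\GL_n(\CC)$ acts transitively on the set of $l$-dimensional subspaces of $\CC^n$, this already yields transitivity on $\Ind\big(K(n),d(1,r)\big)$. (One should also recall, as noted just before the statement, that the $\GL_n(\CC)$-action descends to $\Ind\big(K(n),d\big)$ precisely because it commutes with the base-change action.)

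For the inductive step I would use that $F:=\Sigma_2(-)^\sigma$ restricts to a bijection between the isomorphism classes of indecomposables of dimension vector $d(m,r)$ and those of dimension vector $d(m+1,r)$: this is exactly the input already exploited in Proposition~\ref{pro:indecomposables}, since $\Sigma_1,\Sigma_2$ and $(-)^\sigma$ preserve indecomposability, $\Sigma_2(d(m,r))^\sigma=d(m+1,r)$, and no indecomposable of dimension vector $d(m,r)$ or $d(m+1,r)$ is simple (both coordinates of these vectors are positive), so that $\Sigma_2^2\cong\mathrm{id}$ on the relevant subcategory supplies an inverse. The crucial compatibility is Lemma~\ref{compatibleGSigma}: combined with the fact that $(-)^\sigma$ commutes with the $\GL_n(\CC)$-action (it only relabels the two vertices and fixes the space of arrows), it gives
\[F(h.M)\cong h^{-T}.F(M)\qquad\text{for all }h\in\GL_n(\CC).\]
Thus, given two indecomposables $X,X'$ of dimension vector $d(m+1,r)$, I would write $X=F(Y)$ and $X'=F(Y')$ with $Y,Y'$ indecomposable of dimension vector $d(m,r)$; by the inductive hypothesis $Y'\cong h.Y$ for some $h\in\GL_n(\CC)$, whence $X'\cong F(h.Y)\cong h^{-T}.X$, as desired.

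The main obstacle is not conceptual but bookkeeping: carefully tracking which automorphism of $\GL_n(\CC)$ intertwines the action with each reflection (namely $g\mapsto g^{-T}$) and checking that $(-)^\sigma$ is equivariant on the nose. Even this is forgiving, however, since transitivity of a group action is preserved under precomposition with any automorphism of the group, so any small error in identifying the twist would not affect the conclusion.
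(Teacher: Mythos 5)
Your proof follows essentially the same route as the paper: the base case $m=1$ via transitivity of $\GL_n(\CC)$ on $\Gr_l(\CC^n)$ together with the matrix/row-span description from Proposition~\ref{pro:indecomposables}, and the inductive step via the reflection functor $\Sigma_2(-)^\sigma$ and the twisted equivariance from Lemma~\ref{compatibleGSigma}. You are slightly more scrupulous than the paper in tracking the $(-)^\sigma$ twist and in checking that the reflection genuinely restricts to a bijection on the relevant sets of indecomposables (the paper's commutative diagram silently identifies $K(n)$ with $K(n)^{op}$), but the argument is the same.
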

\begin{proof}
  For $m=1$, the action of $\GL_n(\CC)$ on $\Gr_d(\CC^n)$ is transitive which shows that it is transitive on $\Ind\big(K(n),d(1,r)\big)$.
  As reflection functors preserve indecomposability and isomorphism classes, Lemma~\ref{compatibleGSigma} gives a commutative diagram as below for each $g\in\GL_n(\CC)$:
  \[\xymatrix{\Ind\big(K(n),d(m,r)\big)\ar[r]^(.45){\Sigma_2}\ar[d]^{g^{-T}}&\Ind\big(K(n),d(m+1,r)\big)\ar[d]^g\\
    \Ind\big(K(n),d(m,r)\big)\ar[r]^(.45){\Sigma_2}&\Ind\big(K(n),d(m+1,r)\big)}.\]
  As the $\GL_n(\CC)$-action is transitive on the left hand side, this implies that it is also transitive on the right hand side.
\end{proof}

\begin{theorem}
  \label{thm:truncpp}
  Fix a dimension vector $\bfe$.
  The quiver Grassmannian $\Gr_\bfe(P_{m+1}^V)$ is smooth for each $V\in \Gr(\cH_m)$.
  Moreover, for $V,W\in \Gr_d(\cH_m)$, we have $\Gr_\bfe(P_{m+1}^V)\cong \Gr_\bfe(P_{m+1}^W)$.
\end{theorem}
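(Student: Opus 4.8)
The plan is to deduce both assertions from the $\GL_n(\CC)$-action on $\rep K(n)$ studied above. The key observation is that this action interacts well with subrepresentations: if $g\in\GL_n(\CC)$ and $M\in\rep K(n)$, then a pair of subspaces $(U_1,U_2)$ with $U_i\subset M_i$ is a subrepresentation of $M$ precisely when it is a subrepresentation of $g.M$, since the condition $M_{\alpha_i}(U_2)\subset U_1$ for all $i$ is equivalent to $(g.M)_{\alpha_i}(U_2)=\big(\sum_j g_{ij}M_{\alpha_j}\big)(U_2)\subset U_1$ for all $i$ (one direction is immediate, the other follows by applying $g^{-1}$). Consequently, for every dimension vector $\bfe$ the quiver Grassmannians $\Gr_\bfe(M)$ and $\Gr_\bfe(g.M)$ are \emph{equal} as closed subvarieties of $\prod_i\Gr_{\bfe_i}(M_i)$, not merely isomorphic.

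For the isomorphism statement, fix $V,W\in\Gr_d(\cH_m)$. By Proposition~\ref{pro:indecomposables}, the truncated preprojectives $P_{m+1}^V$ and $P_{m+1}^W$ are both indecomposable with the same dimension vector $d(m,d)$, hence by Proposition~\ref{indecomposables} there exists $g\in\GL_n(\CC)$ with $g.P_{m+1}^V\cong P_{m+1}^W$. Combining this isomorphism of representations with the equality $\Gr_\bfe(P_{m+1}^V)=\Gr_\bfe(g.P_{m+1}^V)$ from the previous paragraph yields $\Gr_\bfe(P_{m+1}^V)\cong\Gr_\bfe(P_{m+1}^W)$, which is the second claim.

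For smoothness, I would first handle the case $V=0$: the representation $P_{m+1}=P_{m+1}^0$ is exceptional by Theorem~\ref{th:rigids}, so $\Gr_\bfe(P_{m+1})$ is smooth by \cite[Corollary~4]{cr}. It then remains to treat $V\neq 0$, where $P_{m+1}^V$ is indecomposable but \emph{not} rigid (Lemma~\ref{le:unique morphisms}), so the cited smoothness criterion does not directly apply. Here I would pass to the universal covering: choosing $I\subsetneq\{1,\dots,n\}$ with $|I|=\dim V$, Lemma~\ref{le:homdecomposition}.(4) produces a lift $\tilde P_{m+1}^I$ of (a representation isomorphic to) $P_{m+1}^V$, and Lemma~\ref{le:properties}.(4) guarantees that $\tilde P_{m+1}^I$ is exceptional as a representation of $\widetilde{K(n)}$. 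Hence every quiver Grassmannian $\Gr_{\tilde\bfe}^{\widetilde{K(n)}}(\tilde P_{m+1}^I)$ is smooth by \cite[Corollary~4]{cr}. By Theorem~\ref{thm:torusfixedpoints} (iterated as in the corollaries following it), the fixed-point locus of the torus action on $\Gr_\bfe(P_{m+1}^V)$ is a disjoint union of these smooth quiver Grassmannians for the cover; invoking Theorem~\ref{thm:bb}, $\Gr_\bfe(P_{m+1}^V)$ is an iterated affine-bundle assembly over a smooth base, hence smooth — provided one knows the total space carries the torus action, which it does since $P_{m+1}^V$ lifts to $\widehat{K(n)}$.

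The main obstacle is the smoothness of $\Gr_\bfe(P_{m+1}^V)$ for $V\neq 0$, precisely because $P_{m+1}^V$ fails to be rigid and the Crawley-Boevey--Rupel criterion is unavailable over $K(n)$ itself; the argument is forced through the universal cover, where the lifted representation $\tilde P_{m+1}^I$ \emph{is} exceptional. One subtlety to check carefully is that the Bia\l ynicki-Birula machinery applies: since the fixed components are smooth and $\Gr_\bfe(P_{m+1}^V)$ is projective with a $\CC^*$-action, smoothness of the ambient space is what we want to \emph{conclude}, so strictly one should instead argue smoothness of $\Gr_\bfe(P_{m+1}^V)$ directly — e.g.\ by noting it is covered by $\GL_n$-translates of $\Gr_\bfe(P_{m+1})$ via Proposition~\ref{indecomposables}, reducing to the exceptional case $V=0$ after all. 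I would present this last reduction as the clean route and relegate the covering-theoretic viewpoint to a remark.
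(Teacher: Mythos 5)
Your treatment of the isomorphism claim is essentially the paper's argument verbatim: you show $\Gr_\bfe(M)=\Gr_\bfe(g.M)$ as subvarieties, then invoke Propositions~\ref{pro:indecomposables} and~\ref{indecomposables} to transport one truncated preprojective to the other by the $\GL_n(\CC)$-action. That part is fine.

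The smoothness part has a genuine gap. You correctly notice that the covering-theoretic route is circular, since Theorem~\ref{thm:bb} takes smoothness of the total space as a hypothesis and cannot be used to derive it. But the "clean route" you propose as a replacement does not work: you suggest covering $\Gr_\bfe(P_{m+1}^V)$ by $\GL_n(\CC)$-translates of $\Gr_\bfe(P_{m+1})$, "reducing to the exceptional case $V=0$." The $\GL_n(\CC)$-action on $\rep K(n)$ preserves dimension vectors, and $\udim P_{m+1}^V=d(m,r)$ with $r=\dim V$ is strictly different from $\udim P_{m+1}=d(m,0)$ when $V\neq 0$; indeed $\langle d(m,r),d(m,r)\rangle = 1-nr+r^2 < 1$ for $1\le r\le n-1$ and $n\ge 3$, so $d(m,r)$ is not even a real root and there is no exceptional representation to reduce to. Proposition~\ref{indecomposables} only gives transitivity within a fixed $r$; it does not connect $r\neq 0$ to $r=0$.

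The paper's actual smoothness argument is different and you should internalize it: having shown (via Propositions~\ref{pro:indecomposables} and~\ref{indecomposables} and the equality $\Gr_\bfe(M)=\Gr_\bfe(g.M)$) that all quiver Grassmannians $\Gr_\bfe(M)$ with $M$ indecomposable of dimension vector $d(m,r)$ are isomorphic, one uses Proposition~\ref{pro:indecomposables}.(3) — the indecomposables form a dense open subset of $R_{d(m,r)}(K(n))$ — and observes that the proof of \cite[Corollary~4]{cr} only needs a dense open locus in the representation variety over which the universal quiver Grassmannian has constant fiber (up to the $\GL_n\times\GL_d$-equivariance just established), not rigidity of the individual module. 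In other words, the $\GL_n(\CC)$-action supplies the transitivity on the dense orbit that rigidity would otherwise provide, and the Caldero–Reineke argument then goes through unchanged. This is the step your proposal is missing.
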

\begin{proof}
  Let $g\in\GL_n(\CC)$.
  We first show that $\Gr_\bfe(g.P_{m+1}^V)=\Gr_\bfe(P_{m+1}^V)$.
  Assume that $P_{m+1}^V$ is given by the linear maps $M_{\alpha_i}$.
  Then $g.P_{m+1}^V$ is given by the matrices $(g.M)_{\alpha_i}=\sum\limits_{j=1}^n g_{ij}M_{\alpha_j}$.
  Let $(U_1,U_2)\in\Gr_\bfe(g.P_{m+1}^V)$.
  Then we have  
  \[M_{\alpha_i}(U_2)\subset U_1 \text{ for all } i=1,\ldots,n\Leftrightarrow \left(\sum\limits_{j=1}^n g_{ij}M_{\alpha_j}\right)(U_2)\subset U_1 \text{ for all } i=1,\ldots,n.\]
  Note that we have $g^{-1}.(g.P_{m+1}^V)=P_{m+1}^V$ which shows the non-obvious direction.

  Propositions~\ref{pro:indecomposables} and~\ref{indecomposables} imply that the quiver Grassmannians $\Gr_\bfe(M)$ for $M\in\Ind\big(K(n),\udim P_{m+1}^V\big)$ are all isomorphic for a fixed $\bfe\in\NN^{Q_0}$.
  In other words, we use that all indecomposables with this dimension vector are truncated preprojectives.

  As the indecomposables form a dense open subset of all representations, we found a dense subset whose quiver Grassmannians for a fixed $\bfe$ are isomorphic.
  But now the same proof as for exceptional roots applies in order to show that these quiver Grassmannians need to be smooth, see \cite[Corollary 4]{cr}.
\end{proof}

\subsection{Fibrations of Quiver Grassmannians}
\label{sec:fibrations}

Let $\ses{M}{B}{N}$ be a short exact sequence of representations.
For a fixed dimension vector $\tbfe$, this induces the so-called ``Caldero-Chapoton map'' between quiver Grassmannians
\begin{align*}
  \Psi:\Gr_\tbfe(B)&\to\bigsqcup_{\tbff+\tbfg=\tbfe} \Gr_\tbff(M)\times \Gr_\tbfg(N)\\
  E&\mapsto \big(E\cap M,(E+M)/M\big).
\end{align*}
Following \cite[Section 3]{cc}, any non-empty fiber of $\Psi$ satisfies $\Psi^{-1}(U,W)\cong\mathbb{A}^{\dim\Hom(W,M/U)}$.

For $\cG_{\tbff,\tbfg}:=\Psi^{-1}\big(\Gr_\tbff(M)\times \Gr_\tbfg(N)\big)$, we have 
\begin{equation}
  \label{eq:grassmannian decomposition}
  \Gr_\tbfe(B)=\bigsqcup_{\tbff+\tbfg=\tbfe} \cG_{\tbff,\tbfg}.
\end{equation}
Then $\Psi$ restricts to a map
\[\Psi_{\tbff,\tbfg}:\cG_{\tbff,\tbfg}\to \Gr_\tbff(M)\times \Gr_\tbfg(N).\]
The following results are proven in \cite[Section 3]{cefr}.
For completeness we include a proof of the first.
\begin{lemma}
  \label{le:good partition}
  There exists a total ordering $\preceq$ of the dimension vectors $\tbff$ appearing in the decomposition \eqref{eq:grassmannian decomposition} such that for any fixed $\tbff$ the subset 
  \[\bigsqcup_{ \tbff'\succeq\tbff} \cG_{\tbff',\tbfe-\tbff'}\]
  is closed in $\Gr_\tbfe(B)$.
\end{lemma}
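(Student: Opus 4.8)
The plan is to produce the total ordering $\preceq$ by comparing the dimension vectors $\tbff$ according to the dimension of a suitable homomorphism space, and then to show that the claimed union is closed by exhibiting it as the locus where a semicontinuous function on $\Gr_\tbfe(B)$ is bounded. First I would recall the description of the map $\Psi$: for $E\in\Gr_\tbfe(B)$ we have $\Psi(E)=(E\cap M,(E+M)/M)$, and $E\in\cG_{\tbff,\tbfg}$ precisely when $\udim(E\cap M)=\tbff$. The key observation is that $\udim(E\cap M)=\tbfe-\udim\big((E+M)/M\big)$, so the partition \eqref{eq:grassmannian decomposition} is governed by the behavior of the composite $E\mapsto (E+M)/M$, equivalently by the rank of the map $E\to N$ induced by $B\onto N$. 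This rank (taken componentwise, i.e. the dimension vector of the image) is an \emph{upper} semicontinuous invariant of $E$ in each vertex degree — the image can only jump up under specialization — hence $\udim(E\cap M)=\tbfe-\udim(\mathrm{im})$ is \emph{lower} semicontinuous in each component. Concretely, for any fixed dimension vector $\tbff$, the set $\{E\mid \udim(E\cap M)\ge\tbff\}$ (inequality componentwise) is closed in $\Gr_\tbfe(B)$.

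The next step is to promote this componentwise semicontinuity to a genuine total order. I would fix any linear functional $\lambda:\ZZ^{Q_0}\to\ZZ$ with strictly positive coordinates and declare $\tbff'\succeq\tbff$ if $\lambda(\tbff')>\lambda(\tbff)$, breaking ties among dimension vectors with the same $\lambda$-value arbitrarily (there are only finitely many $\tbff$ appearing, so this is harmless). For a dimension vector $\tbff$ that does appear, I would then argue that
\[
\bigsqcup_{\tbff'\succeq\tbff}\cG_{\tbff',\tbfe-\tbff'}
\]
coincides with a closed set. The inclusion I want is: $E\in\cG_{\tbff',\tbfe-\tbff'}$ with $\tbff'\succeq\tbff$ forces $E$ to lie in the closed locus $Z_\tbff:=\{E\mid \lambda(\udim(E\cap M))\ge\lambda(\tbff)\}$, and conversely every $E\in Z_\tbff$ lies in some $\cG_{\tbff',\tbfe-\tbff'}$ with $\lambda(\tbff')\ge\lambda(\tbff)$, i.e. with $\tbff'\succeq\tbff$ or $\tbff'=\tbff'$ tied. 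Here is the subtlety: $Z_\tbff$ as just defined is closed (it is a finite union, over $\tbff'$ with $\lambda(\tbff')\ge\lambda(\tbff)$, of the closed sets $\{E\mid\udim(E\cap M)\ge\tbff'\}$... no — that componentwise inequality is too strong). Instead I would directly take $Z_\tbff$ to be the finite union of those pieces $\cG_{\tbff',\tbfe-\tbff'}$ with $\tbff'\succeq\tbff$ and prove its closedness by checking that its complement, the union of pieces with $\lambda(\tbff')<\lambda(\tbff)$ (together with the tied-but-smaller ones), is open: an $E$ in such a piece has $\udim(E\cap M)$ with small $\lambda$-value, hence $\udim(\mathrm{im}(E\to N))$ with \emph{large} $\lambda$-value, and the locus where the image dimension vector has $\lambda$-value at least a fixed bound is open by upper semicontinuity of ranks. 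Summing over the finitely many larger values gives that the complement of $Z_\tbff$ is open, so $Z_\tbff$ is closed.

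The main obstacle I anticipate is bookkeeping with the tie-breaking: $\lambda$ alone only sees $\lambda(\tbff)$, not $\tbff$ itself, so $\preceq$ is only a preorder and I must refine it to a total order without destroying closedness of the up-sets. The clean fix is to choose $\lambda$ \emph{generic} enough — e.g. with $\ZZ$-linearly independent coordinates over the (finite) set of differences $\tbff'-\tbff''$ of dimension vectors that actually occur — so that $\lambda$ is injective on that finite set and no ties arise at all. With that choice, $\tbff'\succeq\tbff:\Longleftrightarrow\lambda(\tbff')\ge\lambda(\tbff)$ is already a total order on the occurring dimension vectors, and the up-set $\bigsqcup_{\tbff'\succeq\tbff}\cG_{\tbff',\tbfe-\tbff'}=\{E\in\Gr_\tbfe(B)\mid \lambda(\udim(E\cap M))\ge\lambda(\tbff)\}$ is closed because its complement $\{E\mid\lambda(\udim((E+M)/M))>\lambda(\tbfe)-\lambda(\tbff)\}$ is a union of rank-at-least loci for the vertex-wise maps $E_i\to N_i$, each of which is open. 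A secondary, purely technical point is to verify that $E\mapsto (E+M)/M$ really is, componentwise, the image of $E_i$ under $B_i\onto N_i$ and therefore varies like the rank of a family of linear maps parametrized by $\Gr_\tbfe(B)$; this is immediate from the definition of $\Psi$ and the fact that $\Gr_\tbfe(B)$ carries a tautological subrepresentation.
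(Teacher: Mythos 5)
Your argument is essentially sound and reaches the same conclusion as the paper by a slightly different route, but the semicontinuity claims as written are backwards and internally inconsistent with the closedness you then assert.

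Concretely: the image dimension $\dim\big((E+M)_i/M_i\big)$ is the rank of $E_i\to N_i$, and ranks are \emph{lower} semicontinuous (images can only drop under specialization), so $\dim(E_i\cap M_i)$ is \emph{upper} semicontinuous (kernels/intersections can only grow). You state both of these the wrong way around, then conclude that $\{E\mid\udim(E\cap M)\geq\tbff\}$ is closed — which is precisely the upper-semicontinuity property of $\dim(E_i\cap M_i)$, contradicting what you just wrote. If one took your semicontinuity statements at face value, the set $\{\udim(E\cap M)\geq\tbff\}$ would be open, not closed, and the stratification would run the wrong way. The same slip recurs later (``open by upper semicontinuity of ranks''); the loci $\{\mathrm{rank}_i\geq r_i\}$ are open because rank is lower semicontinuous. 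Since the conclusions you actually use are the correct ones, the proof survives, but the signs need to be fixed.

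As for the comparison: the paper simply notes that the functions $\rho_i(E_i)=\dim(E_i\cap M_i)$ are upper semicontinuous, deduces that componentwise up-sets $\bigsqcup_{\tbff'\geq\tbff}\cG_{\tbff',\tbfe-\tbff'}$ are closed, and then takes any total refinement of the componentwise partial order (the point being that up-sets of a refinement are still up-sets of the original partial order, hence finite unions of closed principal up-sets). Your approach instead fixes a generic strictly positive linear functional $\lambda$ and orders by $\lambda$-value. This is a specific instance of the paper's ``any refinement'' — a $\lambda$ with strictly positive coordinates is monotone for the componentwise order, and genericity removes ties — but it has the merit of making closedness transparent in one step: $\lambda\circ\udim(E\cap M)$ is a positive combination of upper semicontinuous functions, hence upper semicontinuous, so its superlevel sets are closed with no further poset bookkeeping. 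Both arguments are correct; yours trades the paper's brief appeal to refinements of posets for an explicit, functional description of the order, at the cost of invoking genericity.

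Finally, a small imprecision in the last paragraph: the complement $\{E\mid\lambda\big(\udim((E+M)/M)\big)>c\}$ is not literally a union of the individual sets $\{\mathrm{rank}_i\geq r_i\}$; rather it is open because $\lambda\circ\udim\big((E+M)/M\big)=\sum_i\lambda_i\,\mathrm{rank}_i$ is a positive combination of lower semicontinuous functions and hence lower semicontinuous. The conclusion is the same, but it is cleaner to say it that way.
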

\begin{proof}
  Recall that $\Gr_\tbfe(B)$ is a closed subvariety of $\prod_{i\in Q_0} \Gr_{e_i}(B_i)$.
  For each $i$, the inclusion $M_i\subset B_i$ induces an upper-semicontinuous function $\rho_i:\Gr_{e_i}(B_i)\to\ZZ$ given by $\rho_i(E_i)=\dim(E_i\cap M_i)$.
  In particular, for any fixed $f_i$ the set $\rho_i^{-1}\big(\{f_i,f_i+1,\ldots,e_i\}\big)$ is closed in $\Gr_{e_i}(B_i)$.
  It follows that the lexicographic partial ordering on dimension vectors given by $\tbff\leq\tbff'$ when $f_i\leq f'_i$ for all $i\in Q_0$ gives a closed subset
  \[\bigsqcup_{ \tbff'\geq\tbff} \cG_{\tbff',\tbfe-\tbff'}\subset \Gr_\tbfe(B)\]
  for any fixed $\tbff$.
  Any refinement of this partial order to a total order $\preceq$ will give the claim.
\end{proof}
\begin{theorem}
  \label{vb}
  If $\Im\big(\Psi_{\tbff,\tbfg}\big)$ is locally closed and the fiber dimension of $\Psi_{\tbff,\tbfg}$ is constant over $\Im\big(\Psi_{\tbff,\tbfg}\big)$, then $\Psi_{\tbff,\tbfg}:\cG_{\tbff,\tbfg}\to\Im\big(\Psi_{\tbff,\tbfg}\big)$ is an affine bundle.
  In particular, the existence of a cell decomposition of $\Im\big(\Psi_{\tbff,\tbfg}\big)$ implies a cell decomposition of $\cG_{\tbff,\tbfg}$ in this case.
\end{theorem}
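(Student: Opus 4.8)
The plan is to upgrade the fibrewise description of $\Psi$ to a statement over the base $Y:=\Im(\Psi_{\tbff,\tbfg})$: I would show that $\Psi_{\tbff,\tbfg}\colon\cG_{\tbff,\tbfg}\to Y$ is a torsor under a vector bundle $\mathcal E$ on $Y$ of rank $d:=\dim\Psi^{-1}(U,W)$, and then read off both assertions. The starting point is the more precise form of the computation in \cite[Section~3]{cc}: after fixing a vector–space splitting $B_i=M_i\oplus N_i$ for $i\in Q_0$ (so that the arrow maps of $B$ have the block form with off–diagonal entries $g_\alpha$), a subrepresentation $E$ with $\Psi(E)=(U,W)$ is exactly the graph over $W$ of a linear map $W\to M/U$ which solves a fixed inhomogeneous linear system expressing the subrepresentation condition; the solution set is a coset of $\ker d_{W,M/U}=\Hom(W,M/U)$ inside $\bigoplus_i\Hom_\CC(W_i,M_i/U_i)$. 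Equivalently, $\Hom(W,M/U)$ acts simply transitively on $\Psi^{-1}(U,W)$ via the graph perturbation $\phi\cdot E=(\mathrm{id}_E+\tilde\phi\circ p|_E)(E)$, where $p\colon B\to N$ and $\tilde\phi\colon W\to M$ is any lift of $\phi$ (independence of the lift is immediate since $U=E\cap M\subset E$).

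Next I would globalise this over $Z:=\Gr_\tbff(M)\times\Gr_\tbfg(N)$. Writing $\mathcal U_i\subset M_i\times Z$ and $\mathcal W_i\subset N_i\times Z$ for the tautological subbundles, the maps $d_{W,M/U}$ of \eqref{eq:maphomext} assemble into a morphism of vector bundles $\mathcal D\colon\mathcal S\to\mathcal T$ on $Z$ with $\mathcal S=\bigoplus_i\Hom_\CC(\mathcal W_i,M_i/\mathcal U_i)$, $\mathcal T=\bigoplus_\alpha\Hom_\CC(\mathcal W_{s(\alpha)},M_{t(\alpha)}/\mathcal U_{t(\alpha)})$, whose pointwise kernel is $\Hom(W,M/U)$; the inhomogeneous term of the linear system above is a section $b\in\Gamma(Z,\mathcal T)$. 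Now restrict to $Y$. For $(U,W)\in Y$ the fibre $\Psi^{-1}(U,W)$ is non-empty, so $\dim\Hom(W,M/U)=\dim\Psi^{-1}(U,W)=d$ by hypothesis; hence $\mathcal D|_Y$ has constant rank, so $\mathcal E:=\ker(\mathcal D|_Y)$ and $\Im(\mathcal D|_Y)$ are subbundles of $\mathcal S|_Y$, $\mathcal T|_Y$, and $b|_Y$ is a section of the subbundle $\Im(\mathcal D|_Y)$ (as $b(U,W)$ lies in the image of $\mathcal D$ at $(U,W)$, which is the content of $(U,W)\in\Im\Psi$). Here is precisely where both hypotheses are used: local closedness of $Y$ to have a reduced base, constancy of the fibre dimension to make $\mathcal D|_Y$ of constant rank.

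Then I would produce Zariski–local sections of $\Psi_{\tbff,\tbfg}$. Over an affine open $V\subseteq Y$ over which $\mathcal S|_V$ is free, the induced surjection $\mathcal S|_V\twoheadrightarrow\Im(\mathcal D|_Y)|_V$ splits, and composing a splitting with $b|_V$ gives a morphism $x_V\colon V\to\mathcal S|_V$ with $\mathcal D x_V=b|_V$; its graph is a section $s_V\colon V\to\cG_{\tbff,\tbfg}$ of $\Psi_{\tbff,\tbfg}$. With such a section, the graph–perturbation action defines a morphism of $V$-schemes $\mathcal E|_V\to\Psi_{\tbff,\tbfg}^{-1}(V)$, $\phi\mapsto\phi\cdot s_V$, which is bijective by simple transitivity and has an algebraic inverse (extract from a subrepresentation $E$ and the section value the unique $\phi$ with $\phi\cdot s_V=E$ by reading off the graph data), hence an isomorphism; shrinking $V$ so that $\mathcal E|_V$ is trivial yields $\Psi_{\tbff,\tbfg}^{-1}(V)\cong V\times\mathbb{A}^d$. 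Thus $\Psi_{\tbff,\tbfg}$ is an affine bundle. For the last assertion, given a cell decomposition $\varnothing=Y_{k+1}\subset\cdots\subset Y_1=Y$ with cells $U_i=Y_i\setminus Y_{i+1}\cong\mathbb{A}^{n_i}$, the bundle $\Psi_{\tbff,\tbfg}^{-1}(U_i)\to U_i$ is trivial — vector bundles on $\mathbb{A}^{n_i}$ are trivial by Quillen--Suslin and torsors under them over an affine base vanish in $H^1$ — so $\Psi_{\tbff,\tbfg}^{-1}(U_i)\cong\mathbb{A}^{n_i+d}$, and the closed sets $\Psi_{\tbff,\tbfg}^{-1}(Y_i)$ give the cell decomposition of $\cG_{\tbff,\tbfg}$.

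The main obstacle I anticipate is the middle step: turning the pointwise identification $\Psi^{-1}(U,W)\cong\mathbb{A}^{\dim\Hom(W,M/U)}$ into an honest vector–bundle (torsor) statement over $Y$, i.e.\ checking that $\mathcal D|_Y$ really has locally constant rank and that this forces the relative kernel, the relative image, and the affine term $b$ to behave like (sub)bundles so that local sections of $\Psi_{\tbff,\tbfg}$ can be solved for. Once that bookkeeping is in place, the affine–bundle conclusion and the transfer of cell decompositions are formal.
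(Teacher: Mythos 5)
The paper does not prove this statement itself: immediately before Theorem~\ref{vb} it writes ``The following results are proven in \cite[Section~3]{cefr}'' and only supplies a proof for Lemma~\ref{le:good partition}, so there is no in-paper proof to compare against. Assessed on its own, your proposal is correct and is essentially the standard argument. You identify precisely where both hypotheses enter and why they suffice: after choosing splittings $B_i=M_i\oplus N_i$, a nonempty fibre $\Psi^{-1}(U,W)$ is a torsor under $\Hom(W,M/U)=\ker d_{W,M/U}$; the maps $d_{W,M/U}$ globalize to a bundle map $\mathcal{D}\colon\mathcal{S}\to\mathcal{T}$ over $\Gr_\tbff(M)\times\Gr_\tbfg(N)$ between bundles of fixed rank; local closedness makes $Y=\Im(\Psi_{\tbff,\tbfg})$ a (reduced) variety, and the constancy of the fibre dimension then makes $\mathcal{D}|_Y$ of constant rank, so that $\ker(\mathcal{D}|_Y)$ and $\Im(\mathcal{D}|_Y)$ are subbundles and the inhomogeneous term $b|_Y$ is a section of $\Im(\mathcal{D}|_Y)$; splitting the surjection $\mathcal{S}|_Y\onto\Im(\mathcal{D}|_Y)$ over affines gives local sections of $\Psi_{\tbff,\tbfg}$, and the torsor action upgrades these to local trivializations $\Psi_{\tbff,\tbfg}^{-1}(V)\cong V\times\AA^d$.

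One point worth emphasizing, which your proof does supply: the upgrade from ``Zariski-locally trivial $\AA^d$-fibration'' to ``torsor under a vector bundle $\mathcal{E}$'' is exactly what makes the cell-decomposition transfer in the last sentence of the theorem work. That transfer rests on triviality of vector bundles over affine space (Quillen--Suslin) and on $H^1(\AA^{n_i},\mathcal{O}^d)=0$, neither of which applies to an arbitrary $\AA^d$-fibration with structure group $\mathrm{Aut}(\AA^d)$. This also matches how the paper handles the Bia\l{}ynicki-Birula affine bundles after Theorem~\ref{thm:bb}. I see no gap.
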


\begin{remark}
  \label{rem:cell decompositions}
  To apply Theorem~\ref{vb} and establish cell decompositions for the $\cG_{\tbff,\tbfg}$, we will find a cell decomposition of $\Gr_\tbff(M)\times\Gr_\tbfg(N)$ such that $\Im\big(\Psi_{\tbff,\tbfg}\big)$ is a union of affine cells, giving an induced cell decomposition of $\Im\big(\Psi_{\tbff,\tbfg}\big)$.

  By Lemma~\ref{le:good partition}, the existence of cell decompositions for each $\cG_{\tbff,\tbfg}$ implies the existence of a cell decomposition for $\Gr_\tbfe(B)$.
  Indeed, we may take the affine cells of all the $\cG_{\tbff,\tbfg}$ as affine cells of $\Gr_\tbfe(B)$ with the natural lexicographic total order induced by taking cells from $\cG_{\tbff',\tbfg'}$ after those of $\cG_{\tbff,\tbfg}$ whenever $\tbff'\succ\tbff$.
\end{remark}

We will apply these results in the setting of the truncated preprojective lifts from Lemma~\ref{le:homdecomposition}.
Fix $m\ge1$.
For fixed subsets $J\subsetneq I\subsetneq\{1,\ldots,n\}$ with $I\setminus J=\{j\}$, Lemma~\ref{le:properties}.(1) provides a short exact sequence 
\begin{equation}
  \label{eq:truncated ses}
  \ses{\tilde P_{m,j}}{\tilde P_{m+1}^J}{\tilde P_{m+1}^I}
\end{equation}
which induces a map between quiver Grassmannians as above for any fixed $\tbfe$:
\begin{equation}
  \label{eq:truncated CC}
  \Psi:\Gr^{\tilde Q}_\tbfe(\tilde P_{m+1}^J)\to\bigsqcup_{\tbff+\tbfg=\tbfe} \Gr^{\tilde Q}_\tbff(\tilde P_{m,j})\times\Gr^{\tilde Q}_\tbfg(\tilde P_{m+1}^I).
\end{equation}
To understand the fibers of this map for $m\ge2$, we will need to make use of another map between quiver Grassmannians coming out of Lemma~\ref{le:special subrepresentations}.
Here we consider the short exact sequence 
\begin{equation}
  \label{eq:tau sequence}
  \ses{\tilde P_m(I,j)}{\tilde P_{m,j}}{\tilde K_m},
\end{equation}
where $\tilde K_m=\tilde K_m(I)=\tau\tilde P_{m+1}^I$ for $m\ge3$ and $\tilde K_2=\tilde K_2(I,j)$ is the representation in \eqref{eq:special case} from Remark~\ref{rem:special case}.
Then, in the same way as above, we obtain the following map for any fixed $\tbff$:
\begin{equation}
  \label{eq:truncated CC2}
  \Phi:\Gr^{\tilde Q}_\tbff(\tilde P_{m,j})\to\bigsqcup_{\tbfs+\tbft=\tbff} \Gr^{\tilde Q}_\tbfs\big(\tilde P_m(I,j)\big)\times\Gr^{\tilde Q}_\tbft(\tilde K_m).
\end{equation}
\begin{proposition}
  \label{quotient}
  For $m\geq 2$ and a non-empty subset $I\subsetneq\{1,\ldots,n\}$.
  The following hold for any $j\in I$:
  \begin{enumerate}
    \item The fiber $\Psi^{-1}(U,\tilde P_{m+1}^I)$ is not empty if and only if $\Ext(\tilde P_{m+1}^I,\tilde P_{m,j}/U)=0$.
    \item The fiber $\Psi^{-1}(U,\tilde P_{m+1}^I)$ is empty if and only if $\Phi(U)=(U,0)$, i.e.\ $U$ is already a subrepresentation of $\tilde P_m(I,j)$.
  \end{enumerate} 
\end{proposition}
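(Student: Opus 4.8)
The plan is to analyze the fiber $\Psi^{-1}(U,\tilde P_{m+1}^I)$ using the general theory of the Caldero-Chapoton map. Recall from the discussion preceding Lemma~\ref{le:good partition} that any non-empty fiber of $\Psi$ is isomorphic to an affine space, and that the point $(U,W)$ lies in the image of $\Psi$ precisely when the short exact sequence $\ses{\tilde P_{m,j}}{\tilde P_{m+1}^J}{\tilde P_{m+1}^I}$, pushed out along $\tilde P_{m,j}\onto \tilde P_{m,j}/U$ and pulled back along $W\into \tilde P_{m+1}^I$, splits — equivalently, when a certain element of $\Ext(\tilde P_{m+1}^I,\tilde P_{m,j})$ maps to zero in $\Ext(W,\tilde P_{m,j}/U)$. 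In our situation $W=\tilde P_{m+1}^I$ is the whole quotient, so the relevant condition for non-emptiness of $\Psi^{-1}(U,\tilde P_{m+1}^I)$ is that the image of the class of \eqref{eq:truncated ses} under $\Ext(\tilde P_{m+1}^I,\tilde P_{m,j})\to\Ext(\tilde P_{m+1}^I,\tilde P_{m,j}/U)$ vanishes. Since, by Lemma~\ref{le:properties}.(3), $\Ext(\tilde P_{m+1}^I,\tilde P_{m,j})\cong\CC$ and \eqref{eq:truncated ses} is (up to the identification of $\tilde P_{m+1}^J$ as the corresponding extension) a nonzero such class, I would first argue that this class in fact generates $\Ext(\tilde P_{m+1}^I,\tilde P_{m,j})$, so the map above is zero if and only if $\Ext(\tilde P_{m+1}^I,\tilde P_{m,j}/U)=0$ receives the zero class, which in turn — using that $U\subset\tilde P_{m,j}$ induces a surjection $\Ext(\tilde P_{m+1}^I,\tilde P_{m,j})\onto\Ext(\tilde P_{m+1}^I,\tilde P_{m,j}/U)$ from the long exact sequence (as $\Ext^2=0$ in a hereditary category) — is equivalent to $\Ext(\tilde P_{m+1}^I,\tilde P_{m,j}/U)=0$. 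This gives part (1).

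For part (2), the plan is to use the surjection $\Ext(\tilde P_{m+1}^I,\tilde P_{m,j})\onto\Ext(\tilde P_{m+1}^I,\tilde P_{m,j}/U)$ together with Corollary~\ref{cor:perpendicular}. The quotient $\tilde P_{m,j}/U$ surjects onto $\tilde P_{m,j}/(U+\tilde P_m(I,j))$ and receives $\tilde P_m(I,j)/(U\cap\tilde P_m(I,j))$ as a subrepresentation; more to the point, the short exact sequence \eqref{eq:tau sequence}, namely $\ses{\tilde P_m(I,j)}{\tilde P_{m,j}}{\tilde K_m}$ with $\tilde K_m=\tau\tilde P_{m+1}^I$, tells us that $\Phi(U)=(U,0)$ exactly when $U\subset\tilde P_m(I,j)$, in which case $\tilde K_m$ is a subrepresentation of $\tilde P_{m,j}/U$. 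I would then show: if $U\subset\tilde P_m(I,j)$, then $\Ext(\tilde P_{m+1}^I,\tilde K_m)=\Ext(\tilde P_{m+1}^I,\tau\tilde P_{m+1}^I)$ injects into (a subquotient of) $\Ext(\tilde P_{m+1}^I,\tilde P_{m,j}/U)$ in a way that is compatible with the canonical class, and by the Auslander-Reiten formula $\dim\Ext(\tilde P_{m+1}^I,\tau\tilde P_{m+1}^I)=\dim\Hom(\tilde P_{m+1}^I,\tilde P_{m+1}^I)=1$, so this $\Ext$ is nonzero; chasing the canonical extension class through shows it survives, hence $\Ext(\tilde P_{m+1}^I,\tilde P_{m,j}/U)\ne0$ and by part (1) the fiber is empty. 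Conversely, if $U\not\subset\tilde P_m(I,j)$, i.e.\ $\Phi(U)=(U',W')$ with $W'\ne0$ a nonzero subrepresentation of $\tilde K_m$, I would use Corollary~\ref{cor:perpendicular} — which says $\tilde P_m(I,j)\in(\tilde P_{m+1}^I)^\perp$, so $\Ext(\tilde P_{m+1}^I,\tilde P_m(I,j))=0$ — to conclude from the long exact sequence attached to $0\to \tilde P_m(I,j)/(U\cap\tilde P_m(I,j))\to \tilde P_{m,j}/U\to \tilde K_m/W'\to 0$ that $\Ext(\tilde P_{m+1}^I,\tilde P_{m,j}/U)$ is a quotient of $\Ext(\tilde P_{m+1}^I,\tilde K_m/W')$; then I would check that the canonical class maps into the image of $\Ext(\tilde P_{m+1}^I,W')$, which vanishes in $\Ext(\tilde P_{m+1}^I,\tilde K_m/W')$, forcing $\Ext(\tilde P_{m+1}^I,\tilde P_{m,j}/U)=0$ to receive the zero class and the fiber to be non-empty.

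The case $m=2$ needs separate handling, since by Remark~\ref{rem:special case} the morphism $\tilde P_{2,j}\to\tau\tilde P_3^I$ is not surjective and $\tilde K_2$ is instead the two-vertex representation \eqref{eq:special case}; here I would argue directly from the explicit descriptions in Example~\ref{ex:lifted preprojectives} and Example~\ref{ex:truncated lifts}, computing $\Ext(\tilde P_3^I,\tilde P_{2,j}/U)$ for each type of subrepresentation $U$ (which, by Lemma~\ref{le:subrep}, is a direct sum of lifted preprojectives, in fact a sum of simples $\tilde P_{1,(j,i)}$), and checking the non-emptiness criterion vertex by vertex. The main obstacle I anticipate is the bookkeeping in part (2): pinning down exactly how the canonical extension class of \eqref{eq:truncated ses} behaves under the successive connecting maps in the two long exact sequences coming from \eqref{eq:truncated ses} and \eqref{eq:tau sequence}, and verifying that the two torus-equivariant filtrations interact as claimed — in particular that ``$\Phi(U)=(U,0)$'' is genuinely the dividing line. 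Everything else is a fairly mechanical application of the hereditary long exact sequences together with Lemma~\ref{le:properties} and Corollary~\ref{cor:perpendicular}.
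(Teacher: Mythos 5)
Your argument for part (1) and for the case $\Phi(U)=(U,0)$ of part (2) matches the paper's approach in substance: the fiber over $(U,\tilde P_{m+1}^I)$ is non-empty precisely when the canonical extension class of \eqref{eq:truncated ses} maps to zero under the surjection $\Ext(\tilde P_{m+1}^I,\tilde P_{m,j})\to\Ext(\tilde P_{m+1}^I,\tilde P_{m,j}/U)$, and since the source is one-dimensional and the class generates it, this is equivalent to the vanishing of the target. When $U\subset\tilde P_m(I,j)$, the isomorphism $\Ext(\tilde P_{m+1}^I,\tilde P_{m,j}/U)\cong\Ext(\tilde P_{m+1}^I,\tilde K_m)\cong\CC$ (coming from Corollary~\ref{cor:perpendicular}) forces emptiness; this is exactly what the paper does. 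Note a small slip: in this case $\tilde K_m$ is a \emph{quotient} of $\tilde P_{m,j}/U$, not a subrepresentation, but you then compensate for this and the isomorphism you arrive at is the right one.

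However, there is a genuine gap in the converse direction of part (2), the case $W'\neq 0$. After establishing the isomorphism $\Ext(\tilde P_{m+1}^I,\tilde P_{m,j}/U)\cong\Ext(\tilde P_{m+1}^I,\tilde K_m/W')$, you need to show the latter group vanishes. You assert that ``the canonical class maps into the image of $\Ext(\tilde P_{m+1}^I,W')$, which vanishes in $\Ext(\tilde P_{m+1}^I,\tilde K_m/W')$,'' but you give no reason why the canonical class should lie in that image, i.e.\ why $\Ext(\tilde P_{m+1}^I,W')\to\Ext(\tilde P_{m+1}^I,\tilde K_m)$ should be surjective. This is precisely the nontrivial point. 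The paper proves the vanishing of $\Ext(\tilde P_{m+1}^I,\tilde K_m/W')$ via Auslander--Reiten theory: the one-dimensional space $\Ext(\tilde P_{m+1}^I,\tilde K_m)$ is generated by the class of the Auslander--Reiten sequence $\ses{\tilde K_m}{Z}{\tilde P_{m+1}^I}$, and any non-split morphism $\tilde K_m\to\tilde K_m/W'$ factors through the middle term $Z$. This gives a surjection $\Hom(Z,\tilde K_m/W')\onto\Hom(\tilde K_m,\tilde K_m/W')$, so that the connecting map into $\Ext(\tilde P_{m+1}^I,\tilde K_m/W')$ vanishes; then the bijectivity of $\Hom(\tilde K_m,\tilde K_m)\to\Ext(\tilde P_{m+1}^I,\tilde K_m)$ (a dimension count using $\Ext(\tilde K_m,\tilde K_m)=0$) yields $\Ext(Z,\tilde K_m)=0$, hence $\Ext(Z,\tilde K_m/W')=0$, hence $\Ext(\tilde P_{m+1}^I,\tilde K_m/W')=0$. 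Without some version of this AR-theoretic input (or an equally substantive replacement), the forward implication of part (2) does not close, and the proposal as written is incomplete.
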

\begin{proof}
  Any subrepresentation $U\subset\tilde P_{m,j}$ produces an exact sequence
  \[\xymatrix{\Ext(\tilde P_{m+1}^I,U)\ar[r] &\Ext(\tilde P_{m+1}^I,\tilde P_{m,j})\ar[r] &\Ext(\tilde P_{m+1}^I,\tilde P_{m,j}/U)\ar[r] & 0}.\]
  But note that the fiber $\Psi^{-1}(U,\tilde P_{m+1}^I)$ being non-empty gives rise to a pushout diagram
  \[\xymatrix{0\ar[r]& U\ar[r]\ar[d]\ar@{}[dr]|(.3){\ulcorner} & U'\ar[d]\ar[r]& \tilde P_{m+1}^I\ar[r]\ar@{=}[d]& 0\\
    0\ar[r] &\tilde P_{m,j}\ar[r] &  \tilde P_{m+1}^J\ar[r] & \tilde P_{m+1}^I\ar[r] & 0}\]
  in which the bottom row is not split by Lemma~\ref{le:properties}.(4).
  This implies that the map 
  \[\Ext(\tilde P_{m+1}^I,U)\to\Ext(\tilde P_{m+1}^I,\tilde P_{m,j})\cong\CC\]
  is surjective and thus $\Ext(\tilde P_{m+1}^I,\tilde P_{m,j}/U)=0$.
  This argument can be reversed and thus (1) holds.
  
  Now consider $U\subset\tilde P_{m,j}$ with $U\in\Phi^{-1}(V,W)$.
  This gives rise to the following commutative diagram:  
  \[\xymatrix{
    \Ext(\tilde P_{m+1}^I,V)\ar[r]\ar[d] &\Ext(\tilde P_{m+1}^I,U)\ar[r]\ar[d] &\Ext(\tilde P_{m+1}^I,W)\ar[r]\ar[d] & 0\\
    \Ext\big(\tilde P_{m+1}^I,\tilde P_m(I,j)\big)\ar[r]\ar[d] &  \Ext(\tilde P_{m+1}^I,\tilde P_{m,j})\ar[r]\ar[d] & \Ext(\tilde P_{m+1}^I,\tilde K_m)\ar[r]\ar[d] & 0\\
    \Ext\big(\tilde P_{m+1}^I,\tilde P_m(I,j)/V\big)\ar[r]\ar[d] &\Ext(\tilde P_{m+1}^I,\tilde P_{m,j}/U)\ar[r]\ar[d] &\Ext(\tilde P_{m+1}^I,\tilde K_m/W)\ar[r]\ar[d] & 0 \\
    0&0&0&}\]
  By Corollary~\ref{cor:perpendicular}, we have $\Ext\big(\tilde P_{m+1}^I,\tilde P_m(I,j)\big)=0$ and so $\Ext\big(\tilde P_{m+1}^I,\tilde P_m(I,j)/V\big)=0$ as well.
  This yields the isomorphisms
  \[\Ext(\tilde P_{m+1}^I,\tilde P_{m,j})\cong\Ext(\tilde P_{m+1}^I,\tilde K_m)\quad\text{and}\quad\Ext(\tilde P_{m+1}^I,\tilde P_{m,j}/U)\cong\Ext(\tilde P_{m+1}^I,\tilde K_m/W).\]

  If $W=0$, we get an isomorphism $\Ext(\tilde P_{m+1}^I,\tilde P_{m,j}/U)\cong\Ext(\tilde P_{m+1}^I,\tilde P_{m,j})\cong\CC$ and by part (1) we must have an empty fiber $\Psi^{-1}(U,\tilde P_{m+1}^I)=\varnothing$.

  If $W\neq 0$, $\tilde K_m/W$ is a proper factor of $\tilde K_m$ and we must have $\Ext(\tilde P_{m+1}^I,\tilde K_m/W)=0$.
  Indeed, by Auslander-Reiten theory every non-split morphism $g:\tilde K_m\to\tilde K_m/W$ factors through the middle term $Z$ of the AR-sequence 
  \begin{equation}
    \label{eq:ar2}
    \ses{\tilde K_m}{Z}{\tilde P_{m+1}^I}.
  \end{equation}
  This in particular says the first map of the induced sequence
  \[\Hom(Z,\tilde K_m/W)\to\Hom(\tilde K_m,\tilde K_m/W)\to\Ext(\tilde P_{m+1}^I,\tilde K_m/W)\to\Ext(Z,\tilde K_m/W)\]
  is surjective.
  By Lemma~\ref{le:properties}.(4) and the Auslander-Reiten formulas, we have 
  \[\Ext(\tilde K_m,\tilde K_m)=0\qquad\text{and}\qquad\Ext(\tilde P_{m+1}^I,\tilde K_m)=\CC.\]
  For $m=2$, the identities above follow immediately from the corresponding statements for $\tau\tilde P_3^I$.
  Thus the injective map $\CC=\Hom(\tilde K_m,\tilde K_m)\to\Ext(\tilde P_{m+1}^I,\tilde K_m)$ induced by the Auslander-Reiten sequence \eqref{eq:ar2} is actually bijective and so applying $\Hom(-,\tilde K_m)$ to this sequence yields $\Ext(Z,\tilde K_m)=0$.
  But then $\Ext(Z,\tilde K_m/W)=0$ and so $\Ext(\tilde P_{m+1}^I,\tilde P_{m,j}/U)\cong\Ext(\tilde P_{m+1}^I,\tilde K_m/W)$ must be zero as well.
  By part (1) we see that the fiber $\Psi^{-1}(U,\tilde P_{m+1}^I)$ is non-empty in this case.
\end{proof}

Now we are able to state the following result concerning the fibers of $\Psi$:
\begin{proposition}
  \label{fibers}
  The following hold:
  \begin{enumerate}
    \item For $W\subsetneq\tilde P_{m+1}^I$ and $U\subseteq\tilde P_{m,j}$, we have $\Psi^{-1}(U,W)\cong\AA^{\langle W,\tilde P_{m,j}/U\rangle}$.
    \item If $W=\tilde P_{m+1}^I$, the fiber $\Psi^{-1}(U,W)$ is not empty if and only if $\Phi(U)\neq(U,0)$.
      In this case, we have $\Psi^{-1}(U,W)\cong\AA^{\langle W,\tilde P_{m,j}/U\rangle}$.
  \end{enumerate}
\end{proposition}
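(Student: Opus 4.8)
The first part follows from the Caldero--Chapoton fiber formula recalled above \cite[Section 3]{cc}: a non-empty fiber $\Psi^{-1}(U,W)$ is isomorphic to $\AA^{\dim\Hom(W,\tilde P_{m,j}/U)}$, and $\Psi^{-1}(U,W)$ is non-empty precisely when the class of the sequence \eqref{eq:truncated ses} in $\Ext(\tilde P_{m+1}^I,\tilde P_{m,j})$, transported to $\Ext(W,\tilde P_{m,j}/U)$ by restriction along $W\hookrightarrow\tilde P_{m+1}^I$ and corestriction along $\tilde P_{m,j}\twoheadrightarrow\tilde P_{m,j}/U$, vanishes. So for part (1) the plan is to show that the \emph{entire} group $\Ext(W,\tilde P_{m,j}/U)$ is zero whenever $W\subsetneq\tilde P_{m+1}^I$; this gives non-emptiness for free and, since $\widetilde{K(n)}$ is hereditary, lets us replace $\dim\Hom(W,\tilde P_{m,j}/U)$ by the Euler pairing $\langle W,\tilde P_{m,j}/U\rangle$.

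To prove $\Ext(W,\tilde P_{m,j}/U)=0$ I would first reduce to $\Ext(W,\tilde P_{m,j})=0$: applying $\Hom(W,-)$ to $0\to U\to\tilde P_{m,j}\to\tilde P_{m,j}/U\to0$ produces a surjection $\Ext(W,\tilde P_{m,j})\twoheadrightarrow\Ext(W,\tilde P_{m,j}/U)$ because $\Ext^2$ vanishes. If $W=0$ there is nothing to do; otherwise Lemma~\ref{le:subrep} together with Lemma~\ref{le:projective subrepresentations} (applied after pushing down via $G$) shows that $W$ is a direct sum of lifted preprojectives, each a lift of some $P_r$ with $1\le r\le m$. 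By Theorem~\ref{covering} each such $\Ext$ into $\tilde P_{m,j}$ embeds into $\Ext_{K(n)}(P_r,P_m)$, which is zero by Theorem~\ref{th:rigids}. Hence $\Ext(W,\tilde P_{m,j})=0$, and part (1) is done.

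For part (2), where $W=\tilde P_{m+1}^I$, the non-emptiness criterion is exactly Proposition~\ref{quotient}.(2): $\Psi^{-1}(U,\tilde P_{m+1}^I)$ is empty if and only if $\Phi(U)=(U,0)$, i.e.\ $U$ already lies in $\tilde P_m(I,j)$. When the fiber is non-empty, Proposition~\ref{quotient}.(1) gives $\Ext(\tilde P_{m+1}^I,\tilde P_{m,j}/U)=0$, so the Caldero--Chapoton formula again yields $\Psi^{-1}(U,W)\cong\AA^{\dim\Hom(W,\tilde P_{m,j}/U)}=\AA^{\langle W,\tilde P_{m,j}/U\rangle}$. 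In short, the substantive input -- the orthogonality of Corollary~\ref{cor:perpendicular} and the analysis of $\Phi$ -- has already been packaged into Proposition~\ref{quotient}, and the only step requiring care is the structural $\Ext$-vanishing in part (1); I expect its mild subtlety to be simply keeping track of the fact that proper subrepresentations of $\tilde P_{m+1}^I$ are assembled exclusively from lifts of $P_r$ with $r\le m$, which is precisely what forces $\Ext(W,\tilde P_{m,j})$ to vanish.
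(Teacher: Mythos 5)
Your argument is correct, and for part (1) it is a mild streamlining of the paper's argument rather than a different one in substance. The paper splits into the cases $U=0$ and $U\neq 0$: for $U\neq 0$ it asserts that $\tilde P_{m,j}/U$, being a proper quotient of an indecomposable lifted preprojective, is not preprojective, and then invokes $\Ext(W,\tilde P_{m,j}/U)=0$ since $W$ is a direct sum of preprojectives; for $U=0$ it notes that the summands of $W$ are lifts of $P_l$ with $l\le m$ and that $\Ext(P_l,P_m)=0$. You instead establish $\Ext(W,\tilde P_{m,j})=0$ once (which is precisely the paper's $U=0$ step, via Lemma~\ref{le:subrep}, Lemma~\ref{le:projective subrepresentations}, Theorem~\ref{covering} and Theorem~\ref{th:rigids}) and then propagate it to arbitrary $U$ via the surjection $\Ext(W,\tilde P_{m,j})\twoheadrightarrow\Ext(W,\tilde P_{m,j}/U)$ coming from $\Ext^2=0$. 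This avoids the case split and, in particular, sidesteps the paper's somewhat delicate side assertion that a nonzero proper quotient of a lifted preprojective is never preprojective; in exchange you rely only on the more transparent hereditary long exact sequence. The reduction of the fiber dimension to $\langle W,\tilde P_{m,j}/U\rangle$ via $\Ext$-vanishing and the treatment of part (2) by citing Proposition~\ref{quotient} match the paper exactly.
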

\begin{remark}
  Part (1) of Proposition~\ref{fibers} holds equally well when considering the analogues of the Caldero-Chapoton maps $\Psi$ for $K(n)$.
  This follows from Corollary~\ref{cor:base fibers} and Lemma~\ref{le:projective subrepresentations}.
  However, there does not seem to be a reasonable analogue of part (2) when considering Caldero-Chapoton maps for $K(n)$.
\end{remark}
\begin{proof}
  By Lemma~\ref{le:subrep}, any subrepresentation $W\subsetneq\tilde P_{m+1}^I$ is preprojective.
  But the representation $\tilde P_{m,j}/U$ is not preprojective as it is a proper quotient of a preprojective representation unless $U=0$.
  Thus we have $\Ext(W,\tilde P_{m,j}/U)=0$. 
  If $U=0$, we have $\Ext(W,\tilde P_{m,j})=0$ because for dimension reasons every indecomposable direct summand of $W$ is isomorphic to a lift of some $P_l$ with $l\leq m$ and, moreover, $\Ext(P_l,P_m)=0$ for $l\leq m+1$.

  The second statement follows directly from Proposition~\ref{quotient}.
\end{proof}

\begin{corollary}
  For all $\tbff$ and $\tbfg$, the image of $\Psi_{\tbff,\tbfg}$ is open in $\Gr_\tbff(\tilde P_{m,j})\times\Gr_\tbfg(\tilde P_{m+1}^I)$.
\end{corollary}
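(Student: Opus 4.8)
The plan is to read off the image of $\Psi_{\tbff,\tbfg}$ directly from Proposition~\ref{fibers} and then verify that the resulting locus is open by exhibiting its complement as a closed subvariety of a product of Grassmannians.

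First I would separate two cases according to whether $\tbfg=\udim\tilde P_{m+1}^I$. If $\tbfg\neq\udim\tilde P_{m+1}^I$, then every $W\in\Gr_\tbfg(\tilde P_{m+1}^I)$ is a proper subrepresentation, so by Proposition~\ref{fibers}.(1) the fiber $\Psi^{-1}(U,W)$ is non-empty (indeed an affine space) for \emph{every} pair $(U,W)$; hence $\Im(\Psi_{\tbff,\tbfg})$ is the entire product $\Gr_\tbff(\tilde P_{m,j})\times\Gr_\tbfg(\tilde P_{m+1}^I)$, which is trivially open. In the remaining case $\tbfg=\udim\tilde P_{m+1}^I$ the second factor is the single point $\{\tilde P_{m+1}^I\}$, so the product is canonically identified with $\Gr_\tbff(\tilde P_{m,j})$, and Proposition~\ref{fibers}.(2) says that a point $(U,\tilde P_{m+1}^I)$ lies in $\Im(\Psi_{\tbff,\tbfg})$ if and only if $\Phi(U)\neq(U,0)$, i.e.\ if and only if $U$ is \emph{not} a subrepresentation of $\tilde P_m(I,j)$.

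It then remains to observe that $\{U\in\Gr_\tbff(\tilde P_{m,j})\mid U\subseteq\tilde P_m(I,j)\}$ is closed. Since $\tilde P_m(I,j)$ is a subrepresentation of $\tilde P_{m,j}$, this set is precisely the image of the closed immersion $\Gr_\tbff(\tilde P_m(I,j))\into\Gr_\tbff(\tilde P_{m,j})$: at each vertex $v$ of $\widetilde{K(n)}$ the condition $U_v\subseteq(\tilde P_m(I,j))_v$ cuts out a Schubert variety inside the relevant vector-space Grassmannian, so the locus is closed in the ambient product of Grassmannians and hence in its closed subvariety $\Gr_\tbff(\tilde P_{m,j})$. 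Therefore $\Im(\Psi_{\tbff,\tbfg})$ is open. I do not expect any genuine obstacle here; the entire content is already packaged in Proposition~\ref{fibers}, and the only care required is the case distinction on whether $\tbfg$ is maximal, together with the elementary remark that ``being contained in the fixed subrepresentation $\tilde P_m(I,j)$'' is a closed condition on $\Gr_\tbff(\tilde P_{m,j})$.
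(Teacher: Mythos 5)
Your argument is correct, and it takes a genuinely different (and arguably more elementary) route than the paper. The paper handles the nontrivial case $\tbfg=\udim\tilde P_{m+1}^I$ by invoking Proposition~\ref{quotient} to identify the image as $\{U:\Ext(\tilde P_{m+1}^I,\tilde P_{m,j}/U)=0\}$ and then appealing to upper semicontinuity of $U\mapsto\dim\Ext(\tilde P_{m+1}^I,\tilde P_{m,j}/U)$ to conclude openness. You instead use the $\Phi$-criterion from Proposition~\ref{fibers}.(2) and note that $\Phi(U)=(U,0)$ is equivalent to $U\subseteq\tilde P_m(I,j)$, reducing to the elementary observation that ``contained in a fixed subrepresentation'' is a closed (Schubert-type) condition vertex by vertex. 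What you buy is self-containedness: no semicontinuity of $\Ext$ is needed, only basic Grassmannian geometry. What the paper's approach buys is uniformity: the $\Ext$-vanishing characterization and its semicontinuity apply without having to unwind $\Phi$ into an explicit containment condition, and this phrasing dovetails with how the fibers are analyzed elsewhere in the section. Both proofs correctly isolate the case $\tbfg=\udim\tilde P_{m+1}^I$ as the only nontrivial one (since by Proposition~\ref{fibers}.(1) the map is surjective otherwise), so your case split matches the paper's.
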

\begin{proof}
  By Proposition~\ref{fibers}, the map $\Psi_{\tbff,\tbfg}$ is surjective for $\tbfg\ne\udim\tilde P_{m+1}^I$ and there is nothing to show in this case.
  Assume $\tbfg=\udim\tilde P_{m+1}^I$.
  By Proposition~\ref{quotient}, the image of $\Psi_{\tbff,\tbfg}$ consist precisely of those pairs $(U,\tilde P_{m+1}^I)$ for which $\Ext(\tilde P_{m+1}^I,\tilde P_{m,j}/U)=0$.
  But the map $U\mapsto\dim\Ext(\tilde P_{m+1}^I,\tilde P_{m,j}/U)$ is upper semicontinuous so that its minimal value on $\Gr_\tbff(\tilde P_{m,j})$ is its generic value, i.e.\ the image of $\Psi_{\tbff,\tbfg}$ is open.
\end{proof}

\begin{theorem}
  \label{cellscover}
  For $m\geq 1$ and $I\subsetneq\{1,\ldots,n\}$, every quiver Grassmannian $\Gr^{\tilde Q}_\tbfe(\tilde P_{m+1}^I)$ admits a cell decomposition.
\end{theorem}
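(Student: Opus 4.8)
The plan is to argue by a double induction: an outer induction on $m$ and, for fixed $m$, a downward induction on $|I|$ (so the base case of the inner induction is $|I|=n-1$). For $m=1$, and more generally for $m=2$, the support quivers of the $\tilde P_{m+1}^I$ are thin star-shaped trees (Examples~\ref{ex:lifted preprojectives} and~\ref{ex:truncated lifts}), so every $\Gr^{\tilde Q}_\tbfe(\tilde P_{m+1}^I)$ is a finite set of reduced points and a cell decomposition is immediate. When $|I|=n-1$, say $I^c=\{j\}$, Lemma~\ref{le:truncated quotients}.(1) identifies $\tilde P_{m+1}^I$ with $\tilde P_{m,j}/\tilde P_{m-1,(j,j)}$, a level-$m$ truncated preprojective; since a translate $\tilde P_{m,j}$ differs from $\tilde P_m$ by a quiver automorphism of $\widetilde{K(n)}$, the outer hypothesis applies. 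For $|I|<n-1$ I would fix $j\notin I$, put $I'=I\cup\{j\}$, and use the short exact sequence \eqref{eq:truncated ses}, namely $\ses{\tilde P_{m,j}}{\tilde P_{m+1}^I}{\tilde P_{m+1}^{I'}}$, together with the induced Caldero--Chapoton map $\Psi$ of \eqref{eq:truncated CC}. By Lemma~\ref{le:good partition} and Remark~\ref{rem:cell decompositions} it then suffices to equip each stratum $\cG_{\tbff,\tbfg}$ with a cell decomposition, and this I would obtain from Theorem~\ref{vb}.

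By Proposition~\ref{fibers} every nonempty fiber of $\Psi_{\tbff,\tbfg}$ is an affine space of dimension $\langle\tbfg,\udim\tilde P_{m,j}-\tbff\rangle$, which is constant in $(\tbff,\tbfg)$; so all that remains for each stratum is a cell decomposition of $\Im(\Psi_{\tbff,\tbfg})$. If $\tbfg\ne\udim\tilde P_{m+1}^{I'}$, then $\Psi_{\tbff,\tbfg}$ is surjective onto the product $\Gr_\tbff(\tilde P_{m,j})\times\Gr_\tbfg(\tilde P_{m+1}^{I'})$, which carries a cell decomposition by the outer hypothesis (applied to $\tilde P_{m,j}$) and the inner hypothesis (applied to $\tilde P_{m+1}^{I'}$), so $\cG_{\tbff,\tbfg}$ is an affine bundle over a cell-decomposed base. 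If $\tbfg=\udim\tilde P_{m+1}^{I'}$, then combining Proposition~\ref{fibers}.(2) and Proposition~\ref{quotient}.(2) shows that $\Im(\Psi_{\tbff,\tbfg})$ is the complement of $\Gr_\tbff\big(\tilde P_m(I',j)\big)$ in $\Gr_\tbff(\tilde P_{m,j})$, which is open by upper semicontinuity of $U\mapsto\dim\Ext(\tilde P_{m+1}^{I'},\tilde P_{m,j}/U)$. So in this case it is enough to produce a cell decomposition of $\Gr_\tbff(\tilde P_{m,j})$ in which $\Gr_\tbff\big(\tilde P_m(I',j)\big)$ is a union of cells. To this end I would run the same machinery once more on the sequence \eqref{eq:tau sequence}, $\ses{\tilde P_m(I',j)}{\tilde P_{m,j}}{\tilde K_m}$, where $\tilde K_m=\tau\tilde P_{m+1}^{I'}=\tilde P_{m-1}^{I'}$ (Lemma~\ref{le:truncated tau}). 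Since $\tilde P_m(I',j)$ is a direct sum of preprojective lifts of level $<m$ (Lemma~\ref{le:special subrepresentations}) and every proper subrepresentation of $\tilde P_{m-1}^{I'}$ is such a direct sum as well (Lemma~\ref{le:subrep}), the vanishing $\Ext(P_r,P_s)=0$ for $r\le s$ (Theorem~\ref{th:rigids}) together with Theorem~\ref{covering} forces $\Ext(W,\tilde P_m(I',j))=0$ for all subrepresentations $W\subsetneq\tilde K_m$; hence this $\Psi$ is a surjective affine bundle over the product $\Gr_\tbfs(\tilde P_m(I',j))\times\Gr_\tbft(\tilde P_{m-1}^{I'})$. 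Both factors have cell decompositions -- the second by the outer hypothesis, the first by peeling off the summands of $\tilde P_m(I',j)$ one at a time via Caldero--Chapoton (again using $\Ext(P_r,P_s)=0$ for $r\le s$ to make each step an affine bundle over a product) -- so every stratum of this decomposition of $\Gr_\tbff(\tilde P_{m,j})$ is an affine bundle over a cell-decomposed product; in particular $\Gr_\tbff\big(\tilde P_m(I',j)\big)$ occurs as the stratum with vanishing second coordinate and is therefore a union of cells. (For $m=2$ the representation $\tilde P_{2,j}$ is thin and this step is trivial.)

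The main obstacle, and the place where some care is needed, is exactly this last coherence point: one must arrange the images $\Im(\Psi_{\tbff,\tbfg})$ and the sub-Grassmannians that get cut out of $\Gr_\tbff(\tilde P_{m,j})$ to be unions of cells with respect to a \emph{single} cell decomposition chosen once and for all, which in practice means strengthening the inductive statement to carry this compatibility along. The homological fact that makes everything close up is the perpendicularity $\tilde P_m(I,j)\in(\tilde P_{m+1}^I)^\perp$ of Corollary~\ref{cor:perpendicular}: it is what forces the relevant $\Ext$-groups to vanish, and -- as emphasized in the text -- it has no analogue over $K(n)$ itself, which is precisely why the argument is carried out on the universal cover. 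Granting the compatibility, Theorem~\ref{vb} equips every stratum $\cG_{\tbff,\tbfg}$ with a cell decomposition, and Lemma~\ref{le:good partition} with Remark~\ref{rem:cell decompositions} assembles them into a cell decomposition of $\Gr^{\tilde Q}_\tbfe(\tilde P_{m+1}^I)$.
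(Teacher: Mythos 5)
Your overall strategy is the same as the paper's: a double induction on $m$ and downward on $|I|$, the base case via Lemma~\ref{le:truncated quotients}.(1), the Caldero--Chapoton map from the sequence \eqref{eq:truncated ses}, Proposition~\ref{fibers} to control the fibers, and Corollary~\ref{cor:perpendicular} as the engine that makes everything close up on the cover. You also correctly isolate the only genuinely delicate stratum, namely $\tbfg = \udim\tilde P_{m+1}^{I'}$, and correctly observe that what one needs there is for $\Gr_\tbff\big(\tilde P_m(I',j)\big)$ to appear as a union of cells in a \emph{fixed} cell decomposition of $\Gr_\tbff(\tilde P_{m,j})$.

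The gap is exactly at the point you flag but then leave open. Your proposal is to manufacture the required cell decomposition of $\Gr_\tbff(\tilde P_{m,j})$ from the second map $\Phi$ attached to the sequence \eqref{eq:tau sequence}. Two problems. First, you prove $\Ext(W,\tilde P_m(I',j))=0$ only for \emph{proper} $W\subsetneq\tilde K_m$; for $\tbft=\udim\tilde K_m$ (i.e.\ $W=\tilde K_m$) the defining extension of $\tilde P_{m,j}$ is nonsplit, so the obstruction class in $\Ext(\tilde K_m,\tilde P_m(I',j))$ is nonzero and the corresponding $\Phi$-stratum $\cG_{\tbfs,\udim\tilde K_m}$ is \emph{not} automatically a surjective affine bundle over the whole base; your assertion that ``every stratum of this decomposition of $\Gr_\tbff(\tilde P_{m,j})$ is an affine bundle over a cell-decomposed product'' is therefore not justified, and without that you do not obtain a cell decomposition of $\Gr_\tbff(\tilde P_{m,j})$ at all. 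Second, even granting a cell decomposition built this way, it is tailored to one choice of $I'$ and there is no reason it should agree with the decomposition coming from the outer inductive hypothesis on level $m$ or with the decompositions needed one more step down; this is precisely the coherence problem you name in your last paragraph. The paper resolves both difficulties simultaneously by not constructing a new decomposition at all: instead it strengthens the inductive statement with a uniform compatibility condition $(\dagger)$ indexed by \emph{admissible sequences} $\bfI$ and the associated quotient maps $\pi_{m+1}^{J,\bfI}$ of Lemma~\ref{le:truncated quotients}. Because $U\subset\tilde P_m(I,j)$ is equivalent to $\pi_{m,j}^{\bfJ}(U)=0$ for the relevant sequence $\bfJ$, $(\dagger)$ at level $m$ says exactly that ``the fiber is empty'' is a cell-wise constant condition, which is what feeds the $|J|$-induction at level $m+1$ and is also what carries $(\dagger)$ forward to level $m+1$. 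So the missing ingredient in your write-up is the precise bookkeeping device (admissible sequences plus $(\dagger)$) that turns your informal ``carry this compatibility along'' into an inductive invariant that actually closes.
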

\begin{proof}
  We work by induction on $m$.
  When $m=1$, the claim is trivial since in this case all quiver Grassmannians are points.
  We establish the result for $m\ge2$ by proving the following more general statement:\bigskip

  \noindent{\bf Claim.}
  For any admissible sequence $\bfI=(I_0,\ldots,I_k)$ with $k<m$ and any subset $J\subset I_0$, the quiver Grassmannian $\Gr^{\tilde Q}_\tbfe(\tilde P_{m+1}^J)$ admits a cell decomposition which is compatible with the unique map $\pi_{m+1}^{J,\bfI}:\tilde P_{m+1}^J\to\tilde P_{m+1}^\bfI$ from Lemma~\ref{le:truncated quotients} in the following sense:
  \begin{itemize}
    \item[($\dagger$)] For any $V\in\Gr^{\tilde Q}_\tbfe(\tilde P_{m+1}^J)$ such that $\pi_{m+1}^{J,\bfI}(V)\neq 0$, we have $\pi_{m+1}^{J,\bfI}(V')\neq 0$ for all $V'\in C_V$, where $C_V$ is the affine cell which contains $V$.
  \end{itemize}
  \vs

  In what follows, we will freely use the notation from Lemma~\ref{le:truncated quotients}.
  We proceed by simultaneous induction on $m$ and reverse induction on $|J|$.
  Fix an admissible sequence $\bfI=(I_0,\ldots,I_k)$ with $k<m$ and $J\subset I_0$.

  To begin, we assume $|J|=n-1$ and thus $J=I_0$.
  This gives $\tilde P_{m+1}^J\cong\tilde P_{m,i_0}^{J_0}$ and so by induction on $m$ the quiver Grassmannian $\Gr^{\tilde Q}_\tbfe(\tilde P_{m+1}^J)=\Gr^{\tilde Q}_\tbfe(\tilde P_{m,i_0}^{J_0})$ admits a cell decomposition so that the compatibility condition ($\dagger$) holds for the unique map $\pi_{m,i_0}^{J_0,\delta\bfI}:\tilde P_{m,i_0}^{J_0}\to\tilde P_{m,i_0}^{\delta\bfI}$.
  But $\tilde P_{m,i_0}^{\delta\bfI}=\tilde P_{m+1}^\bfI$ so that $\pi_{m,i_0}^{J_0,\delta\bfI}$ coincides with the map $\pi_{m+1}^{J,\bfI}$ and thus the condition ($\dagger$) holds for the cell decomposition of $\Gr^{\tilde Q}_\tbfe(\tilde P_{m+1}^J)$.

  Now suppose $|J|<n-1$.
  If $J=I_0$, then we must have $k=0$ and $\tilde P_{m+1}^J=\tilde P_{m+1}^\bfI$ so that the compatibility condition ($\dagger$) with the map $\pi_{m+1}^{J,\bfI}$ is vacuous and any cell decomposition will suffice.
  Thus we may assume $J\subsetneq I_0$.
  
  Choose any subset $I\subset I_0$ with $J\subset I$ and $|I\setminus J|=1$, say $I\setminus J=\{j\}$.
  This gives the short exact sequence \eqref{eq:truncated ses} inducing the maps $\Psi$ and $\Phi$ between quiver Grassmannians from \eqref{eq:truncated CC} and \eqref{eq:truncated CC2}.
  Then by induction on $|I|$, each quiver Grassmannian $\Gr^{\tilde Q}_\tbfg(\tilde P_{m+1}^I)$ has a cell decomposition which is compatible with $\pi_{m+1}^{I,\bfI}$, say 
  \[\Gr^{\tilde Q}_\tbfg(\tilde P_{m+1}^I)=\coprod_{k=1}^r C_k.\]

  When $m=2$, each quiver Grassmannian $\Gr^{\tilde Q}_\tbff(\tilde P_{m,j})$ is just a point.
  For $m\ge3$, each quiver Grassmannian $\Gr^{\tilde Q}_\tbff(\tilde P_{m,j})$ admits a cell decomposition which is compatible with the map $\pi_{m,j}^\bfJ:\tilde P_{m,j}\to\tilde P_{m-1,(j,j)}^I$, where $\bfJ=(\{1,\ldots,\widehat j,\ldots,n\},I)$, say
  \[\Gr^{\tilde Q}_\tbff(\tilde P_{m,j})=\coprod_{\ell=1}^s B_\ell.\] 
 For $m=2$, we write $\Gr^{\tilde Q}_\tbff(\tilde P_{m,j})=B_1$. In view of Remark \ref{rem:cell decompositions}, we need to show that the image of each $\Psi_{\tbff,\tbfg}$ is compatible with these cell decompositions in order to establish a cell decomposition of each $\mathcal G_{\tbff,\tbfg}$ which then gives a cell decomposition of $\Gr_{\tbfe}^{\tilde Q}(\tilde P_{m+1}^J)$.  

  Proposition~\ref{fibers} shows that the fiber of $\Psi_{\tbff,\tbfg}$ over $(U,V)\in B_\ell\times C_k$ is empty exactly when $\tbfg=\udim\tilde P_{m+1}^I$ and one of the following conditions is satisfied
  \begin{itemize}
    \item $m=2$ with $\tbff_{(2,\alpha_j^{-1})}\ne0$ or $\tbff_{(1,e)}\ne0$;
    \item $m\ge3$, with $\pi_{m,j}^\bfJ(U)=0$.
  \end{itemize}
By induction, the compatibility condition ($\dagger$) is true for $\pi_{m,j}^\bfJ$  which shows that either all or none of the fibers over $B_l\times C_k$ are empty. This shows the compatibility with the image. If the fiber is not empty, Theorem \ref{vb} gives that we obtain an affine cell in $\Gr^{\tilde Q}_\tbfe(\tilde P_{m+1}^J)$  of the form $B_\ell\times C_k\times\AA^d$ with~$d=\langle\tbfg,\udim\tilde P_{m,j}-\tbff\rangle$. Altogether this establishes a cell decomposition of $\Gr_{\tbfe}^{\tilde Q}(\tilde P_{m+1}^J)$.

  For $V\in\Gr^{\tilde Q}_\tbfe(\tilde P_{m+1}^J)$ which is contained in such a cell, we have $\pi_{m+1}^{J,I}(V)\in C_k$.
  But then $\pi_{m+1}^{J,\bfI}(V)\ne0$ if and only if $\pi_{m+1}^{I,\bfI}(W)\ne0$ for all $W\in C_k$ and thus $\pi_{m+1}^{J,\bfI}(V')\ne0$ for all $V'$ in this cell $B_\ell\times C_k\times\AA^d$ of $\Gr^{\tilde Q}_\tbfe(\tilde P_{m+1}^J)$. This shows ($\dagger$) for $\pi_{m+1}^{J,\bfI}$.
\end{proof}

\begin{theorem}
  \label{celldec}
  The following hold:
  \begin{enumerate}
    \item Every quiver Grassmannian of any indecomposable preprojective or preinjective representation of $K(n)$ and $\widetilde{K(n)}$ has a cell decomposition.
    \item Let $X\in\rep K(n)$ be an indecomposable representation with dimension vector $(d,e)$ or $(e,d)$, where $(d,e)=\udim P_{m+1}-r\udim P_m$ for $m\geq 1$ and $\leq 0\leq r\leq n-1$.
      Then every quiver Grassmannian $\Gr_\bfe(X)$ has a cell decomposition.
  \end{enumerate}
\end{theorem}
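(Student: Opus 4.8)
The statement is a compilation: it bundles the geometric content of Theorems~\ref{cellscover} and~\ref{thm:truncpp} with the torus/Bia\l{}ynicki-Birula transfer of Section~\ref{torusaction}, so the plan is mostly to organize the reductions. I would first isolate the following \emph{transfer principle}: if $Y\in\rep K(n)$ has $\Gr_\bfe(Y)$ smooth, can be lifted to $\widetilde{K(n)}$, and every $\Gr^{\widetilde{K(n)}}_{\tbfe}(\tilde Y)$ admits a cell decomposition, then $\Gr_\bfe(Y)$ admits a cell decomposition. This is obtained by iterating the $\CC^*$-action of Theorem~\ref{thm:torusfixedpoints} along the tower of abelian covers up to a level on which the support of the lift of $Y$ is a tree, hence a subquiver of $\widetilde{K(n)}$; at each stage the relevant quiver Grassmannian is the $\CC^*$-fixed locus inside the smooth quiver Grassmannian of the preceding stage, so it is again smooth, and Theorem~\ref{thm:bb} (equivalently Corollary~\ref{fpoly}(1)) then lets cell decompositions propagate back down from $\widetilde{K(n)}$ to $\Gr_\bfe(Y)$.

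Next I would prove Part~(2). Suppose $\udim X=d(m,r)$ with $0\le r\le n-1$. By Proposition~\ref{pro:indecomposables}(2), $X\cong P_{m+1}^V$ for some $V\in\Gr(\cH_m)$ with $\dim V=r$ (read as $X\cong P_{m+1}$ when $r=0$). Fix any $I\subsetneq\{1,\dots,n\}$ with $|I|=r$; by Lemma~\ref{le:homdecomposition}(1),(4) the representation $G(\tilde P_{m+1}^I)$ is a truncated preprojective $P_{m+1}^W$ with $\dim W=|I|=r$, so $P_{m+1}^W$ is lifted by the exceptional representation $\tilde P_{m+1}^I$ of $\widetilde{K(n)}$. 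By Theorem~\ref{thm:truncpp} the quiver Grassmannians $\Gr_\bfe(P_{m+1}^V)$ and $\Gr_\bfe(P_{m+1}^W)$ are both smooth and isomorphic (for $r=0$ smoothness is immediate from \cite{cr}), while by Theorem~\ref{cellscover} every $\Gr^{\widetilde{K(n)}}_{\tbfe}(\tilde P_{m+1}^I)$ has a cell decomposition; the transfer principle applied to $Y=P_{m+1}^W$ therefore yields a cell decomposition of $\Gr_\bfe(P_{m+1}^W)\cong\Gr_\bfe(X)$. If instead $\udim X=(e,d)$ with $(d,e)=d(m,r)$, I would apply the contravariant duality $\Hom_\CC(-,\CC)$ followed by the relabeling isomorphism $K(n)^{op}\cong K(n)$: this turns $X$ into an indecomposable $X'\in\rep K(n)$ with $\udim X'=d(m,r)$ and identifies $\Gr_\bfe(X)$ with $\Gr_{\bfe'}(X')$ for a suitable $\bfe'$, reducing to the case already treated.

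For Part~(1) over $K(n)$, every indecomposable preprojective is $P_m$; for $m\ge2$ one has $\udim P_m=d(m-1,0)$, so the preprojective assertion is the $r=0$ case of Part~(2), and likewise the preinjective assertion is the $(e,d)$-case, while $P_1=S_1$ and $I_1=S_2$ have quiver Grassmannians that are points or empty and so are trivially cell-decomposed. Over $\widetilde{K(n)}$, every indecomposable projective is a $W_n$-translate of $\tilde P_1=S_{(1,e)}$ (projectives at sinks) or of $\tilde P_2=P_{(2,e)}$ (projectives at sources); applying $\tau^{-1}$, the recursions~\eqref{eq:recursive covers}, and the fact that $\tau^{-1}$ commutes with the $W_n$-action, one sees that every indecomposable preprojective representation of $\widetilde{K(n)}$ is a translate $\tilde P_{m,w}$ of a standard lift. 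Since $W_n$ acts on $\widetilde{K(n)}$ by quiver automorphisms, $\Gr^{\widetilde{K(n)}}_\tbfe(\tilde P_{m,w})\cong\Gr^{\widetilde{K(n)}}_{\tbfe'}(\tilde P_m)$, which has a cell decomposition by Theorem~\ref{cellscover} (or is a point when $m=1$); the preinjective case for $\widetilde{K(n)}$ again follows from the duality, using $\widetilde{K(n)}^{op}\cong\widetilde{K(n)}$ and that duality sends preinjectives to preprojectives.

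The main difficulty here is organizational rather than conceptual: the one genuinely non-formal point is that an arbitrary truncated preprojective $P_{m+1}^V$ need not coincide with a standard lift $G(\tilde P_{m+1}^I)$, so Theorem~\ref{cellscover} cannot be invoked directly; the detour through the isomorphism $\Gr_\bfe(P_{m+1}^V)\cong\Gr_\bfe(P_{m+1}^W)$ of Theorem~\ref{thm:truncpp}, which ultimately relies on the $\GL_n(\CC)$-transitivity of Proposition~\ref{indecomposables}, is what makes the argument work. One must also verify that smoothness is preserved along the iterated torus action so that Theorem~\ref{thm:bb} applies at each stage, which is precisely why Theorem~\ref{thm:truncpp} (rather than only \cite{cr}) is needed, since the truncated preprojectives with $r\ge1$ fail to be rigid.
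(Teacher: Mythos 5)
Your proof is correct and follows essentially the same route as the paper: iterated $\CC^*$-actions from Section~\ref{torusaction}, the Bia\l{}ynicki-Birula decomposition of Theorem~\ref{thm:bb}, Theorem~\ref{cellscover} for the lifted representations, and Theorem~\ref{thm:truncpp} together with Proposition~\ref{pro:indecomposables} for smoothness and the identification $X\cong P_{m+1}^V$, with preinjectives handled by duality exactly as the paper uses the isomorphism $\Gr_\bfe(P_m)\cong\Gr_{\udim P_m-\bfe}(I_m)$. The one place you genuinely streamline the argument is the smoothness propagation: the paper justifies smoothness of the quiver Grassmannians over the intermediate iterated abelian covers $\widehat{K(n)}^{(k)}$ by arguing (via reflection recursions as in \eqref{eq:recursive covers}) that the intermediate lifts are exceptional, whereas you observe that the $\CC^*$-fixed locus of a smooth projective variety is automatically smooth and thereby inherit smoothness down the tower from $\Gr_\bfe(P_{m+1}^V)$ without needing exceptionality of the intermediate lifts at all.
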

\begin{proof}Both claims follow by considering iterated torus actions taking into account that all quiver Grassmannians under consideration are smooth. For the truncated preprojective representations of $K(n)$ this is Theorem~\ref{thm:truncpp}. For the truncated preprojective representations lifted to the iterated universal abelian covering quivers $\widehat{K(n)}^{(k)}$ for $k\geq 1$, it follows inductively - when applying reflection recursions similar to those stated in (\ref{eq:recursive covers}) -  that these lifts are exceptional representations of $\widehat{K(n)}^{(k)}$, which means that their quiver Grassmannians are again smooth. Note that this is indeed clear for $m= 2$.

  The first part now follows by combining the results of Section~\ref{torusaction} and Theorems~\ref{thm:bb} and~\ref{cellscover}, taking into account that every preprojective representation of $\widetilde{ K(n)}$ is a lift of a preprojective representation (c.f.\ Lemma~\ref{le:lifts of transjectives}).
  The dual version for preinjective representations follows immediately since $\Gr_{\bfe}(P_m)\cong \Gr_{\udim P_m-\bfe}(I_m)$ and $\Gr_{\tbfe}(\tilde P_m)\cong\Gr_{\udim \tilde P_m-\tbfe}(\tilde I_m)$.

  The second part follows in the same way taking into account the initial remark.
\end{proof}

\begin{corollary}
  Let $X\in\rep K(n)$ be a direct sum of exceptional representations.
  Then every quiver Grassmannian $\Gr_{\bfe}(X)$ has a cell decomposition.
  In particular, this is true for all rigid representations of $K(n)$.
\end{corollary}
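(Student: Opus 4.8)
The plan is to reduce the claim about direct sums of exceptional representations to the case of a single exceptional representation, which is already covered by Theorem~\ref{celldec}, together with the fact (recalled in Section~\ref{sec:bb}) that cell decompositions of varieties $X$ and $Y$ give a cell decomposition of the product $X\times Y$. So the heart of the matter is the following observation: a quiver Grassmannian of a direct sum decomposes, in a suitable stratified sense, into products of quiver Grassmannians of the summands. Concretely, if $X=X'\oplus X''$, the Caldero-Chapoton map $\Psi:\Gr_\bfe(X)\to\bigsqcup_{\bff+\bfg=\bfe}\Gr_\bff(X')\times\Gr_\bfg(X'')$ attached to the split short exact sequence $\ses{X'}{X}{X''}$ has fibers $\Psi^{-1}(U,W)\cong\AA^{\dim\Hom(W,X'/U)}$, and since the sequence splits one checks that $\Psi_{\bff,\bfg}$ is surjective with constant fiber dimension over each stratum where $\dim\Hom(W,X'/U)$ is constant. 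First I would make this precise.

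**First I would** set up the induction on the number of exceptional summands. The base case is a single exceptional representation $X$: by Theorem~\ref{celldec}(2) (or directly, since exceptional representations of $K(n)$ have dimension vector $\udim P_{m+1}-r\udim P_m$ or its reflection — this is exactly Theorem~\ref{th:rigids} combined with the classification of exceptional representations, all of which are preprojective or preinjective up to the reflection symmetry), every $\Gr_\bfe(X)$ has a cell decomposition. For the inductive step, write $X=X'\oplus X''$ with $X''$ exceptional and $X'$ a direct sum of fewer exceptional representations, so $\Gr_\bff(X')$ has a cell decomposition for every $\bff$ by the inductive hypothesis, and $\Gr_\bfg(X'')$ has one for every $\bfg$ by the base case. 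Fix a cell decomposition of each product $\Gr_\bff(X')\times\Gr_\bfg(X'')$ by ordering products of cells lexicographically. Then I would invoke the machinery of Section~\ref{sec:fibrations}: by Lemma~\ref{le:good partition} it suffices to show each $\cG_{\bff,\bfg}=\Psi^{-1}\big(\Gr_\bff(X')\times\Gr_\bfg(X'')\big)$ has a cell decomposition, and by Theorem~\ref{vb} this follows once $\Im(\Psi_{\bff,\bfg})$ is locally closed, compatible with the chosen cell decomposition of $\Gr_\bff(X')\times\Gr_\bfg(X'')$, and $\Psi_{\bff,\bfg}$ has constant fiber dimension over it.

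**The key point** is that for a split sequence $\Psi$ is \emph{surjective}: given any $(U,W)\in\Gr_\bff(X')\times\Gr_\bfg(X'')$, the subrepresentation $U\oplus W\subset X'\oplus X''=X$ (using the splitting $X''\hookrightarrow X$) maps to $(U,W)$ under $\Psi$, so $\Im(\Psi_{\bff,\bfg})=\Gr_\bff(X')\times\Gr_\bfg(X'')$ is closed, hence trivially compatible with its own cell decomposition. It remains only to arrange constant fiber dimension. The fiber over $(U,W)$ has dimension $\dim\Hom\big(W,X'/U\big)$, which is upper semicontinuous in $(U,W)$; so I would refine the cell decomposition of $\Gr_\bff(X')\times\Gr_\bfg(X'')$ by intersecting its cells with the (locally closed) loci where $\dim\Hom(W,X'/U)$ takes a fixed value — this is still a cell decomposition because each such locus is a union of cells after passing to a fine enough common refinement, or more simply because on an affine space the locus where an upper semicontinuous constructible function is constant need not be a cell, so instead I would argue at the level of the cells $C$ of $\Gr_\bff(X')\times\Gr_\bfg(X'')$: on each irreducible affine cell the generic value of $\dim\Hom(W,X'/U)$ is its minimum, and one stratifies $C$ by this value; but since $C$ is an affine space and the function is constructible, each stratum is locally closed in $C$, and we only need Theorem~\ref{vb} to produce affine bundles over these strata. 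The cleanest route, which I would adopt, is to note that over each individual cell $C$ one may apply Theorem~\ref{vb} after further stratifying $C$ into the constant-fiber-dimension loci and checking (again by upper semicontinuity and the affineness of $C$) that these form a valid cell decomposition of $C$ — this is the one routine point I expect to need a careful word.

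**The main obstacle** I anticipate is precisely this last bookkeeping: ensuring the constant-fiber-dimension stratification of each cell $C$ of the product is itself a cell decomposition compatible with Theorem~\ref{vb}. In fact this can be sidestepped entirely: since the sequence $\ses{X'}{X}{X''}$ splits, $\Psi$ itself is globally a trivial fibration over each product stratum in a strong sense — the map $X'/U\mapsto$ extensions is zero because $\Ext$ of a split thing, so $\Psi^{-1}(U,W)$ is an affine space bundle that trivializes globally, and one gets $\Gr_\bfe(X)\cong\bigsqcup_{\bff+\bfg=\bfe}$ (total space of a vector bundle over $\Gr_\bff(X')\times\Gr_\bfg(X'')$), which immediately inherits a cell decomposition. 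Either way, the conclusion for rigid representations is then immediate from Theorem~\ref{th:rigids}, which states that every rigid representation of $K(n)$ is of the form $P_m^{a_1}\oplus P_{m+1}^{a_2}$ or $I_m^{a_1}\oplus I_{m+1}^{a_2}$, hence a direct sum of exceptional representations.
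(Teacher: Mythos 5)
The overall strategy you adopt---peeling off exceptional summands one at a time, inducting via the Caldero--Chapoton map, and feeding the result into Lemma~\ref{le:good partition} and Theorem~\ref{vb}---is indeed the paper's strategy. But there is a genuine gap in how you justify the hypothesis of Theorem~\ref{vb}: namely, constancy of the fiber dimension $\dim\Hom(W,X'/U)$. You first notice (correctly) that stratifying each affine cell $C$ of $\Gr_\bff(X')\times\Gr_\bfg(X'')$ by the value of this upper-semicontinuous function need not produce affine cells, and then try to sidestep this by asserting that ``$\Ext$ of a split thing'' is zero, so the fiber dimension is automatically constant. This is false: splitness of $\ses{X'}{X}{X''}$ says the extension class in $\Ext(X'',X')$ vanishes; it says nothing about $\Ext(W,X'/U)$, where $X'/U$ is a \emph{quotient} of the subobject $X'$. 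Concretely, for $X=S_1\oplus P_3$ with $X'=S_1$, $X''=P_3$, $U=0$ and $W\subset P_3$ with $\udim W=(2,1)$, the fiber dimension $\dim\Hom(W,S_1)$ equals the number of $P_1$-summands of $W$, which jumps between $0$ (for $W$ indecomposable) and $1$ (for $W\cong P_1\oplus P_2^{V'}$). So the fibration is not automatically an affine bundle over $\Gr_\bff(X')\times\Gr_\bfg(X'')$.

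What the paper actually proves is that $\Ext(W,X'/U)=0$ for every $W$ and $U$ occurring in the fibration, so that $\dim\Hom(W,X'/U)=\langle\udim W,\udim X'-\udim U\rangle$ depends only on $\bff$ and $\bfg$ and is genuinely constant. This requires two ingredients you omit. First, the summands must be peeled off in the right order: the paper writes $X=\bigoplus_{i=1}^r P_{j_i}\oplus\bigoplus_{i=1}^s I_{k_i}$ with $j_i\leq j_{i+1}$ and uses the sequence $\ses{P_{j_{r'+1}}}{P(r'+1)}{P(r')}$, so the \emph{largest} preprojective sits as the subterm while the quotient $P(r')$ contains only the smaller ones. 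Second, one must use the structure of preprojectives: any $W\subsetneq P(r')$ decomposes into summands $P_l$ with $l\le j_{r'}\le j_{r'+1}$, and $P_{j_{r'+1}}/U$ has no preprojective direct summand at all if $U\ne 0$ (because $\Hom(P_{j_{r'+1}},P_s)=0$ for $s<j_{r'+1}$). Together with the Auslander--Reiten formula this forces $\Ext(W,P_{j_{r'+1}}/U)=0$. The preinjective part is handled dually, and the two are glued via $\ses{I(s)}{X}{P(r)}$, where the key point is that quotients of $I(s)$ are preinjective while subrepresentations of $P(r)$ are preprojective, and preprojectives have no self-extensions with preinjectives. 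In your example $X=S_1\oplus P_3$ above, the correct ordering gives $\ses{P_3}{X}{P_1}$ instead, and then $W\subset P_1$ is $0$ or $P_1$, for which $\Ext(W,P_3/U)=0$ trivially. Note also that your base case is a single exceptional representation, for which you cite Theorem~\ref{celldec}(2), but exceptional representations of $K(n)$ are exactly the $P_m$ and $I_m$, so Theorem~\ref{celldec}(1) is the relevant statement; the dimension vectors $d(m,r)$ with $1\le r\le n-1$ belong to the \emph{truncated} preprojectives, which are not rigid.
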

\begin{proof}
  As every exceptional representation of $K(n)$ is either preprojective or preinjective, we have
  \[X=\bigoplus_{i=1}^rP_{j_i}\oplus\bigoplus_{i=1}^sI_{k_i},\]
  where we assume that $j_i\leq j_{i+1}$ and write
  \[P(r'):=\bigoplus_{i=1}^{r'} P_{j_i},\quad I(s'):=\bigoplus_{i=1}^{s'}I_{k_i}\]
  for $r'\leq r$ and $s'\leq s$. 

  By Theorem~\ref{celldec}, the claim is true for all quiver Grassmannians attached to $P_{j_i}$ or $I_{k_i}$.
  Consider the short exact sequence
  \[\ses{P_{j_{r'+1}}}{P(r'+1)}{P(r')}.\]
  By induction, we can assume that all quiver Grassmannians attached to the two outer terms have a cell decomposition.
  Consider the Caldero-Chapoton map
  \[\Psi_\bfe:\Gr_\bfe\big(P(r'+1)\big)\to\bigsqcup_{\bff+\bfg=\bfe} \Gr_\bff(P_{j_{r'+1}})\times \Gr_\bfg\big(P(r')\big).\]
  The results of \cite[Section 3]{cc} show that $\Psi_{\tbfe}^{-1}(U,W)\cong\AA^{\dim\Hom(W,P_{j_{r'+1}}/U)}$ for all $(U,W)\in \Gr_\bff(P_{j_{r'+1}})\times \Gr_\bfg\big(P(r')\big)$, in particular the fiber is never empty.
  Now every subrepresentation $W$ of $P(r')$ is isomorphic to a direct sum of preprojective representations such that for each direct summand $P_l$ we have $l\leq j_{r'}$.
  Moreover, the quotient $P_{j_{r'+1}}/U$ is not projective if $U\neq 0$ and equal to $P_{j_{r'+1}}$ otherwise.
  Together these yield $\Ext(W,P_{j_{r'+1}}/U)=0$ and thus
  \[\dim\Hom(W,P_{j_{r'+1}}/U)=\langle W, P_{j_{r'+1}}/U\rangle\]
  for all $(U,W)\in \Gr_\bff(P_{j_{r'+1}})\times \Gr_\bfg(P(r'))$.
  Following Theorem~\ref{vb} (see Remark~\ref{rem:cell decompositions}), this already shows that $\Gr_{\bfe}\big(P(r')\big)$ has a cell decomposition for every $1\leq r'\leq r$.
  By duality, the same is true for $\Gr_{\bfe}\big(I(s)\big)$. 

  Finally consider the short exact sequence
  \[\ses{I(s)}{X}{P(r)}.\]
  As every quotient of $I(s)$ is preinjective and as every subrepresentation of $P(r')$ is preprojective, the same argument shows that every quiver Grassmannian attached to $X$ has a cell decomposition.
\end{proof}

As the $F$-polynomial of truncated preprojective representations only depend on the dimension vector, we may denote them by $F_{d(m,r)}$.
The description of the non-empty fibers in Proposition~\ref{fibers} together with Corollary~\ref{fpoly} and Theorem~\ref{celldec} yield the following:
\begin{corollary}
  For $m\geq 1$ and $0\leq r\leq n-2$, we have 
  \[F_{d(m,r)}=F_{d(m,r+1)}F_{\udim P_m}-x^{d(m,r)}F_{d(m-2,r)}.\]
\end{corollary}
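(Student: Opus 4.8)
The plan is to read off the recursion directly from the fibration structure developed in Section~\ref{sec:fibrations}, applied on the universal covering quiver and then pushed down via Corollary~\ref{fpoly}. Fix $m\geq1$ and $0\leq r\leq n-2$. Choose subsets $J\subsetneq I\subsetneq\{1,\ldots,n\}$ with $|J|=r$, $|I|=r+1$, and $I\setminus J=\{j\}$, so that the short exact sequence \eqref{eq:truncated ses} reads $\ses{\tilde P_{m,j}}{\tilde P_{m+1}^J}{\tilde P_{m+1}^I}$, where $\tilde P_{m,j}$ lifts $P_m$, $\tilde P_{m+1}^J$ lifts $P_{m+1}^{V_J}$ with $\dim V_J=r$, and $\tilde P_{m+1}^I$ lifts $P_{m+1}^{V_I}$ with $\dim V_I=r+1$. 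By Theorem~\ref{celldec} all three of these quiver Grassmannians have cell decompositions, so their Euler characteristics are computed by counting cells, and by Corollary~\ref{fpoly}(2) the $F$-polynomial of each representation of $K(n)$ equals the specialization $S$ of the $F$-polynomial of its lift.

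First I would stratify $\Gr^{\tilde Q}_\tbfe(\tilde P_{m+1}^J)$ using the decomposition \eqref{eq:grassmannian decomposition} into the pieces $\mathcal G_{\tbff,\tbfg}$ and the map $\Psi_{\tbff,\tbfg}\colon\mathcal G_{\tbff,\tbfg}\to\Gr_\tbff(\tilde P_{m,j})\times\Gr_\tbfg(\tilde P_{m+1}^I)$. By Proposition~\ref{fibers}, whenever the fiber is non-empty it is an affine space of dimension $\langle\tbfg,\udim\tilde P_{m,j}-\tbff\rangle$, hence contributes $\chi=1$; and the fiber is non-empty everywhere on $\Gr_\tbff(\tilde P_{m,j})\times\Gr_\tbfg(\tilde P_{m+1}^I)$ \emph{except} in the single case $\tbfg=\udim\tilde P_{m+1}^I$, where by Proposition~\ref{quotient}(2) it is empty precisely over those $U\subseteq\tilde P_{m,j}$ which already lie inside $\tilde P_m(I,j)$. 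Taking Euler characteristics and summing over $\tbff+\tbfg=\tbfe$ gives
\[\chi\big(\Gr_\tbfe(\tilde P_{m+1}^J)\big)=\sum_{\tbff+\tbfg=\tbfe}\chi\big(\Gr_\tbff(\tilde P_{m,j})\big)\chi\big(\Gr_\tbfg(\tilde P_{m+1}^I)\big)-\chi\Big(\Gr_{\tbfe-\udim\tilde P_{m+1}^I}\big(\tilde P_m(I,j)\big)\Big),\]
the correction term accounting exactly for the empty fibers in the $\tbfg=\udim\tilde P_{m+1}^I$ stratum.

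Next I would package this into $F$-polynomials. Summing the displayed identity over all $\tbfe$ with weight $y^\tbfe$ turns the convolution into a product: $F_{\tilde P_{m+1}^J}=F_{\tilde P_{m,j}}\,F_{\tilde P_{m+1}^I}-y^{\udim\tilde P_{m+1}^I}F_{\tilde P_m(I,j)}$. Now apply the specialization $S$ of Corollary~\ref{fpoly}(2): $F_{\tilde P_{m,j}}$ and $F_{\tilde P_m(I,j)}$ both specialize to $F_{\udim P_m}$ — for the first because $\tilde P_{m,j}$ lifts $P_m$, and for $\tilde P_m(I,j)$ because, by Corollary~\ref{cor:perpendicular} and its role in the proof of Theorem~\ref{cellscover}, its projection to $K(n)$ is (a shift of) a truncated preprojective whose $F$-polynomial depends only on its dimension vector; one checks from Lemma~\ref{le:special subrepresentations} that $\udim G\tilde P_m(I,j)=d(m-2,r)$ after accounting for the shift, so it specializes to $F_{d(m-2,r)}$. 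Likewise $F_{\tilde P_{m+1}^J}$ specializes to $F_{d(m,r)}$ and $F_{\tilde P_{m+1}^I}$ to $F_{d(m,r+1)}$, while the monomial $y^{\udim\tilde P_{m+1}^I}$ specializes to $x^{d(m,r+1)}$. A short bookkeeping check with $\udim P_{m+1}^{V_I}=\udim P_{m+1}^{V_J}-\udim P_m=d(m,r)$ and the identity $d(m,r+1)+\udim P_m=d(m,r)$ — wait, rather $d(m,r)=d(m,r+1)+\udim P_m$ is false; the correct relation is $d(m,r)=\udim P_{m+1}-r\,\udim P_m$, so $d(m,r)-d(m,r+1)=\udim P_m$ — reconciles the exponent, giving $x^{d(m,r+1)}F_{d(m-2,r)}$, and re-indexing (replace $r+1$ by $r$ throughout, equivalently shifting the statement) yields exactly $F_{d(m,r)}=F_{d(m,r+1)}F_{\udim P_m}-x^{d(m,r)}F_{d(m-2,r)}$.

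\textbf{Main obstacle.} The delicate point is \emph{not} the fibration bookkeeping but identifying the correction term $\chi\big(\Gr_{\tbfe-\udim\tilde P_{m+1}^I}(\tilde P_m(I,j))\big)$ — equivalently its $F$-polynomial — with $x^{d(m,r)}F_{d(m-2,r)}$ after pushing down. One must verify carefully that $\tilde P_m(I,j)$, which by Lemma~\ref{le:special subrepresentations} is a direct sum of shifted lifted preprojectives, projects under $G$ to a representation whose $F$-polynomial is $F_{d(m-2,r)}$ — using that $F$-polynomials of truncated preprojectives of $K(n)$ depend only on the dimension vector (Theorem~\ref{thm:truncpp}) and tracking the exponent shift coming from $\udim\tilde P_{m+1}^I$ carefully through the specialization. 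Getting the two dimension-vector shifts — the one in the monomial prefactor and the one inside the residual $F$-polynomial — to land simultaneously on $x^{d(m,r)}$ and $F_{d(m-2,r)}$ requires the Euler-form identity $\langle\udim P_m,\udim P_{m-1}\rangle$-type computations together with the defining recursion $d(m,r)=d(m-1,r)$ reflected, and this is where I would spend the most care.
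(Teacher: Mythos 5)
Your framework matches the paper's one-line sketch exactly: the short exact sequence \eqref{eq:truncated ses} with $|J|=r$, $|I|=r+1$, the Caldero--Chapoton stratification, the affineness of non-empty fibers from Proposition~\ref{fibers} together with the description of the empty locus from Proposition~\ref{quotient}, and the specialization $S$ of Corollary~\ref{fpoly}. The resulting identity $F_{\tilde P_{m+1}^J}=F_{\tilde P_{m,j}}F_{\tilde P_{m+1}^I}-y^{\udim\tilde P_{m+1}^I}F_{\tilde P_m(I,j)}$ and its pushdown $F_{d(m,r)}=F_{\udim P_m}F_{d(m,r+1)}-x^{d(m,r+1)}S\big(F_{\tilde P_m(I,j)}\big)$ are both correctly derived.

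However, the final identification -- which you yourself flag as the ``main obstacle'' -- is not resolved, and what you sketch there fails on two counts. First, $G\tilde P_m(I,j)$ is \emph{not} a shift of a truncated preprojective: by Lemma~\ref{le:special subrepresentations} it is the decomposable representation $P_{m-1}^{\,n-1}\oplus P_{m-2}^{\,r}$ (with no $P_{m-2}$ factors when $m=2$), so Theorem~\ref{thm:truncpp} does not apply; the specialization $S\big(F_{\tilde P_m(I,j)}\big)=F_{\udim P_{m-1}}^{\,n-1}F_{\udim P_{m-2}}^{\,r}$ is a product, not a single $F_{d(m-2,r)}$, and indeed $\udim G\tilde P_m(I,j)=(n-1)\udim P_{m-1}+r\udim P_{m-2}\ne d(m-2,r)$, so your dimension-vector assertion is false. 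Second, the monomial prefactor you actually obtain is $x^{d(m,r+1)}$, not $x^{d(m,r)}$; since $d(m,r)=d(m,r+1)+\udim P_m$ these differ, and the proposed fix of ``re-indexing $r+1\to r$'' would simultaneously change the left-hand side to $F_{d(m,r-1)}$, so it reconciles nothing (your own bookkeeping paragraph also contradicts itself about whether $d(m,r)=d(m,r+1)+\udim P_m$, which in fact holds). Comparing lowest-degree monomials, $x^{d(m,r+1)}S\big(F_{\tilde P_m(I,j)}\big)$ starts at degree $d(m,r+1)$ while $x^{d(m,r)}F_{d(m-2,r)}$ starts at degree $d(m,r)$, so they cannot coincide. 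Your derivation therefore yields the recursion $F_{d(m,r)}=F_{\udim P_m}F_{d(m,r+1)}-x^{d(m,r+1)}F_{\udim P_{m-1}}^{\,n-1}F_{\udim P_{m-2}}^{\,r}$, and the step needed to turn this into the stated formula is missing.
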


\section{Combinatorial Descriptions of Non-Empty Cells}
\label{sec:combinatorics}

\noindent 
In this section, we provide two combinatorial descriptions of the non-empty cells in the quiver Grassmannians of (truncated) preprojective representations of $K(n)$.
The first is quiver theoretic and follows directly from the recursive construction of the cell decomposition from Section~\ref{sec:fibrations}.
The second is the notion of compatible pairs in a maximal Dyck path arising in the computation of rank 2 cluster variables \cite{llz}.
We give a bijection between these which provides a partial geometric explanation for the combinatorial construction of counting polynomials for rank two quiver Grassmannians given in \cite{rupel}.

\subsection{2-Quivers}
\label{sec:2quivers}

The key concept for describing the cell decompositions is the following notion of $2$-quiver which is closely related to certain coefficient quivers of the corresponding representations.
This construction makes use of the support quivers from Examples~\ref{ex:lifted preprojectives} and~\ref{ex:truncated lifts}.
It will turn out that a feature of this construction is that it is blind to the coloring of the different arrows of $\widetilde K(n)$ covering the arrows of $K(n)$.
\begin{definition}
  Let $Q=(Q_0,Q_1)$ be a quiver.
  A subset $\beta\subset Q_0$ is \emph{successor closed in $Q$} if for each $p\in\beta$, the existence of an arrow $\alpha:p\to q$ in $Q_1$ implies $q\in\beta$.

  A \emph{2-arrow} of the quiver $Q$ is an ordered pair $V=\big(\Gamma(1),\Gamma(2)\big)$ of full connected subquivers of $Q$, these will be denoted $V:\Gamma(1)\implies\Gamma(2)$.
  A \emph{2-quiver} is a pair $\cQ=(Q,Q_2)$ consisting of a quiver $Q$ and a collection $Q_2$ of 2-arrows of $Q$.
  Given a 2-quiver $\cQ$, we call a subset $\beta\subset Q_0$ \emph{strong successor closed in $\cQ$} if it is successor closed in $Q$ and for each 2-arrow $V:\Gamma(1)\implies\Gamma(2)$ in $Q_2$ with $\Gamma(1)_0\subset\beta$ we have $\Gamma(2)_0\cap\beta\ne\varnothing$.  
\end{definition}

The following notion of equivalence for $2$-quivers will be useful in the construction of 2-quivers whose strong successor closed subsets label cells in quiver Grassmannians.
Observe that any quiver can be considered as a 2-quiver with no 2-arrows.
\begin{definition}
  \label{def:2equivalence}
  Let $\cQ=(Q,Q_2)$ be a $2$-quiver with a $2$-arrow $V:\Gamma(1)\implies\Gamma(2)$ in $Q_2$ such that one of the following conditions is satisfied:
  \begin{enumerate}
    \item $\Gamma(1)$ has precisely one source $p$; 
    \item $\Gamma(2)$ has precisely one sink $q$;
    \item $\Gamma(1)=\{p\}$ and $\Gamma(2)=\{q\}$.
  \end{enumerate}
  Depending on the condition which is satisfied, we define
  \begin{enumerate}
    \item $\cQ_p$ as the $2$-quiver obtained from $\cQ$ when replacing the $2$-arrow $V$ by a $2$-arrow $V_p:\{p\}\implies\Gamma(2)$;
    \item $\cQ_q$ as the $2$-quiver obtained from $\cQ$ when replacing the $2$-arrow $V$ by a $2$-arrow $V_q:\Gamma(1)\implies\{q\}$;
    \item $\cQ_V$ as the $2$-quiver obtained from $\cQ$ when replacing the $2$-arrow $V$ by a usual arrow $\alpha_V:p\to q$. 
  \end{enumerate}
  This defines a relation on the set of $2$-quivers denoted by $\cQ\to\cQ_p$, $\cQ\to\cQ_q$ and $\cQ\to\cQ_V$ respectively.
  Moreover, it induces an equivalence relation $\sim$ on the set of $2$-quivers when taking the symmetric and transitive closure of this relation. 
\end{definition}

An important consequence of this definition is that the vertex sets of equivalent $2$-quivers coincide, in particular we can formulate the following result.
\begin{lemma}
  \label{lem:2equivalence}
  Let $\cQ=(Q,Q_2)$ and $\cQ'=(Q',Q_2')$ be equivalent $2$-quivers.
  A subset $\beta\subset Q_0$ is strong successor closed in $\cQ$ if and only if it is strong successor closed in $\cQ'$.
\end{lemma}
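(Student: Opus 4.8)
The plan is to reduce to the three elementary moves of Definition~\ref{def:2equivalence} and check each one separately; since $\sim$ is by definition the symmetric-transitive closure of the relation generated by $\cQ\to\cQ_p$, $\cQ\to\cQ_q$, $\cQ\to\cQ_V$, and since the property ``$\beta$ is strong successor closed'' is a property of a fixed subset $\beta$ of the (common) vertex set, it suffices to prove the statement when $\cQ'$ is obtained from $\cQ$ by a single move. In each case only one $2$-arrow changes, so all conditions coming from the other $2$-arrows and from successor-closedness in the underlying quiver $Q$ are literally unchanged in the first two cases; the only thing to verify is the condition attached to the modified $2$-arrow (and in case (3) also the new ordinary arrow $\alpha_V$).

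First I would treat the move $\cQ\to\cQ_p$, where $V:\Gamma(1)\implies\Gamma(2)$ is replaced by $V_p:\{p\}\implies\Gamma(2)$ and $p$ is the unique source of $\Gamma(1)$. Since the underlying quiver $Q$ is unchanged, $\beta$ is successor closed in $Q$ for $\cQ$ iff it is for $\cQ'$. So I must show: if $\beta$ is successor closed in $Q$, then $\Gamma(1)_0\subset\beta \iff p\in\beta$. The direction $\Rightarrow$ is trivial since $p\in\Gamma(1)_0$. For $\Leftarrow$, suppose $p\in\beta$; because $\Gamma(1)$ is a full connected subquiver of $Q$ with unique source $p$, every vertex $v\in\Gamma(1)_0$ is reachable from $p$ by a directed path inside $\Gamma(1)$ (here I would spell out: connectedness plus acyclicity of $Q$ forces every vertex to lie on a directed path from some source, and $p$ is the only one), hence by iterating the successor-closed property of $\beta$ in $Q$ we get $v\in\beta$. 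Thus the premise $\Gamma(1)_0\subset\beta$ of the $V$-condition holds for $\cQ$ exactly when the premise $p\in\beta$ of the $V_p$-condition holds for $\cQ'$, and the conclusion ($\Gamma(2)_0\cap\beta\ne\varnothing$) is the same in both; so the two conditions are equivalent. The move $\cQ\to\cQ_q$ is dual: there $\Gamma(2)$ has a unique sink $q$, and I would show $\Gamma(2)_0\cap\beta\ne\varnothing \iff q\in\beta$ whenever $\beta$ is successor closed — the $\Leftarrow$ is immediate, and for $\Rightarrow$, if some $v\in\Gamma(2)_0\cap\beta$ then the directed path inside $\Gamma(2)$ from $v$ to the unique sink $q$ propagates membership in $\beta$ via successor-closedness, so $q\in\beta$. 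Hence the $V$-condition and the $V_q$-condition agree.

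Finally, the move $\cQ\to\cQ_V$ with $\Gamma(1)=\{p\}$, $\Gamma(2)=\{q\}$: here the $2$-arrow $V:\{p\}\implies\{q\}$ is deleted and an ordinary arrow $\alpha_V:p\to q$ is added to $Q$, producing $Q'$. For a subset $\beta\subset Q_0$, being successor closed in $Q'$ means: successor closed in $Q$, and additionally $p\in\beta\Rightarrow q\in\beta$. The $2$-arrow condition attached to $V$ in $\cQ$ reads: $\{p\}_0\subset\beta$ (i.e. $p\in\beta$) implies $\{q\}_0\cap\beta\ne\varnothing$ (i.e. $q\in\beta$) — literally the same implication. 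So ``strong successor closed in $\cQ$'' $=$ ``successor closed in $Q$ and ($p\in\beta\Rightarrow q\in\beta$) and all other $2$-arrow conditions'' $=$ ``successor closed in $Q'$ and all other $2$-arrow conditions'' $=$ ``strong successor closed in $\cQ'$''. Combining the three cases, a single move preserves the strong-successor-closed property of every $\beta$; closing under symmetry and transitivity gives the lemma.

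I do not expect a genuine obstacle here — the proof is essentially a bookkeeping exercise — but the one point that needs care is the reachability claim used in cases (1) and (2): that in a finite acyclic quiver a full connected subquiver with a unique source $p$ has the property that every vertex is the target of a directed path from $p$ (and dually for a unique sink). This is where I would be most careful, since ``connected'' for quivers means connected as an undirected graph, so one must argue that unique-source plus acyclicity upgrades undirected connectivity to directed reachability from $p$; a short induction on the number of vertices, removing a sink of $\Gamma(1)$, handles it.
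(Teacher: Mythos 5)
Your proposal is correct and takes essentially the same route as the paper's proof: reduction to the three elementary moves, the observation that a unique source (resp.\ sink) in a finite connected subquiver gives directed reachability, and the literal identity of the $V$-condition with successor-closedness along $\alpha_V$ in case (3). The paper states the reachability observation without proof, whereas you flag it and sketch a justification, but this is only a difference of emphasis, not of method.
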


For the proof of this Lemma the following straightforward observation is essential: in a finite connected quiver which has precisely one source $p$, there exists a path from $p$ to every other vertex of the quiver.
An analogous statement holds if a quiver has precisely one sink. 
\begin{proof}
  By induction, we only need to consider the cases $\cQ'\in\{\cQ_p,\cQ_q,\cQ_V\}$ where one of the conditions of Definition~\ref{def:2equivalence} is satisfied.

  Assume first that $\cQ'\in\{\cQ_p,\cQ_q\}$.
  Then we have $Q=Q'$ from which we immediately see that $\beta\subset Q_0$ is successor closed in $Q$ if and only if $\beta$ is successor closed in $Q'$.
  We only consider the case $\cQ'=\cQ_p$ below, the argument for $\cQ'=\cQ_q$ is dual.

  Let $\beta\subset Q_0$ be strong successor closed in $\cQ$.
  To see that $\beta$ is strong successor closed in $\cQ_p$ it suffices to consider the 2-arrow $V_p:\{p\}\implies\Gamma(2)$.
  Suppose $\{p\}\subset\beta$.
  As $\beta$ is successor closed in $Q$ and $p$ is a source in the connected quiver $\Gamma(1)$, we have $\Gamma(1)_0\subset\beta$ and thus $\Gamma(2)_0\cap\beta\ne\varnothing$, i.e.\ $\beta$ is strong successor closed in $\cQ_p$.
  The reverse implication is immediate since $\{p\}\subset\Gamma(1)_0$.

  Now assume $\cQ'=\cQ_V=\big(Q_V,(Q_V)_2\big)$. 
  Let $\beta\subset Q_0$ be strong successor closed in $\cQ$.
  Since $(Q_V)_2\subset Q_2$, to see that $\beta$ is strong successor closed in $\cQ_V$ we only need to show that $\beta$ is successor closed in $Q_V$.  
  For this it suffices to consider the arrow $\alpha_V:p\to q$ for which that claim is obvious since $p\in\beta$ is equivalent to $\{p\}\subset\beta$ and similarly for $q$.

  Finally, let $\beta\subset(Q_V)_0$ be strong successor closed in $\cQ_V$.
  Since $Q_1\subset(Q_V)_1$, we immediately see that $\beta$ is successor closed in $Q$.
  To see that $\beta$ is strong successor closed in $\cQ$, it suffices to consider the 2-arrow $V:\{p\}\implies\{q\}$ for which the claim is obvious as above.
\end{proof}

\begin{remark}
  Below we will usually apply Lemma~\ref{lem:2equivalence} after performing each of the equivalences from Definition~\ref{def:2equivalence}.
  That is, given a 2-arrow $V:\Gamma(1)\implies\Gamma(2)$ for which $\Gamma(1)$ has a unique source $p$ and $\Gamma(2)$ has a unique sink $q$, we get an equivalent 2-quiver by replacing this 2-arrow with a usual arrow $\alpha_V:p\to q$.
\end{remark}

In the following, we freely use the notation and conventions of Section~\ref{sec:RepK(n)}.
For $m\ge1$, Theorem~\ref{thm:truncpp} shows that up to isomorphism the quiver Grassmannians $\Gr_\bfe(P_{m+1}^V)$ of arbitrary truncated preprojective representations $P_{m+1}^V$ for $V\in\Gr(\cH_m)$ only depend on $\bfe$ and $\udim P_{m+1}^V$.
In particular, fixing $\dim V=r$, we construct a 2-quiver $\cQ_{m+1}^{[r]}$ whose strong successor closed subsets are in one-to-one correspondence with the cells of quiver Grassmannians of~$P_{m+1}^V$.

By Theorem~\ref{celldec}, the cells of the quiver Grassmannians of $P_{m+1}^V$ are in one-to-one correspondence with those attached to any lift $\tilde P_{m+1}^{[r]}$ to $\widetilde{K(n)}$.
Since the choice of $V\in\Gr(\cH_m)$ with $\dim V=r$ is immaterial for understanding the geometry of $\Gr_\bfe(P_{m+1}^V)$, we may fix a particular choice of $V$ and a particular lift to the universal cover.
Indeed, set
\[\tilde P_{m+1}^{[r]}:=
  \begin{cases}
    \tilde P_{m+1}^{\{n,n-1,\ldots,n-r+1\}} & \text{ if $m$ is odd;}\\
    \tilde P_{m+1}^{\{1,2,\ldots,r\}} & \text{ if $m$ is even;}
  \end{cases}\] 
and write $P_{m+1}^{[r]}=G(\tilde P_{m+1}^{[r]})$.
Note that we may allow $r=0$ above and take $\tilde P_{m+1}^{[0]}=\tilde P_{m+1}$, then we write $\cQ_{m+1}$ in place of $\cQ_{m+1}^{[0]}$.
Fixing a choice of lift will allows us to give a concrete description of the 2-quiver~$\cQ_{m+1}^{[r]}$, it will be clear from the construction that making another choice of lift and following an analogous procedure will give a construction of an isomorphic 2-quiver.
In this way, the 2-quiver $\cQ_{m+1}$ should be viewed as a combinatorial shadow of the sequences \eqref{ses1} defining the truncated preprojective representations of $\widetilde{K(n)}$.
In fact, the related sequences \eqref{eq:truncted sequence} will be used together with Lemma~\ref{le:truncated quotients} to recursively construct the 2-quivers $\cQ_{m+1}^{[r]}$.

Each 2-quiver $\cQ_{m+1}^{[r]}$ should be thought of as a combinatorially enhanced version of the coefficient quiver of $\tilde P_{m+1}^{[r]}$ in which certain arrows are upgraded to 2-arrows.
In particular, the vertices and arrows of the quiver $Q_{m+1}^{[r]}$ underlying the 2-quiver $\cQ_{m+1}^{[r]}$ can naturally be associated with vertices and arrows of $\widetilde{K(n)}$.

To begin, we take the $2$-quiver $\cQ_1=\cQ_1^{[0]}$ associated to $\tilde P_1$ to be the quiver $Q_1$ consisting of a single vertex which we associate to the vertex $(1,e)$ of $\widetilde{K(n)}$.
By analogy with the notation of Section~\ref{Lifting}, we define a 2-quiver $\cQ_{1,i}$ for $1\le i\le n$ whose underlying quiver $Q_{1,i}$ has a single vertex which is associated to the vertex $(1,\alpha_i)$ of $\widetilde{K(n)}$.

The 2-quiver $\cQ_2^{[r]}$ associated to $\tilde P_2^{[r]}$ has underlying quiver $Q_2^{[r]}:=Q_1^\sigma\sqcup\coprod\limits_{i=1}^{n-r} Q_{1,i}$, where the single vertex of the quiver $Q_1^\sigma$ is associated to the vertex $(2,e)$ of $\widetilde{K(n)}$, and has 2-arrows (colored red) as in the figure below:
\[\begin{tikzpicture}
  \tikzstyle{box} = [rectangle, minimum width=2cm, minimum height=1cm, text centered, draw=black]
  \draw (0,0) node (Vr) {$(1,\alpha_{n-r})$} +(.8,-.9) node {\footnotesize$ \mathcal Q_2^{[r]}$} +(3,0) node (Vr1) {$(1,\alpha_{n-r+1})$}+(3.8,-.9) node (Lr1) {\footnotesize$ \mathcal Q_2^{[r+1]}$} +(6,0) node (Vx) {$\dots$} +(9,.15) node (V2) {$(1,\alpha_{2})$} +(9.8,-.75) node (L2) {\footnotesize$ \mathcal Q_2^{[n-2]}$}+(11.5,.45) node (V1) {$(1,\alpha_1)$}  +(14,.45) node (V0) {$(2,e)$} +(12.8,-.35) node (L1) {\footnotesize$\mathcal Q_2^{[n-1]}$}; 

  \node (Q0) [draw=black,fit={(V0)}]{};
  \node (P1) [draw=black,fit={(V1)}]{};
  \node (Q1) [draw=black,fit={(P1) (Q0) (L1)}]{};
  \node (P2) [draw=black,fit={(V2)}]{};
  \node (Q2) [draw=black,fit={(Q1) (P2) (L2)}]{};
  \node (Q2b) [draw=black,fit={(Q2) (Vx) }]{};
  \node (Pr1) [draw=black,fit={(Vr1)}]{};
  \node (Q3) [draw=black,fit={(Q2b) (Pr1) (Lr1)}]{};
  \node (Pr) [draw=black,fit={(Vr)}]{};

  \draw[implies-,double equal sign distance,red] (P1)--(Q0) node[pos=.5,above] {\footnotesize$\alpha_1$};
  \draw[-implies,double equal sign distance,red] (Q1)--(P2) node[pos=.5,above] {\footnotesize$\alpha_2$};
  \draw[-implies,double equal sign distance,red] (Q2)--(Vx) node[pos=.5,above] {\footnotesize$\alpha_3$};
  \draw[-implies,double equal sign distance,red] (Q2b)--(Pr1) node[pos=.5,above] {\footnotesize$\alpha_{n-r-1}$};
  \draw[-implies,double equal sign distance,red] (Q3)--(Pr) node[pos=.5,above] {\footnotesize$\alpha_{n-r}$};
\end{tikzpicture}\]
The source and target quivers for each 2-arrow above have been drawn inside a box.
Note that the vertices $(1,\alpha_i)$ are just the $2$-quivers $\cQ_{1,i}$ corresponding to $\tilde P_{1,i}$ and that $\cQ_2^{[t]}$ is a sub-2-quiver of $\cQ_2^{[r]}$ for $t\ge r$.

\begin{remark}
  \label{rem:2-arrows Q2}
  The 2-arrows of $\cQ_2$ should be viewed as a reflection of the isomorphism
  \begin{equation}
    \label{eq:case2}
    \Ext(P_2,P_1)
    \cong\bigoplus_{i=1}^n \Ext(\tilde P_2,\tilde P_{1,i})
    \cong\big\langle (2,e)\xrightarrow{\alpha_i}(1,\alpha_i)\mid i=1,\ldots,n\big\rangle
  \end{equation}
  and the inclusions of $\cQ_2^{[t]}$ in $\cQ_2^{[r]}$ for $t\ge r$ as a reflection of the surjections $\Ext(P_2^{[t]},P_1)\onto\Ext(P_2^{[r]},P_1)$.
  In particular, the isomorphism \eqref{eq:case2} can be used with these surjections to obtain compatible bases for each $\Ext(P_2^{[r]},P_1)$.
\end{remark}
The $2$-quiver $\mathcal Q_2^{[r]}$ given above is clearly equivalent to the support quiver \eqref{coeff} thought of as a 2-quiver with no 2-arrows:
\begin{align}
  \label{coeff}
    \xymatrix@R40pt@C10pt{&(2,e)\ar_{\alpha_1}[ld]\ar^{\alpha_{n-r}}[rd]&\\ (1,\alpha_1)&\cdots&(1,\alpha_{n-r}).}
\end{align}
Thus we may think of $\cQ_2^{[r]}$ as a coefficient quiver of $\tilde P_2^{[r]}$ or of $P_2^V$ for $V\in\Gr(\cH_1)$ with $\dim V=r$.
In order to keep the illustrations and combinatorics simple, we will abuse notation and denote the support quiver \eqref{coeff} by $\mathcal Q_{2}^{[r]}$, working instead with this 2-quiver.
In this way, we may define the translated 2-quivers $\cQ_{2,i}$ (resp.\ $\cQ_{2,i}^{[r]}$) as those obtained from $\cQ_2$ (resp.\ $\cQ_2^{[r]}$) by translating all vertices and ($2$-)arrows by $\alpha_i^{-1}$.

The 2-quiver $\cQ_3^{[r]}$ associated to $\tilde P_3^{[r]}$ has underlying quiver $Q_3^{[r]}:=Q_{2,n}^{[1]}\sqcup\coprod\limits_{i=r+1}^{n-1} Q_{2,i}$.
Note that we are not taking this union as subquivers of $\widetilde{K(n)}$, in particular each quiver $Q_{2,i}$ has a vertex which can be associated to $(1,e)$ in $\widetilde{K(n)}$ but these are not identified in the quiver $Q_3^{[r]}$. 
For $r<s<n$, there is a 2-arrow $V_s:\Gamma_s(1)\implies\Gamma_s(2)$ of $\cQ_3^{[r]}$ given by $\Gamma_s(1)=Q_{2,n}^{[1]}\sqcup\coprod\limits_{i=s+1}^{n-1} Q_{2,i}$ with $\Gamma_s(2)\subset Q_{2,s}$ the subquiver $(2,\alpha_s^{-1})\xrightarrow{\alpha_s} (1,e)$.
By Lemma~\ref{lem:2equivalence}, we obtain an equivalent 2-quiver by replacing each $\Gamma_s(2)$ above with the corresponding sink $(1,e)$ taken as a vertex of $Q_{2,s}$.
By a slight abuse of notation, below we will let $\cQ_3^{[r]}$ denote this equivalent 2-quiver.
Then $\cQ_3^{[r]}$ can be found as a sub-2-quiver of $\cQ_3$ which is constructed recursively by connecting $\cQ_3^{[i]}$ to $\cQ_{2,i}$ for $i=n-1,\ldots, 1$ in the following way: \bigskip
\[
\begin{tikzpicture}[scale=1.35]
  \draw (0,0) node (I1) {$\bullet$} +(1,1) node (J11) {$\bullet$} +(1,.5) node (J12) {$\bullet$} +(1,0.1) node (D1) { $\vdots$}+(1,-.5) node (J1N1) {$\bullet$} +(1,-1) node (J1N) {$\bullet$} +(0.1,-1.3) node {\footnotesize$\mathcal Q_3$}; 
  \draw[->] (I1)--(J11) node[pos=.7,above,sloped]{\scriptsize$\alpha_1$};
  \draw[->] (I1)--(J12) node[pos=.7,above,sloped]{\scriptsize$\alpha_2$};
  \draw[->] (I1)--(J1N1) node[pos=.7,above,sloped]{\scriptsize$\alpha_{n-1}$};
  \draw[->] (I1)--(J1N) node[pos=.7,above,sloped]{\scriptsize$\alpha_{n}$};

  \draw (2,0) node (I2) {$\bullet$} +(1,1) node (J21) {$\bullet$} +(1,.5) node (J22) {$\bullet$}+(1,0.1) node (D2) { $\vdots$} +(1,-.5) node (J2N1) {$\bullet$} +(1,-1) node (J2N) {$\bullet$} +(0.1,-1.3) node (P1) {\footnotesize$\mathcal Q_3^{[1]}$} ; 
  \draw[->] (I2)--(J21) node[pos=.7,above,sloped]{\scriptsize$\alpha_1$};
  \draw[->] (I2)--(J22) node[pos=.7,above,sloped]{\scriptsize$\alpha_2$};
  \draw[->] (I2)--(J2N1) node[pos=.7,above,sloped]{\scriptsize$\alpha_{n-1}$};
  \draw[->] (I2)--(J2N) node[pos=.7,above,sloped]{\scriptsize$\alpha_{n}$};

  \draw (4,0) node (DOTS) {$\dots$}+(0,-1.2) node (P2b) {\footnotesize$\mathcal Q_3^{[2]}$};

  \draw (6,0) node (I3) {$\bullet$} +(1,1) node (J31) {$\bullet$}   +(1,.6) node (D31) { $\vdots$} +(1,0) node (J3N2) {$\bullet$} +(1,-.55) node (J3N1) { $\bullet$} +(1,-1) node (J3N) {$\bullet$} +(0.2,-1.1) node (P3) {\footnotesize$\mathcal Q_3^{[n-3]}$} ; 
  \draw[->] (I3)--(J31) node[pos=.7,above,sloped]{\scriptsize$\alpha_1$};
  \draw[->] (I3)--(J3N2) node[pos=.7,above,sloped]{\scriptsize$\alpha_{n-2}$};
  \draw[->] (I3)--(J3N1) node[pos=.7,above,sloped]{\scriptsize$\alpha_{n-1}$};
  \draw[->] (I3)--(J3N) node[pos=.7,above,sloped]{\scriptsize$\alpha_{n}$};

  \draw (8,0) node (I4) {$\bullet$} +(1,1) node (J41) {$\bullet$} +(1,.5) node (J42) {$\bullet$} +(1,0.1) node (D4) { $\vdots$} +(1,-.5) node (J4N1) {$\bullet$} +(1,-1) node (J4N) {$\bullet$} +(0.2,-1) node (P4) {\footnotesize$\mathcal Q_3^{[n-2]}$}; 
  \draw[->] (I4)--(J41) node[pos=.7,above,sloped]{\scriptsize$\alpha_1$};
  \draw[->] (I4)--(J42) node[pos=.7,above,sloped]{\scriptsize$\alpha_2$};
  \draw[->] (I4)--(J4N1) node[pos=.7,above,sloped]{\scriptsize$\alpha_{n-1}$};
  \draw[->] (I4)--(J4N) node[pos=.7,above,sloped]{\scriptsize$\alpha_{n}$};

  \draw (10,0) node (I5) {$\bullet$} +(1,1) node (J51) {$\bullet$} +(1,.5) node (J52) {$\bullet$} +(1,0.1) node (D5) { $\vdots$} +(1,-.5) node (J5N1) {$\bullet$} +(0.2,-.6) node (P5) {\footnotesize$\mathcal Q_3^{[n-1]}$}   ; 
  \draw[->] (I5)--(J51) node[pos=.7,above,sloped]{\scriptsize $\alpha_1$};
  \draw[->] (I5)--(J52) node[pos=.7,above,sloped]{\scriptsize $\alpha_2$};
  \draw[->] (I5)--(J5N1) node[pos=.7,above,sloped]{\scriptsize $\alpha_{n-1}$};
  \node (Q2) [draw=black,fit={(P5) (I5) (J51) (J52) (J5N1)}]{};
  \draw[-implies,double equal sign distance,red] (Q2)--(J4N1) node[pos=.5,above,sloped]{};
  \node (Q3) [draw=black,fit={(P4) (Q2) (I4) (J41) (J42) (J4N1) (J4N)}]{};
  \draw[-implies,double equal sign distance,red] (Q3)--(J3N2)  node[pos=.5,above,sloped] {};
  \node (Q4) [draw=black,fit={(P3) (Q3) (I3) (J31) (J3N1) (J3N2) (J3N)}]{};
  \draw[-implies,double equal sign distance,red] (Q4)--(DOTS)  node[pos=.5,above] {};
  \node (Q4b) [draw=black,fit={(P2b) (Q4) (DOTS)}]{};

  \node (Q5) [draw=black,fit={(P1) (Q4b) (I2) (J21) (J22) (J2N1) (J2N)}]{};
  \draw[-implies,double equal sign distance,red] (Q5)--(J11)  node[pos=.5,above,sloped,sloped] {};
  \draw[-implies,double equal sign distance,red] (Q4b)--(J22)  node[pos=.5,above,sloped] {};
  \draw[-implies,double equal sign distance,red] (Q5)--(J11)  node[pos=.5,above,sloped] {};
\end{tikzpicture}
\]  
To avoid cluttering the diagram, we did not label the vertices in the illustration.

\begin{remark} 
  Here we justify the definition of the 2-arrows in $\cQ_3^{[r]}$, this discussion will also serve to motivate the choice of 2-arrows for general $\cQ_{m+1}^{[r]}$ and thus we work in that generality.

  For $m\ge2$, we may apply Theorem~\ref{covering} together with Lemma~\ref{le:properties} and the Auslander-Reiten formula to get an isomorphism 
  \begin{align}
    \label{eq:homext} 
    \Ext\big(P_{m+1}^{[r]},P_m\big)
    \cong\bigoplus_{i=r+1}^n \Ext\big(\tilde P_{m+1}^{[r]},\tilde P_{m,i}\big)
    \cong\bigoplus_{i=r+1}^n \Hom\big(\tilde P_{m,i},\tau\tilde P_{m+1}^{[r]}\big).
  \end{align}
  The image of a nonzero map $\tilde P_{m,i}\to\tau\tilde P_{m+1}^{[r]}$ is the representation $\tilde K_m$ from the appropriate sequence \eqref{eq:tau sequence}.
  Such a map is surjective if $m\ge3$ and for $m=2$ has image with support quiver $(2,\alpha_i^{-1})\xrightarrow{\alpha_i} (1,e)$.
  In view of Corollary~\ref{cor:perpendicular}, the sequence \eqref{eq:tau sequence} gives rise to an isomorphism 
  \[\Ext\big(\tilde P_{m+1}^{[r]},\tilde P_{m,i}\big)\cong\Ext\big(\tilde P_{m+1}^{[r]},\tilde K_m\big).\]

  Finally note for $0\le r\le n-2$ that there exists a short exact sequence
  \[\ses{\Hom(P_m,P_m)}{\Ext\big(P_{m+1}^{[r]},P_m\big)}{\Ext\big(P_{m+1}^{[r+1]},P_m\big)}.\]
  Thus a basis of $\Ext\big(P_{m+1}^{[r]},P_m\big)$ can be obtained by taking the last $r$ elements of a basis for $\Ext\big(P_{m+1}^{[n-1]},P_m\big)$.
  The choice of 2-arrows in $\cQ_{m+1}^{[r]}$ should be viewed as a combinatorial shadow of the isomorphisms above.
\end{remark}

We are now ready to define the 2-quivers $\cQ_{m+1}^{[r]}$ for $m\ge3$.
This will be by induction, so assume we have already constructed the 2-quivers $\cQ_m^{[s]}$ for $0\le s\le n-1$ and define the 2-quivers $\cQ_{m,i}^{[s]}:=\alpha_i^{(-1)^{m+1}}.\cQ_m^{[s]}$ for $1\le i\le n$. 
Then we may take the underlying quiver of $\cQ_{m+1}^{[r]}$ to be
\begin{equation}
  \label{eq:recursive 2-quivers}
  Q_{m+1}^{[r]}:=
  \begin{cases}
    Q_{m,n}^{[1]}\sqcup\coprod\limits_{i=r+1}^{n-1} Q_{m,i} & \text{if $m$ is even;}\\
    Q_{m,1}^{[1]}\sqcup\coprod\limits_{i=2}^{n-r} Q_{m,i} & \text{if $m$ is odd.}
  \end{cases}
\end{equation}
For $r<s<n$, there is a 2-arrow $V_s:\Gamma_s(1)\implies\Gamma_s(2)$ of $\cQ_{m+1}^{[r]}$ given by $\Gamma_s(1)=Q_{m+1}^{[s]}\subset Q_{m+1}^{[r]}$ with $\Gamma_s(2)\subset Q_{m,s}$ the subquiver $Q_{m-1}^{[s]}\subset Q_{m-1}=Q_{m-1,(s,s)}\subset Q_{m,s}$.
\begin{remark}
  For $m\geq 3$, the truncated preprojective $\tau\tilde P_{m+1}^{[s]}\cong\tilde P_{m-1}^{[s]}$ can uniquely be found as a quotient of $\tilde P_{m,s}$.
  This is reflected in the structure of the $2$-quivers as we can find $\cQ_{m-1}^{[s]}$ as a subquiver of $\cQ_{m,s}$.
  In the diagrams for 2-quivers given here, this sub-2-quiver can be found at the very right of the $2$-quiver $\mathcal Q_{m,s}$.
\end{remark}

\[
\begin{tikzpicture}[scale=0.97]
\draw (0,4) node (A1) {\footnotesize$Q_{m-1,1}$}
  +(0,-1.2) node (PM) {\footnotesize$\mathcal Q_{m,r}$}
  +(0,-4.2) node {$\cQ_{m+1}^{[r-1]}$}
  +(1.65,0) node (B1) {\footnotesize$Q_{m-1,2}$}
  +(1.65,-1.1) node (P11) {\footnotesize$\mathcal Q_{m}^{[1]}$}
  +(3,0) node (B11) {\footnotesize$\dots$}
  +(4.55,0) node (D1) {\footnotesize$Q_{m-1,n-1}$}
  +(4.55,-1) node (P41) {\footnotesize$\mathcal Q_{m}^{[n-2]}$}
  +(6.4,0) node (E1) {\footnotesize$Q_{m-2,2}$}
  +(6.4,-.9) node (P51) {\footnotesize$\mathcal Q_{m-1}^{[1]}$}
  +(7.75,0) node (A1b) {\footnotesize$\dots$}
  +(9.6,0) node (B1b) {\footnotesize$Q_{m-2,r+1}$}
  +(9.6,-.8) node (P11b) {\footnotesize$\mathcal Q_{m-1}^{[r]}$}
  +(11.125,0) node (B11b) {\footnotesize$\dots$}
  +(12.7,0) node (D1b) {\footnotesize$Q_{m-2,n-1}$}
  +(12.7,-.6) node (P41b) {\footnotesize$\mathcal Q_{m-1}^{[n-2]}$}
  +(14.4,0) node (E1b) {\footnotesize$\mathcal Q^{[n-1]}_{m-2}$};
\node (Q11) [draw=black,fit={(D1b) (E1b) (P41b)}]{};
\node (Q21) [draw=black,fit={(Q11) (B11b)}]{};
\node (Q31)[draw=black,fit={(Q21) (B1b) (P11b)}]{};
\node (Q41)[draw=black,fit={(Q31) (A1b)}]{};
\node (Q51)[draw=black,fit={(Q41) (E1)}]{};
\node (Q61)[draw=black,fit={(Q51) (P41) (D1)}]{};
\node (Q71)[draw=black,fit={(Q61) (B11)}]{};
\node (Q81)[draw=black,fit={(Q71) (P11) (B1)}]{};
\node (Q91)[draw=black,fit={(Q81) (A1)}]{};
\draw[implies-,double equal sign distance,red] (D1b)--(E1b) node[pos=.5,above] {};
\draw[-implies,double equal sign distance,red] (Q11)--(B11b) node[pos=.5,above] {};
\draw[-implies,double equal sign distance,red] (Q21)--(B1b) node[pos=.5,above] {};
\draw[-implies,double equal sign distance,red] (Q31)--(A1b) node[pos=.5,above] {};
\draw[-implies,double equal sign distance,red] (Q41)--(E1) node[pos=.5,above] {};
\draw[-implies,double equal sign distance,red] (Q51)--(D1) node[pos=.5,above] {};
\draw[-implies,double equal sign distance,red] (Q61)--(B11) node[pos=.5,above] {};
\draw[-implies,double equal sign distance,red] (Q71)--(B1) node[pos=.5,above] {};
\draw[-implies,double equal sign distance,red] (Q81)--(A1) node[pos=.5,above] {};

\draw (4.45,0) node (B) {\footnotesize$Q_{m,r+1}$}+(0,-.8) node (P1) {\footnotesize$\mathcal Q_{m+1}^{[r]}$} +(2.5,0) node (B1) {$\dots$} +(5,0) node (C) {\footnotesize$Q_{m,n-2}$} +(5,-.7) node (P3) {\footnotesize$\mathcal Q_{m+1}^{[n-3]}$}+(7.5,0.25) node (D) {\footnotesize$Q_{m,n-1}$}  +(10,0.25) node (E) {\footnotesize$\mathcal Q_{m+1}^{[n-1]}$} +(7.5,-.35) node (P4) {\footnotesize$\mathcal Q_{m+1}^{[n-2]}$}; 
\node (Q1) [draw=black,fit={(D) (E) (P4)}]{};
\node (Q2) [draw=black,fit={(Q1) (C) (P3)}]{};
\node (Q2b)[draw=black,fit={(Q2) (B1) }]{};

\draw[implies-,double equal sign distance,red] (D)--(E) node[pos=.5,above] {};
\draw[-implies,double equal sign distance,red] (Q1)--(C) node[pos=.5,above] {};
\draw[-implies,double equal sign distance,red] (Q2)--(B1) node[pos=.5,above] {};
\draw[-implies,double equal sign distance,red] (Q2b)--(B) node[pos=.5,above] {};

\node (PMK) [draw=black,fit={(Q2b) (B)}]{};
\draw[-implies,double equal sign distance,red,thick] (PMK)--(P11b);
\end{tikzpicture}
\]
As already mentioned two different vertices of $\mathcal Q_{m+1}^{[r]}$ can correspond to the same vertex of $\widetilde{K(n)}$.
Writing dimension vectors $\tbfe\in\NN^{\widetilde{K(n)_0}}$ as $\tbfe=\sum_{q\in \widetilde{K(n)_0}}\tbfe_q\cdot q$, the dimension types $\tbfe(\beta)$ and  $\bfe(\beta)$ of a subset $\beta\subset(\mathcal Q_{m+1}^{[r]})_0$ are defined by
\[\tbfe (\beta)=\sum_{q\in\beta} \tilde q\in\NN^{\widetilde{K(n)_0}}\qquad\text{and}\qquad\bfe(\beta)=G\big(\tbfe(\beta)\big)\in\NN^{K(n)_0},\]
where $\tilde q\in\widetilde{K(n)}_0$ is the vertex which corresponds to $q\in\beta\subset(\mathcal Q_{m+1}^{[r]})_0$.
\begin{theorem}
  \label{thm:2quivercells}
  \mbox{}
  \begin{enumerate}
    \item The affine cells of the cell decomposition of $\Gr_\tbfe\big(\tilde P_{m+1}^{[r]}\big)$ (resp. $\Gr_\bfe\big(P_{m+1}^{[r]}\big)$) induced by Theorem \ref{cellscover} can be labeled by strong successor closed subsets $\beta\subset\cQ_{m+1}^{[r]}$ of dimension type $\tbfe\in\NN^{\widetilde{K(n)_0}}$ (resp. $\bfe\in\NN^{K(n)_0}$) yielding a one-to-one correspondence between cells and strong successor closed subsets.
    \item For $\tbfe\in\NN^{\widetilde{K(n)_0}}$ (resp. $\bfe\in\NN^{K(n)_0}$), the Euler characteristic $\chi(\Gr_\tbfe(\tilde P_{m+1}^{[r]}))$ (resp. $\chi(\Gr_{\bfe }(P_{m+1}^{[r]}))$) is given by the number of strong successor closed subsets of dimension type $\tbfe$ (resp. $\bfe$) of the $2$-quiver of $\cQ_{m+1}^{[r]}$.
  \end{enumerate}
\end{theorem}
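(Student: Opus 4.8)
\emph{Strategy.} The plan is to prove part (1) and deduce part (2), via a double induction that runs in parallel with the two recursive constructions already in place: the recursive cell decomposition from the proof of Theorem~\ref{cellscover} and the recursive construction of $\cQ_{m+1}^{[r]}$ in \eqref{eq:recursive 2-quivers}. The outer induction is on $m$; for fixed $m$ the inner induction is a reverse induction on $r$, from $r=n-1$ down to $r=0$. Given part (1), part (2) is immediate, since each affine cell contributes $1$ to the Euler characteristic, so $\chi\big(\Gr_\tbfe(\tilde P_{m+1}^{[r]})\big)$ (and likewise the $K(n)$-version) equals the number of cells and hence the number of strong successor closed subsets of the prescribed dimension type. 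The passage from $\widetilde{K(n)}$ to $K(n)$ is handled by Theorem~\ref{celldec} and the iterated torus action of Section~\ref{torusaction}: via the Bia\l{}ynicki-Birula decomposition the cells of $\Gr_\bfe(P_{m+1}^{[r]})$ are in bijection with the cells of $\bigsqcup_{\tbfe}\Gr_\tbfe(\tilde P_{m+1}^{[r]})$ over all $\tbfe$ compatible with $\bfe$, and since $\bfe(\beta)=G\big(\tbfe(\beta)\big)$, summing the bijection of part (1) over these $\tbfe$ yields the $K(n)$-statement.

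\emph{Base cases.} For $m=1$ the support quiver of $\tilde P_2^{[r]}$ is the star \eqref{coeff}: each of its quiver Grassmannians is a single reduced point (possibly empty), which is its own cell, and for a given dimension type $\tbfe$ there is a subrepresentation of $\tilde P_2^{[r]}$ of type $\tbfe$ if and only if there is a successor closed subset of \eqref{coeff} of type $\tbfe$, and the two then agree; this is precisely a strong successor closed subset of $\cQ_2^{[r]}$ since this $2$-quiver has no $2$-arrows. For $m\ge2$ and $r=n-1$, Lemma~\ref{le:truncated quotients}(1) gives $\tilde P_{m+1}^{[n-1]}\cong\tilde P_{m,j}/\tilde P_{m-1,(j,j)}$ for the appropriate $j$, which under the $W_n$-action and the arrow-permutation symmetry of the lifts is $\tilde P_m^{[1]}$; correspondingly \eqref{eq:recursive 2-quivers} degenerates (empty coproduct, no $V_s$) to $\cQ_{m+1}^{[n-1]}=\cQ_{m,j}^{[1]}$, a translate of $\cQ_m^{[1]}$. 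Since both the recursive cell decomposition and the set of strong successor closed subsets are manifestly invariant under these symmetries, the case $(m+1,n-1)$ follows from the case $(m,1)$, available by the outer induction.

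\emph{Inductive step.} Fix $m\ge2$ and $r<n-1$, and consider the short exact sequence \eqref{eq:truncated ses} with $J,I$ the index sets of $\tilde P_{m+1}^{[r]},\tilde P_{m+1}^{[r+1]}$ respectively, so that $\{j\}=I\setminus J$ and the outer terms are $\tilde P_{m,j}$ and $\tilde P_{m+1}^{[r+1]}$; this matches the step from $\cQ_{m+1}^{[r+1]}$ to $\cQ_{m+1}^{[r]}$ in \eqref{eq:recursive 2-quivers}. By the proof of Theorem~\ref{cellscover}, the affine cells of $\Gr_\tbfe(\tilde P_{m+1}^{[r]})$ are the products $B_\ell\times C_k\times\AA^d$ ranging over pairs of a cell $B_\ell$ of $\Gr_\tbff(\tilde P_{m,j})$ and a cell $C_k$ of $\Gr_\tbfg(\tilde P_{m+1}^{[r+1]})$ with $\tbff+\tbfg=\tbfe$ and non-empty $\Psi_{\tbff,\tbfg}$-fiber; by Propositions~\ref{quotient} and~\ref{fibers} this fiber is empty precisely when $\tbfg=\udim\tilde P_{m+1}^{[r+1]}$ and the representatives of $B_\ell$ lie inside $\tilde P_m(I,j)$, equivalently, by property ($\dagger$), when $B_\ell$ is killed by $\pi_{m,j}^{\bfJ}$. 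On the combinatorial side $(\cQ_{m+1}^{[r]})_0=(Q_{m,j})_0\sqcup(Q_{m+1}^{[r+1]})_0$ as ordinary quivers, the only $2$-arrow crossing this splitting being the new $V_{r+1}$, so a subset $\beta=\beta_B\sqcup\beta_C$ is strong successor closed in $\cQ_{m+1}^{[r]}$ if and only if $\beta_B$ is strong successor closed in $\cQ_{m,j}$, $\beta_C$ is strong successor closed in $\cQ_{m+1}^{[r+1]}$, and the $V_{r+1}$-condition holds, i.e.\ $\beta_C\ne(Q_{m+1}^{[r+1]})_0$ or $\beta_B\cap(Q_{m-1}^{[r+1]})_0\ne\varnothing$. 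Feeding in the two induction hypotheses ($B_\ell\leftrightarrow\beta_B$ with matching dimension type by the induction on $m$, using translation invariance; $C_k\leftrightarrow\beta_C$ with matching dimension type by the reverse induction on $r$), and using that $\udim\tilde P_{m+1}^{[r+1]}$ is the dimension type of the full vertex set $(Q_{m+1}^{[r+1]})_0$ while the support of $\tilde K_m=\tau\tilde P_{m+1}^{[r+1]}$ inside $\tilde P_{m,j}$ is exactly the target subquiver $\cQ_{m-1}^{[r+1]}$ of $V_{r+1}$ (for $m=2$, the one-vertex target $(1,e)$ obtained via Lemma~\ref{lem:2equivalence} from \eqref{eq:special case}), the geometric emptiness condition matches the failure of the $V_{r+1}$-condition term by term. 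This produces the desired bijection between cells and strong successor closed subsets, preserves dimension types, and, being constructed compatibly with the projection maps, propagates property ($\dagger$), completing the induction.

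\emph{Main difficulty.} The crux is this last matching: identifying the support of $\tilde K_m=\tau\tilde P_{m+1}^{[r+1]}$ as it sits inside $\tilde P_{m,j}$ with the vertex set $(Q_{m-1}^{[r+1]})_0$ of the target of $V_{r+1}$, and checking that ``$U\subseteq\tilde P_m(I,j)$'' is equivalent, after Lemma~\ref{lem:2equivalence}, to ``$\beta_B\cap(Q_{m-1}^{[r+1]})_0=\varnothing$'', uniformly in $m$ and in particular in the degenerate case $m=2$ of Remark~\ref{rem:special case}. Everything else is bookkeeping running parallel to the proof of Theorem~\ref{cellscover}.
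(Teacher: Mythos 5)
Your proposal follows essentially the same route as the paper's own proof: reduce the $K(n)$-statement to the $\widetilde{K(n)}$-statement via the torus action of Sections~\ref{sec:bb}--\ref{torusaction}, deduce part~(2) from part~(1), and run a double induction (on $m$, and reverse on $r$) along the short exact sequence~\eqref{eq:truncated ses}, using Proposition~\ref{quotient} to match the empty-fiber condition with the failure of the $V_{r+1}$-condition. Your treatment is, if anything, slightly more explicit on the $r=n-1$ base case and the identification of $\tilde K_m$ with the target of $V_{r+1}$, which the paper's proof only asserts implicitly.
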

\begin{proof}
  The results of Sections~\ref{sec:bb} and \ref{torusaction} imply that the statements in parentheses follow from the respective results for the lifted representations.
  Moreover, the second result follows from the first one.

  We proceed by induction on $m$ and $r$.
  The case of the representation $\tilde P_1$ is trivial.
  We have $(\udim\tilde P_2^{[r]})_q\in\{0,1\}$ for all $q\in\widetilde{K(n)}$, whence the subrepresentations are in one-to-one correspondence with the successor closed subsets of the quiver (\ref{coeff}) which is equivalent to $\mathcal Q_2^{[r]}$.
  Equivalently, we have $\Gr_\tbfe(\tilde P_2^{[r]})\in\{\varnothing,\{\pt\}\}$ so that $\Gr_{\tbfe}(\tilde P_2^{[r]})=\{\pt\}$ if and only if $\tbfe\subset\cQ_2^{[r]}$ is strong successor closed.

  Thus assume that the claim is true for $\tilde P_m$ and $\tilde P_{m+1}^{[r]}$.
  Consider the short exact sequence
  \[\ses{\tilde P_m}{\tilde P_{m+1}^{[r-1]}}{\tilde P_{m+1}^{[r]}}.\]
  We have $(\cQ_{m+1}^{[r-1]})_0=(\cQ_{m+1}^{[r]})_0\coprod{}(\cQ_{m,r})_0$.
  Let $\beta_1\coprod\beta_2\subset (\mathcal Q_{m+1}^{[r-1]})_0\coprod{}(\mathcal Q_{m,r})_0$ be a pair of strong successor closed subsets which gives rise to a pair of non-empty cells by the induction hypothesis.
  By Proposition~\ref{quotient}, the fiber over the pair of cells corresponding to $\beta_1\coprod\beta_2$ is empty if and only if $\beta_2=(\mathcal Q_{m+1}^{[r]})_0$ and $\beta_1\cap(\mathcal Q_{m-1}^{[r]})_0=\varnothing$, where $\mathcal Q_{m-1}^{[r]}$ is considered as a sub-$2$-quiver of $\mathcal Q_{m,r}$.
  But this is precisely the condition on $\beta_1\coprod\beta_2$ to be strong successor closed as $\mathcal Q_{m+1}^{[r]}$ is connected to $\mathcal Q_{m-1}^{[r]}$ by a $2$-arrow. This shows the first claim.
\end{proof}

As an example consider the case $n=3$ and $m=3$. The $2$-quiver of $\tilde P_3$ is given by:\bigskip

\begin{tikzpicture}[scale=1]
\draw (0,0) node (I1) {$\bullet$} +(1,1) node (J11) {$\bullet$} +(1,0) node (J12) {$\bullet$} +(1,-1) node (J13) {$\bullet$}; 
\draw[->] (I1)--(J11) node[pos=.7,above,sloped]{\scriptsize$a$};
\draw[->] (I1)--(J12) node[pos=.7,above,sloped]{\scriptsize$b$};
\draw[->] (I1)--(J13) node[pos=.7,above,sloped]{\scriptsize$c$};

\draw (2,0) node (I2) {$\bullet$} +(1,1) node (J21) {$\bullet$} +(1,0) node (J22) {$\bullet$} +(1,-1) node (J23) {$\bullet$}; 
\draw[->] (I2)--(J21) node[pos=.7,above,sloped]{\scriptsize$a$};
\draw[->] (I2)--(J22) node[pos=.7,above,sloped]{\scriptsize$b$};
\draw[->] (I2)--(J23) node[pos=.7,above,sloped]{\scriptsize$c$};
\draw (6,0) node (I3) {$\bullet$} (5,0) node (J31) {$\bullet$} (4,0) node (J33) {$\bullet$}; 
\draw[->] (I3)--(J31) node[pos=.5,above,sloped]{\scriptsize$a$};
\node (Q0)[draw=black,fit={(I3) (J31) }]{};
\node (Q1)[draw=black,fit={(Q0) (J33)}]{};
\node (Q2)[draw=black,fit={(Q1) (I2) (J21) (J22) (J23)}]{};

\draw[-implies,double equal sign distance,red] (Q0)--(J33) node[pos=.5,above] {\scriptsize $b$};
\draw[-implies,double equal sign distance,red] (Q1)--(J22) node[pos=.5,above] {\scriptsize $c$};
\draw[-implies,double equal sign distance,red] (Q2)--(J11) node[pos=.5,above] {´};
\end{tikzpicture}
\bigskip

The one of $\tilde P_4$ is given by:\bigskip

\begin{tikzpicture}[scale=1]
\draw (1.6,0) node (I1) {$\bullet$} +(1,1) node (J11) {$\bullet$} +(1,0) node (J12) {$\bullet$} +(1,-1) node (J13) {$\bullet$}; 
\draw[->] (I1)--(J11) node[pos=.7,above,sloped]{\scriptsize$a$};
\draw[->] (I1)--(J12) node[pos=.7,above,sloped]{\scriptsize$b$};
\draw[->] (I1)--(J13) node[pos=.7,above,sloped]{\scriptsize$c$};

\draw (3.6,0) node (I2) {$\bullet$} +(1,1) node (J21) {$\bullet$} +(1,0) node (J22) {$\bullet$} +(1,-1) node (J23) {$\bullet$}; 
\draw[->] (I2)--(J21) node[pos=.7,above,sloped]{\scriptsize$a$};
\draw[->] (I2)--(J22) node[pos=.7,above,sloped]{\scriptsize$b$};
\draw[->] (I2)--(J23) node[pos=.7,above,sloped]{\scriptsize$c$};
\draw (7.6,0) node (I3) {$\bullet$} +(-1,0) node (J31) {$\bullet$} +(-2,0) node (J33) {$\bullet$}; 
\draw[->] (I3)--(J31) node[pos=.5,above,sloped]{\scriptsize$a$};
\node (Q0)[draw=black,fit={(I3) (J31) }]{};
\node (Q1)[draw=black,fit={(Q0) (J33)}]{};
\node (Q2)[draw=black,fit={(Q1) (I2) (J21) (J22) (J23)}]{};

\draw[-implies,double equal sign distance,red] (Q0)--(J33) node[pos=.5,above] {\scriptsize $b$};
\draw[-implies,double equal sign distance,red] (Q1)--(J22) node[pos=.5,above] {\scriptsize $c$};
\draw[-implies,double equal sign distance,red] (Q2)--(J11) node[pos=.5,above] {};

\draw (0,-4) node (I1b) {$\bullet$} +(1,1) node (J11b) {$\bullet$} +(1,0) node (J12b) {$\bullet$} +(1,-1) node (J13b) {$\bullet$}; 
\draw[->] (I1b)--(J11b) node[pos=.7,above,sloped]{\scriptsize$a$};
\draw[->] (I1b)--(J12b) node[pos=.7,above,sloped]{\scriptsize$b$};
\draw[->] (I1b)--(J13b) node[pos=.7,above,sloped]{\scriptsize$c$};

\draw (2,-4) node (I2b) {$\bullet$} +(1,1) node (J21b) {$\bullet$} +(1,0) node (J22b) {$\bullet$} +(1,-1) node (J23b) {$\bullet$}; 
\draw[->] (I2b)--(J21b) node[pos=.7,above,sloped]{\scriptsize$a$};
\draw[->] (I2b)--(J22b) node[pos=.7,above,sloped]{\scriptsize$b$};
\draw[->] (I2b)--(J23b) node[pos=.7,above,sloped]{\scriptsize$c$};

\draw (6,-4) node (I3b) {$\bullet$} (5,-4) node (J31b) {$\bullet$} (4,-4) node (J33b) {$\bullet$}; 
\draw[->] (I3b)--(J31b) node[pos=.5,above,sloped]{\scriptsize$a$};
\node (Q0b)[draw=black,fit={(I3b) (J31b) }]{};
\node (Q1b)[draw=black,fit={(Q0b) (J33b)}]{};
\node (Q2b)[draw=black,fit={(Q1b) (I2b) (J21b) (J22b) (J23b)}]{};

\draw[-implies,double equal sign distance,red] (Q0b)--(J33b) node[pos=.5,above] {\scriptsize $b$};
\draw[-implies,double equal sign distance,red] (Q1b)--(J22b) node[pos=.5,above] {\scriptsize $c$};
\draw[-implies,double equal sign distance,red] (Q2b)--(J11b) node[pos=.5,above] {};

\draw (9,-4) node (I2c) {$\bullet$} +(1,1) node (J21c) {$\bullet$} +(1,0) node (J22c) {$\bullet$} +(1,-1) node (J23c) {$\bullet$}; 
\draw[->] (I2c)--(J21c) node[pos=.7,above,sloped]{\scriptsize$a$};
\draw[->] (I2c)--(J22c) node[pos=.7,above,sloped]{\scriptsize$b$};
\draw[->] (I2c)--(J23c) node[pos=.7,above,sloped]{\scriptsize$c$};

\draw (13,-4) node (I3c) {$\bullet$} (12,-4) node (J31c) {$\bullet$} (11,-4) node (J33c) {$\bullet$}; 
\draw[->] (I3c)--(J31c) node[pos=.5,above,sloped]{\scriptsize$a$};
\node (Q0c)[draw=black,fit={(I3c) (J31c) }]{};
\node (Q1c)[draw=black,fit={(Q0c) (J33c)}]{};
\node (Q2c)[draw=black,fit={(Q1c) (I2c) (J21c) (J22c) (J23c)}]{};
\node (Q3c)[draw=black,fit={(Q2c) (Q2b) (I1b)}]{};
\draw[-implies,double equal sign distance,red] (Q0c)--(J33c) node[pos=.5,above] {\scriptsize $b$};
\draw[-implies,double equal sign distance,red] (Q1c)--(J22c) node[pos=.5,above] {\scriptsize $c$};
\draw[-implies,double equal sign distance,red] (Q2c)--(Q0b) node[pos=.5,above] {};
\draw[-implies,double equal sign distance,red] (Q3c)--(Q1) node[pos=.6,above] {};
\end{tikzpicture}
\bigskip

\subsection{Compatible Pairs}

For $m\ge1$, let $D_m$ denote the maximal Dyck path in the lattice rectangle with corner vertices $(0,0)$ and $(u_m,u_{m-1})$.  
More precisely, $D_m$ is the lattice path which begins at $(0,0)$, takes East and North steps to end at $(u_m,u_{m-1})$, and never passes above the main diagonal joining $(0,0)$ and $(u_m,u_{m-1})$.  
It is maximal in the sense that any lattice point lying strictly above $D_m$ also lies above the main diagonal.  
The maximal Dyck paths $D_m$, $m\ge1$, exhibit the following recursive structure.
In what follows we assume $n\ge2$.
\begin{theorem}
  \label{th:dyck path recursion}
  \cite[Corollary 2.4]{rupel}
  For $n\ge2$, the maximal Dyck path $D_m$, $m\ge1$, can be constructed recursively as follows:
  \begin{enumerate}
    \item $D_1$ consists of a single horizontal edge;
    \item $D_2$ consists of $n$ consecutive horizontal edges followed by a vertical edge;
    \item $D_m$, $m\ge3$, consists of $n-1$ copies of $D_{m-1}$ followed by a copy of $D_{m-1}$ with its first $D_{m-2}$ removed.
  \end{enumerate}
\end{theorem}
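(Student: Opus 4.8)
This statement is \cite[Corollary~2.4]{rupel}; I sketch a self-contained argument. The plan is to rephrase the maximality condition as an explicit formula for the height function of $D_m$, reduce the asserted recursion to two elementary floor-function identities, and verify those by induction.

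\emph{Setup.} From $u_{m+1}=nu_m-u_{m-1}$ the Euclidean algorithm gives $\gcd(u_m,u_{m-1})=\gcd(u_1,u_0)=1$, and an immediate induction gives $0<u_{m-1}<u_m$ for $m\ge2$ together with the Cassini-type identity $u_{m-1}^2-u_mu_{m-2}=1$ for $m\ge2$. Since $(u_m,u_{m-1})$ are coprime, the maximality condition becomes: $D_m$ is the unique monotone lattice path from $(0,0)$ to $(u_m,u_{m-1})$ whose topmost point on each vertical line $X=x$ ($0\le x\le u_m$) has ordinate $h_m(x):=\lfloor u_{m-1}x/u_m\rfloor$; equivalently, the lattice points weakly below $D_m$ are exactly those $(x,y)$ with $u_my\le u_{m-1}x$. (Such a path exists and is unique, and it meets the main diagonal only at its endpoints, again because $\gcd(u_m,u_{m-1})=1$.)

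\emph{Base cases and reduction.} For $m=1$, $(u_1,u_0)=(1,0)$ gives a single horizontal edge; for $m=2$, $h_2(x)=\lfloor x/n\rfloor$ is $0$ for $0\le x<n$ and $1$ at $x=n$, i.e.\ $n$ horizontal edges then a vertical edge. Fix $m\ge3$ and assume the statement for $D_{m-1}$ and $D_{m-2}$. Let $P$ be the concatenation of $n-1$ copies of $D_{m-1}$ followed by a copy of $D_{m-1}$ with its initial $D_{m-2}$-block removed; by the inductive description that block has displacement $(u_{m-2},u_{m-3})$, so the truncated piece has displacement $(u_{m-1}-u_{m-2},\,u_{m-2}-u_{m-3})$, and using $u_m=nu_{m-1}-u_{m-2}$ and $u_{m-1}=nu_{m-2}-u_{m-3}$ one checks the $n-1$ full copies and the truncated piece together displace by $(u_m,u_{m-1})$, so $P$ is a genuine lattice path $(0,0)\to(u_m,u_{m-1})$. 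Reading off the height function $h_P$ block by block, and using $h_{m-1}(u_{m-2})=u_{m-3}$ (which is $\lfloor u_{m-2}^2/u_{m-1}\rfloor=u_{m-3}$ by the Cassini identity), the equality $h_P=h_m$ becomes the two identities
\[\Big\lfloor\tfrac{(k-1)+u_{m-1}s}{u_m}\Big\rfloor=\Big\lfloor\tfrac{u_{m-2}s}{u_{m-1}}\Big\rfloor\ \ (1\le k\le n-1,\ 0\le s\le u_{m-1}),\qquad \Big\lfloor\tfrac{(n-1)+u_{m-1}t}{u_m}\Big\rfloor=\Big\lfloor\tfrac{1+u_{m-2}t}{u_{m-1}}\Big\rfloor\ \ (0\le t\le u_{m-1}-u_{m-2}).\]

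\emph{The floor identities, and the obstacle.} For the first identity set $q=\lfloor u_{m-2}s/u_{m-1}\rfloor$ and $\rho=u_{m-2}s-qu_{m-1}\in[0,u_{m-1})$; a short computation using $u_{m-1}^2=u_mu_{m-2}+1$ gives $u_{m-1}s-qu_m=(q+u_{m-1}\rho)/u_{m-2}$, and since $q\le u_{m-2}$, $\rho<u_{m-1}$ and $u_{m-1}/u_{m-2}=n-u_{m-3}/u_{m-2}>n-1$, this integer lies in $[0,\,u_m-n+1]$; adding $k-1\le n-2$ keeps it in $[qu_m,(q+1)u_m)$, which is the first identity. The second is the same computation, the extra $+1$ being absorbed by the sharper bound $t\le u_{m-1}-u_{m-2}$ (so $u_{m-2}t\le u_{m-1}(u_{m-2}-u_{m-3})-1$), with the boundary abscissa $t=u_{m-1}-u_{m-2}$ (i.e.\ $x=u_m$) giving $u_{m-2}-u_{m-3}$ on both sides, matching $h_m(u_m)=u_{m-1}$ after the block shift. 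A little bookkeeping at the shared abscissae $x\in\{u_{m-1},\dots,(n-1)u_{m-1}\}$ confirms the adjacent block formulas agree there. The one genuinely delicate point — and essentially the only place the arithmetic of the $u_k$ enters in an essential way — is this last round of estimates: one must see that the \emph{a priori} weaker constraint ``weakly below the main diagonal'' forces ``weakly below $P$,'' which hinges precisely on $u_{m-1}^2-u_mu_{m-2}=1$ together with the strict inequality $(n-1)u_{m-1}<u_m$ (equivalently $u_{m-2}<u_{m-1}$) to kill the fractional slack; everything else is routine.
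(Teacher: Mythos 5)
The paper itself does not prove this theorem; it imports it from \cite[Corollary 2.4]{rupel}. Your proposal is therefore a new, self-contained argument, and the overall strategy is sound: characterize $D_m$ via the height function $h_m(x)=\lfloor u_{m-1}x/u_m\rfloor$ (justified because $\gcd(u_m,u_{m-1})=1$), check that the concatenation $P$ has the correct total displacement using the recursion for the $u_k$, and reduce the equality $h_P=h_m$ to two explicit floor identities using the Cassini-type relation $u_{m-1}^2-u_mu_{m-2}=1$. The reduction to those identities, and the identity $u_{m-1}s-qu_m=(q+u_{m-1}\rho)/u_{m-2}$, are correct.

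However, the estimate you use to close the first floor identity does not follow from the facts you list. You assert that $q\le u_{m-2}$, $\rho<u_{m-1}$ and $u_{m-1}/u_{m-2}>n-1$ together force $N:=(q+u_{m-1}\rho)/u_{m-2}$ into $[0,\,u_m-n+1]$. They do not: for $m=3$, $n=3$ (so $u_{m-2}=1$, $u_{m-1}=3$, $u_m=8$) those three inequalities are consistent with $q=1$, $\rho=2$, giving $N=7$, whereas the claimed range is $[0,6]$. The missing ingredient is that $q$ and $\rho$ are coupled by $qu_{m-1}+\rho=u_{m-2}s\le u_{m-2}u_{m-1}$, which is what actually excludes $(q,\rho)=(1,2)$. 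Using this coupling one can rewrite $N=(s+\rho u_m)/u_{m-1}$ (another consequence of the Cassini identity), and then $s\le u_{m-1}$, $\rho\le u_{m-1}-1$ and $u_m/u_{m-1}>n-1$ give $N<u_m-n+2$, hence $N\le u_m-n+1$ by integrality; the analogous repair is also needed for the second identity, which you dispose of ``by the same computation.'' This is exactly the spot you flag as the one genuinely delicate point, and the flag is well placed, but as written the estimate is not established.
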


We obtain the following as an immediate consequence.
\begin{corollary}
  \label{cor:short hooks}
  Inside $D_m$, $m\ge2$, there are precisely $u_{m-2}$ vertical edges which are immediately preceded by exactly $n-1$ horizontal edges, all other vertical edges are immediately preceded by exactly $n$ horizontal edges.
\end{corollary}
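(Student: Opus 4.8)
The plan is to prove by induction on $m$ a slightly strengthened statement: for every $m\ge 2$ the path $D_m$ begins with a horizontal edge, ends with a vertical edge, every maximal block of horizontal edges directly preceding a vertical edge has length $n-1$ or $n$, and exactly $u_{m-2}$ of the $u_{m-1}$ vertical edges of $D_m$ are preceded by a block of length $n-1$. Since every such block has length at least $n-1\ge 1$, no two vertical edges of $D_m$ are consecutive, so $D_m$ factors uniquely as a product of \emph{hooks} $H^{k_1}V\,H^{k_2}V\cdots H^{k_{u_{m-1}}}V$ with each $k_i\in\{n-1,n\}$, and the corollary becomes the assertion that exactly $u_{m-2}$ of the $k_i$ equal $n-1$. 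Hence proving the strengthened statement suffices.

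The engine of the induction is an elementary stability remark about hook lengths. If a path $P$ ending with a vertical edge is concatenated with a path $P'$ beginning with a horizontal edge, then for every vertical edge the preceding maximal horizontal block is the same in $PP'$ as it is in $P$ or in $P'$, because scanning horizontal edges backward from a vertical edge of $P'$ is blocked by the terminal vertical edge of $P$. Dually, deleting from a path an initial segment that itself ends with a vertical edge leaves the hook lengths of every surviving vertical edge unchanged. Consequently the hook factorization of such a concatenation (respectively, truncation) is the concatenation (respectively, truncation) of the hook factorizations of the pieces; I would record and use it in exactly this form.

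For the base cases, $D_2=H^nV$ is a single hook of length $n$, giving $u_1=1$ hook of which $u_0=0$ have length $n-1$; and by Theorem~\ref{th:dyck path recursion} the path $D_3$ is $n-1$ copies of $D_2$ followed by $D_2$ with its leading edge $D_1=H$ removed, i.e. $D_3=(H^nV)^{n-1}H^{n-1}V$, with $u_2=n$ hooks of which exactly $u_1=1$ has length $n-1$. For the inductive step $m\ge 4$ I would write $D_m=(D_{m-1})^{n-1}D_{m-1}'$ with $D_{m-1}'$ equal to $D_{m-1}$ with its initial copy of $D_{m-2}$ removed. Because $m-1$ and $m-2$ are both $\ge 2$, the paths $D_{m-1}$ and its leading $D_{m-2}$ end with vertical edges, so the stability remark identifies the hook factorization of $D_m$ as $n-1$ copies of that of $D_{m-1}$ followed by that of $D_{m-1}$ with its first $u_{m-3}$ hooks (those forming the initial $D_{m-2}$) deleted. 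Using the inductive counts $u_{m-3}$ and $u_{m-4}$ of short hooks in $D_{m-1}$ and $D_{m-2}$, the truncated copy contributes $u_{m-3}-u_{m-4}$ short hooks out of $u_{m-2}-u_{m-3}$ total, so $D_m$ has $(n-1)u_{m-2}+(u_{m-2}-u_{m-3})=nu_{m-2}-u_{m-3}=u_{m-1}$ hooks in all, matching its number of vertical edges, and $(n-1)u_{m-3}+(u_{m-3}-u_{m-4})=nu_{m-3}-u_{m-4}=u_{m-2}$ of length $n-1$, with every hook length still lying in $\{n-1,n\}$; it also evidently begins horizontally and ends vertically. This closes the induction and yields the corollary.

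The one delicate point, and the reason $D_3$ must be treated as a base case rather than by the general step, is that the truncation half of the stability remark requires the deleted prefix to terminate in a vertical edge; this holds for the removed $D_{m-2}$ precisely when $m-2\ge 2$, whereas for $D_3$ the removed prefix is $D_1=H$, which terminates horizontally and therefore genuinely shortens the affected hook from length $n$ to $n-1$ (this is exactly what creates the single short hook of $D_3$, hence ultimately the factor $u_{m-2}$). Beyond that, the argument is bookkeeping driven by the Chebyshev recursion $u_{m-1}=nu_{m-2}-u_{m-3}$.
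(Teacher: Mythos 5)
Your proof is correct and follows essentially the same route as the paper's: induction on $m$ via Theorem~\ref{th:dyck path recursion}(3), with base cases $m=2,3$ and the count $(n-1)u_{m-3}+(u_{m-3}-u_{m-4})=u_{m-2}$ for $m\ge4$. Your version is somewhat more careful than the paper's one-line inductive step, making explicit the stability of hook lengths under concatenation and truncation at vertical-edge boundaries and the strengthened invariant that all hook lengths lie in $\{n-1,n\}$, both of which the paper leaves implicit.
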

\begin{proof}
  We work by induction on $m\ge2$.
  The cases $m=2,3$ are immediate from Theorem~\ref{th:dyck path recursion} parts (2) and (3).
  For $m\ge4$, part (3) of Theorem~\ref{th:dyck path recursion} shows by induction that there are $(n-1)u_{m-3}+(u_{m-3}-u_{m-4})=u_{m-2}$ vertical edges which are immediately preceded by exactly $n-1$ horizontal edges.
\end{proof}

For $m\ge1$ and $1\le r\le n-1$, write $D_{m+1}^{[r]}$ for the maximal Dyck path obtained from $D_{m+1}$ by removing the first $r$ copies of $D_m$.
Extending this notation we also set $D_{m+1}^{[0]}:=D_{m+1}$.

For $m\ge1$ and $1\le i\le n-1$, we write $D_{m,i}$ for the $i$-th copy of $D_m$ inside $D_{m+1}$.
Note that for $1\le r\le n-1$, the maximal Dyck paths $D_{m,i}$, $r+1\le i\le n$, naturally identify with subpaths of $D_{m+1}^{[r]}$.
Extending the notation above, for $m\ge2$ and $1\le r\le n-1$, we write $D_{m,i}^{[r]}$ for the Dyck path obtained by removing the first $r$ copies of $D_{m-1}$ from $D_{m,i}$.
\begin{remark}
  For notational convenience, we also set $D_{m,n}^{[1]}:=D_{m+1}^{[n-1]}$ even though there is no maximal Dyck path $D_{m,n}$ identifying with a copy of $D_m$ inside $D_{m+1}$, such notation is justified by Theorem~\ref{th:dyck path recursion}.
  This should be compared with Corollary~\ref{cor:truncated preprojective isomorphism} and Lemma~\ref{le:truncated quotients}.

  This allows to write $D_{m,n}^{[r]}$ for $1\le r\le n-1$ for the terminal subpaths of $D_{m+1}$.
  We also iterate this notation below by identifying $D_{m,r+1}$ with $D_m$ and identifying $D_{m,r+1,n}^{[r+1]}$ with the subpath obtained by removing the first $r+1$ copies of $D_{m-2}$ from a copy of $D_{m-1}$.
\end{remark}

For $m\ge1$, we identify the edges of $D_{m+1}$ with the ordered set $E_{m+1}=\{1,\ldots,u_{m+1}+u_m\}$, where edges of $D_{m+1}$ are taken in the natural order beginning from $(0,0)$.
Let $E_{m+1}=H_{m+1}\sqcup V_{m+1}$, where $H_{m+1}=\{h_1,\ldots,h_{u_{m+1}}\}$ and $V_{m+1}=\{v_1,\ldots,v_{u_m}\}$ denote the horizontal and vertical edges of $D_{m+1}$ respectively.
Following Theorem~\ref{th:dyck path recursion}, we partition the edges as $E_{m+1}=\bigsqcup_{i=1}^n E_{m,i}$, where $E_{m,i}$ denotes the edges of $D_{m,i}$.
The set $E_{m,i}$ is naturally partitioned into its subsets $H_{m,i}$ and $V_{m,i}$ of horizontal and vertical edges.
The edges of $D_{m+1}^{[r]}$ are similarly partitioned as $E_{m+1}^{[r]}=\bigsqcup_{i=r+1}^n E_{m,i}=H_{m+1}^{[r]}\sqcup V_{m+1}^{[r]}$.

Given edges $e,e'\in E_{m+1}$ with $e<e'$, write $ee'$ for the shortest subpath of $D_{m+1}$ containing $e$ and $e'$, in particular $ee$ is the subpath containing the single edge $e$.
\begin{definition}
  \label{def:compatibility}
  For $m\ge1$, a pair of subsets $S_H\subset H_{m+1}$ and $S_V\subset V_{m+1}$ is called \emph{compatible} if: 
  for each pair $(h,v)\in S_H\times S_V$ with $h<v$, there exists an edge $e\in hv$ so that at least one of the following holds
  \begin{equation}
    \label{eq:hgc}
    e\ne v\qquad\text{and}\qquad |he\cap V_{m+1}|=n|he\cap S_H|
  \end{equation}
  or
  \begin{equation}
    \label{eq:vgc}
    e\ne h\qquad\text{and}\qquad |ev\cap H_{m+1}|=n|ev\cap S_V|.
  \end{equation}
  Write $\cC_{m+1}$ for the collection of all pairs $(S_H,S_V)$ which are compatible as above.
\end{definition}
\begin{remark}
  This notion of compatibility extends naturally to the maximal Dyck paths $D_{m+1}^{[r]}$, $1\le r\le n-1$, and trivially to the Dyck path $D_1$.
  Write $\cC_{m+1}^{[r]}$ for the set of all compatible pairs in $D_{m+1}^{[r]}$.
\end{remark}

The recursive structure of the maximal Dyck paths from Theorem~\ref{th:dyck path recursion} gives rise to a recursive characterization of compatible pairs.
\begin{definition}
  \label{def:piecewise compatibility}
  \cite[Definition 3.11]{rupel}
  A pair of subsets $S_H\subset H_{m+1}$ and $S_V\subset V_{m+1}$ is called \emph{piecewise compatible} if, for each $1\le r\le n$, one of the conditions \eqref{eq:hgc} or \eqref{eq:vgc} is satisfied for each pair $(h,v)\in S_H\times S_V$ with $h\in H_{m,i}$ and $v\in V_{m,i}$.
\end{definition}
\begin{remark}
  The notion of piecewise compatibility naturally extends to the maximal Dyck paths $D_{m+1}^{[r]}$, $1\le r\le n-1$.
  Given a compatible pair $(S_H,S_V)$ in $D_{m+1}^{[r]}$, we write $S_H^{[r+1]}=S_H\cap H_{m+1}^{[r+1]}\subset H_{m+1}^{[r]}$ and $S_V^{[r+1]}=S_V\cap V_{m+1}^{[r+1]}\subset V_{m+1}^{[r]}$.
  In particular, the pair $(S_H^{[r+1]},S_V^{[r+1]})$ is compatible in $D_{m+1}^{[r+1]}$.
  We also write $S_{H,i}=S_H\cap H_{m,i}$ and $S_{V,i}=S_V\cap V_{m,i}$ for $r+1\le i\le n-1$.
\end{remark}

To describe precisely when a piecewise compatible pair $(S_H,S_V)$ is compatible we need more notation.
For a horizontal edge $h\in H_{m+1}$ and a subset $S_H\subset H_{m+1}$, write $D(h;S_H)=he$ for the shortest subpath of $D_{m+1}$ for which $|he\cap V_{m+1}|=n|he\cap S_H|$, if no such subpath exists we set $D(h;S_H)=hv_{u_m}$.
The subpath $D(h;S_H)$ is called the \emph{local shadow path} of $h$ with respect to $S_H$.
Similarly, for a vertical edge $v\in V_{m+1}$ and a subset $S_V\subset V_{m+1}$, the \emph{local shadow path} of $v$ with respect to $S_V$ is $D(v;S_V)=ev$ for the shortest subpath of $D_{m+1}$ for which $|ev\cap H_{m+1}|=n|ev\cap S_V|$ and we take $D(v;S_V)=h_1v$ if there does not exist such an edge $e$.
\begin{definition}
  \cite[Definition 3.17]{rupel}
  A horizontal edge $h_i\in H_{m+1}$, $m\ge2$, is called \emph{blocking} for a subset $S_H\subset H_{m+1}$ if $D(h_i;S_H)=h_iv_{u_m}$ and $h_i$ is furthest to the right with this property, i.e.\ the index $i$ is maximal. 

  Suppose $S_H\subset H_{m+1}$ admits a blocking edge $h_i\in H_{m+1}$.
  Then $S_H$ is \emph{left-justified at $h_i$} if there exists $k\ge i$ so that $S_H=\{h_i,h_{i+1},\ldots,h_k\}$.
  The subset $S_H$ is \emph{strongly left-justified at $h_i$} if $S_H$ is left-justified at $h_i$ and $|h_iv_{u_m}\cap V_{m+1}|=n|h_iv_{u_m}\cap S_H|$.

  A subset $S_V\subset V_{m+1}$ is \emph{right-justified with respect to $h_i$} if there exists a vertical edge $v_s\in h_iv_{u_m}$ so that $S_V\cap h_iv_{u_m}=\{v_s,v_{s-1},\ldots,v_{u_m}\}$.
  The subset $S_V$ is \emph{strongly right-justified with respect to $h_i$} if $S_V$ is right-justified with respect to $h_i$ and $D(v_{u_m};S_V)=h_iv_{u_m}$ with $|h_iv_{u_m}\cap H_{m+1}|=n|h_iv_{u_m}\cap S_V|$.
\end{definition}

\begin{theorem}
  \label{th:blocking edge conditions}
  \cite[Theorem 3.20 and Corollary 3.22]{rupel}
  For $m\ge2$, suppose $S_H\subset H_{m+1}$ and $S_V\subset V_{m+1}$ are piecewise compatible. 
  Then the following hold:
  \begin{enumerate}
    \item If $S_H$ does not admit a blocking edge, then $(S_H,S_V)\in\cC_{m+1}$.
    \item Suppose $S_H$ admits a blocking edge $h_i\in H_{m+1}$ and $(S_H,S_V)$ is not compatible.
      Then $S_H$ is left-justified at $h_i$ and $S_V$ is strongly right-justified with respect to $h_i$.
      In addition, the following hold:
      \begin{enumerate}
        \item If $m=2$, then $S_H\cap h_iv_{u_m}=\{h_i\}$.
        \item If $m\ge3$, then $S_H$ is strongly left-justified at $h_i$.
        \item If $m\ge4$, then either $i=1$ or $h_i$ is immediately preceded by a vertical edge in $D_{m+1}$.
      \end{enumerate}
  \end{enumerate}
\end{theorem}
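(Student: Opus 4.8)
The plan is to prove the statement by induction on $m$, exploiting the self-similar structure of $D_{m+1}$ recorded in Theorem~\ref{th:dyck path recursion} and reducing a failure of compatibility to the presence of a single ``bad pair'' of edges. First I would reformulate compatibility in terms of a step function: for a horizontal edge $h$ and an edge $e\ge h$ of $D_{m+1}$ set $\delta_H(h,e):=|he\cap V_{m+1}|-n\,|he\cap S_H|$, which equals $-n$ when $e=h\in S_H$, increases by $1$ across each vertical edge, decreases by $n$ across each edge of $S_H$, and is constant otherwise. Since $\delta_H(h,\cdot)$ changes by at most $+1$ per edge, once negative it can only return to $0$ across a vertical edge, so the local shadow path $D(h;S_H)$ is exactly the initial segment up to the first vertical edge $e_0>h$ at which $\delta_H$ reaches $0$ (and equals $hv_{u_m}$ when no such $e_0$ exists). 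Defining $\delta_V(e,v)$ dually, one checks directly from Definition~\ref{def:compatibility} that a pair $(h,v)$ with $h<v$ fails the compatibility requirement --- call it \emph{bad} --- exactly when the terminal edge of $D(h;S_H)$ is $\ge v$ \emph{and} the initial edge of $D(v;S_V)$ is $\le h$. Piecewise compatibility (Definition~\ref{def:piecewise compatibility}) is precisely the statement that no bad pair lies entirely in a single block $D_{m,i}$, so any bad pair must have $h\in H_{m,i}$ and $v\in V_{m,j}$ with $i<j$.

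Assume then that $(S_H,S_V)$ is piecewise compatible but not compatible, and fix a bad pair $(h,v)$ chosen extremally (say with $v$ in the latest block and $h$ as far to the right as the constraint allows). The heart of the argument is a propagation analysis: restricting $(S_H,S_V)$ to the terminal families of copies $D_{m,i},\dots,D_{m,n}$ of $D_m$ (the last being the truncated copy $D_{m,n}^{[1]}=D_{m+1}^{[n-1]}$) and applying the theorem for $m-1$ blockwise, one tracks how $\delta_H(h,\cdot)$ and $\delta_V(\cdot,v)$ evolve across block boundaries; the crucial combinatorial input is Corollary~\ref{cor:short hooks}, which bounds by $n$ the number of horizontal edges between consecutive vertical edges and pins down the exceptions. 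The outcome of this analysis is that $D(h;S_H)$ reaches $v_{u_m}$ without returning to $0$, so $D(h;S_H)=hv_{u_m}$; letting $h_i$ be the rightmost edge with this property shows that $S_H$ admits the blocking edge $h_i\ge h$. In contrapositive form this is part (1): if $S_H$ has no blocking edge, every shadow closes inside $D_{m+1}$ and, combined with piecewise compatibility, supplies a witness edge for every cross-block pair, so $(S_H,S_V)\in\cC_{m+1}$. Dually one extracts the symmetric information for $S_V$ (the leftmost relevant vertical edge and the corresponding shadow reaching $h_1$).

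It remains to convert ``$D(h_i;S_H)=h_iv_{u_m}$ and $h_i$ rightmost'' into the asserted rigidity. A gap in $S_H$ to the right of $h_i$, or a start strictly right of $h_i$, would --- again by the short-hook bound --- force $\delta_H(h_i,\cdot)$ to climb back to $0$ before $v_{u_m}$, contradicting the blocking property; hence $S_H=\{h_i,h_{i+1},\dots,h_k\}$ is left-justified at $h_i$, and comparing the value $\delta_H(h_i,v_{u_m})$ with $0$ upgrades this to strongly left-justified when $m\ge3$. The dual computation gives that $S_V$ is (strongly) right-justified with respect to $h_i$. For $m=2$ each block is a hook consisting of $n$ horizontal edges followed by one vertical edge, and an explicit count shows the only non-closing configuration is $S_H\cap h_iv_{u_m}=\{h_i\}$. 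For $m\ge4$, Corollary~\ref{cor:short hooks} forces $h_i$ to be immediately preceded by a vertical edge unless $i=1$: otherwise $h_i$ would lie in the run of $n$ horizontal edges of a short hook, and left-justification of $S_H$ would make $\delta_H(h_i,\cdot)$ return to $0$ inside that hook.

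The main obstacle is the propagation step. Because the terminal block of $D_{m+1}$ is the truncated copy $D_{m,n}^{[1]}$ rather than a full copy of $D_m$, the vertical- and horizontal-edge counts governing $\delta_H$ and $\delta_V$ must be tracked carefully relative to the recursion of Theorem~\ref{th:dyck path recursion}, and one has to verify block by block that ``the shadow never closes'' is equivalent to the justification conditions; Corollary~\ref{cor:short hooks} is the bookkeeping device that keeps this tractable, and setting up the induction so that it simultaneously controls the restricted pairs $(S_H^{[r+1]},S_V^{[r+1]})$ on $D_{m+1}^{[r+1]}$ is what makes the blockwise reduction go through.
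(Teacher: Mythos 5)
The paper cites this statement from \cite[Theorem 3.20 and Corollary 3.22]{rupel} and does not give a proof of it, so there is no internal argument to compare yours against; I can only assess your sketch on its own merits, and I see two genuine gaps.

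Your step-function reformulation $\delta_H(h,e)=|he\cap V_{m+1}|-n|he\cap S_H|$, together with the observation that $\delta_H$ increases only (and only by $+1$) across vertical edges so the shadow closes at the first vertical edge where $\delta_H$ returns to $0$, is correct and a good way to set things up; so is the characterization of a bad pair $(h,v)$ as one where $D(h;S_H)$ reaches $v$ and $D(v;S_V)$ reaches back to $h$. But the step that carries the content of the theorem --- deducing from a cross-block bad pair that $S_H$ has a blocking edge --- is left as ``one tracks how $\delta_H$ and $\delta_V$ evolve across block boundaries; ... the outcome of this analysis is that $D(h;S_H)$ reaches $v_{u_m}$.'' That is a statement of what you hope to prove, not an argument. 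Badness of $(h,v)$ only gives that $\delta_H(h,\cdot)$ stays negative up to $v$, not up to $v_{u_m}$; converting the first into the second is precisely the inductive, blockwise bookkeeping you defer, and Corollary~\ref{cor:short hooks} by itself does not do it. You acknowledge this yourself in the last paragraph, which confirms the core of the proof is not actually there.

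The justification step is also incomplete. You argue that a gap in $S_H$ to the right of $h_i$, or a start strictly to the right of $h_i$, would force $\delta_H(h_i,\cdot)$ back to $0$. The second case is vacuous ($h_i\notin S_H$ gives $D(h_i;S_H)=h_ih_i$, so $h_i$ could never be blocking), and the first does not follow from the short-hook bound alone: removing $S_H$-edges raises $\delta_H$, but whether it raises it to $0$ before $v_{u_m}$ is a quantitative question that you don't settle. More seriously, ``left-justified at $h_i$'' demands $S_H=\{h_i,\dots,h_k\}$, i.e.\ $S_H$ contains nothing strictly to the \emph{left} of $h_i$, and your argument never addresses that side at all --- $h_i$ being the rightmost blocking edge does not by itself exclude $h_j\in S_H$ with $j<i$. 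This, together with the upgrade to strong left-justification for $m\ge3$, the $m\ge4$ claim about $h_i$ being preceded by a vertical edge, and the dual statements on the $S_V$ side, requires a coupling of the two shadows that your sketch does not supply.
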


\begin{corollary}
  \label{cor:piecewise not compatible}
  For $m\ge3$ and $0\le r\le n-1$, consider $S_H\subset H_{m+1}^{[r]}$ and $S_V\subset V_{m+1}^{[r]}$ so that $(S_H,S_V)$ is piecewise compatible.
  Assume $\big(S_H^{[r+1]},S_V^{[r+1]}\big)\in\cC_{m+1}^{[r+1]}$.
  Then $(S_H,S_V)$ is not compatible if and only if $H_{m,r+1,n}^{[r+1]}\subset S_{H,r+1}$ and $V_{m+1}^{[r+1]}\subset S_V$.
\end{corollary}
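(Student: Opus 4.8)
\textit{Approach.} I would argue entirely within the Dyck-path combinatorics, using three inputs: the recursive description of maximal Dyck paths (Theorem~\ref{th:dyck path recursion}), the blocking-edge characterisation of non-compatibility (Theorem~\ref{th:blocking edge conditions}), and the hook count of Corollary~\ref{cor:short hooks}. The shape of the argument parallels the behaviour of the empty fibres of the Caldero--Chapoton map for the short exact sequence $0\to\tilde P_m\to\tilde P_{m+1}^{[r]}\to\tilde P_{m+1}^{[r+1]}\to 0$: non-compatibility, like emptiness of that fibre, will occur precisely when the ``quotient side'' of the pair is everything and the ``sub side'' meets the $\tau$-translate in a prescribed way.

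\textit{Step 1 (splitting; reduction to crossing pairs).} By Theorem~\ref{th:dyck path recursion} the path $D_{m+1}^{[r]}$ is the concatenation $D_{m,r+1}\cdot D_{m+1}^{[r+1]}$ with $D_{m,r+1}$ a full copy of $D_m$ (the degenerate case $r=n-1$ is handled with the analogous iterated notation). Split $S_H=S_{H,r+1}\sqcup S_H^{[r+1]}$ and $S_V=S_{V,r+1}\sqcup S_V^{[r+1]}$, and note that $V_{m+1}^{[r+1]}\subset S_V$ is equivalent to $S_V^{[r+1]}=V_{m+1}^{[r+1]}$. A pair $(h,v)\in S_H\times S_V$ with $h<v$ either has both edges in one of the two blocks, or has $h\in D_{m,r+1}$ and $v\in D_{m+1}^{[r+1]}$. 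For a pair inside $D_{m,r+1}$ a valid edge exists because piecewise compatibility is exactly the assertion that the pair restricts compatibly to each copy of $D_m$; for a pair inside $D_{m+1}^{[r+1]}$ one exists by the hypothesis $(S_H^{[r+1]},S_V^{[r+1]})\in\cC_{m+1}^{[r+1]}$, the point being that the subpath $hv$ lies entirely inside that block and the counts in \eqref{eq:hgc} and \eqref{eq:vgc} are unchanged on passing to the block. Hence $(S_H,S_V)$ is compatible if and only if every crossing pair admits a valid edge.

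\textit{Step 2 (a blocking-edge criterion).} I would then show that, for a piecewise-compatible pair in a (truncated) maximal Dyck path with $m\ge3$, non-compatibility is equivalent to the existence of a blocking edge $h_i$ of $S_H$ at which $S_H$ is strongly left-justified and with respect to which $S_V$ is strongly right-justified. The forward direction is Theorem~\ref{th:blocking edge conditions}(1)--(2), with $m\ge3$ invoked so that part~(2)(b) upgrades left-justification to strong left-justification. The converse is a direct check: under those conditions the pair $(h_i,v_{u_m})$ admits no valid edge, since \eqref{eq:hgc} fails on every proper initial subpath (as $D(h_i;S_H)=h_iv_{u_m}$) and \eqref{eq:vgc} fails on every proper terminal subpath (as strong right-justification gives $D(v_{u_m};S_V)=h_iv_{u_m}$). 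Applying this criterion inside $D_{m+1}^{[r+1]}$ and using $(S_H^{[r+1]},S_V^{[r+1]})\in\cC_{m+1}^{[r+1]}$ then forces any such blocking edge to lie in the first block $D_{m,r+1}$.

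\textit{Step 3 (matching; the main obstacle).} Write $\mathcal D:=D_{m,r+1,n}^{[r+1]}$ for the terminal subpath of $D_{m,r+1}$ obtained by deleting its first $r+1$ copies of $D_{m-2}$. The key numerical input, obtained from the recursion $u_{k+1}=nu_k-u_{k-1}$, is that the number of vertical edges of $D_{m+1}^{[r]}$ lying strictly after the first edge of $\mathcal D$ equals $n\,|H_{\mathcal D}|$; equivalently, when $S_V^{[r+1]}=V_{m+1}^{[r+1]}$ the local shadow path $D(v_{u_m};S_V)$ reaches back exactly to the first edge of $\mathcal D$, and the corresponding count of horizontal edges balances. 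Feeding this identity, together with Corollary~\ref{cor:short hooks} (to locate the short hooks of $D_{m,r+1}$ relative to $\mathcal D$) and piecewise compatibility, into the criterion of Step~2 identifies the blocking-edge configuration there — with blocking edge necessarily the first edge of $\mathcal D$ — with the condition $H_{m,r+1,n}^{[r+1]}\subset S_{H,r+1}$ together with $V_{m+1}^{[r+1]}\subset S_V$, which completes the equivalence. I expect this final step, a careful but conceptually routine bookkeeping with the Chebyshev-type recursion and the hook structure, to be the main obstacle; Steps~1 and~2 are essentially formal once Theorem~\ref{th:blocking edge conditions} is available.
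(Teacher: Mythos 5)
Your Steps~1 and~2 are sound and match the role Theorem~\ref{th:blocking edge conditions} plays in the paper: the backward direction~($\Leftarrow$) of Step~2 and the argument that any blocking edge must lie in the first block $D_{m,r+1}$ are both correct and essentially the paper's own reasoning. The reverse implication of the corollary (i.e.\ that the stated conditions imply non-compatibility) is also handled correctly by your numerical identity in Step~3 and agrees with the paper's computation.

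The gap is in the forward direction of Step~3, which you flag as the ``main obstacle'' but then dismiss as ``conceptually routine bookkeeping''. It is not routine, and the paper does not handle it by direct counting for all $m$. The paper proves the forward implication by \emph{induction on $m$}: the base case $m=3$ is done by a direct divisibility count exactly in the spirit you describe (there $|V_4^{[r]}|=(n-r)n-1$, so the blocking edge is pinned down modulo $n$, and since the first block has only $n$ vertical edges the position is forced), but for $m\ge4$ the paper passes to the pair $\bigl((\varphi^*)^{-1}S_V,\Omega^{-1}S_H\bigr)$ on $D_m$ using the shift bijection $\varphi^*$ of \cite[Section~3.2]{rupel}, invoking \cite[Propositions~3.10 and 3.16]{rupel} to preserve non-compatibility and piecewise compatibility, applies the inductive hypothesis, and translates back. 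Your plan to replace this with a direct count runs into the following difficulty: for $m\ge4$ the first block $D_{m,r+1}$ has $u_{m-1}$ vertical edges, so the divisibility constraint ``$|h_iv_{u_m}\cap V_{m+1}^{[r]}|$ is a multiple of $n$'' is satisfied at roughly $u_{m-1}/n$ candidate positions, not just one; Corollary~\ref{cor:short hooks} and Theorem~\ref{th:blocking edge conditions}(2)(c) do not by themselves single out the first edge of $\mathcal D$. Moreover, you also need strong right-justification together with the blocking configuration to \emph{force} $S_V^{[r+1]}=V_{m+1}^{[r+1]}$ and $H_{m,r+1,n}^{[r+1]}\subset S_{H,r+1}$, rather than just be consistent with it; your Step~3 as written assumes $S_V^{[r+1]}=V_{m+1}^{[r+1]}$ when computing the reach of $D(v_{u_m};S_V)$, which is circular in the $(\Rightarrow)$ direction. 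To complete your route you would essentially have to reconstruct the $\varphi^*$-reduction of \cite{rupel} or find an independent inductive mechanism; as stated, this step is a genuine missing piece.
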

\begin{proof}
  We begin with the reverse implication.
  First note that there are $(n-r)u_m-u_{m-1}$ horizontal edges and $(n-r)u_{m-1}-u_{m-2}$ vertical edges in $D_{m+1}^{[r]}$. 
  It follows that $H_{m,r+1,n}^{[r+1]}\sqcup H_{m+1}^{[r+1]}$ contains $n(n-r)u_{m-1}-nu_{m-2}$ horizontal edges and $V_{m-1}^{[r+1]}\sqcup V_{m+1}^{[r+1]}$ contains $n(n-r)u_{m-2}-nu_{m-3}$ vertical edges (note that $D_{m,r+1,n}^{[r+1]}$ naturally identifies with the Dyck path $D_{m-1}^{[r+1]}$).

  Assuming $H_{m,r+1,n}^{[r+1]}\subset S_{H,r+1}$ and $S_V^{[r+1]}=V_{m+1}^{[r+1]}$, we have $S_V\cap V_{m,r+1,n}^{[r+1]}=\varnothing$ and $S_H\cap H_{m+1}^{[r+1]}=\varnothing$ by piecewise compatibility.
  Let $h\in H_{m+1}^{[r]}$ be the horizontal edge corresponding to the first horizontal edge of $H_{m,r+1,n}^{[r+1]}$.
  Then, since there are $(n-r)u_{m-2}-u_{m-3}$ horizontal edges in $H_{m,r+1,n}^{[r+1]}$, the local shadow path $D(h;S_H)$ contains $n\big((n-r)u_{m-2}-u_{m-3}\big)$ vertical edges and is thus equal to $hv_{u_m}$.
  Similarly, the local shadow path $D(v_{u_m};S_V)$ is also equal to $hv_{u_m}$.
  In particular, neither of the compatibility conditions of Definition~\ref{def:compatibility} are satisfied for the path $hv_{u_m}$ and so $(S_H,S_V)$ is not compatible.

  For the forward implication, we work by induction on $m\ge3$.
  Consider a pair $(S_H,S_V)$ for $D_4^{[r]}$ as above which is not compatible.
  Following Theorem~\ref{th:blocking edge conditions}, write $h\in H_4^{[r]}$ for the blocking edge of $S_H$.
  Then the number of vertical edges in the local shadow path $D(h;S_H)=hv_{u_3}$ must be divisible by $n$. 
  Since $\big(S_H^{[r+1]},S_V^{[r+1]}\big)$ is compatible, we must have $h\in H_{3,r+1}$.
  But observe that $|V_4^{[r]}|=(n-r)n-1$ and so the divisibility condition above implies $h\in H_{3,r+1}^{[n-1]}$.
  But $S_V$ is strongly right-justified with respect to $h$ and thus the number of horizontal edges in $D(v_{u_3};S_V)=hv_{u_3}$ is divisible by $n$.
  Identifying $H_{3,r+1}^{[n-1]}$ with $H_2^{[1]}$, this divisibility condition only occurs when $h$ is the first horizontal edge in $H_2^{[r+1]}\subset H_2^{[1]}$.
  Then by piecewise compatibility, the vertical edge of $H_2^{[1]}$ cannot be an element of $S_V$ and we must have $V_4^{[r+1]}\subset S_V$.
  By piecewise compatibility again, this implies $H_4^{[r+1]}\cap S_H=\varnothing$ and so $D(h;S_H)=hv_{u_3}$ implies $H_2^{[r+1]}\subset S_H$.

  To continue, let $(S_H,S_V)$ be a pair for $D_{m+1}$, $m\ge4$, which is not compatible.
  Write $\varphi:H_m\to V_{m+1}$ for the bijection given by $\varphi(h_i)=v_i$ for $1\le i\le u_m$.
  For any subset $T\subset H_m$, set $\varphi^*(T)=V_{m+1}\setminus\varphi(T)$.
  Clearly, the map $\varphi^*$ gives a bijection between subsets of $H_m$ and subsets of $V_{m+1}$.
  In Section 3.2 of \cite{rupel}, a new pair of subsets $\big((\varphi^*)^{-1}S_V,\Omega^{-1}S_H\big)$ for $D_m$ is given, we refer the reader to \emph{loc.\ cit} for notation.
  By \cite[Proposition 3.10]{rupel}, the pair $\big((\varphi^*)^{-1}S_V,\Omega^{-1}S_H\big)$ is not compatible, but is piecewise compatible by \cite[Proposition 3.16]{rupel}.
  Thus by induction, we must have $H_{m-1,r+1,n}^{[r+1]}\subset(\varphi^*)^{-1}S_V$ and $V_m^{[r+1]}\subset\Omega^{-1}S_H$.
  It follows from piecewise compatibility that $H_m^{[r+1]}\cap(\varphi^*)^{-1}S_V=\varnothing$.
  But then by the definition of $\varphi^*$ we have $V_{m,r+1,n}^{[r+1]}\cap S_V=\varnothing$ and $S_V^{[r+1]}=V_{m+1}^{[r+1]}$ so that $D(v_{u_m};S_V)=hv_{u_m}$ with $h$ as in the first case above.
  Since $(S_H,S_V)$ is not compatible, Theorem~\ref{th:blocking edge conditions} states that $h$ must be the blocking edge for $S_H$ and we must have $D(h;S_H)=hv_{u_m}$.
  But this can only occur if $H_{m,r+1,n}^{[r+1]}\subset S_H$ since $H_{m+1}^{[r+1]}\cap S_H=\varnothing$ by piecewise compatibility.
\end{proof}

The following result is an immediate consequence of the combinatorial construction of rank 2 cluster variables \cite{llz} and the categorification of these variables using representations of $K(n)$ \cite{cc,ck}.
\begin{theorem}
  \label{th:combinatorial euler}
  \cite{llz}
  For each $m\ge1$ and $\bfe\in\ZZ_{\ge0}^2$, we have 
  \[\chi\big(\Gr_\bfe(P_m)\big)=\Big|\big\{(S_H,S_V)\in\cC_m:|S_H|=u_m-e_1,|S_V|=e_2\big\}\Big|.\]
\end{theorem}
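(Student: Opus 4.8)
The statement is essentially a citation to \cite{llz}, so the plan is not to reprove the cluster algebra combinatorics from scratch but to explain how the identity follows by combining two already-known facts: the combinatorial formula for rank two cluster variables in terms of compatible pairs in maximal Dyck paths \cite{llz}, and the Caldero--Chapoton-type categorification \cite{cc,ck} expressing cluster variables of a rank two cluster algebra as generating functions of Euler characteristics of quiver Grassmannians of preprojective representations of $K(n)$. The plan is to match these two Laurent-polynomial expressions coefficient by coefficient.

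\begin{proof}
Fix $n\ge2$ and work in the rank two cluster algebra $\mathcal A(n)$ with principal coefficients, whose exchange matrix encodes the $n$-Kronecker quiver. By the cluster character formula of \cite{cc} together with its extension to the generalized Kronecker case in \cite{ck}, the non-initial cluster variable $x_m$ associated (via Auslander--Reiten theory) to the preprojective representation $P_m$ is given by
\[
x_m=x_1^{-u_m}x_2^{-u_{m-1}}\sum_{\bfe\in\ZZ_{\ge0}^2}\chi\big(\Gr_\bfe(P_m)\big)\,x_1^{?}x_2^{?},
\]
where the monomial exponents are determined by $\bfe$ and $\udim P_m=(u_m,u_{m-1})$ in the standard way; in particular the coefficient of the normalized monomial indexed by $\bfe$ in the $F$-polynomial $F_{P_m}=\sum_\bfe\chi(\Gr_\bfe(P_m))y^\bfe$ is exactly $\chi(\Gr_\bfe(P_m))$. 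On the other hand, the combinatorial expression for the same cluster variable obtained in \cite{llz} via maximal Dyck paths has, after the identical normalization, the coefficient of the monomial indexed by $\bfe$ equal to the number of compatible pairs $(S_H,S_V)\in\cC_m$ with $|S_H|=u_m-e_1$ and $|S_V|=e_2$, where the sizes are read off from the correspondence between a compatible pair and the monomial it contributes (horizontal edges not in $S_H$ record the $x_1$-exponent, vertical edges in $S_V$ record the $x_2$-exponent). Since both expressions compute the Laurent expansion of the \emph{same} element $x_m\in\mathcal A(n)$ in the \emph{same} cluster, and since distinct monomials are linearly independent in the ring of Laurent polynomials, the two families of coefficients must agree term by term. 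This yields the claimed equality for every $m\ge1$ and every $\bfe\in\ZZ_{\ge0}^2$.
\end{proof}

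The genuinely delicate point—and the step I would spend the most care on—is bookkeeping the normalizations so that the monomial labelled by the dimension vector $\bfe$ on the representation-theoretic side is literally the monomial labelled by a compatible pair of the stated cardinalities on the Dyck path side; this requires pinning down the denominator vector, the $g$-vector shift, and the convention relating $(|S_H|,|S_V|)$ to $(e_1,e_2)$ exactly as in \cite{llz} and \cite{rupel}. Once the dictionary is fixed, no further argument is needed: everything reduces to the uniqueness of Laurent expansions. One could alternatively make the statement self-contained by invoking Theorem~\ref{th:dyck path recursion} and the recursion for $F_{d(m,r)}$ to match both sides inductively on $m$, but for the purposes of this paper the identification of two known formulas for $x_m$ is the cleanest route.
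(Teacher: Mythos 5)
Your proposal is correct and follows exactly the route the paper intends: the paper gives no proof of this theorem, only the one-sentence assertion that it is ``an immediate consequence of the combinatorial construction of rank 2 cluster variables \cite{llz} and the categorification of these variables using representations of $K(n)$ \cite{cc,ck}.'' You make that assertion precise by comparing the two Laurent expansions of the same cluster variable and invoking uniqueness of coefficients, and you correctly identify the only nontrivial content---pinning down the denominator/$g$-vector normalizations and the convention $(|S_H|,|S_V|)=(u_m-e_1,e_2)$---as the step requiring care.
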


Our goal is to give a geometric explanation for this by showing that the compatible pairs provide a natural labeling for the cells of $\Gr_\bfe(P_{m+1})$ found in Theorem~\ref{celldec}.
In fact, we will see more: that the cells of quiver Grassmannians $\Gr_\bfe(P_{m+1}^V)$ for truncated preprojectives $P_{m+1}^V$ are also naturally labeled by compatible pairs.
We accomplish this by providing a bijection between the compatible pairs as in Theorem~\ref{th:combinatorial euler} and the successor closed sets of vertices in the 2-quivers $\cQ_{m+1}^{[r]}$ used in Theorem~\ref{thm:2quivercells} to describe the non-empty cells.
\begin{theorem}
  \label{th:compatible cells}
  For $m\ge1$ and $V\in\Gr(\cH_m)$ or $V=0$, each quiver Grassmannian $\Gr_\bfe(P_{m+1}^V)$ admits a cell decomposition with affine cells labeled by compatible pairs in the maximal Dyck path $D_{m+1}^{[r]}$, where $r=\dim V$.
\end{theorem}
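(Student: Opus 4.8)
The plan is to reduce the statement, via Theorem~\ref{thm:2quivercells}, to a purely combinatorial bijection and then to construct that bijection recursively, running in parallel with the two recursions already available: the construction of the $2$-quivers $\cQ_{m+1}^{[r]}$ in \eqref{eq:recursive 2-quivers} and the recursion for the maximal Dyck paths $D_{m+1}^{[r]}$ in Theorem~\ref{th:dyck path recursion}. Concretely, Theorem~\ref{celldec} gives a cell decomposition of $\Gr_\bfe(P_{m+1}^V)$ and Theorem~\ref{thm:2quivercells} identifies its affine cells with the strong successor closed subsets $\beta\subset(\cQ_{m+1}^{[r]})_0$ of dimension type $\bfe$, where $r=\dim V$. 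So it suffices to produce, for all $m\ge1$ and $0\le r\le n-1$, a bijection
\[\Theta_{m+1}^{[r]}:\bigl\{\beta\subset(\cQ_{m+1}^{[r]})_0\text{ strong successor closed}\bigr\}\xrightarrow{\ \sim\ }\cC_{m+1}^{[r]}\]
sending a subset of dimension type $\bfe=(e_1,e_2)$ to a compatible pair $(S_H,S_V)$ with $|S_V|=e_2$ and $|S_H|=(u_{m+1}-ru_m)-e_1$, i.e.\ the number of horizontal edges of $D_{m+1}^{[r]}$ minus $e_1$. Composing the cell labeling of Theorem~\ref{thm:2quivercells} with $\Theta_{m+1}^{[r]}$ then gives the asserted labeling of cells by compatible pairs (and recovers Theorem~\ref{th:combinatorial euler} when $r=0$).

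I would build $\Theta$ by double induction, outer on $m$ and inner a reverse induction on $r$, exactly as in the proof of Theorem~\ref{thm:2quivercells}. The base cases are $\cQ_1\leftrightarrow D_1$ (a unique subset and a unique pair on each side) and $\cQ_2^{[r]}\leftrightarrow D_2^{[r]}$: here $\cQ_2^{[r]}$ is, by \eqref{coeff}, a single source with $n-r$ sinks, whose strong successor closed subsets are either an arbitrary subset of the sinks or the whole quiver, while a short direct check from Definition~\ref{def:compatibility} shows that the compatible pairs of $D_2^{[r]}$ (that is, $n-r$ horizontal edges followed by one vertical edge) are $(S_H,\varnothing)$ for arbitrary $S_H$ together with $(\varnothing,\{v\})$; matching them under complementation of $S_H$ (with the unique vertical edge on iff the source lies in $\beta$) gives $\Theta_2^{[r]}$. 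For $m\ge2$ I would define $\Theta_{m+1}^{[r]}$ piecewise: restrict $\beta$ to each piece $\cQ_{m,i}$ ($r+1\le i\le n-1$) and $\cQ_{m,n}^{[1]}\cong\cQ_m^{[1]}$ into which \eqref{eq:recursive 2-quivers} decomposes $\cQ_{m+1}^{[r]}$, apply the inductively constructed bijections, and concatenate the resulting pairs along the decomposition $D_{m+1}^{[r]}=\bigsqcup_{i=r+1}^n E_{m,i}$ of Theorem~\ref{th:dyck path recursion}; this automatically yields a \emph{piecewise} compatible pair in the sense of Definition~\ref{def:piecewise compatibility}, with inverse obtained by splitting along the $E_{m,i}$ and applying inverse bijections.

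The heart of the argument is to show that the extra conditions promoting ``piecewise strong successor closed'' to ``strong successor closed'' match, under $\Theta$, those promoting piecewise compatibility to compatibility. On the quiver side these are precisely the $2$-arrow conditions of \eqref{eq:recursive 2-quivers}: by the recursion in the proof of Theorem~\ref{thm:2quivercells}, after adjoining the piece $\cQ_{m,s}$ a piecewise strong successor closed $\beta=\beta_1\sqcup\beta_2$ fails to be strong successor closed exactly when $\beta_2=(\cQ_{m+1}^{[s]})_0$ and $\beta_1\cap(\cQ_{m-1}^{[s]})_0=\varnothing$, where $\cQ_{m-1}^{[s]}$ is the distinguished sub-$2$-quiver of $\cQ_{m,s}$ attached by the $2$-arrow $V_s$. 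On the Dyck side, Corollary~\ref{cor:piecewise not compatible} states that the correspondingly concatenated pair fails to be compatible exactly when $H_{m,s,n}^{[s]}\subset S_{H,s}$ and $V_{m+1}^{[s]}\subset S_V$. Since each piecewise factor of $\Theta$ is, by induction, a dimension-type-preserving bijection, ``$\beta$ equals the whole piece $\cQ_{m+1}^{[s]}$'' corresponds to ``$S_H$ meets $D_{m+1}^{[s]}$ in nothing and $S_V$ in every vertical edge,'' and ``$\beta$ misses $\cQ_{m-1}^{[s]}$'' corresponds to ``$S_H$ contains every horizontal edge of the terminal subpath $D_{m,s,n}^{[s]}\cong D_{m-1}^{[s]}$'', once one checks that the distinguished sub-$2$-quiver $\cQ_{m-1}^{[s]}\subset\cQ_{m,s}$ is carried to the terminal subpath $D_{m,s,n}^{[s]}\subset D_{m,s}$ (both encoding $\tau\tilde P_{m+1}^{[s]}\cong\tilde P_{m-1}^{[s]}$, cf.\ Corollary~\ref{cor:truncated preprojective isomorphism} and Lemma~\ref{le:truncated quotients}). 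Hence the two forbidden configurations coincide, $\beta$ is strong successor closed if and only if $\Theta(\beta)\in\cC_{m+1}^{[r]}$, and the size conditions on $\Theta(\beta)$ follow from the same recursion together with the base cases.

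The main obstacle is this matching step: one must line up all index conventions across the double induction (which truncation superscript labels the adjoined piece, the interplay between the $s$-th $2$-arrow of $\cQ_{m+1}^{[r]}$ and the corresponding Dyck piece, and the identification of $\cQ_{m-1}^{[s]}$ with $D_{m,s,n}^{[s]}$) and verify the dimension-type bookkeeping throughout, and one must handle the transition into $\cQ_3^{[r]}$ (the $m=2$ instance of the recursive step) separately, since Corollary~\ref{cor:piecewise not compatible} is only available for $m\ge3$ and the relevant non-compatibility phenomenon there is governed by part~(2)(a) of Theorem~\ref{th:blocking edge conditions} on the Dyck side and by the special representation $\tilde K_2$ of Remark~\ref{rem:special case} on the quiver side; this special case is, however, precisely the content of the $\cQ_2^{[r]}\leftrightarrow D_2^{[r]}$ and $\cQ_3^{[r]}$ analyses above and so introduces no new difficulty beyond careful bookkeeping.
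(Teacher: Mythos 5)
Your proposal is correct and follows essentially the same route as the paper: reduce to Theorem~\ref{thm:2quivercells}, match vertices of $Q_{m+1}^{[r]}$ with edges of $D_{m+1}^{[r]}$ via the parallel recursive constructions (with $S_H$ given by complementation on horizontal edges), and invoke Corollary~\ref{cor:piecewise not compatible} to equate the failure of strong-successor-closedness with the failure of compatibility. Your attention to the inductive bookkeeping and especially to the $m=2$ boundary case, where Corollary~\ref{cor:piecewise not compatible} is unavailable and one must fall back on Theorem~\ref{th:blocking edge conditions}(2)(a) and Remark~\ref{rem:special case}, is a genuine sharpening of the paper's argument, which silently relies on that corollary without addressing the $m=2$ step.
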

\begin{proof}
  The recursive construction of the 2-quivers $\cQ_{m+1}^{[r]}$ provides a natural ordering of the vertices in the underlying quiver $Q_{m+1}^{[r]}$.
  Indeed, when considering the recursive construction of the 2-quiver $Q_{m+1}^{[r]}$ from \eqref{eq:recursive 2-quivers}, we order the component quivers $Q_{m,i}$ and $Q_{m,*}^{[1]}$ naturally according to their indices so that $Q_{m,*}^{[1]}$ comes last.
  This provides a bijection of these vertices with the edges of $D_{m+1}^{[r]}$ whereby vertices covering the vertex 1 (resp.\ vertex 2) of $K(n)$ correspond to horizontal edges (resp.\ vertical edges) of $D_{m+1}^{[r]}$.

  Given a strong successor closed subset $\beta\subset(Q_{m+1}^{[r]})_0$, we define a pair of subsets $S_H(\beta)\subset H_{m+1}^{[r]}$ and $S_V(\beta)\subset V_{m+1}^{[r]}$ as follows: a vertical edge $v\in V_{m+1}^{[r]}$ is in $S_V(\beta)$ exactly when the corresponding vertex of $Q_{m+1}^{[r]}$ is in $\beta$ while a horizontal edge $h\in H_{m+1}^{[r]}$ is in $S_H(\beta)$ exactly when the corresponding vertex of $Q_{m+1}^{[r]}$ is \emph{not} in $\beta$.
  Then Corollary~\ref{cor:piecewise not compatible} shows that under this bijection a subset $\beta\in(Q_{m+1}^{[r]})_0$ is strong successor closed in~$\cQ_{m+1}^{[r]}$ if and only if the corresponding pair of subsets $\big(S_H(\beta),S_V(\beta)\big)$ is compatible.
  Applying Theorem~\ref{thm:2quivercells} completes the proof.
\end{proof}

The results of \cite{rupel} provide a stronger statement than Theorem~\ref{th:combinatorial euler}.
Indeed, the compatible pairs are shown to compute the counting polynomials of these quiver Grassmannians $\Gr_\bfe(P_{m+1})$ over a finite field (these coincide with their Poincar\'e polynomials in this case).  
We conjecture that the torus action on $\Gr_\bfe(P_{m+1})$ can be chosen to provide a geometric explanation of this result.
\begin{conjecture}
  \label{conj:cell dimensions}
  For $m\ge1$ and $V\in\Gr(H_m)$ or $V=0$, there exists a torus action on $\Gr_\bfe(P_{m+1}^V)$ such that the dimension of the cell labeled by a compatible pair $(S_H,S_V)$ in the maximal Dyck path $D_{m+1}^{[r]}$, where $r=\dim V$, is given by $\overline{\gamma}_{S_H,S_V}=\sum\limits_{e<e'\in E_{m+1}^{[r]}}\overline{\gamma}_\omega(e,e')$ for
  \begin{equation*}
    \overline{\gamma}_\omega(e,e')=
    \begin{cases}
      -n & \text{if $e\in S_H$ and $e'\in S_V$;}\\ 
      1 & \text{if $e\in S_H$ and $e'\in H_{m+1}^{[r]}\setminus S_H$;}\\ 
      1 & \text{if $e\in V_{m+1}^{[r]}\setminus S_V$ and $e'\in S_V$;}\\ 
      0 & \text{otherwise.}\\ 
    \end{cases}
  \end{equation*}
\end{conjecture}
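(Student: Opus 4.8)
The plan is to exhibit the required $\CC^*$-action explicitly and to compute its Bia\l{}ynicki--Birula cell dimensions as numbers of positive-weight tangent directions, matching the resulting sum with $\overline{\gamma}_{S_H,S_V}$. Since $\Gr_\bfe(P_{m+1}^V)$ depends only on $\udim P_{m+1}^V$ by Theorem~\ref{thm:truncpp}, we may work with a fixed representative whose coefficient quiver refines $\cQ_{m+1}^{[r]}$, and let $\CC^*$ act diagonally in the coefficient basis through a weight function $d$ on $(\cQ_{m+1}^{[r]})_0$ which satisfies the degree condition of Lemma~\ref{le:degree condition} for the coefficient quiver and is, moreover, strictly decreasing along the total order of $(\cQ_{m+1}^{[r]})_0$ coming from the recursive construction of Theorem~\ref{th:compatible cells} (this forces the positive constants $c_\alpha$ of Lemma~\ref{le:degree condition} to be chosen in decreasing, rapidly growing order, which is harmless). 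For a sufficiently generic such $d$ the induced action on $\Gr_\bfe(P_{m+1}^V)$ has isolated fixed points, namely the coordinate subrepresentations $U_\beta$ for $\beta$ strong successor closed in $\cQ_{m+1}^{[r]}$ (Theorem~\ref{thm:2quivercells}); by Theorem~\ref{thm:bb} and smoothness the cell through $U_\beta$ is its attracting set, of dimension equal to the number of positive-weight lines in $T_{U_\beta}\Gr_\bfe(P_{m+1}^V)\cong\Hom(U_\beta,P_{m+1}^V/U_\beta)$.

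Writing $X=P_{m+1}^V$ and $U=U_\beta$, the complex $d_{U,X/U}$ of \eqref{eq:maphomext} yields, as an identity of $\CC^*$-characters,
\[\Hom(U,X/U)\;=\;\bigoplus_{i\in\{1,2\}}\Hom(U_i,(X/U)_i)\;\ominus\;\bigoplus_{j=1}^{n}\Hom(U_2,(X/U)_1)\;\oplus\;\Ext(U,X/U).\]
A direct computation in the coefficient basis shows that the entry of $\Hom(U_i,(X/U)_i)$ indexed by a pair of basis vectors $(q',q)$ has weight $d(q)-d(q')$, and that each of the $n$ copies of $\Hom(U_2,(X/U)_1)$ in the arrow term has $(q'',q)$-entry of weight $d(q)-d(q'')$. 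By the monotonicity of $d$ these weights are positive exactly when the underlying pairs of edges of $D_{m+1}^{[r]}$ satisfy the ordering conditions appearing in the definition of $\overline{\gamma}_\omega$; hence the positive parts of the two arrow-free summands contribute $\#\{h<h':h\in S_H,\ h'\notin S_H\}+\#\{v<v':v\notin S_V,\ v'\in S_V\}$, and the positive part of the arrow term contributes $n\cdot\#\{h<v:h\in S_H,\ v\in S_V\}$. The theorem is therefore equivalent to the single homological assertion that $\Ext(U_\beta,P_{m+1}^V/U_\beta)$ has no positive-weight part. Here the strong-successor-closedness of $\beta$ enters decisively: by Theorem~\ref{covering}, and using that the central summand vanishes by rigidity of $\tilde P_{m+1}^{[r]}$ (Lemma~\ref{le:properties}), this $\Ext$-space is $\bigoplus_{w\neq e}\Ext\big((\tilde U_\beta)_w,\tilde P_{m+1}^{[r]}/\tilde U_\beta\big)$, and the vanishing of its positive part is to be deduced from Corollary~\ref{cor:perpendicular} together with the structure of the quotients $\tilde P_{m+1}^I$ recorded in Lemma~\ref{le:properties} and Lemma~\ref{le:truncated quotients}.

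I would carry out the identification by the same simultaneous induction on $m$ and reverse induction on $r$ as in the proofs of Theorems~\ref{cellscover} and~\ref{thm:2quivercells}. For the inductive step the short exact sequence \eqref{eq:truncated ses} and its Caldero--Chapoton map exhibit the cell of $\beta=\beta_1\sqcup\beta_2$ as the product of the cell of $\beta_1\subset\cQ_{m,j}$, the cell of $\beta_2\subset\cQ_{m+1}^{[r]}$, and an affine factor $\AA^{\langle\tbfg,\,\udim\tilde P_{m,j}-\tbff\rangle}$ with $\tbff=\tbfe(\beta_1)$, $\tbfg=\tbfe(\beta_2)$; on the combinatorial side, Theorem~\ref{th:dyck path recursion} splits $D_{m+1}^{[r-1]}$ as a copy of $D_m$ followed by $D_{m+1}^{[r]}$, so $\overline{\gamma}$ over $D_{m+1}^{[r-1]}$ decomposes as the two sub-sums plus the cross-sum of $\overline{\gamma}_\omega(e,e')$ over pairs of edges lying in the two distinct blocks. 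It then remains to verify the single identity that this cross-sum equals $\langle\tbfg,\udim\tilde P_{m,j}-\tbff\rangle$ together with the positive-weight contribution of the $w\neq e$ summands from the covering decomposition -- an identity which is a weighted reformulation of Corollary~\ref{cor:piecewise not compatible}, the latter already pinpointing exactly the configurations obstructing compatibility.

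The main obstacle is the control of the positive-weight part of $\Ext(U_\beta,P_{m+1}^V/U_\beta)$: over the covering quiver rigidity annihilates the relevant extension group, but over $K(n)$ these extension classes genuinely occur, and one must show that for the chosen monotone weights they all carry non-positive weight, equivalently that each ``spurious'' positive arrow direction is cancelled against an extension class. This is precisely where the local shadow-path conditions of Definition~\ref{def:compatibility} -- absent from the streamlined formula for $\overline{\gamma}_\omega$ but re-emerging as the conditions for an arrow direction to lie in $\ker(d_{U_\beta,X/U_\beta})$ -- must be matched with the homological algebra; organizing this reconciliation compatibly with the recursion is the technical heart of the argument. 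A secondary point, handled by a lexicographic, rapidly growing choice of the $c_\alpha$, is to arrange $d$ coherently at every stage of the induction so that monotonicity, and hence the dictionary between positive weights and edge-order conditions, persists.
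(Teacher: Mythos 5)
This statement is labeled a \emph{conjecture} in the paper; the authors supply no proof of it. There is therefore nothing to compare your attempt against, and the only question is whether your sketch could be completed to a proof. Your Bia\l{}ynicki--Birula approach via the $\CC^*$-character of the tangent space and the deformation complex $d_{U,X/U}$ is indeed the natural line of attack, and the translation of positive weights in the arrow-free and arrow summands into the ordering conditions of $\overline{\gamma}_\omega$ is what one wants.

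However, you misidentify the ``technical heart.'' You treat the control of the positive-weight part of $\Ext(U_\beta,P_{m+1}^V/U_\beta)$ as the main obstacle and propose a covering-theoretic attack via Theorem~\ref{covering} and Corollary~\ref{cor:perpendicular}. In fact this $\Ext$-group vanishes \emph{identically}, with no weight analysis and no covering decomposition: by Lemma~\ref{le:projective subrepresentations} every proper subrepresentation $U_\beta\subsetneq P_{m+1}^V$ is a direct sum of preprojectives $P_r$ with $1\le r\le m$; for $r\le2$ the module $P_r$ is projective; for $r\ge3$ the Auslander--Reiten formula gives $\Ext(P_r,Y)\cong D\Hom(Y,\tau P_r)=D\Hom(Y,P_{r-2})$ (and no stable corrections are needed, since no nonzero morphism from an injective lands in a preprojective). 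Taking $Y=P_{m+1}^V/U_\beta$: any $f\colon Y\to P_{r-2}$ precomposes with the surjection $P_{m+1}^V\onto Y$ to a morphism $P_{m+1}^V\to P_{r-2}$, which is zero (Lemma~\ref{le:basic homological properties} in the truncated case, or $\Hom(P_{m+1},P_\ell)=0$ for $\ell\le m$ otherwise); since the projection is surjective, $f=0$. Hence $\Ext(U_\beta,P_{m+1}^V/U_\beta)=0$ and the tangent character is exactly the virtual difference of the two $\Hom$-summands and the arrow term. Your proposed reduction to Corollary~\ref{cor:perpendicular} is therefore a detour you do not need.

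What genuinely remains open are the two points you gloss over. First, the existence of a weight function $d$ on the coefficient basis that is both (i) compatible with the arrow structure so that the $\CC^*$-action descends to the Grassmannian --- this is a nontrivial system of linear constraints, one per pair of same-colored arrows in the coefficient tree --- and (ii) strictly monotone along the specific linear order on $(\cQ_{m+1}^{[r]})_0$ induced by the edges of $D_{m+1}^{[r]}$ as in Theorem~\ref{th:compatible cells}. Your appeal to Lemma~\ref{le:degree condition} with ``rapidly growing $c_\alpha$'' is not obviously compatible with (ii), since the order in (ii) is dictated by Dyck path recursion and not by the $c_\alpha$'s; this needs to be constructed, not asserted. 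Second, you identify the $\CC^*$-fixed points with the coordinate subrepresentations $U_\beta$ for $\beta$ \emph{strong} successor closed in the $2$-quiver and cite Theorem~\ref{thm:2quivercells}, but that theorem concerns the cells of the recursive construction of Theorem~\ref{cellscover}, not the fixed points of a diagonal action; the fixed points of your action are the subsets of the coefficient basis that are successor closed in the \emph{coefficient tree}, and the coincidence of these with the strong successor closed subsets of $\cQ_{m+1}^{[r]}$ (equivalently, the existence of a ``good'' coefficient tree for $\tilde P_{m+1}^{[r]}$) is an extra claim that your inductive scheme must verify. Your high-level plan for matching the inductive cross-term with $\langle\tbfg,\udim\tilde P_{m,j}-\tbff\rangle$ is plausible, but, as you say yourself, is not carried out, and it is exactly these two bookkeeping points, not the $\Ext$-positivity, that separate your sketch from a proof.
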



\end{document}